\documentclass[11pt]{amsart} %loads amsmath, amsfonts, amsthm

\usepackage{amssymb,cite,hyperref}
\usepackage{youngtab}
\usepackage{ytableau}
\usepackage[margin=1in]{geometry}
\usepackage{tensor,enumitem,verbatim}
\usepackage[all,cmtip]{xy}
\usepackage{todonotes}

\newtheorem{theorem}{Theorem}[section]
\newtheorem{lemma}[theorem]{Lemma}
\newtheorem{cor}[theorem]{Corollary}
\newtheorem{rk}[theorem]{Remark}

\newtheorem{prop}[theorem]{Proposition}

\numberwithin{equation}{section}
\numberwithin{figure}{section}

\numberwithin{equation}{section}

\theoremstyle{definition}

\newtheorem{remark}[theorem]{Remark}

\newcommand{\q}{\mathfrak{q}}

\newcommand{\Hd}{\mathcal{H}_d}
\newcommand{\two}{M\otimes N\otimes V^{\otimes d}}

\newcommand{\x}{\tilde{x}}
\newcommand{\y}{\tilde{y}}
\newcommand{\z}{\tilde{z}}
\newcommand{\hh}{\hspace{.3 in}}
\newcommand{\vv}{v_M\otimes v_N\otimes v_1 \otimes \cdots \otimes v_d}
\newcommand{\kk}{\kappa}

\title[Two Boundary Centralizer Algebras for $\mathfrak{q}(n)$]{Two Boundary Centralizer Algebras for $\mathfrak{q}(n)$}

\author{Jieru Zhu}
\address{Department of Mathematics \\
		University at Buffalo \\
		Buffalo, NY, 14260, USA}
\email{jieruzhu@buffalo.edu}

%\date{\today}

%\subjclass[2010]{Primary 20G10. Secondary 17B56.}

\allowdisplaybreaks

\linespread{1.1}

\begin{document}

\maketitle

\begin{abstract}
We define the degenerate two boundary affine Hecke-Clifford algebra $\mathcal{H}_d$, and show it admits a well-defined $\mathfrak{q}(n)$-linear action on the tensor space $M\otimes N\otimes V^{\otimes d}$, where $V$ is the natural module for $\mathfrak{q}(n)$, and $M, N$ are arbitrary modules for $\mathfrak{q}(n)$, the Lie superalgebra of Type Q. When $M$ and $N$ are irreducible highest weight modules parameterized by a staircase partition and a single row, respectively, this action factors through a quotient of $\mathcal{H}_d$. We then construct explicit modules for this quotient, $\mathcal{H}_{p,d}$, using combinatorial tools such as shifted tableaux and the Bratteli graph. These modules belong to a family of modules which we call calibrated. Using the relations in $\mathcal{H}_{p,d}$, we also classify a specific class of calibrated modules. The irreducible summands of $M\otimes N\otimes V^{\otimes d}$ coincide with the combinatorial construction, and provide a weak version of the Schur-Weyl type duality.
\end{abstract}

\section{Introduction}

In the early twentieth century, Schur studied the actions of the general linear group, $GL(V)$, and the symmetric group on a tensor space $V^{\otimes d}$. The two actions fully centralize each other. This is now known as Schur-Weyl duality and it provides a powerful link between the representation theories of these two groups. For example, the finite-dimensional irreducible representations of $GL(V)$ which occur as summands of $V^{\otimes d}$ are in bijection with the finite dimensional irreducible representations of the symmetric group, whenever $\operatorname{dim} V \geq d$. Schur-Weyl duality continues to be studied and has been generalized to many other settings. 

For example, following Schur's work, given a finite dimensional module $M$ and the natural module $V$ for the general linear Lie algebra $\mathfrak{gl}_n(\mathbb{C})$, Arakawa-Suzuki \cite{Arakawa} studied an action of the affine Hecke algebra $\mathcal{H}^{\operatorname{aff}}_d$ on $M\otimes V^{\otimes d}$ which centralizes the action of $\mathfrak{gl}_n(\mathbb{C})$. As a result, there exists a family of functors from the category of finite-dimensional $\mathfrak{gl}_n(\mathbb{C})$-modules to the category of finite-dimensional $\mathcal{H}^{\operatorname{aff}}_d$-modules.

An analogue of $\mathcal{H}^{\operatorname{aff}}_d$, called the degenerate two-boundary braid algebra $\mathcal{G}_d$, was studied by Daugherty \cite{Daugherty}. The quantum version of this algebra has a diagrammatic presentation in \cite{Ram} . In particular, given any two finite dimensional $\mathfrak{gl}_n(\mathbb{C})$-modules $M,N$, there exists a well-defined action of $\mathcal{G}_d$ on the tensor module $M\otimes N\otimes V^{\otimes d}$. As a special case, one can choose $M,N$ to be simple modules parameterized by rectangular Young diagrams. Daugherty then defined a quotient $\mathcal{H}^{\operatorname{ext}}_d$ of $\mathcal{G}_d$, under extra relations dependent on the Young diagrams. It follows that the action of $\mathcal{G}_d$ satisfies the extra relations and factors through the quotient $\mathcal{H}^{\operatorname{ext}}_d$. Moreover, Daugherty constructed irreducible $\mathcal{H}^{\operatorname{ext}}_d$-modules using combinatorial tools, and showed that irreducible direct summands of $M\otimes N\otimes V^{\otimes d}$ are isomorphic to these combinatorially constructed modules.

A super (i.e. $\mathbb{Z}_2$-graded) analogue of the above result was developed by the author in \cite{Zhu}. This was done by studying the connection between the representation theory of $\mathcal{H}_d^{ \operatorname{ext}}$ and the representation theory of the general linear Lie superalgebras $\mathfrak{gl}_{n|m}(\mathbb{C})$. Similar to the representation theory of $\mathfrak{gl}_n(\mathbb{C})$, polynomial representations of $\mathfrak{gl}_{n|m}(\mathbb{C})$ are also parameterized by combinatorial objects, and characters of polynomial representations are given by hook Schur functions.  

In particular, given finite dimensional supermodules $M,N$ for the general linear Lie superalgebra $\mathfrak{gl}_{n|m}(\mathbb{C})$, $M\otimes N\otimes V^{\otimes d}$ is naturally a module for $\mathfrak{gl}_{n|m}(\mathbb{C})$. An action of $\mathcal{G}_d$ on $M\otimes N\otimes V^{\otimes d}$ can be defined using a certain Casimir element, and this action commutes with the action of $\mathfrak{gl}_{n|m}(\mathbb{C})$. Moreover, when $M$ and $N$ are irreducible representations whose highest weights are given by rectangular Young diagrams, the defining relations for $\mathcal{H}_d^{\operatorname{ext}}$, as a quotient of $\mathcal{G}_d$, are also satisfied, and this induces a further action of $\mathcal{H}_d^{\operatorname{ext}}$. We therefore recover the irreducible  $\mathcal{H}_d^{\operatorname{ext}}$-modules in \cite{Daugherty} by studying irreducible summands of $M\otimes N\otimes V^{\otimes d}$ as $\mathcal{H}_d^{\operatorname{ext}}$-modules, and they coincide with Daugherty's combinatorial construction in \cite{Daugherty}.

On the other hand, Hill-Kujawa-Sussan \cite{HKS} studied the Type Q version of the construction in Arakawa-Suzuki \cite{Arakawa}. In particular, the Type Q analogue of the affine Hecke algebra is the affine Hecke-Clifford algebra $H_d$, whose underlying vector space is the tensor product between $\mathcal{H}^{\operatorname{aff}}$ and the Clifford algebra. Given any finite dimensional module $M$ for the Type Q Lie superalgebra $\q(n)$, there is a well-defined action of  $H_d$ on $M\otimes V^{\otimes d}$, which commutes with the $\q(n)$-action. Similar to Arakawa-Suzuki\cite{Arakawa}, there is a family of functors from the category of finite-dimensional $\q(n)$-modules to the category of finite-dimensional $H_d$-modules. In addition, Hill-Kujawa-Sussan constructed $H_d$-modules using combinatorial tools such as shifted Young tableaux.

This article is a generalization of Hill-Kujawa-Sussan \cite{HKS} to the two boundary setting, or alternatively, a generalization of Daugherty \cite{Daugherty} and Zhu \cite{Zhu} to Type Q. In Section~\ref{2.1}, we first define the degenerate two boundary affine Hecke-Clifford algebra $\mathcal{H}_d$ in the spirit of the two boundary Hecke algebra $\mathcal{G}_d$. Similar to the case of $\mathfrak{gl}_n(\mathbb{C})$ and $\mathfrak{gl}_{n|m}(\mathbb{C})$, there is an odd Casimir element $\Omega\in \q(n)\otimes \q(n)$ which induces a $\q(n)$-linear action on $M\otimes N$ for any $\q(n)$-modules $M$ and $N$. We have our first main result in Theorem \ref{hacts}:

\begin{theorem}
Let $d\geq 0 $ and $M,N$ be arbitrary modules for $\q(n)$. There is a well-defined algebra homomorphism $\mathcal{H}_d \to \operatorname{End}_{\q(n)}(M\otimes N\otimes V^{\otimes d})$.
\end{theorem}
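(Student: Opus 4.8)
The plan is to assign to each generator of $\Hd$ an explicit operator on $\two$ and to check two things: (a) that these operators satisfy all the defining relations of $\Hd$ listed in Section~\ref{2.1}, and (b) that each of them commutes with the diagonal action of $\q(n)$, i.e.\ lies in $\End_{\q(n)}(\two)$. The Clifford generators $c_i$ are made to act on the $i$-th copy of $V$ through the distinguished odd endomorphism of $V$ coming from $\q(n)$, and the symmetric group generators $s_i$ through the graded flip of the $i$-th and $(i{+}1)$-st copies of $V$; that these satisfy the relations of the finite Hecke--Clifford algebra and are $\q(n)$-linear is Sergeev's classical duality, which I would simply quote. The interior polynomial generators and the boundary generators are then built from the odd Casimir $\Omega\in\q(n)\otimes\q(n)$: writing $\Omega_{a,b}$ for the odd, $\q(n)$-invariant operator that places $\Omega$ in tensor positions $a$ and $b$, the boundary generators are realized through $\Omega_{M,N}$ and through sums of the form $\sum_{i}\Omega_{M,V_i}$, while the interior generators are realized through Jucys--Murphy-type sums $\sum_{k<i}\Omega_{V_k,V_i}$ together with the boundary contributions $\Omega_{M,V_i}$ and $\Omega_{N,V_i}$ --- the precise normalization being the one fixed in Section~\ref{2.1}, and the whole picture mirroring \cite{HKS} in the one-boundary case and \cite{Daugherty,Zhu} in the two-boundary $\mathfrak{gl}_n$ and $\mathfrak{gl}_{n|m}$ cases.

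Verifying the mixed relations is the heart of the matter. Using that $\Omega_{a,b}$ and $\Omega_{c,d}$ commute whenever $\{a,b\}\cap\{c,d\}=\varnothing$, each such relation reduces to a short list of identities involving at most three tensor factors: the symmetry of $\Omega$, its $\q(n)$-invariance, a quadratic identity for $\Omega_{a,b}^2$, and a ``triangle'' identity relating $\Omega_{a,b}\Omega_{a,c}$, $\Omega_{a,c}\Omega_{b,c}$, and $\Omega_{a,b}\Omega_{b,c}$ on a triple tensor product. The first three are immediate from the construction of $\Omega$, and the triangle identity is the Type~Q analogue of the key computation behind \cite{Daugherty,Zhu}, carried out in the relevant configurations already in \cite{HKS}. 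With these at hand, the commutation relations among the polynomial and boundary generators, and the braid-type relations between the $s_i$ and these generators, follow by direct manipulation.

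The step I expect to be the main obstacle is the family of relations intertwining the Clifford generators $c_i$ with the polynomial and boundary generators: since $\Omega$ is odd, moving $c_i$ past an $\Omega_{V_i,\bullet}$ produces a Koszul sign together with an explicit correction term, and it is exactly the shape of these corrections that dictates the defining relations of $\Hd$ in the first place, so the verification is careful sign-bookkeeping in the super setting, with no counterpart in the non-super \cite{Daugherty}. A related subtlety is the three-factor identity needed for the braid relation between $s_i$ and the boundary generator attached to $N$, which genuinely uses the $\q(n)$-invariance of $\Omega$; I would isolate it as a lemma about $\Omega_{12},\Omega_{13},\Omega_{23}$ acting on $V^{\otimes 3}$ and on $M\otimes N\otimes V$, proved by a short computation in $\q(n)^{\otimes 3}$ that reuses and mildly extends the identities already recorded in \cite{HKS}. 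Part~(b), $\q(n)$-linearity of every generator, is then essentially free: each operator is assembled from the graded flip, the Clifford structure on $V$, and the $\q(n)$-invariant $\Omega$, all of which are morphisms of $\q(n)$-modules.
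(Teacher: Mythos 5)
Your overall strategy matches the paper's proof of Theorem~\ref{hacts}: realize $s_i,c_i$ via the Sergeev action, realize $\tilde z_0$ as $\Omega_{M,N}$, $\tilde x_1$ as $\Omega_{M,1}$, and $\tilde z_i$ as the Jucys--Murphy-type sum $\Omega_{M,i}+\Omega_{N,i}+\sum_{k<i}\Omega_{k,i}$, and then reduce the defining relations to a handful of local identities about $\Omega$ on two or three tensor slots, with the key one being a ``triangle'' anticommutator of the form $\Omega_{M,N}(\Omega_{M,1}+\Omega_{N,1})=-(\Omega_{M,1}+\Omega_{N,1})\Omega_{M,N}$ (this is (\ref{omegamixed}) in the paper). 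The $\q(n)$-linearity then does come for free from $\q(n)$-invariance of $\Omega$ and of the Sergeev operators, exactly as you say.

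There is, however, one sign slip that would derail the verification if followed literally: because $\Omega$ is odd, the operators $\Omega_{a,b}$ and $\Omega_{c,d}$ with \emph{disjoint} index sets \emph{anticommute}, not commute. This is not cosmetic — it is precisely what makes the polynomial relations $\tilde z_i\tilde z_j=-\tilde z_j\tilde z_i$ hold: for example, in checking $\tilde z_1\tilde z_2=-\tilde z_2\tilde z_1$ one needs $\Omega_{M,1}\Omega_{N,2}+\Omega_{N,2}\Omega_{M,1}=0$ (the identity (\ref{var3}) in the paper). If the disjoint-support terms commuted, the anticommutation relations among the $\tilde z_i$ would fail outright. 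Two smaller points: the quadratic identity for $\Omega^2$ that you list among the needed ingredients is not used in this theorem at all — it only enters later (Proposition~\ref{Quo_2_key}) when passing to the quotients $\mathcal{H}^{\operatorname{ev}}_{p,d}$ and $\mathcal{H}^{\operatorname{od}}_{p,d}$ — and it is certainly not ``immediate from the construction''; and the two-boundary triangle identity (\ref{omegamixed}) is not literally in \cite{HKS} (which treats one boundary), though your instinct that its proof should be an extension of the computations there is sound, and the paper does verify it by a direct expansion in $\q(n)^{\otimes 3}$.
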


We then focus our study on the case when $M$ and $N$ are polynomial modules. By definition these are modules which occur as direct summands of $V^{\otimes e}$ for some $e\geq 0$. Polynomial modules are semisimple as explained in Section~\ref{3.1}, and they are closed under tensor product. According to \cite{Wang}, irreducible polynomial modules of $\q(n)$ are parameterized by strict partitions defined in Section~\ref{3.1}, and their tensor product decomposes into polynomial representations in a fashion controlled by combinatorics of the Schur P-function \cite{Stembridge}. We further choose $M$ and $N$ so that the irreducible summands occurring in the decomposition of $M\otimes N$ have the smallest possible multiplicity. Based on a combinatorial result developed by Bessenrodt \cite{Bessenrodt}, this is true when $M=L(\alpha)$ is parameterized by a staircase shape $\alpha$ and $N=L(\beta)$ is parameterized by a single row partition $\beta$. In this case, any irreducible summand occurs with multiplicity $2$ (see Section \ref{3.1}). A similar result is true for $W\otimes V$ when $W$ is any simple polynomial module and $N$ is the natural module. These multiplicity results allow us to describe summands of $M\otimes N\otimes V^{\otimes d}$ using the Bratteli graph introduced in Section \ref{4.1}.

A special phenomenon exists in the theory of $\mathfrak{q}(n)$-modules, called Super Schur's Lemma, in which the endomorphism ring of a simple module can sometimes be two-dimensional, spanned by the identity map and an odd endomorphism. Because of this, we take slightly larger algebras $\mathcal{H}^{{+}}_{d}$ and $\mathcal{H}^{{++}}_{d}$ that are tensor products between $\mathcal{H}_d$ and the Clifford algebras on one or two generators. If $\beta$ has $p$ boxes, we define $\mathcal{H}^{\operatorname{ev}}_{d,p}$ as a quotient of $\mathcal{H}^{+}_d$, and $\mathcal{H}^{\operatorname{od}}_{d,p}$ as a quotient of $\mathcal{H}^{++}_d$, under extra relations involving $n$ and $p$. A key formula in Proposition \ref{Quo_2_key} establishes a connection between $\Omega$ and certain even central elements in $\q(n)$, whose actions on a simple polynomial module can be easily calculated. Using this formula, we show the above action factors through the quotient, leading to the following result in Theorem~\ref{relations}.

\begin{theorem}\label{Int_hf}
There is a well-defined action
\begin{align*}
\rho:\mathcal{H}^{\operatorname{ev}}_{d,p} \to \operatorname{End}_{\q(n)}(L(\alpha)\otimes L(\beta) \otimes V^{\otimes d}).
\end{align*}
Further, if $n$ is odd, there is a well-defined action
\begin{align*}
\rho:\mathcal{H}^{\operatorname{od}}_{d,p} \to \operatorname{End}_{\q(n)}(L(\alpha)\otimes L(\beta) \otimes V^{\otimes d}).
\end{align*}
\end{theorem}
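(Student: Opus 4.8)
The plan is to bootstrap from Theorem~\ref{hacts}. Specializing $M=L(\alpha)$ and $N=L(\beta)$ there already produces an algebra homomorphism $\mathcal{H}_d\to\operatorname{End}_{\q(n)}(L(\alpha)\otimes L(\beta)\otimes V^{\otimes d})$, so it suffices to (i) extend this map across the Clifford part to $\mathcal{H}^{+}_d=\mathcal{H}_d\otimes\mathrm{Cl}_1$ and, when $n$ is odd, to $\mathcal{H}^{++}_d=\mathcal{H}_d\otimes\mathrm{Cl}_2$; and then (ii) verify that the extra relations cutting out the quotients $\mathcal{H}^{\operatorname{ev}}_{d,p}$ and $\mathcal{H}^{\operatorname{od}}_{d,p}$ hold for the resulting operators. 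The universal property of the quotient then delivers $\rho$.

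For step (i) I would invoke Super Schur's Lemma. Writing $T=L(\alpha)\otimes L(\beta)\otimes V^{\otimes d}$, at least one tensor factor is of type $\mathtt{Q}$ (for instance $V$ when $d\geq 1$), so $T$ carries a nonzero odd $\q(n)$-endomorphism $\theta$ with $\theta^{2}$ a nonzero scalar; after rescaling, let the generator of $\mathrm{Cl}_1$ act by $\theta$. That $\theta$ is $\q(n)$-linear is automatic, and one must check that $\theta$ super-commutes with the images of the generators of $\mathcal{H}_d$ as prescribed by the relations of $\mathcal{H}^{+}_d$; this is $\Z_2$-grading bookkeeping carried through the explicit formulas for the $\mathcal{H}_d$-action from Section~\ref{2.1}, using that $\theta$ and the boundary generators (such as $\x$, $\y$, $\z$) are supported on the leftmost tensor factors while the Hecke and Clifford generators of $\mathcal{H}_d$ act essentially on $V^{\otimes d}$. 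When $n$ is odd, a second odd endomorphism of $T$, independent of $\theta$ and anticommuting with it after normalization, is available and is assigned to the second generator of $\mathrm{Cl}_2$; this parity count is the only place the hypothesis on $n$ enters, and it is what lets $\mathcal{H}^{\operatorname{od}}_{d,p}$ act.

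For step (ii) the engine is Proposition~\ref{Quo_2_key}, which rewrites $\Omega$ — hence the boundary generators of $\mathcal{H}_d$, whose $\rho$-image is built from the odd Casimir — in terms of even central elements of $U(\q(n))$. Since $L(\alpha)$ and $L(\beta)$ are irreducible polynomial modules, each such central element acts on them by an explicit scalar, a symmetric function of the contents of the relevant strict partition; for $L(\beta)$ the scalar depends only on $n$ and $p=|\beta|$. Substituting these scalars into Proposition~\ref{Quo_2_key} shows the boundary generators act with prescribed eigenvalues, equivalently that they satisfy the polynomial identities in $n$ and $p$ that define $\mathcal{H}^{\operatorname{ev}}_{d,p}$ and $\mathcal{H}^{\operatorname{od}}_{d,p}$ as quotients — by construction the roots of those polynomials were chosen to be exactly these eigenvalues. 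The remaining defining relations of the quotients already hold in $\mathcal{H}^{+}_d$ and $\mathcal{H}^{++}_d$, so they pass through $\rho$ automatically, and $\rho$ descends.

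The main obstacle is step (ii): determining the exact scalars by which the even central elements of $U(\q(n))$ act on $L(\alpha)$ and $L(\beta)$ and matching them against the data $(n,p)$ hard-wired into the quotient relations. This fuses the combinatorics of staircase and one-row strict partitions, the structure of the center of $U(\q(n))$, and careful sign and parity bookkeeping in the super setting, and it is precisely what dictates the form of the relations defining $\mathcal{H}^{\operatorname{ev}}_{d,p}$ and $\mathcal{H}^{\operatorname{od}}_{d,p}$. A secondary point is checking compatibility of the Clifford generators from step (i) with these polynomial identities, which is immediate since the central elements act by scalars and so commute with $\theta$.
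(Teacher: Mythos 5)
Your overall architecture is right — bootstrap from Theorem~\ref{hacts}, extend across the Clifford factor, then verify the extra relations using Proposition~\ref{Quo_2_key} and the central-character computation. But step (i) contains a genuine error.

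The new Clifford generators $c_0$ (for $\mathcal{H}^{+}_d$) and $c_M$ (for $\mathcal{H}^{++}_d$) cannot come from the Type~Q structure of $V$, as you suggest when you write ``for instance $V$ when $d\geq 1$.'' The odd endomorphism $C$ of $V$ is already what produces the Sergeev generators $c_1,\dots,c_d$ living inside $\mathcal{H}_d$; reusing it gives nothing new, and the argument collapses at $d=0$. The source of $c_0$ is the Type~Q structure of $N=L(\beta)$: a simple polynomial $\q(n)$-module $L(\lambda)$ is Type~Q iff $\ell(\lambda)$ is odd, and since $\ell(\beta)=1$, $L(\beta)$ is always Type~Q, so one takes $c_0 = 1\otimes c^N\otimes 1^{\otimes d}$ for an odd $c^N\in\operatorname{End}_{\q(n)}(N)$ with $(c^N)^2=-\operatorname{id}$. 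Likewise $c_M = c^M\otimes 1\otimes 1^{\otimes d}$ uses the Type~Q structure of $M=L(\alpha)$, which exists iff $\ell(\alpha)=n$ is odd — and \emph{that} is the only reason the hypothesis ``$n$ odd'' appears. Your version inverts the logic: the parity dichotomy is about $L(\alpha)$, not about any count of Type~Q factors among all tensorands. Fixing this also requires showing the extra Clifford relations ($c_0^2=-1$, $c_0 c_i=-c_i c_0$, $c_0 s_i=s_i c_0$, and $c_0$ anticommuting with $\tilde{x}_i,\tilde{y}_i,\tilde{z}_i$), which follow from the tensor-leg placement of $c^N$, $c^M$ and of $\Omega_{M,1}$, $\Omega_{N,1}$.

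For step (ii), your intuition is sound but imprecise. Proposition~\ref{Quo_2_key} controls $\Omega^{2}$, not $\Omega$ itself (which is odd and so does not act by scalars). The actual relations to check are $\tilde{x}_1^2 = n(n+1)$ and $\tilde{y}_1^2\bigl(\tilde{y}_1^2 - p(p+1)\bigr)=0$, and the mechanism that makes them true is the Pieri rule combined with Corollary~\ref{Piericontent}: on each irreducible summand of $L(\mu)\otimes V$, $\Omega^2$ acts by $c(b)(c(b)+1)$ for the one added box $b$. For $L(\alpha)\otimes V$ only one box can be added (content $n$), giving $\tilde{x}_1^2=n(n+1)$; for $L(\beta)\otimes V$ two boxes can be added (contents $p$ and $0$), giving the two eigenvalues $p(p+1)$ and $0$ for $\tilde{y}_1^2$, whence the quadratic factorization. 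Your phrase ``satisfy the polynomial identities in $n$ and $p$'' is the right slogan, but without the Pieri/content analysis it is not a proof.
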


Similar to Daugherty and Hill-Kujawa-Sussan, we also construct $\mathcal{H}^{\operatorname{ev}}_{d,p}$- and $\mathcal{H}^{\operatorname{od}}_{d,p}$-modules  in Section~\ref{4.2} using combinatorial tools. In Lemma~\ref{Pre_5_com}, we reformulated the decomposition formula for Schur P-fuctions given by Stembridge \cite{Stembridge}, and define a Bratteli graph $\Gamma_{\alpha,\beta}$ whose vertices are strict partitions and directed edges are defined representation theoretically. Let $\lambda$ be a fixed vertex in this graph, and denote $\Gamma^{\lambda}$ to be the set of all paths from $\alpha$ to $\lambda$. Let $f: \Gamma^{\lambda}\to \mathbb{C}^{\times}$ be any function satisfying the condition $(\ref{Cal_2_f})$, such as the one in Lemma~\ref{preparation}. We define vector spaces $\mathcal{D}^{\lambda}_{f}$ and $\mathcal{E}^{\lambda}_{f}$, whose bases are given in terms of paths in the Bratteli graph and the generators of $\mathcal{H}^{\operatorname{ev}}_{d,p}$ and $\mathcal{H}^{\operatorname{od}}_{d,p}$ acts by explicit formulas. In particular, the graph and these formulas depend on $\alpha$ and $\beta$, which are determined by the parameters $n$ and $p$ in the defining relations of $\mathcal{H}^{\operatorname{ev}}_{d,p}$ and $\mathcal{H}^{\operatorname{od}}_{d,p}$. We have the following result in Theorem~\ref{relationshold} and Theorem~\ref{irreducibility}.

\begin{theorem}
The vector space $\mathcal{D}^{\lambda}_f$ admits a well-defined action of $\mathcal{H}^{\operatorname{ev}}_{p,d}$, and is irreducible. Similarly, the vector space $\mathcal{E}^{\lambda}_f$ admits a well-defined action of $\mathcal{H}^{\operatorname{od}}_{p,d}$, and is irreducible.
\end{theorem}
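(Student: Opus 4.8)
The plan is to prove the two assertions — well-definedness of the action (Theorem~\ref{relationshold}) and irreducibility (Theorem~\ref{irreducibility}) — in turn, following the template for calibrated modules over affine Hecke-type algebras. First I would verify that the explicit operators attached to the generators of $\mathcal{H}^{\operatorname{ev}}_{p,d}$ (resp.\ $\mathcal{H}^{\operatorname{od}}_{p,d}$) satisfy all the defining relations. Since the basis of $\mathcal{D}^{\lambda}_f$ is indexed by the paths $P\in\Gamma^{\lambda}$ and is a weight basis for the commutative subalgebra generated by the polynomial (Jucys--Murphy-type) generators — the weight of $P$ being recorded by the vertices of $P$ in $\Gamma_{\alpha,\beta}$ — the relations among those generators, and among the Clifford generators, are immediate from the definitions. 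The relations mixing the symmetric-group-type generators with the polynomial generators involve only two adjacent indices, so each reduces to a computation on the span of the paths that agree with $P$ away from those positions, a space of dimension at most two thanks to the multiplicity-two phenomenon recorded in Section~\ref{3.1}; here I would check that the scalar $f(P)$, subject to condition~(\ref{Cal_2_f}), is exactly what forces the braid and ``unitarity'' relations to hold, with no vanishing denominators because consecutive vertices on a path of $\Gamma_{\alpha,\beta}$ carry distinct weights.

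Next I would handle the relations that cut $\mathcal{H}^{\operatorname{ev}}_{p,d}$ and $\mathcal{H}^{\operatorname{od}}_{p,d}$ out of $\mathcal{H}^{+}_d$ and $\mathcal{H}^{++}_d$, namely the ones depending on $n$ and $p$. These should hold because, by construction of $\Gamma_{\alpha,\beta}$ via Lemma~\ref{Pre_5_com}, the eigenvalues attached to the paths are precisely the contents occurring in shifted tableaux of shapes obtained from the staircase $\alpha$ and the single row $\beta$ of $p$ boxes, and these are exactly the roots of the polynomial identities imposed in passing to the quotient. For irreducibility, I would first observe that distinct basis vectors have distinct weights, so any nonzero submodule $U\subseteq\mathcal{D}^{\lambda}_f$ is spanned by the basis vectors it contains. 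It then suffices to show that the submodule generated by any single path $P$ is everything: using that $\Gamma_{\alpha,\beta}$ is connected from $\alpha$ to $\lambda$ and that the off-diagonal part of each symmetric-group-type generator — together with the Clifford generators in the $\mathcal{E}^{\lambda}_f$ case — sends a basis vector to a nonzero multiple of a basis vector indexed by an adjacent path (the nonvanishing being guaranteed again by $f$ taking values in $\mathbb{C}^{\times}$ and by the distinct-weight condition), one walks along paths in the graph to recover every basis vector from $P$.

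The hard part will be the well-definedness step, and specifically the handful of relations that genuinely entangle the Clifford generators, the boundary generators, and the parameters $n,p$; this is exactly where the Type Q features — the odd Casimir $\Omega$, Super Schur's Lemma, and the doubling to $\mathcal{H}^{+}_d$ and $\mathcal{H}^{++}_d$ — interact, and confirming them demands careful eigenvalue bookkeeping on paths and a check that condition~(\ref{Cal_2_f}) is simultaneously compatible with all of them. A subsidiary obstacle is proving that every intertwiner formula is pole-free, i.e.\ that consecutive vertices on a path of $\Gamma_{\alpha,\beta}$ never share a weight; this is a purely combinatorial fact about the Bratteli graph that I would extract from the shifted-tableau description underlying Lemma~\ref{Pre_5_com}.
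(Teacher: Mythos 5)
Your outline matches the paper's proof in both structure and key ideas: for well-definedness the paper checks each relation directly, with the polynomial and Clifford relations immediate from the weight basis, the condition~(\ref{Cal_2_f}) on $f$ engineered precisely so that $x_1^2 = n(n+1)$ and the degree-four $\tilde y_1$-relation hold (Lemmas~\ref{kapparelations}--\ref{firstnontrivial}), and the Hecke relation $s_i z_i = z_{i+1}s_i - 1 + c_ic_{i+1}$ reduced to Hill--Kujawa--Sussan's Proposition~\ref{HKSaction}; for irreducibility the paper constructs polynomial projection operators onto one-dimensional simultaneous $z_i$-eigenspaces, then walks the graph using the nonvanishing off-diagonal coefficients (Lemma~\ref{Cen_1_lem}) together with transitivity of $\{s_0,\dots,s_{d-1}\}$ on $\Gamma^\lambda$ (Lemma~\ref{Cal_3_comb}).

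One caution on scope: the relation $x_1(s_1 x_1 s_1 + (1-c_1c_2)s_1) = (s_1 x_1 s_1 + (1-c_1c_2)s_1)x_1$ does \emph{not} localize to a two-dimensional path subspace as you suggest. It requires analyzing the full orbit of $\langle s_0, s_1\rangle$ on $\Gamma^\lambda \cup\{\star\}$, which generically has five paths (plus $\star$), and after expanding into the Clifford components the resulting identities in $\kappa_T(0),\kappa_T(1),\kappa_T(2)$ are heavy enough that the paper resorts to explicit computer-algebra verification. So the "hard part" you flag is genuinely harder than the multiplicity-two remark suggests, and you should budget for a case analysis over two orbit types rather than a two-by-two block computation.
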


Following the spirit of Hill-Kujawa-Sussan \cite{HKS} and Wan \cite{Wan}, we study $\mathcal{H}^{\operatorname{ev}}_{p,d}$- and $\mathcal{H}^{\operatorname{od}}_{p,d}$-modules on which certain polynomial generators act by eigenvalues. We then classify a specific family of these modules, whose eigenvalues are assumed to be given by the combinatorial data in the Bratteli graph. In particular, for each path $T$ and integer $i$, the eigenvalue $\kappa_T(i)$ is defined in Section~\ref{4.2}. In Theorem~\ref{classify} and Theorem~\ref{oddclassify}, we show that these eigenvalues determine the module structure of $\mathcal{D}^{\lambda}_f$ and $\mathcal{E}^{\lambda}_f$ defined above. By definition the algebra $\mathcal{H}^{\operatorname{ev}}_{p,d}$ contains the Clifford algebra on $d+1$ generators as a subalgebra. 

\begin{theorem}Given an $\mathcal{H}^{\operatorname{ev}}_{p,d}$-module $\mathcal{W}^{\lambda}$, if it is free over $\operatorname{Cl}_{d+1}$ with basis $\{v_T\}_{T\in \operatorname{\Gamma^{\lambda}}}$, where each $v_T$ is homogeneous  and $z_i.v_T=\kappa_T(i)v_T$, $0\leq i\leq d$, then $\mathcal{W}^{\lambda} \simeq \mathcal{D}^{\lambda}_f $ for some function $f$. A similar result holds for $\mathcal{H}^{\operatorname{od}}_{p,d}$ and the module $\mathcal{E}^{\lambda}_f$, if additionally, the module $\mathcal{W}^{\lambda}$ admits an odd $\mathcal{H}^{\operatorname{od}}_{p,d}$-endomorphism.
\end{theorem}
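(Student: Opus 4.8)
The plan is to follow the standard template for classifying calibrated modules over affine Hecke-type algebras, as in Hill--Kujawa--Sussan \cite{HKS} and Wan \cite{Wan}, adapting it to the two boundary Hecke--Clifford setting and to the presence of odd endomorphisms. Fix an $\mathcal{H}^{\operatorname{ev}}_{p,d}$-module $\mathcal{W}^{\lambda}$ as in the statement. By hypothesis the polynomial generators $z_0,\dots,z_d$ act diagonally on the given $\operatorname{Cl}_{d+1}$-basis, $z_i . v_T = \kappa_T(i)\, v_T$, so, using the built-in relations between the $z_i$ and $\operatorname{Cl}_{d+1}$, the entire $z$-action on $\mathcal{W}^{\lambda}$ is determined by the combinatorial data $\{\kappa_T : T \in \Gamma^{\lambda}\}$. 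The goal is then to show that the action of the remaining generators of $\mathcal{H}^{\operatorname{ev}}_{p,d}$ on the $v_T$ is forced, up to a single function $f$ on $\Gamma^{\lambda}$, to match the formulas defining $\mathcal{D}^{\lambda}_f$.

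First I would analyze each symmetric-group-type generator $\sigma_i$. From the defining relations of $\mathcal{H}^{\operatorname{ev}}_{p,d}$ governing $\sigma_i z_i$, $\sigma_i z_{i+1}$ and $\sigma_i z_j$ for $j \neq i, i+1$, together with the Clifford commutation relations, one deduces that $\sigma_i . v_T \in \operatorname{Cl}_{d+1} v_{s_i T} + \operatorname{Cl}_{d+1} v_T$, where $s_i T$ is the path obtained from $T$ by interchanging its $i$-th and $(i{+}1)$-st steps when this is again a path in the Bratteli graph $\Gamma_{\alpha,\beta}$, and $\sigma_i . v_T \in \operatorname{Cl}_{d+1} v_T$ otherwise. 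The precise Clifford-algebra coefficients are then pinned down by the quadratic relation for $\sigma_i$, the braid relations among the $\sigma_i$, and the extra relations determined by the parameters $n$ and $p$; these leave exactly one scalar per path undetermined, and collecting them produces the function $f : \Gamma^{\lambda} \to \C^{\times}$. Consistency of this data across all the relations is precisely the requirement that $f$ satisfy condition $(\ref{Cal_2_f})$, and conversely, once $f$ is fixed, an appropriate rescaling of the $v_T$ identifies the $\mathcal{H}^{\operatorname{ev}}_{p,d}$-action on $\mathcal{W}^{\lambda}$ with the one on $\mathcal{D}^{\lambda}_f$.

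Given such an $f$, I would define $\varphi : \mathcal{W}^{\lambda} \to \mathcal{D}^{\lambda}_f$ by matching the distinguished $\operatorname{Cl}_{d+1}$-generators $v_T$ (up to the scaling just described) and extending $\operatorname{Cl}_{d+1}$-linearly; this is well defined since $\mathcal{W}^{\lambda}$ is free over $\operatorname{Cl}_{d+1}$. By the previous step $\varphi$ intertwines the action of every generator of $\mathcal{H}^{\operatorname{ev}}_{p,d}$, and it is a bijection because it identifies the path-indexed $\operatorname{Cl}_{d+1}$-bases of the two sides, so $\mathcal{W}^{\lambda} \simeq \mathcal{D}^{\lambda}_f$. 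For the $\mathcal{H}^{\operatorname{od}}_{p,d}$-case one runs the identical eigenvalue analysis for the generators shared with $\mathcal{H}^{\operatorname{ev}}_{p,d}$; the additional odd endomorphism of $\mathcal{W}^{\lambda}$ supplies the action of the extra odd Clifford-type generator distinguishing $\mathcal{H}^{\operatorname{od}}_{p,d}$, and the relations of $\mathcal{H}^{\operatorname{od}}_{p,d}$ force it to agree with the corresponding generator on $\mathcal{E}^{\lambda}_f$, giving $\mathcal{W}^{\lambda} \simeq \mathcal{E}^{\lambda}_f$.

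The hard part will be the Clifford bookkeeping in the second step: in contrast to the purely even settings of Daugherty \cite{Daugherty} and Zhu \cite{Zhu}, the generators $\sigma_i$ conjugate the $z_j$ only up to odd corrections involving the Clifford generators, so the relevant intertwiner elements have the shape $\sigma_i$ plus a rational expression in the $z_j$ and the Clifford generators, and one must check these have non-vanishing denominators on each $v_T$ --- equivalently, that $\kappa_T(i)$ and $\kappa_T(i{+}1)$ never collide in the forbidden manner. This non-degeneracy is exactly what the combinatorics of the Bratteli graph $\Gamma_{\alpha,\beta}$ recorded in Section \ref{4.1} (the content and residue data, and the multiplicity-two decomposition) guarantees. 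A secondary subtlety, in the odd case, is verifying that the odd endomorphism is the ``internal'' one compatible with the $\operatorname{Cl}_{d+1}$-structure rather than an independent symmetry, so that $\mathcal{W}^{\lambda}$ is genuinely isomorphic to the irreducible module $\mathcal{E}^{\lambda}_f$ and does not split as a direct sum.
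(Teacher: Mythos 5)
Your proposal captures the standard intertwiner/rescaling template for the transposition generators (matching the paper's Lemma~\ref{rescalingvectors} and Lemma~\ref{sidetermined}), but it has a fundamental gap: it never analyzes the generator $x_1$, and this is where essentially all the difficulty of the classification — and the function $f$ itself — actually lives. The paper splits the proof into Lemma~\ref{x1determined} (determining the $x_1$-action) and Lemma~\ref{sidetermined} (determining the $s_i$-action); the first is the heavy one. Since $x_1$ commutes with $z_2,\dots,z_d$ but \emph{not} with $z_0,z_1$, it preserves the simultaneous eigenspace for $z_2,\dots,z_d$ but mixes $v_T$ with $v_{s_0.T}$. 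The paper writes $x_1$ as a $2\times 2$ block matrix over $\operatorname{Cl}_2$ on this space, then uses in turn the relations $x_1(c_0c_1)+(c_0c_1)x_1=0$, $x_1(z_0c_0c_1+z_1)+(z_0c_0c_1+z_1)x_1=2x_1^2=2n(n+1)$, and $(x_1-z_1)^4=p(p+1)(x_1-z_1)^2$ to pin down the block structure. In particular, the off-diagonal blocks come with two undetermined scalars $e,f$, and the constraint $x_1^2=n(n+1)$ forces their product to satisfy condition~$(\ref{Cal_2_f})$ — this is where the function $f$ comes from. You instead attribute $f$ to leftover freedom in the $s_i$-action, but in the paper's argument the $s_i$-action is \emph{completely} rigid after rescaling, and the freedom encoded by $f$ belongs entirely to $x_1$.

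You also miss a delicate step whose absence would make the theorem false as stated: the paper must show that $v_T$ and $v_{s_0.T}$ have \emph{opposite} parities, which is not a hypothesis but is deduced from the quartic relation. The same-parity alternative is excluded precisely by the assumption (present in Theorem~\ref{classify} but not quoted in the abbreviated statement here) that $n^2(n+1)^2+p^2(p+1)^2$ is not a perfect square; the proof derives a quadratic in $m$ whose discriminant must be a perfect square, and this assumption prevents that. Any proposal that does not isolate and constrain the $x_1$ block matrix will never hit this obstruction and hence cannot justify the parity claim. Finally, for the $\mathcal{H}^{\operatorname{od}}_{p,d}$ case your sketch is directionally right but underspecified: the paper's mechanism is to form the even operator $c_M\phi$ from the odd endomorphism $\phi$, pass to its $\pm 1$-eigenspaces, and observe that each eigenspace is an $\mathcal{H}^{\operatorname{ev}}_{p,d}$-module satisfying the hypotheses of the even classification — reducing cleanly to Theorem~\ref{classify} rather than rerunning the whole relation check with an extra generator.
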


The module $L(\alpha)\otimes L(\beta) \otimes V^{\otimes d}$ can be regarded as a bimodule for $\q(n)$ and the centralizer $\mathcal{Z}_d=\operatorname{End}_{\q(n)}(L(\alpha)\otimes L(\beta)\otimes V^{\otimes d})$. Similar to Schur-Weyl duality, a $\mathfrak{q}(n)$-version of the double centralizer theorem describes properties of its irreducible summands as a bimodule. Since the image $\rho(\mathcal{H}^{\operatorname{ev}}_{p,d})$ or $\rho(\mathcal{H}^{\operatorname{od}}_{p,d})$ in Theorem~\ref{Int_hf} is a subalgebra of $\mathcal{Z}_d$, if $\mathcal{L}^{\lambda}$ is an irreducible $\mathcal{Z}_d$-summand of $L(\alpha)\otimes L(\beta) \otimes V^{\otimes d}$, its restriction $\operatorname{Res}^{\mathcal{Z}_d}_{\rho(\mathcal{H}^{\operatorname{ev}}_{p,d})}\mathcal{L}^{\lambda}$  is a module for $\rho(\mathcal{H}^{\operatorname{ev}}_{p,d})$ and further for $\mathcal{H}^{\operatorname{ev}}_{p,d}$. Similarly, $\operatorname{Res}^{\mathcal{Z}_d}_{\rho(\mathcal{H}^{\operatorname{od}}_{p,d})}\mathcal{L}^{\lambda}$ can be regarded as a module for $\mathcal{H}^{\operatorname{od}}_{p,d}$.  In Theorem~\ref{punchline}, we show that we can recover the combinatorially constructed irreducible modules $\mathcal{D}^{\lambda}_f$ via this restriction:

\begin{theorem}
When $n$, the number of nonzero rows in $\alpha$ is even, there exists an isomorphism $\operatorname{Res}^{\mathcal{Z}_d}_{\rho(\mathcal{H}^{\operatorname{ev}}_{p,d})}\mathcal{L}^{\lambda}\simeq \mathcal{D}^{\lambda}_f$ for some choice of $f$. When $n$ is odd, then there exists an isomorphism $\operatorname{Res}^{\mathcal{Z}_d}_{\rho(\mathcal{H}^{\operatorname{od}}_{p,d})}\mathcal{L}^{\lambda}\simeq \mathcal{E}^{\lambda}_f$ for some choice of $f$.
\end{theorem}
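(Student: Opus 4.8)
The plan is to combine the representation-theoretic double centralizer machinery with the classification theorem for calibrated modules (the previous theorem). First I would set up the bimodule structure: regard $L(\alpha)\otimes L(\beta)\otimes V^{\otimes d}$ as a $(\q(n),\mathcal{Z}_d)$-bimodule and, by the $\q(n)$-version of the double centralizer theorem together with the multiplicity-$2$ results from Section~\ref{3.1}, decompose it as a direct sum over the vertices $\lambda$ of the Bratteli graph $\Gamma_{\alpha,\beta}$ at level $d$, with each $\mathcal{L}^{\lambda}$ occurring as the irreducible $\mathcal{Z}_d$-summand paired with the $\q(n)$-irreducible $L(\lambda)$. The key point is that $\dim \mathcal{L}^{\lambda}$ (or rather its graded rank over the relevant Clifford algebra) equals the number of paths $\#\Gamma^{\lambda}$, by counting how $L(\lambda)$ appears via the iterated branching $W\otimes V$ described by the Bratteli graph. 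This should be a consequence of Lemma~\ref{Pre_5_com} reformulating Stembridge's rule.

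Next I would identify the Clifford subalgebra action. The chain of subalgebras $\mathrm{Cl}_{d+1}\subset\mathcal{H}^{\operatorname{ev}}_{p,d}$ acts on $\operatorname{Res}\mathcal{L}^{\lambda}$ through $\rho$; I need to show $\operatorname{Res}^{\mathcal{Z}_d}_{\rho(\mathcal{H}^{\operatorname{ev}}_{p,d})}\mathcal{L}^{\lambda}$ is free over $\mathrm{Cl}_{d+1}$ with a homogeneous basis $\{v_T\}_{T\in\Gamma^{\lambda}}$ on which the polynomial generators $z_i$ act by the scalars $\kappa_T(i)$. To get the eigenvalues, I would invoke Proposition~\ref{Quo_2_key}, which relates $\Omega$ (hence the generators of $\mathcal{H}_d$ built from Casimir-type elements) to even central elements of $\q(n)$ whose eigenvalues on polynomial irreducibles are computable in terms of the content/shifted-content statistics of the partitions labelling the vertices along a path $T$. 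Matching these computed eigenvalues with the combinatorial definition of $\kappa_T(i)$ from Section~\ref{4.2} is the crux of the identification; this is essentially the same eigenvalue bookkeeping as in Hill–Kujawa–Sussan and Daugherty, adapted to the two-boundary and staircase-plus-row setup. Once freeness over $\mathrm{Cl}_{d+1}$ and the eigenvalue statement are in hand, the previous classification theorem (Theorem on $\mathcal{W}^{\lambda}$) applies verbatim and yields $\operatorname{Res}^{\mathcal{Z}_d}_{\rho(\mathcal{H}^{\operatorname{ev}}_{p,d})}\mathcal{L}^{\lambda}\simeq\mathcal{D}^{\lambda}_f$ for some $f$; the function $f$ is determined (up to the gauge freedom allowed by condition~$(\ref{Cal_2_f})$) by the actual matrix coefficients of the intertwiner generators on $\mathcal{L}^{\lambda}$.

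For the odd case ($n$ odd), I would repeat the argument with $\mathcal{H}^{\operatorname{od}}_{p,d}\subset$ built from $\mathcal{H}^{++}_d$ and $\mathrm{Cl}_{d}$ or the appropriate Clifford algebra, using that when $n$ is odd Super Schur's Lemma forces $\operatorname{End}_{\q(n)}(L(\lambda))$ to be two-dimensional, so $\mathcal{L}^{\lambda}$ automatically carries an odd $\mathcal{Z}_d$-endomorphism, which restricts to an odd $\mathcal{H}^{\operatorname{od}}_{p,d}$-endomorphism — precisely the extra hypothesis needed to invoke the $\mathcal{E}^{\lambda}_f$ half of the classification theorem. The parity type (even vs.\ odd) of each $L(\lambda)$ is governed by the parity of $n$ together with the number of parts of $\lambda$, and I would need a short lemma (or a citation to Wang/HKS) tracking this along the Bratteli graph to be sure the relevant Clifford module structure is the one assumed in the statement of the classification theorem.

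The main obstacle I expect is the eigenvalue computation: showing that the even central elements appearing in Proposition~\ref{Quo_2_key} act on the summand $\mathcal{L}^{\lambda}$ by exactly the scalars packaged into $\kappa_T(i)$, uniformly along every path $T\in\Gamma^{\lambda}$. This requires a careful analysis of how $\Omega$ propagates through the tensor factors $M\otimes N\otimes V^{\otimes i}$ as $i$ grows — i.e.\ that the "intermediate" module obtained after $i$ tensor factors decomposes according to the Bratteli graph and that the restriction of the $z_i$-action to each piece is scalar — together with the explicit formula for the Harish-Chandra image of the relevant central elements on strict-partition-labelled polynomial irreducibles. A secondary difficulty is keeping the $\mathbb{Z}_2$-grading and the Clifford-module bookkeeping consistent: one must verify that the homogeneous basis $\{v_T\}$ can be chosen so that the grading matches the grading built into $\mathcal{D}^{\lambda}_f$, and that no parity obstruction prevents the isomorphism (this is where the hypothesis on the parity of $n$ enters, dividing into the two cases of the theorem).
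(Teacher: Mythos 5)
Your proposal follows essentially the same route as the paper's proof of Theorem~\ref{punchline}: double-centralizer decomposition (\ref{doublecentralizer}), a dimension count identifying the $\operatorname{Cl}$-rank of $\mathcal{L}^{\lambda}$ with $\#\Gamma^{\lambda}$ (this is Lemma~\ref{dimensionsagree}), construction of a simultaneous-eigenvector $v_T$ for each $T\in\Gamma^{\lambda}$ by iterating Lemma~\ref{Quo_3_scalar} (which itself rests on Corollaries~\ref{Piericontent},~\ref{x1squareacts} and hence on Proposition~\ref{Quo_2_key}), and finally applying the classification Theorems~\ref{classify}/\ref{oddclassify}. You correctly flag the odd case as using Type Q of $L(\lambda)$ to produce the odd endomorphism required by Theorem~\ref{oddclassify}.

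Two small refinements worth noting. First, the eigenvalue obstacle you flag as the crux is already resolved by the paper's Lemma~\ref{Quo_3_scalar}; the extra step the paper has to make explicit (and which you gloss over) is how to push an eigenvector $w_T$ that a priori lives in $L(\lambda)\otimes\mathcal{L}^{\lambda}$ down into $\mathcal{L}^{\lambda}$ itself — the paper does this by decomposing $w_T$ inside the $\mathcal{H}^{\operatorname{ev}}_{p,d}$-isotypic component and extracting a nonzero homogeneous piece, then adjusting signs with Clifford generators. Second, your remark that the parity type of $L(\lambda)$ must be ``tracked along the Bratteli graph'' is unnecessary: as observed in Section~\ref{3.1}, every $\lambda$ appearing in $\Gamma$ satisfies $\ell(\lambda)=n$ (it contains $\alpha$ and has at most $n$ nonzero rows), so the type is constant across the graph and determined by the parity of $n$ alone. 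Neither point changes the strategy; your outline is correct.
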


\subsection*{Acknowledgement} The author thanks her PhD advisor, Jonathan Kujawa, for providing the main idea of this project and for overseeing its process. The author also thanks the referee for their previous time reading this article and for the valuable improvements.

\section{Centralizing Actions}
\subsection{The Lie superalgebra $\mathfrak{q}(n)$}\label{1.1}
We first introduce some basics. The Lie superalgebra $\q=\mathfrak{q}(n)$ is the $\mathbb{C}$-vector space 
\begin{align*}
\q(n)=\left\{ \begin{bmatrix}
A & B \\ B& A
\end{bmatrix}  \hspace{.1 in}| \hspace{.1 in} A,B \in \operatorname{Mat}_{n,n}(\mathbb{C})  \right\},
\end{align*}
with a $\mathbb{Z}_2$-grading $\q(n)=\q_{\overline{0}}\oplus \q_{\overline{1}}$ where
\begin{align*}
\q_{\overline{0}}=&\left\{ \begin{bmatrix}
A & 0 \\ 0& A
\end{bmatrix}  \hspace{.1 in}| \hspace{.1 in} A\in \operatorname{Mat}_{n,n}(\mathbb{C})  \right\},\\
\q_{\overline{1}}=&\left\{ \begin{bmatrix}
0 & B \\ B& 0
\end{bmatrix}  \hspace{.1 in}| \hspace{.1 in} B\in \operatorname{Mat}_{n,n}(\mathbb{C})  \right\}.
\end{align*}
An element $x\in \q$ is said to be homogeneous of degree $\overline{i}$ if $x\in \q_{\overline{i}}$, $i=0,1$, and we denote by $\overline{x}$ the degree of $x$. For homogeneous $x,y\in \q$, the Lie superbracket is defined as
\begin{align}
[x,y]=xy-(-1)^{\overline{x}\cdot \overline{y}} yx \label{bracket},
\end{align}
using matrix multiplication and extended linearly to all elements in $\q$. This sign convention will also apply in similar contexts, where an extra negative sign is introduced whenever odd elements commute past each other. It is straightforward to check that this superbracket satisfies the axioms for Lie superalgebras:
\begin{align*}
&[x,y]+(-1)^{\overline{x}\cdot \overline{y}}[y,x]=0,\\
&(-1)^{\overline{y}\cdot \overline{z}}[x,[y,z]]+(-1)^{\overline{z}\cdot \overline{x}}[y,[z,x]]+(-1)^{\overline{x}\cdot \overline{y}}[z,[x,y]]=0.
\end{align*}

For two Lie superalgebras $\mathfrak{g}$ and $\mathfrak{h}$, an \emph{even Lie superalgebra homomorphism} $\phi:\mathfrak{g} \to \mathfrak{h}$ is a map such that $[\phi(x),\phi(y)]=\phi([x,y])$, $\phi(\mathfrak{g}_{\overline{0}})\subset \phi(\mathfrak{h}_{\overline{0}})$ and $\phi(\mathfrak{g}_{\overline{1}})\subset \phi(\mathfrak{h}_{\overline{1}})$. A \emph{supermodule} $W$ for $\q(n)$ is a $\mathbb{Z}_2$-graded $\mathbb{C}$-vector space with an even homomorphism of Lie superalgebras $\q(n) \to \operatorname{End}(W)$. Here, $\operatorname{End}(W)$ is equipped with the superbracket defined similar to $(\ref{bracket})$ using composition of maps, and an element $f\in \operatorname{End}(W)$ is even if and only if $f$ preserves the grading in $W$, and is odd if and only if $f$ reverses the grading. For this article, we will always be discussing supermodules and refer to them as just modules.

Let $V=\mathbb{C}^{2n}$ be the set of column vectors of height $2n$. For $1\leq i\leq n$, Let $e_i$ be the column vector with $1$ on the $i$-th entry and $0$ everywhere else, and $f_i$ be the column vector with $1$ on the $n+i$-th entry and $0$ everywhere else.  Impose a $\mathbb{Z}_2$-grading on $V$ with $V_{\overline{0}}=\langle e_1,\dots, e_n\rangle$ and $V_{\overline{1}}= \langle f_1,\dots, f_n\rangle$. Let $\q$ acts on $V$ by matrix multiplication on the left. It is straightforward to check that this defines a $\q(n)$-module structure. For $\q(n)$-modules $M$ and $N$, define the module structure on $M\otimes N$ as follows: on $ v\otimes w\in M\otimes N $, a homogeneous element $x\in \q(n)$ acts as
\begin{align}\label{tensor}
x.(v\otimes w) = (x.v) \otimes w +(-1)^{\overline{x}\cdot\overline{v}} v\otimes (x.w).
\end{align}
We extend this definition to a tensor product of multiple modules, such as $M\otimes N\otimes W$, by taking successive tensor products, and it is straightforward to check $M\otimes (N\otimes W)\simeq (M\otimes N)\otimes W$ under the obvious map. Hence for arbitrary $\q(n)$-modules $M$ and $N$, the tensor product $M\otimes N\otimes V^{\otimes d}$ is again a $\q(n)$-module.

\subsection{The two boundary affine Hecke-Clifford algebra}\label{2.1} The goal of this section is to define another superalgebra, which acts on $M\otimes N\otimes V^{\otimes d}$, and the action supercommutes with that of $\q(n)$. This algebra is a natural generalization of the Sergeev algebra. First, a \emph{superalgebra} is an associative algebra $A$ with a $\mathbb{Z}_2$-grading $A=A_{\overline{0}}\oplus A_{\overline{1}}$, such that $A_{\overline{i}}A_{\overline{j}}\subset A_{\overline{i+j}}$ for $i,j\in\{0,1\}$. Let $d\in \mathbb{Z}_{\geq 0}$, the Clifford algebra $\operatorname{Cl}_d$ is a superalgebra generated by odd $c_1,\dots,c_d$, subject to the relations
\begin{align}\label{clifford}
c_i^2=-1, \hspace{.5 in} c_ic_j=-c_jc_i \hspace{.1 in}(i\neq j).
\end{align}
The Sergeev algebra $\operatorname{Ser}_d$ is generated by odd elements $c_1,\dots,c_d$, even elements $s_1,\dots,s_{d-1}$, under the $\operatorname{Cl}_d$-relations and 
\begin{align}
s_i^2=&1, \hspace{.3 in} s_is_j=s_js_i \hspace{.1 in}(|i-j|>1),\hspace{.2 in} s_is_{i+1}s_i=s_{i+1}s_is_{i+1} \hspace{.1 in} (1\leq i\leq d-2) \label{symmetricgrouprelations}\\
s_ic_j=&c_js_i \hspace{.1 in}(j\neq i,i+1), \hspace{.2 in} s_ic_i=c_{i+1}s_i, \hspace{.2 in} s_ic_{i+1}=c_is_i. \label{cliffordrelations}
\end{align}
Notice $s_1,\dots,s_{d-1}$ generate the group algebra for symmetric group $S_d$. Recall that $V$ is the natural representation for $\q(n)$. In \cite{Sergeev}, Sergeev defined an action of $\operatorname{Ser}_d$ on $V^{\otimes d}$, where $s_i$ acts by signed permutation, using the sign convention introduced in the previous section:
\begin{align}
s_i.(v_1\otimes \cdots\otimes v_i\otimes v_{i+1}\otimes \cdots \otimes v_d)=(-1)^{\overline{v_i}\cdot\overline{v_{i+1}}}v_1\otimes \cdots \otimes v_{i+1}\otimes v_i\otimes \cdots \otimes v_d,\label{sergeev1}
\end{align}
and $c_i$ acts by 
\begin{align}
c_i (v_1\otimes \cdots\otimes v_i \otimes  \cdots \otimes v_d)=(-1)^{\overline{v_1}+\cdots+\overline{v_{i-1}}} v_1\otimes \cdots\otimes C. v_i \otimes \cdots \otimes v_d.\label{sergeev2}
\end{align}
Here, $C\in \operatorname{End}(V)$ is left multipllication by the matrix
\begin{align}
C= \begin{bmatrix}
0 & -I_n \\ I_n & 0
\end{bmatrix}, \label{Cdefinition}
\end{align}
and $I_n$ is the $n\times n$ identity matrix. This action is known to \emph{supercommute} with $\q(n)$, in the following sense: for any $y\in \operatorname{Ser}_d$, $x\in \mathfrak{q}(n)$ and $v\in V^{\otimes d}$, $y.(x.v)=(-1)^{\overline{x}\cdot \overline{y}}x.(y.v)$. In general, for $\q(n)$-modules $W$ and $U$, a homogeneous linear map $f:W\to U$ is said to supercommute with $\q(n)$, if $f(x.w)=(-1)^{\overline{x}\cdot \overline{f}}x.f(w)$ for all homogeneous $x\in \q(n)$ and $w\in W$. Denote by $\operatorname{End}_{\q(n)}(W)$ the space spanned by all homogeneous endomorphisms of $W$ that supercommute with $\q(n)$.

Generalizing this result, Nazarov \cite{Naz} defined the following affine Hecke-Clifford algebra $H_d$: it is the superalgebra generated by $\operatorname{Ser}_d$ and even $x_1,\dots,x_d$, under further relations.
\begin{align}
x_i x_j =& x_j x_i,  \hspace{.1 in}(1\leq i,j\leq d) \hspace{.2 in} s_ix_i = x_{i+1}s_i-1+c_ic_{i+1}, \label{affinecliffordtwist0}\\
c_iz_j=&  x_jc_i \hspace{.1 in} (i\neq j) \hspace{.2 in}
c_iz_i= -x_ic_i. \label{affinecliffordtwist}
\end{align}
To define the action of $H_d$ on $V^{\otimes d}$, Hill-Kujawa-Sussan introduced an even Casimir tensor element. Let $E_{ij}$ be the elementary $n\times n$ matrix with $1$ in the $(i,j)$-position and zero elsewhere, and let
\begin{align}\label{ef}
e_{ij}=\begin{bmatrix}
E_{ij} & 0 \\ 0 & E_{ij}
\end{bmatrix},\hspace{.4 in} f_{ij}=\begin{bmatrix}
0 & E_{ij} \\  E_{ij} & 0
\end{bmatrix}.
\end{align}
The even Casimir element is 
\begin{align}\label{evenomega}
\overline{\Omega}=\sum_{1\leq i,j\leq n} e_{ij} \otimes f_{ji}C -\sum_{1\leq i,j\leq n} f_{ij} \otimes e_{ji}C \in \q(n) \otimes \operatorname{End}(V).
\end{align}
In \cite[Theorem 7.4.1]{HKS}, Hill-Kujawa-Sussan showed that for an arbitrary $\q(n)$-module $M$, there is a well defined action of $H_d$ on $M\otimes V^{\otimes d}$, on which the Sergeev algebra $\operatorname{Ser}_d$ acts as (\ref{sergeev1}) and (\ref{sergeev2}), and $z_k$ acts on $M\otimes  V^{\otimes d}$ as follows, using the element $\overline{\Omega}$:
\begin{align}
&x_k. (v_M\otimes v_1\otimes \cdots\otimes v_k \otimes \cdots\otimes v_d) \label{omega}\\
=&\sum_{1\leq i,j\leq n} (-1)^{\overline{v_M+v_1+\cdots+v_{k-1}}}e_{ij}.v_M\otimes v_1\otimes \cdots\otimes f_{ji}.C(v_k) \otimes \cdots\otimes v_d\\
-&\sum_{1\leq i,j\leq n} f_{ij}.v_M\otimes v_1\otimes \cdots\otimes e_{ji}.C(v_k) \otimes \cdots\otimes v_d.
\end{align}
For short, we will also denote this action as $\overline{\Omega}_{M,k}$, because the two tensor factors in $\overline{\Omega}$ act on the module $M$ and the $k$-th copy of $V$, respectively. We extend this notation to actions denoted as $X_{W,k}$ or $X_{\ell,k}$, for any $X \in \q\otimes \q$, any $\q(n)$-module $W$ (for example, $W=M\otimes N$ or $W=M\otimes N\otimes V^{\otimes i}$) and any integer $1\leq \ell,k\leq d$. To be precise, when $X=x\otimes y$, and $\Delta(x)=\sum_x x^{(1)}\otimes x^{(2)}$ under Sweedler's notation \cite{Swee},
\begin{align}
&X_{M\otimes N, k}. (v_M\otimes v_1\otimes \cdots\otimes v_k \otimes \cdots\otimes v_d) \label{omegadef1}\\
=& \sum_x (-1)^{\overline{v_M}\cdot \overline{x^{(2)}}+\overline{y}\cdot (\overline{v_M}+\cdots+\overline{v_{k-1}})} x^{(1)}.v_M \otimes x^{(2)} v_N \otimes v_1\otimes \cdots\otimes y.v_k \otimes \cdots\otimes v_d. \notag
\end{align}
And extend this linearly when $X$ is a linear combination of pure tensors. One can also infer the formula when $W=M\otimes N\otimes V^{\otimes i}$, by acting $x$ on $M\otimes N\otimes V^{\otimes i}$ using the $(i+1)$-fold coproduct to obtain $i+2$ tensor factors, then applying the result component-wise, and $y$ on the $k$-th copy of $V$. Similarly, $X_{\ell, k}$ acts on the $\ell$-th and $k$-th copies of $V$ simultaneously:
\begin{align}
&X_{\ell, k}. (v_M\otimes v_1\otimes \cdots\otimes v_{\ell}\otimes \cdots\otimes v_k \otimes \cdots\otimes v_d)\label{omegadef2}\\
=&(-1)^{\overline{x}\cdot(\overline{v_M}+\cdots\overline{v_{\ell-1}}+\overline{y}\cdot(\overline{v_M}+\cdots+\cdots \overline{v_{k-1}})} v_M\otimes v_1\otimes \cdots\otimes  x.v_{\ell}\otimes \cdots\otimes y.v_k \otimes \cdots\otimes v_d. \notag
\end{align}

The following superalgebra is a generalization of $H_d$ to the setting of $M\otimes N\otimes V^{\otimes d}$. This was motivated by an analogous construction in the $\mathfrak{gl}(n)$ setting: the usual Schur-Weyl duality on the tensor representation was extended to the one boundary setting  $M\otimes V^{\otimes d}$ by Arakawa-Suzuki \cite{Arakawa}, and then further extended to the two boundary setting $M\otimes N\otimes V^{\otimes d}$ by Daugherty \cite{Daugherty}. The following superalgebra is motivated by the extended degenerate two boundary affine Hecke algebra in \cite{Daugherty}.  We define the two boundary degenerate affine Hecke-Clifford algebra $\mathcal{H}_d$, to be the superalgebra generated by the even generators $s_1,\dots,s_{d-1}$, odd generators $c_1,\dots,c_d$, odd generators $\tilde{x}_1,\tilde{z}_0,\dots,\tilde{z}_d$, under the Sergeev relations among $s_i$ and $c_i$, together with further relations (\ref{Hecke1}) through (\ref{lastindefinition}):

\noindent(Hecke relations)
\begin{align}
s_i \tilde{z}_i &= \tilde{z}_{i+1}s_i+c_i-c_{i+1}  &(1\leq i\leq d-1),\label{Hecke1}\\
\tilde{x}_1 s_i &= s_i \tilde{x}_1 &(2\leq i\leq n),\\
\tilde{z}_j s_i &= s_i \tilde{z}_j &(j \neq i,i+1, \hspace{.1 in} 0\leq j\leq d).
\end{align}
(Clifford twist relations)
\begin{align}
c_i\tilde{x}_1 &= - \tilde{x}_1 c_i &(1\leq i\leq d), \label{Ctwist1} \\
c_i\tilde{z}_j &= - \tilde{z}_j c_i &(1\leq i \leq d, \hspace{.1 in}0\leq j\leq d). \label{Ctwist2} 
\end{align}
(Polynomial relations)
\begin{align}
\tilde{x}_1(s_1\tilde{x}_1s_1-(c_1-c_2)s_1 ) &=-(s_1\tilde{x}_1s_1-(c_1-c_2)s_1 )\tilde{x}_1,  \label{x1x2}\\
\tilde{z}_1 \tilde{z}_2 &=- \tilde{z}_2\tilde{z}_1 , \hspace{.2 in} \z_0\z_1=-\z_1\z_0.
\end{align}
(Relations between polynomial rings)
\begin{align}
\tilde{z}_2 \tilde{x}_1 &= - \tilde{x}_1 \tilde{z}_2, \\
(\tilde{z}_0-\tilde{z}_1+\tilde{x}_1)\tilde{x}_1 &= -\tilde{x}_1 (\tilde{z}_0-\tilde{z}_1+\tilde{x}_1). \label{lastindefinition}
\end{align}

It is worth pointing out that, using $\tilde{x}_1$ and the simple transpositions $s_i$, one can generate a polynomial ring in $d$ variables: for $1\leq i \leq d$, define element $\tilde{x}_i$ recursively by 
\begin{align}
\tilde{x}_{i+1}=s_i\tilde{x}_is_i-(c_i-c_{i+1})s_i.\label{Hecke2}
\end{align}
Also, define additional variables $\tilde{y}_i$, such that $\tilde{y}_1=\tilde{z}_1-\tilde{x}_1$, and 
\begin{align}
\tilde{y}_{i+1}=s_i\tilde{y}_is_i-(c_i-c_{i+1})s_i, \label{Hecke3}
\end{align}
then the following proposition explores further relations among these newly defined elements, and draws an analogy between $\mathcal{H}_d$ and the extended degenerate two boundary affine Hecke algebra in Daugherty's construction \cite{Daugherty}.

\begin{prop}\label{extragenerators}
The following relations hold in $\mathcal{H}_d$. Assume $1\leq i,j\leq d$ for all indices if not specified otherwise.

\noindent (Hecke relations)
\begin{align*}
&\tilde{x}_i s_j = s_j \tilde{x}_i , \hspace{.2 in}
\tilde{y}_i s_j = s_j \tilde{y}_i  & (i\neq j,j+1).
\end{align*}
(Clifford twist relations)
\begin{align*}
c_j\tilde{x}_i = - \tilde{x}_i c_j,  \hspace{.2 in} 
c_j\tilde{y}_i = - \tilde{y}_i c_j  \hspace{.5 in}(1\leq i,j\leq d).
\end{align*}
(Polynomial relations) 
\begin{align*}
\tilde{x}_i\tilde{x}_j =-\tilde{x}_j \tilde{x}_i,  \hspace{.2 in}
\tilde{y}_i\tilde{y}_j =-\tilde{y}_j \tilde{y}_i \hspace{.1 in}  (i\neq j,  \hspace{.05 in} 1\leq i,j\leq d)  \hspace{.3 in}  \tilde{z}_i\tilde{z}_j =-\tilde{z}_j \tilde{z}_i  \hspace{.1 in} (i\neq j,  \hspace{.05 in} 0\leq i,j\leq d). 
\end{align*}
(Relations between polynomial rings)
\begin{align}
\tilde{x}_i\tilde{z}_j&=-\tilde{z}_j\tilde{x}_i, \hspace{.1 in}  \tilde{y}_i\tilde{z}_j=-\tilde{z}_j\tilde{y}_i& ( i<j ), \label{xz}\\
(\tilde{z}_0+\tilde{z}_1-\tilde{y}_1)\tilde{y}_1 &= -\tilde{y}_1 (\tilde{z}_0+\tilde{z}_1-\tilde{y}_1), & \label{yz}\\
\tilde{x}_i+\tilde{y}_i&=\tilde{z}_i - \displaystyle\sum_{1\leq j\leq i-1}(c_j-c_i)t_{j,i}. &\label{jm}
\end{align}
Here, $t_{j,i}=s_j s_{j+1} \cdots s_{i-2} s_{i-1} s_{i-2}\cdots s_{j+1}s_j$ corresponds to the symmetric group element $(ij)$ in cycle notation.
\end{prop}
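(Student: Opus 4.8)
The plan is to verify each family of relations in Proposition~\ref{extragenerators} by induction on the indices, using the recursive definitions (\ref{Hecke2}) and (\ref{Hecke3}) together with the Sergeev relations (\ref{symmetricgrouprelations})--(\ref{cliffordrelations}) and the base relations (\ref{Hecke1})--(\ref{lastindefinition}). The key observation that makes this tractable is that all of $\tilde{x}_i$, $\tilde{y}_i$, $\tilde{z}_i$ are \emph{odd} generators, so ``commute'' always means ``anticommute,'' and the signs in (\ref{cliffordrelations}) and in the twists (\ref{Ctwist1})--(\ref{Ctwist2}) are what one must bookkeep carefully. I would proceed in the following order.

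\textbf{Step 1 (Hecke relations for $\tilde{x}_i$ and $\tilde{y}_i$).} First I would show $\tilde{x}_i s_j = s_j \tilde{x}_i$ for $i \neq j, j+1$. For $i = 1$ this is a defining relation; for the inductive step I would rewrite $\tilde{x}_{i+1} = s_i \tilde{x}_i s_i - (c_i - c_{i+1})s_i$ and push $s_j$ across, splitting into the cases $|i - j| > 1$ (where $s_i s_j = s_j s_i$ and $s_j$ commutes with $c_i, c_{i+1}$) and $j = i-1$, where one uses the braid relation $s_i s_{i-1} s_i = s_{i-1} s_i s_{i-1}$ and the Clifford-twist relations (\ref{cliffordrelations}) to move $s_{i-1}$ through the $c$'s. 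The argument for $\tilde{y}_i$ is identical since $\tilde{y}_i$ satisfies the same recursion (\ref{Hecke3}).

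\textbf{Step 2 (Clifford twists for $\tilde{x}_i$, $\tilde{y}_i$).} To show $c_j \tilde{x}_i = -\tilde{x}_i c_j$ for all $i,j$, I would again induct on $i$: given $c_j \tilde{x}_i = -\tilde{x}_i c_j$, apply this to $\tilde{x}_{i+1} = s_i \tilde{x}_i s_i - (c_i - c_{i+1}) s_i$; for $j \notin \{i, i+1\}$ one slides $c_j$ freely past $s_i$ and the $c_i, c_{i+1}$ (picking up the anticommutation with $\tilde{x}_i$), and for $j \in \{i, i+1\}$ one uses $s_i c_i = c_{i+1} s_i$, $s_i c_{i+1} = c_i s_i$ and $c_i^2 = c_{i+1}^2 = -1$, $c_i c_{i+1} = -c_{i+1} c_i$ to track the sign. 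The same for $\tilde{y}_i$.

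\textbf{Step 3 (Polynomial relations: $\tilde{x}_i \tilde{x}_j = -\tilde{x}_j \tilde{x}_i$, etc.).} The base case $\tilde{x}_1 \tilde{x}_2 = -\tilde{x}_2 \tilde{x}_1$ is exactly (\ref{x1x2}) rewritten via (\ref{Hecke2}); the case $\tilde{y}_1 \tilde{y}_2 = -\tilde{y}_2 \tilde{y}_1$ follows from (\ref{yz}) after substituting $\tilde{z}_1 = \tilde{y}_1 + \tilde{x}_1$ is \emph{not} quite right, so instead I would derive it from (\ref{x1x2}), the relation $\tilde{z}_1\tilde{z}_2 = -\tilde{z}_2\tilde{z}_1$, and $\tilde{y}_i = \tilde{z}_i - \tilde{x}_i - (\text{correction})$; more cleanly, I would first prove (\ref{jm}) (see Step 5) and then deduce the $\tilde{y}$-anticommutation from the $\tilde{x}$- and $\tilde{z}$-anticommutations plus Step 1. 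For general $i < j$ one conjugates the base case $\tilde{x}_i \tilde{x}_{i+1} = -\tilde{x}_{i+1}\tilde{x}_i$ by products of $s_k$'s: since conjugation by $s_k$ sends $\tilde{x}_i \mapsto \tilde{x}_{s_k(i)}$ up to an additive $(c_k - c_{k+1})$-term that anticommutes correctly (by Step 2), one must check these correction terms cancel in pairs --- this is the one genuinely fiddly computation.

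\textbf{Step 4 (Relations between polynomial rings, $\tilde{x}_i \tilde{z}_j = -\tilde{z}_j \tilde{x}_i$ for $i < j$).} I would induct on $i$. The case $i=1$: for $j = 2$ this is a defining relation, for $j > 2$ one uses $\tilde{z}_j = s_{j-1}\cdots s_2 \tilde{z}_2 s_2 \cdots s_{j-1} + (\text{$c$-terms})$ via iterated (\ref{Hecke1}) and commutes $\tilde{x}_1$ past, using that $\tilde{x}_1$ commutes with $s_i$ for $i \geq 2$ (Step 1) and anticommutes with the $c$'s. The inductive step on $i$ conjugates by $s_{i}$ exactly as in Step 3. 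Relation (\ref{yz}) is obtained from (\ref{lastindefinition}) by replacing $\tilde{x}_1$ with $\tilde{z}_1 - \tilde{y}_1$ and simplifying, using $\tilde{z}_0 \tilde{z}_1 = -\tilde{z}_1\tilde{z}_0$.

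\textbf{Step 5 (The Jucys--Murphy-type identity (\ref{jm})).} This is the structural heart. Set $D_i := \tilde{z}_i - \tilde{x}_i - \tilde{y}_i$; the claim is $D_i = \sum_{1\leq j \leq i-1}(c_j - c_i) t_{j,i}$. For $i = 1$ we have $D_1 = \tilde{z}_1 - \tilde{x}_1 - \tilde{y}_1 = 0$ by the definition $\tilde{y}_1 = \tilde{z}_1 - \tilde{x}_1$. For the inductive step, I would combine the recursions: $\tilde{x}_{i+1} + \tilde{y}_{i+1} = s_i(\tilde{x}_i + \tilde{y}_i)s_i - 2(c_i - c_{i+1})s_i$, while from (\ref{Hecke1}) one has $\tilde{z}_{i+1} = s_i \tilde{z}_i s_i - (c_i - c_{i+1})s_i$ --- wait, (\ref{Hecke1}) reads $s_i \tilde{z}_i = \tilde{z}_{i+1} s_i + c_i - c_{i+1}$, i.e. $\tilde{z}_{i+1} = s_i \tilde{z}_i s_i^{-1} - (c_i - c_{i+1})s_i^{-1} = s_i \tilde{z}_i s_i - (c_i s_i - c_{i+1}s_i) $ after simplification using $s_i^2 = 1$ and the Clifford twists; I would pin down the exact form carefully. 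Then $D_{i+1} = s_i D_i s_i + (c_i - c_{i+1})s_i$, and using the inductive hypothesis for $D_i$ together with $s_i t_{j,i} s_i = t_{j,i+1}$ for $j < i$ and $s_i(c_j - c_i)s_i = c_j - c_{i+1}$ (from the Sergeev twist relations (\ref{cliffordrelations}), valid since $j < i$), plus the new term $(c_i - c_{i+1})s_i = (c_i - c_{i+1})t_{i,i+1}$, one recovers $\sum_{1\leq j\leq i}(c_j - c_{i+1})t_{j,i+1}$, completing the induction.

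\textbf{Main obstacle.} The delicate part is the sign and correction-term bookkeeping in Steps 3 and 4 when conjugating the base anticommutation relations by strings of $s_k$'s: each conjugation introduces an extra $(c_k - c_{k+1})s_k$ term via (\ref{Hecke2})/(\ref{Hecke3}), and one must verify these extra terms either vanish (by an anticommutation from Step 2 applied twice, giving $+$ where $-$ is needed, hence cancellation) or reassemble into the $t_{j,i}$ appearing in (\ref{jm}). A clean way to organize this is to prove (\ref{jm}) first (Step 5 depends only on Steps 1 and 2), then use it to \emph{transfer} every anticommutation statement among the $\tilde{z}_i$ (which follow directly from $\tilde{z}_1\tilde{z}_2 = -\tilde{z}_2\tilde{z}_1$, $\tilde{z}_0\tilde{z}_1 = -\tilde{z}_1\tilde{z}_0$ by conjugation, where the $\tilde{z}$-recursion (\ref{Hecke1}) has the same shape) into the corresponding statements for $\tilde{x}_i$ and $\tilde{y}_i$, rather than proving each from scratch.
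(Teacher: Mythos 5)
Your main plan coincides with the paper's proof: the same family-by-family induction on indices using the recursions (\ref{Hecke2}) and (\ref{Hecke3}), the braid-relation case split for the Hecke relations, the explicit base-case computation for $\tilde{y}_1\tilde{y}_2 = -\tilde{y}_2\tilde{y}_1$, the substitution $\tilde{y}_1 = \tilde{z}_1 - \tilde{x}_1$ together with $\tilde{z}_0\tilde{z}_1 = -\tilde{z}_1\tilde{z}_0$ for (\ref{yz}), and the induction $D_{k+1} = s_k D_k s_k + (c_k - c_{k+1})s_k$ for (\ref{jm}).

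One caution about your closing remark: proving (\ref{jm}) first and then \emph{transferring} the $\tilde{z}$-anticommutation to $\tilde{x}$ and $\tilde{y}$ does not work, because (\ref{jm}) only controls the sum $\tilde{x}_i + \tilde{y}_i$, not each separately. If you try to expand $\tilde{y}_i\tilde{y}_k + \tilde{y}_k\tilde{y}_i$ via (\ref{jm}), you produce cross terms of the form $\tilde{z}_i\tilde{x}_k + \tilde{x}_k\tilde{z}_i$ with $i>k$, which are not covered by (\ref{xz}); as the paper itself notes, that relation holds only for $i<j$ because the induction fails when $s_{j_0}$ and $\tilde{z}_i$ no longer commute. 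So the $\tilde{x}$-, $\tilde{y}$-, and $\tilde{z}$-anticommutation families must each be established with their own parallel base cases before (\ref{jm}) is assembled, which is exactly the order both the paper and your Steps~1--5 follow.
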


\begin{proof}
We check each set of relations in order.

(Hecke relations) Let us induct on $i$. The base case when $i=1$ is a defining relation in $\mathcal{H}_d$. Notice by (\ref{Hecke2}) we have 
\begin{align*}
\tilde{x}_{k+1}=s_k\tilde{x}_ks_k-(c_k-c_{k+1})s_k.
\end{align*}
Suppose the statement is true for all $i\leq k$, $j\neq i, i-1$.
When $i=k+1$, $i\neq j,j+1,j+2$ (or equivalently, $k \neq j-1,j,j+1$),
\begin{align*}
\hh \x_{k+1}s_j - s_j \x_{k+1}
&=(s_k\x_ks_k-(c_k-c_{k+1})s_k)s_j -s_j(s_k\x_ks_k-(c_k-c_{k+1})s_k)\\
&= s_k\x_ks_js_k-s_ks_j\x_ks_k-(c_k-c_{k+1})s_ks_j-s_j(c_k-c_{k+1})s_k\\
&=-(c_k-c_{k+1})s_ks_j+(c_k-c_{k+1})s_js_k=0.
\end{align*}
When $i=k+1=j+2$, 
\begin{align*}
\hh \x_{k+1}s_j - s_j \x_{k+1}
&=(s_k\x_ks_k-(c_k-c_{k+1})s_k)s_{k-1} -s_{k-1}(s_k\x_ks_k-(c_k-c_{k+1})s_k)\\
&= (s_k(s_{k-1}\x_{k-1}s_{k-1}-(c_{k-1}-c_k)s_{k-1})s_k-(c_k-c_{k+1})s_k)s_{k-1}\\
&\hh  -s_{k-1}(s_k(s_{k-1}\x_{k-1}s_{k-1}-(c_{k-1}-c_k)s_{k-1})s_k-(c_k-c_{k+1})s_k)\\
&=s_ks_{k-1}\x_{k-1}s_ks_{k-1}s_k-s_k(c_{k-1}-c_k)s_{k-1}s_ks_{k-1}-(c_k-c_{k+1})s_ks_{k-1}\\
&\hh -s_{k}s_{k-1}s_k\x_{k-1}s_{k-1}s_k+s_{k-1}s_k(c_{k-1}-c_k)s_{k-1}s_k+s_{k-1}(c_k-c_{k+1})s_k\\
&=-s_k(c_{k-1}-c_k)s_ks_{k-1}s_k-(c_k-c_{k+1})s_ks_{k-1}\\
&\hh +s_{k-1}(c_{k-1}-c_{k+1})s_ks_{k-1}s_k+(c_{k-1}-c_{k+1})s_{k-1}s_k\\
&=-(c_{k-1}-c_{k+1})s_{k-1}s_k-(c_k-c_{k+1})s_ks_{k-1}\\
&\hh +s_{k-1}(c_{k-1}-c_{k+1})s_{k-1}s_ks_{k-1}+(c_{k-1}-c_{k+1})s_{k-1}s_k\\
&=-(c_k-c_{k+1})s_ks_{k-1}+(c_k-c_{k+1})s_ks_{k-1}=0.
\end{align*}
For the $\tilde{y}_j$ version, $(\tilde{z}_1-\tilde{x}_1)s_i=s_i(\tilde{z}_1-\tilde{x}_1)$ for $2\leq i\leq d$, which provides the base case $\tilde{y}_1s_i=s_i\tilde{y}_1$. The induction argument is exactly same as above.

(Clifford twist relations) Again induct on the index for $\tilde{x}$. The base case $\tilde{x}_1c_1=-c_1\tilde{x}_1$ is included as one of the original relations in $\mathcal{H}_d$. Assume that the statement is true for $i\leq k$.
\begin{align*}
 \x_{k+1}c_j+c_j\x_{k+1}&=(s_k\x_ks_k-(c_k-c_{k+1})s_k)c_j+c_j(s_k\x_ks_k-(c_k-c_{k+1})s_k)\\
&=-(c_k-c_{k+1})s_kc_j-c_j(c_k-c_{k+1})s_k.
\end{align*} 
The above quantity is $0$ if $j\neq k,k+1$. When $j=k$,
\begin{align*}
 \x_{k+1}c_j+c_j\x_{k+1}&=-(c_k-c_{k+1})s_kc_k-c_k(c_k-c_{k+1})s_k\\
&=-(c_k-c_{k+1})c_{k+1}s_k-c_k(c_k-c_{k+1})s_k=0.
\end{align*}
Similarly when $j=k+1$,
\begin{align*}
 \x_{k+1}c_j+c_j\x_{k+1}&=-(c_k-c_{k+1})s_kc_{k+1}-c_{k+1}(c_k-c_{k+1})s_k\\
&=-(c_k-c_{k+1})c_{k}s_k-c_{k+1}(c_k-c_{k+1})s_k=0.
\end{align*}
The base case for $\tilde{y}_i$ follows from $(\tilde{z}_1-\tilde{x}_1)c_j=-c_j(\tilde{z}_1-\tilde{x}_1)$ and a similar induction argument proves the relations for $\tilde{y}_i$.

(Polynomial relations) The polynomial relations for $\tilde{x}_i$ are shown as follows. Relation (\ref{x1x2}) is equivalent to $\tilde{x}_1\tilde{x}_2=-\tilde{x}_2\tilde{x}_1$. We now show $\tilde{x}_1\tilde{x}_j=-\tilde{x}_j\tilde{x}_1$ for all $2\leq j\leq d$. Assume it is true for $j\leq k$, then 
\begin{align*}
\hh \x_1\x_{k+1}+\x_{k+1}\x_1 
&= \x_1(s_k\x_ks_k-(c_k-c_{k+1})s_k)+(s_k\x_ks_k-(c_k-c_{k+1})s_k)\x_1\\
&=s_k(\x_1\x_k+\x_k\x_1)s_k-\x_1(c_k-c_{k+1})s_k-(c_k-c_{k+1})s_k\x_1=0.
\end{align*}
Now we show $\tilde{x}_i\tilde{x}_j=-\tilde{x}_j\tilde{x}_i$ by fixing $j$ and induction on $i$. Suppose it is true for $i\leq k-1$, then
\begin{align*}
\x_k\x_{j}+\x_{j}\x_k 
&=(s_{k-1}\x_{k-1}s_{k-1}-(c_{k-1}-c_k)s_{k-1})\x_{j}+\x_{j}(s_{k-1}\x_{k-1}s_{k-1}-(c_{k-1}-c_k)s_{k-1})\\
&=s_{k-1}(\x_{k-1}\x_{j}+\x_{j}\x_{k-1})s_{k-1}-(c_{k-1}-c_k)s_{k-1}\x_{j}-\x_{j}(c_{k-1}-c_k)s_{k-1}=0.
\end{align*}
The argument for $\tilde{z}_i\tilde{z}_j=-\z_j\z_i$ is exactly the same. For the $\tilde{y}$-version, the induction argument is also the same and the base case is shown via
\begin{align*}
&\hh (\z_1-\x_1)(s_1(\z_1-\x_1)s_1-(c_1-c_2)s_1) +(s_1(\z_1-\x_1)s_1-(c_1-c_2)s_1)(\z_1-\x_1)\\
&=\z_1\z_2+\z_2\z_1 - \z_1s_1\x_1s_1-s_1\x_1s_1\z_1 -\x_1\z_2-\z_2\x_1 +\x_1s_1\x_1s_1+s_1\x_1s_1\x_1\\
&=-(s_1\z_2-(c_1-c_2))\x_1s_1-s_1\x_1(\z_2s_1+(c_1-c_2))\\
&\hh +(s_1\x_2 -(c_1-c_2))\x_1s_1+s_1\x_1(\x_2s_1+(c_1-c_2))=0.
\end{align*}
The last equality requires relations $\tilde{z}_2\x_1=-\x_1\z_2$ and $\x_1\x_2=-\x_2\x_1$ in $\mathcal{H}_d$.

(Relations between polynomial rings) To show (\ref{xz}), we first claim $\tilde{z}_j\x_1=-\x_1\z_j$ $(2\leq j\leq d)$ by induction on $j$. The base case $j=2$ is a defining relation in $\mathcal{H}_d$. Suppose this statement is true for  $i\leq k$, then 
\begin{align*}
\z_{k+1} \x_1 +\x_1\z_{k+1}&= (s_k\z_ks_k-(c_k-c_{k+1})s_k)\x_1+\x_1(s_k\z_ks_k-(c_k-c_{k+1})s_k)\\
&= s_k (\z_k\x_1+\x_1\z_k)s_k =0.
\end{align*}
Then we claim $\z_i\x_j=\x_j\z_i$ by fixing $i$ and induction on $j$. Suppose this is true for $j\leq j_0$, then
\begin{align*}
\tilde{z}_i \tilde{x}_{j_0+1} +  \tilde{x}_{j_0+1} \tilde{z}_i&=\tilde{z}_i(s_{j_0}\tilde{x}_{j_0}s_{j_0}-(c_{j_0}-c_{{j_0}+1})s_{j_0})+(s_{j_0}\tilde{x}_{j_0}s_{j_0}-(c_{j_0}-c_{{j_0}+1})s_{j_0})\tilde{z}_i\\
&=s_{j_0}(\tilde{z}_i\tilde{x}_{j_0}+\tilde{x}_{j_0}\tilde{z}_i)s_{j_0}-(\tilde{z}_i(c_{j_0}-c_{{j_0}+1})s_{j_0})+(c_{j_0}-c_{{j_0}+1})s_{j_0})\tilde{z}_i)=0.
\end{align*}
Notice the argument fails when $s_{j_0}$ and $\z_i$ no longer commute, especially when $j_0=i-1$, which explains the condition $i<j$ on the indices.

The induction argument for $\tilde{y}$-version is the same, with the base case following from 
\begin{align*}
&\z_i\y_1=\z_i(\z_1-\x_1)=-(\z_1-\x_1)\z_i=-\y_1\z_i &(2\leq i\leq d).
\end{align*}

To show (\ref{yz}), notice
\begin{align*}
&\hh (\tilde{z}_0+\tilde{z}_1-(\z_1-\x_1))(\z_1-\x_1)+(\z_1-\x_1) (\tilde{z}_0+\tilde{z}_1-(\z_1-\x_1))\\
&= (\z_0+\x_1)(\z_1-\x_1)+(\z_1-\x_1) (\z_0+\x_1)\\
&= \x_1(\z_1-\x_1)-\z_0\x_1+\z_0\z_1 + (\z_1-\x_1)\x_1 +\z_1\z_0-\x_1\z_0\\
&= \x_1(\z_1-\x_1-\z_0)+(\z_1-\x_1-\z_0)\x_1=0.
\end{align*}
Lastly, for (\ref{jm}), let us induct on $i$. The base case $i=1$ is from the definition of $\y_1$. Assume the claim is true for $i\leq k$,
\begin{align*}
 \tilde{x}_{k+1}+\tilde{y}_{k+1}&= s_k\x_ks_k+s_k\y_ks_k-2(c_k-c_{k+1})s_k= s_k(\z_k - \displaystyle\sum_{1\leq j\leq k-1}(c_j-c_k)t_{j,k} )s_k-2(c_k-c_{k+1})s_k\\
&=\z_{k+1}-s_k(\displaystyle\sum_{1\leq j\leq k-1}(c_j-c_k)t_{j,k} )s_k-(c_k-c_{k+1})s_k\\
&=\z_{k+1}-(\displaystyle\sum_{1\leq j\leq k-1}(c_j-c_{k+1})s_kt_{j,k}s_k)-(c_k-c_{k+1})s_k\\
&=\z_{k+1}-(\displaystyle\sum_{1\leq j\leq k-1}(c_j-c_{k+1})t_{j,k+1})-(c_k-c_{k+1})s_k=\z_{k+1}-(\displaystyle\sum_{1\leq j\leq k}(c_j-c_{k+1})t_{j,k+1}).
\end{align*}
\end{proof}

\subsection{An action of $\mathcal{H}_d$}
For arbitrary $\q(n)$-module $M$ and $N$, we aim to define an action of $\mathcal{B}_d$ on $M\otimes N\otimes V^{\otimes d} $ which supercommutes with $\q(n)$. Recall the elements $e_{ij}$ and $f_{ij}$ defined in (\ref{ef}). We now define the odd Casimir tensor element
\begin{align}\label{oddomega}
\Omega=\sum_{1\leq i,j\leq n}e_{ij}\otimes f_{ji} -\sum_{1\leq i,j\leq n} f_{ij}\otimes e_{ji} \in \q(n)\otimes \q(n),
\end{align}
this is related to the even Casimir tensor $\overline{\Omega}$ in (\ref{evenomega}) by $\overline{\Omega}=\Omega(1\otimes C)$ for the element $C\in \operatorname{End}(V)$ in (\ref{Cdefinition}). Similar to $\overline{\Omega}$, $\Omega$ is known to induce $\q(n)$-linear actions: on a tensor product $M\otimes N$ of arbitrary $\q(n)$-modules $M$ and $N$, the action of $\Omega$ supercommutes with that of $\q(n)$.    Also recall the notation $\Omega_{M\otimes N,k}$ introduced after (\ref{omega}). In particular, $\Omega_{M\otimes N,k} \in \operatorname{End}(M\otimes N\otimes V^{\otimes d})$ is defined to be the action of the following element, using the module structure on $M\otimes N$ introduced in  (\ref{tensor}):
\begin{align*}
&\Omega_{M\otimes N,k}=\Omega_{M,k}+\Omega_{N,k}\\
=&\sum_{1\leq i,j\leq d} e_{ij}\otimes 1_N \otimes 1^{\otimes k-1} \otimes f_{ji} \otimes 1^{\otimes d-k}+\sum_{1\leq i,j\leq d} 1_M\otimes e_{ij} \otimes 1^{\otimes k-1} \otimes f_{ji} \otimes 1^{\otimes d-k}\\
&-\sum_{1\leq i,j\leq d} f_{ij}\otimes 1_N \otimes 1^{\otimes k-1} \otimes e_{ji} \otimes 1^{\otimes d-k}-\sum_{1\leq i,j\leq d} 1_M\otimes f_{ij} \otimes 1^{\otimes k-1} \otimes e_{ji} \otimes 1^{\otimes d-k}.
\end{align*}

Recall the definition of $\Omega_{M\otimes N\otimes V^{\otimes i-1},i}$ and $\Omega_{\ell,k}$ in Eq (\ref{omegadef1}) and Eq (\ref{omegadef2}). In particular, $\Omega_{\ell,k}$ applies the first and second tensor factors of $\Omega$ to the $\ell$-th and $k$-th copy of $V$, respectively. Notice an element $x\in \mathfrak{q}(n)$ acts on $M\otimes N\otimes V ^{\otimes i-1}$ via the following element, after applying coproduct $i$ times:
\begin{align*}
x\otimes 1^{\otimes i}+1\otimes x\otimes1^{\otimes i-1}+\cdots+1^{\otimes i} \otimes x,
\end{align*}
it follows that the action of $\Omega_{M\otimes N\otimes V^{\otimes i-1}, i}$ is equivalent to applying the first factor in $\Omega$ to each of $M$, $N$ and the first $(i-1)$-th copies of $V$, while applying the second factor to the $i$-th copy of $V$, in other words,
\begin{align}
\Omega_{M\otimes N\otimes V^{\otimes i-1}, i}=\Omega_{M,i}+\Omega_{N,i}+\Omega_{1,i}+\cdots+\Omega_{i-1,i}. \label{omegadef3}
\end{align}

\begin{theorem}\label{hacts}
There is a well-defined action of the algebra $\mathcal{H}_d$ on $V^{\otimes d}$, where $s_1,\dots,s_{d-1}$, $c_1,\dots,c_d$ act by the Sergeev action in (\ref{sergeev1}) and (\ref{sergeev2}), $\tilde{x}_1$ acts as $\Omega_{M,1}$, $\tilde{z}_0$ acts as $\Omega_{M,N}$, and $\tilde{z}_i$ acts as $\Omega_{M\otimes N\otimes V^{\otimes i-1}, i}$  in Eq (\ref{omegadef3}) for $1\leq i\leq d$ (under the convention $V^{\otimes 0}=\mathbb{C}$). Moreover, this action supercommutes with $\mathfrak{q}(n)$, and induces an algebra homomorphism $\mathcal{H}_d \to \operatorname{End}_{\q(n)}(V^{\otimes d})$.
\end{theorem}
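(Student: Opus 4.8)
The plan is to verify that the proposed operators satisfy each defining relation of $\mathcal{H}_d$, organized by exploiting structural features of $\Omega$ rather than brute computation. The Sergeev relations among $s_1,\dots,s_{d-1}$ and $c_1,\dots,c_d$ already hold by Sergeev's theorem, so the work is to check: (i) the Hecke relations (\ref{Hecke1}) and the $s$-commutations for $\tilde{x}_1,\tilde{z}_j$; (ii) the Clifford twist relations (\ref{Ctwist1}), (\ref{Ctwist2}); (iii) the polynomial relations (\ref{x1x2}) and the anticommutations $\tilde{z}_1\tilde{z}_2=-\tilde{z}_2\tilde{z}_1$, $\tilde{z}_0\tilde{z}_1=-\tilde{z}_1\tilde{z}_0$; and (iv) the cross relations (\ref{lastindefinition}) and $\tilde{z}_2\tilde{x}_1=-\tilde{x}_1\tilde{z}_2$. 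The key observations making this tractable are: $\Omega$ is an \emph{odd} element of $\q(n)\otimes\q(n)$, so any $\Omega_{W,k}$ is an odd endomorphism; $\Omega$ is $\q(n)$-invariant, i.e. $[\Delta(x),\Omega]=0$ in the appropriate super sense, which yields the supercommutation with $\q(n)$ once the operators are shown well-defined; and $\Omega$ is symmetric, so $\Omega_{\ell,k}=\Omega_{k,\ell}$ up to the sign bookkeeping, which makes $s_i$ commute with $\Omega_{\ell,k}$ when $\{i,i+1\}\cap\{\ell,k\}=\emptyset$ and intertwine the two factors when they overlap. I would set up, once and for all, the "move past $s_i$" lemma: $s_i \Omega_{\ell,k} = \Omega_{s_i(\ell),s_i(k)} s_i$ plus correction terms supported on the $i,i+1$ strands, the corrections being exactly the Sergeev-algebra expressions $c_i-c_{i+1}$ coming from the identity $s_i \Omega_{i,i+1} = \Omega_{i,i+1} s_i$ combined with the explicit form of $\Omega$ acting on $V\otimes V$. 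This single computation on $V\otimes V$ (and on $W\otimes V$ for one boundary strand) feeds all the Hecke-type relations.

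Concretely, for (i): $\tilde{z}_i$ acts as $\Omega_{M,i}+\Omega_{N,i}+\Omega_{1,i}+\cdots+\Omega_{i-1,i}$ by (\ref{omegadef3}), and $s_i$ fixes every summand except it swaps the roles of strand $i$ and strand $i+1$ in the terms $\Omega_{i,i+1}$ appearing in $\tilde z_{i+1}$; pushing $s_i$ through produces $\tilde z_{i+1}s_i$ plus the residual $\Omega_{i,i+1}s_i - s_i\Omega_{i,i+1}$, which I compute on $V\otimes V$ to be $c_i-c_{i+1}$. The commutations $\tilde x_1 s_i = s_i\tilde x_1$ ($i\geq 2$) and $\tilde z_j s_i=s_i\tilde z_j$ ($j\neq i,i+1$) are immediate since the relevant $\Omega$-factors live on strands disjoint from $\{i,i+1\}$. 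For (ii), the Clifford twist relations reduce to a computation in $\q(n)\otimes\operatorname{End}(V)$: I must check $(1\otimes C)\Omega = -\Omega(1\otimes C)$ in each tensor slot, equivalently that $C$ anticommutes with $f_{ji}$ and $e_{ji}$ appropriately after re-indexing; because $C=\left[\begin{smallmatrix}0&-I\\ I&0\end{smallmatrix}\right]$ exchanges $e_{ij}\leftrightarrow f_{ij}$ up to sign, one gets $\Omega(1\otimes C) = -(1\otimes C)\Omega$ on the nose, and the sign conventions in (\ref{omegadef1})–(\ref{omegadef2}) (odd $\times$ odd) supply the remaining signs so that $c_j$ anticommutes with each $\Omega_{W,k}$.

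For (iii) and (iv) — the polynomial and cross relations — I would use the $\q(n)$-invariance of $\Omega$ to convert anticommutators of $\Omega$-operators into expressions involving a single copy of $\Omega$ composed with the coproduct, following the standard "$\Omega$-calculus" (as in Arakawa–Suzuki and Hill–Kujawa–Sussan). The relation $\tilde z_1\tilde z_2 = -\tilde z_2\tilde z_1$ becomes $\{\Omega_{M\otimes N,1},\ \Omega_{M\otimes N\otimes V,2}\}=0$; expanding the second via (\ref{omegadef3}) and using $[\Delta(\Omega),\Omega]$-type identities plus the oddness sign, the cross terms cancel in pairs and the "diagonal" term $\{\Omega_{M\otimes N,1},\Omega_{1,2}\}$ vanishes by the $\q(n)$-invariance applied to the first strand. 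The relation (\ref{x1x2}), i.e. $\tilde x_1\tilde x_2 = -\tilde x_2\tilde x_1$ with $\tilde x_2 = s_1\tilde x_1 s_1 - (c_1-c_2)s_1$, follows by the same mechanism once (i) is in hand. The relations (\ref{lastindefinition}) and $\tilde z_2\tilde x_1=-\tilde x_1\tilde z_2$ encode the interaction between the two boundaries $\Omega_{M,N}$, $\Omega_{M,1}$ and $\Omega_{M\otimes N\otimes V,2}$; here I would again expand into summands $\Omega_{M,k}$, $\Omega_{N,k}$, $\Omega_{\ell,k}$ and check anticommutation of each pair, the only nontrivial cases being those sharing the module $M$ or a common strand, which reduce to the invariance identity. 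Finally, the supercommutation with $\q(n)$ is inherited: each generator acts as some $\Omega_{W,k}$ or as a Sergeev operator, all of which supercommute with $\q(n)$ (the former by $\q(n)$-invariance of $\Omega$, the latter by Sergeev), and supercommuting operators are closed under composition and linear combination, so the whole image lies in $\operatorname{End}_{\q(n)}(V^{\otimes d})$.

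The main obstacle I anticipate is not any single relation but the sign bookkeeping: $\Omega$ is odd, the module structure on tensor products carries Koszul signs (\ref{tensor}), and the operators $\Omega_{W,k}$ are defined with the Sweedler-notation signs in (\ref{omegadef1})–(\ref{omegadef2}); keeping these consistent when composing two odd operators (which introduces an extra $-1$ whenever they pass each other) is where errors creep in, and the cross relations (\ref{lastindefinition}), (\ref{yz}) are the delicate ones because they mix three different "positions" of $\Omega$. I would manage this by proving one master lemma computing $\Omega_{A,k}\Omega_{B,\ell} + (-1)\Omega_{B,\ell}\Omega_{A,k}$ for all configurations of $A,B\in\{M,N,\text{strand}\}$ and $k,\ell$, in terms of the Sergeev generators and a single "coproduct-applied" $\Omega$, and then reading off every polynomial and cross relation as a specialization of that lemma.
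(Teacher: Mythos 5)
Your proposal follows the same strategy as the paper's proof: verify each defining relation of $\mathcal{H}_d$ directly, using the oddness of $\Omega$, the symmetry $s_i\Omega_{j,i}=\Omega_{j,i+1}s_i$, and the $\mathfrak{q}(n)$-invariance of $\Omega$ as the three organizing tools. The paper does not isolate a ``master anticommutation lemma'' the way you propose; it instead verifies (\ref{omegamixed}) $\Omega_{M,N}\Omega_{M\otimes N,1}+\Omega_{M\otimes N,1}\Omega_{M,N}=0$ by brute tensor expansion in $e_{ij},f_{ij}$, and then quotes three variations of it — but your master lemma is exactly this computation redone once for all cases, so the mechanics are the same and either organization works.

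One claim you make, however, is false as stated: for the relation $\tilde z_1\tilde z_2=-\tilde z_2\tilde z_1$, you assert that ``the diagonal term $\{\Omega_{M\otimes N,1},\Omega_{1,2}\}$ vanishes by the $\mathfrak{q}(n)$-invariance applied to the first strand.'' It does not. Note $\Omega_{1,2}$ acts only on the two $V$-strands and sees no copy of $M$ or $N$, so $\mathfrak{q}(n)$-invariance of $\Omega_{M\otimes N,1}$ gives you nothing about it in isolation. The invariance argument pairs $\Omega_{W,1}$ with the \emph{full} operator $\Omega_{W\otimes V,2}$ whose first factor acts on $W\otimes V$ via the coproduct, so the correct vanishings are
\begin{align*}
\{\Omega_{M,1},\,\Omega_{M,2}+\Omega_{1,2}\}=0, \qquad \{\Omega_{N,1},\,\Omega_{N,2}+\Omega_{1,2}\}=0,
\end{align*}
together with the separate (and easy) facts $\{\Omega_{M,1},\Omega_{N,2}\}=\{\Omega_{N,1},\Omega_{M,2}\}=0$. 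These are exactly the identities (\ref{var1})--(\ref{var3}) the paper records. Summing them recovers the target relation, and you will see that $\{\Omega_{M\otimes N,1},\Omega_{1,2}\}$ alone equals $-\{\Omega_{M,1},\Omega_{M,2}\}-\{\Omega_{N,1},\Omega_{N,2}\}$, which is generally nonzero. Your master lemma, if you carry it out, would surface this grouping automatically; but the sketch as written points the computation in the wrong direction, and the same misapprehension would propagate to the relations (\ref{lastindefinition}) and $\tilde z_2\tilde x_1=-\tilde x_1\tilde z_2$, which you plan to handle by the same decomposition.
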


\begin{proof}
It is enough to show that all the defining relations in $\mathcal{H}_d$ are satisfied. The ``Sergeev relations" are automatically satisfied because of the action of the Sergeev algebra. Some of these calculations have an analogue using the even Casimir tensor in (\ref{evenomega}), in the proof of \cite[Theorem 7.4.1]{HKS}.

1) Hecke relations.

First we argue that $s_i\Omega_{j,i}=\Omega_{j,i+1}s_i$. For any $j\leq i-1$, $v_M\otimes v_N\otimes v_1\otimes\cdots\otimes v_d\in \two$,
\begin{align*}
&\hh s_i\Omega_{j,i}( \vv) \\
&=(-1)^{\overline{v_M}+\cdots +\overline{v_{i-1}}}s_i  \sum_{1\leq p,q\leq n} v_M\otimes\cdots\otimes e_{pq} v_j \otimes \cdots \otimes  f_{qp}v_i \otimes v_{i+1}\otimes  \cdots \otimes v_d \\
&\hh- (-1)^{\overline{v_M}+\cdots +\overline{v_{j-1}}}s_i \sum_{1\leq p,q\leq n} v_M\otimes\cdots\otimes f_{pq} v_j \otimes \cdots \otimes  e_{qp}v_i \otimes v_{i+1}\otimes \cdots \otimes v_d\\
&= (-1)^{(\overline{v_i}+\overline{1})\overline{v_{i+1}}+\overline{v_M}+\cdots +\overline{v_{i-1}} } \sum_{1\leq p,q\leq n}v_M\otimes\cdots\otimes e_{pq} v_j \otimes \cdots  \otimes v_{i+1}\otimes  f_{qp}v_i  \otimes\cdots \otimes v_d\\
&\hh - (-1)^{\overline{v_i}\cdot \overline{v_{i+1}}+\overline{v_M}+\cdots +\overline{v_{j-1}}} \sum_{1\leq p,q\leq n} v_M\otimes\cdots\otimes f_{pq} v_j \otimes \cdots  \otimes v_{i+1} \otimes e_{qp}v_i \otimes\cdots \otimes v_d.
\end{align*}
On the other hand, 
\begin{align*}
&\hh \Omega_{j,i+1}s_i(\vv)\\
&= (-1)^{\overline{v_{i+1}}\cdot \overline{v_i}}\Omega_{j,i+1} (v_M\otimes v_N\otimes v_1\otimes \cdots \otimes v_{i+1}\otimes v_{i}\otimes \cdots\otimes v_d)\\
&= (-1)^{\overline{v_{i+1}}\cdot \overline{v_i}+\overline{v_M}+\cdots+\overline{v_{i-1}}+\overline{v_{i+1}}} \sum_{1\leq p,q\leq n}v_M\otimes \cdots \otimes e_{pq}v_j \otimes \cdots \otimes v_{i+1}\otimes f_{qp} v_i \otimes \cdots\otimes v_d\\
&\hh +(-1)^{\overline{v_{i+1}}\cdot \overline{v_i}+\overline{v_M}+\cdots+\overline{v_{j-1}}} \sum_{1\leq p,q\leq n}v_M\otimes \cdots \otimes f_{pq}v_j \otimes \cdots \otimes v_{i+1}\otimes e_{qp} v_i\otimes \cdots\otimes v_d.
\end{align*}
And the two expressions are equal as expected. Similarly, $s_i\Omega_{M,i}=\Omega_{M,i+1}s_i $ and $s_i\Omega_{N,i}=\Omega_{N,i+1}s_i $. We also check that $-\Omega_{i,i+1}s_i-c_i+c_{i+1}=0$:

Recall that $\{e_1,\dots,e_n,f_1,\dots,f_1\}$ defined in Section~\ref{2.1} form an ordered basis of $V$. If $w$ is either symbol $e$ or $f$, then the action of a basis of $\q(n)$ in (\ref{ef}) act as
\begin{align*}
e_{pq}w_i=\delta_{iq}w_p, \hspace{.2 in} f_{pq}w_i=(-1)^{\overline{w_i}}\delta_{iq}(c.w_p).
\end{align*}
Assume that $v_i$ and $v_{i+1}$ are basis vectors of $V$, and $v_i=w_a$, $v_{i+1}=u_b$, where $w,u$ are symbols $e$ or $f$. The following sums are over all $1\leq p,q\leq n$.
\begin{align*}
&\hh (-\Omega_{i,i+1}s_i-c_i+c_{i+1}).(v_M\otimes \cdots \otimes v_{i-1}\otimes w_a \otimes u_b \otimes \cdots \otimes v_d)\\
&=-(-1)^{\overline{w_a}\cdot\overline{u_b}} \Omega_{i,i+1}(v_M\otimes \cdots \otimes v_{i-1}\otimes w_b \otimes u_a \otimes \cdots \otimes v_d)\\
&\hh -(-1)^{\overline{v_M}+\cdots+\overline{v_{i-1}}} v_M\otimes \cdots \otimes v_{i-1}\otimes c.w_a \otimes u_b \otimes \cdots \otimes v_d\\
&\hh +(-1)^{\overline{v_M}+\cdots+\overline{v_{i-1}}+\overline{w_a}}v_M\otimes \cdots \otimes v_{i-1}\otimes w_a \otimes c.u_b \otimes \cdots \otimes v_d\\
&=-(-1)^{\overline{w_a}\cdot\overline{u_b}+\overline{v_M}+\cdots+\overline{v_{i-1}}+\overline{u_b}}\sum v_M\otimes \cdots \otimes v_{i-1}\otimes e_{pq}u_b \otimes f_{qp}w_a \otimes \cdots \otimes v_d\\
&\hh +(-1)^{\overline{w_a}\cdot\overline{u_b}+\overline{v_M}+\cdots+\overline{v_{i-1}}} \sum v_M\otimes \cdots \otimes v_{i-1}\otimes f_{pq}u_b \otimes e_{qp}w_a \otimes \cdots \otimes v_d\\
&\hh -(-1)^{\overline{v_M}+\cdots+\overline{v_{i-1}}} v_M\otimes \cdots \otimes v_{i-1}\otimes c.w_a \otimes u_b \otimes \cdots \otimes v_d\\
&\hh +(-1)^{\overline{v_M}+\cdots+\overline{v_{i-1}}+\overline{w_a}}v_M\otimes \cdots \otimes v_{i-1}\otimes w_a \otimes c.u_b \otimes \cdots \otimes v_d\\
&=-(-1)^{\overline{w_a}\cdot\overline{u_b}+\overline{v_M}+\cdots+\overline{v_{i-1}}+\overline{u_b}+\overline{w_a}} v_M\otimes \cdots \otimes v_{i-1}\otimes u_a \otimes c.w_b \otimes \cdots \otimes v_d\\
&\hh +(-1)^{\overline{w_a}\cdot\overline{u_b}+\overline{v_M}+\cdots+\overline{v_{i-1}}+\overline{u_b}} v_M\otimes \cdots \otimes v_{i-1}\otimes c.u_a \otimes w_b \otimes \cdots \otimes v_d\\
&\hh -(-1)^{\overline{v_M}+\cdots+\overline{v_{i-1}}} v_M\otimes \cdots \otimes v_{i-1}\otimes c.w_a \otimes u_b \otimes \cdots \otimes v_d\\
&\hh +(-1)^{\overline{v_M}+\cdots+\overline{v_{i-1}}+\overline{w_a}}v_M\otimes \cdots \otimes v_{i-1}\otimes w_a \otimes c.u_b \otimes \cdots \otimes v_d.
\end{align*}
The answer is zero if $u$ and $w$ are the same symbols. If $u$ is $e$ and $w$ is $f$, $\overline{u_b}=\overline{0}$, $\overline{w_a}=\overline{1}$, $c.w_i=-u_i$ and $c.u_i=w_i$.
\begin{align*}
&\hh (-\Omega_{i,i+1}s_i-c_i+c_{i+1}).(v_M\otimes \cdots \otimes v_{i-1}\otimes w_a \otimes u_b \otimes \cdots \otimes v_d)\\
&=-(-1)^{\overline{v_M}+\cdots+\overline{v_{i-1}}} v_M\otimes \cdots \otimes v_{i-1}\otimes u_a \otimes u_b \otimes \cdots \otimes v_d\\
&\hh +(-1)^{\overline{v_M}+\cdots+\overline{v_{i-1}}} v_M\otimes \cdots \otimes v_{i-1}\otimes w_a \otimes w_b \otimes \cdots \otimes v_d\\
&\hh -(-1)^{\overline{v_M}+\cdots+\overline{v_{i-1}}+1} v_M\otimes \cdots \otimes v_{i-1}\otimes u_a \otimes u_b \otimes \cdots \otimes v_d\\
&\hh +(-1)^{\overline{v_M}+\cdots+\overline{v_{i-1}}+\overline{1}}v_M\otimes \cdots \otimes v_{i-1}\otimes w_a \otimes w_b \otimes \cdots \otimes v_d=0.
\end{align*}
On the other hand, if If $u$ is $f$ and $w$ is $e$, $\overline{u_b}=\overline{1}$, $\overline{w_a}=\overline{0}$, $c.w_i=u_i$ and $c.u_i=-w_i$.
\begin{align*}
&\hh (-\Omega_{i,i+1}s_i-c_i+c_{i+1}).(v_M\otimes \cdots \otimes v_{i-1}\otimes w_a \otimes u_b \otimes \cdots \otimes v_d)\\
&=-(-1)^{\overline{v_M}+\cdots+\overline{v_{i-1}}+\overline{1}} v_M\otimes \cdots \otimes v_{i-1}\otimes u_a \otimes u_b \otimes \cdots \otimes v_d\\
&\hh +(-1)^{\overline{v_M}+\cdots+\overline{v_{i-1}}} v_M\otimes \cdots \otimes v_{i-1}\otimes w_a \otimes w_b \otimes \cdots \otimes v_d\\
&\hh -(-1)^{\overline{v_M}+\cdots+\overline{v_{i-1}}} v_M\otimes \cdots \otimes v_{i-1}\otimes u_a \otimes u_b \otimes \cdots \otimes v_d\\
&\hh +(-1)^{\overline{v_M}+\cdots+\overline{v_{i-1}}+\overline{1}}v_M\otimes \cdots \otimes v_{i-1}\otimes w_a \otimes w_b \otimes \cdots \otimes v_d=0.
\end{align*}
Therefore,
\begin{align*}
&\hh s_i \tilde{z}_i -\tilde{z}_{i+1}s_i-c_i+c_{i+1}\\
&=s_i(\Omega_{M,i}+\Omega_{N,i}+\Omega_{1,i}+\cdots+ \Omega_{i-1,i})-(\Omega_{M,i+1}+\Omega_{N,i_1}+\Omega_{1,i+1}+\cdots+ \Omega_{i,i+1})s_i-c_i+c_{i+1}\\
&=-\Omega_{i,i+1}s_i-c_i+c_{i+1}=0.
\end{align*}

The remaining commuting Hecke relations are straightforward to check.

2) Clifford twist relations.

It is enough to check that $\Omega_{i,j}$ commutes with $c_k$, for any $1\leq i,j\leq d$. The cases when $k\neq i,j$ are straightforward to check. When $i=k$, it is enough to check that $\Omega (c\otimes 1) = -(c \otimes 1)\otimes \Omega$. The following sums are over all $1\leq p,q\leq d$.
\begin{align*}
\Omega (c\otimes 1) &=\sum (e_{pq}\otimes f_{qp})(1\otimes c)-\sum(f_{pq}\otimes e_{qp})(1\otimes c)=\sum (e_{pq}\otimes  f_{qp}c)-\sum(f_{pq}\otimes e_{qp} c)\\
&=-\sum (e_{pq}\otimes  cf_{qp})-\sum(f_{pq}\otimes c e_{qp} )=- \sum(1\otimes c)(e_{pq}\otimes f_{qp})+\sum(1\otimes c)(f_{pq}\otimes e_{qp})\\
&= -(1\otimes c)\Omega.
\end{align*}
Similarly, $\Omega (c\otimes 1)= -(c\otimes 1)\otimes \Omega$ and therefore $\Omega_{i,j}c_j=-c_j \Omega_{i,j}$. 

3) Polynomial relations. 

For the relations $\x_1\x_2=-\x_2\x_1$ and $\z_0\z_1=-\z_1\z_0$, we will first show
\begin{align}\label{omegamixed}
\Omega_{M,N}(\Omega_{M,1}+\Omega_{N,1})=-(\Omega_{M,1}+\Omega_{N,1})\Omega_{M,N}.
\end{align}
First,
\begin{align*}
&\hh\Omega_{M,N}\Omega_{M,1}+\Omega_{M,1}\Omega_{M,N}\\
&=\sum_{i,j}(e_{ij}\otimes f_{ji}\otimes 1-f_{ij}\otimes e_{ji}\otimes 1) \sum_{p,q}(e_{pq}\otimes 1\otimes f_{qp}-f_{pq}\otimes 1\otimes e_{qp})\\
&\hh +\sum_{p,q}(e_{pq}\otimes 1\otimes f_{qp}-f_{pq}\otimes 1\otimes e_{qp})\sum_{i,j}(e_{ij}\otimes f_{ji}\otimes 1-f_{ij}\otimes e_{ji}\otimes 1)\\
&=\sum_{i,j,p,q}((e_{ij}e_{pq}-e_{pq}e_{ij})\otimes f_{ji}\otimes f_{qp} + (e_{ij}f_{pq}-f_{pq}e_{ij})\otimes f_{ji}\otimes e_{qp}  \\
&\hh -(f_{ij}e_{pq}-e_{pq}f_{ij})\otimes e_{ji}\otimes f_{qp}+ (f_{ij}f_{pq}+f_{pq}f_{ij})\otimes e_{ji}\otimes e_{qp} \\
&=\sum_{i,j,p,q}((\delta_{jp}e_{iq}-\delta_{iq}e_{pj})\otimes f_{ji}\otimes f_{qp}+(\delta_{jp}f_{iq}-\delta_{iq}f_{pj})\otimes  f_{ji}\otimes e_{qp}\\
&\hh -(\delta_{jp}f_{iq}-\delta_{iq}f_{pj})\otimes e_{ji}\otimes f_{qp}+(\delta_{ip}e_{iq}+\delta_{iq}e_{pj})\otimes e_{ji}\otimes e_{qp}\\
&=\sum_{i,p,q}e_{iq}\otimes f_{pi}\otimes f_{qp}-\sum_{j,p,q}e_{pj}\otimes f_{jq}\otimes f_{qp}+\sum_{i,p,q}f_{iq}\otimes  f_{pi}\otimes e_{qp} -\sum_{j,p,q}f_{pj}\otimes  f_{jq}\otimes e_{qp}\\
 &\hh -\sum_{i,p,q}f_{iq}\otimes e_{pi}\otimes f_{qp}+\sum_{j,p,q}f_{pj}\otimes e_{jq}\otimes f_{qp}+\sum_{j,p,q}e_{iq}\otimes e_{jp}\otimes e_{qp}+\sum_{j,p,q}e_{pj}\otimes e_{jq}\otimes e_{qp},
\end{align*}
Also
\begin{align*}
&\hh \Omega_{M,N}\Omega_{N,1}+\Omega_{N,1}\Omega_{M,N}\\
&= \sum_{p,q}(e_{pq}\otimes f_{qp}\otimes 1-f_{pq}\otimes e_{qp}\otimes 1)\sum_{i,j}(  1\otimes e_{ij}\otimes f_{ji}-1\otimes f_{ij}\otimes e_{ji})\\
&+\sum_{i,j}( 1\otimes e_{ij}\otimes f_{ji}-1\otimes f_{ij}\otimes e_{ji}) \sum_{p,q}(e_{pq}\otimes f_{qp}\otimes 1-f_{pq}\otimes e_{qp}\otimes 1)\\
&= \sum_{i,j,p,q}e_{pq}\otimes (f_{qp}e_{ij}-e_{ij}f_{qp})\otimes f_{ji}-\sum_{i,j,p,q}e_{pq}\otimes (f_{qp}f_{ij}+f_{ij}f_{qp})\otimes e_{ji}\\
&\hh -\sum_{i,j,p,q}f_{pq}\otimes (e_{qp} e_{ij}-e_{ij}e_{qp})\otimes f_{ji}+\sum_{i,j,p,q} f_{pq}\otimes (e_{qp}f_{ij}-f_{ij}e_{qp})\otimes e_{ji}\\
&=\sum_{i,j,p,q}e_{pq}\otimes (\delta_{pi}f_{qj}-\delta_{jq}f_{ip})\otimes f_{ji}-\sum_{i,j,p,q}e_{pq}\otimes (\delta_{pi}e_{qj}+\delta_{jq}e_{ip})\otimes e_{ji}\\
&\hh -\sum_{i,j,p,q}f_{pq}\otimes (\delta_{pi}e_{qj}-\delta_{jq}e_{ip})\otimes f_{ji}+\sum_{i,j,p,q} f_{pq}\otimes (\delta_{pi}f_{qj}-\delta_{jq}f_{ip})\otimes e_{ji}\\
&=\sum_{j,p,q}e_{pq}\otimes f_{qj}\otimes f_{jp}-\sum_{i,p,q}e_{pq}\otimes f_{ip}\otimes f_{qi} -\sum_{j,p,q}e_{pq}\otimes e_{qj}\otimes e_{jp} -\sum_{i,p,q}e_{pq}\otimes e_{ip}\otimes e_{qi}\\
&\hh -\sum_{j,p,q}f_{pq}\otimes e_{qj}\otimes f_{jp}+\sum_{i,p,q}f_{pq}\otimes e_{ip}\otimes f_{qi}+\sum_{j,p,q} f_{pq}\otimes f_{qj}\otimes e_{jp}-\sum_{i,p,q} f_{pq}\otimes f_{ip}\otimes e_{qi}.
\end{align*}
By comparison, these two expressions are equal, (\ref{omegamixed}) holds,
therefore $\z_0\z_1=-\z_1\z_0$. Similarly,
\begin{align*}
\Omega_{M,1}(\Omega_{M,2}+\Omega_{1,2})=-(\Omega_{M,2}+\Omega_{1,2})\Omega_{M,1}.
\end{align*}
By an argument similar to the ones proving``Hecke relations'', $\tilde{x}_2$ acts as $\Omega_{M,2}+\Omega_{1,2}$, therefore $\x_1\x_2=-\x_2\x_1$.

Lastly, $\tilde{z}_1\tilde{z}_2=-\tilde{z}_2\tilde{z}_1$ is equivalent to \begin{align*}
(\Omega_{M,1}+\Omega_{N,1})(\Omega_{M,2}+\Omega_{N,2}+\Omega_{1,2})=-(\Omega_{M,2}+\Omega_{N,2}+\Omega_{1,2})(\Omega_{M,1}+\Omega_{N,1}).
\end{align*}
To prove this, one simply uses the fact that
\begin{align}
\Omega_{M,1}(\Omega_{M,2}+\Omega_{1,2})=&-(\Omega_{M,2}+\Omega_{1,2})\Omega_{M,1},\label{var1}\\
\Omega_{N,1}(\Omega_{N,2}+\Omega_{1,2})=&-(\Omega_{N,2}+\Omega_{1,2})\Omega_{N,1},\label{var2}\\
\Omega_{M,1}\Omega_{N,2}+\Omega_{N,2}\Omega_{M,1}=&\Omega_{N,1}\Omega_{M,2}+\Omega_{M,2}\Omega_{N,1}=0. \label{var3}
\end{align}
where (\ref{var1}) and (\ref{var2}) are variations of (\ref{omegamixed}), and (\ref{var3}) is straightforward to check.

(4) Relations between polynomial rings.

Since
\begin{align*}
&\hh\z_2\x_1+\x_1\z_2= (\Omega_{M,2}+\Omega_{N,2}+\Omega_{1,2})\Omega_{M,1}+\Omega_{M,1}(\Omega_{M,2}+\Omega_{N,2}+\Omega_{1,2}),
\end{align*}
one simply uses (\ref{var1}) and the first part of (\ref{var3}).

Lastly,
\begin{align}
(\z_0-\z_1+\x_1)\x_1+\x_1(\z_0-\z_1+\x_1)=(\Omega_{M,N}-\Omega_{N,1})\Omega_{M,1}+\Omega_{M,1}(\Omega_{M,N}-\Omega_{N,1}),
\end{align}\label{notlastrelation}
where

and
\begin{align*}
&\hh\Omega_{N,1}\Omega_{M,1}+\Omega_{M,1}\Omega_{N,1}\\
&=\sum_{i,j}(1\otimes e_{ij}\otimes f_{ji}-1\otimes f_{ij}\otimes e_{ji})\sum_{pq}(e_{pq}\otimes 1\otimes f_{qp}-f_{pq}\otimes 1\otimes e_{qp})\\
&\hh +\sum_{pq}(e_{pq}\otimes 1\otimes f_{qp}-f_{pq}\otimes 1\otimes e_{qp})\sum_{i,j}(1\otimes e_{ij}\otimes f_{ji}-1\otimes f_{ij}\otimes e_{ji})\\
&= \sum_{i,j,p,q} (e_{pq}\otimes e_{ij}\otimes (f_{ji}f_{qp}+f_{qp}f_{ji})+f_{pq}\otimes e_{ij}\otimes (f_{ji}e_{qp}-e_{qp}f_{ji})  \\
&\hh - e_{pq}\otimes f_{ij}\otimes (e_{ji}f_{qp}-f_{qp}e_{ji})-f_{pq}\otimes f_{ij}\otimes (e_{ji}e_{qp}-e_{qp}e_{ji}))\\
&= \sum_{i,j,p,q} (e_{pq}\otimes e_{ij}\otimes (\delta_{iq}e_{jp}+\delta_{jp}e_{qi})+f_{pq}\otimes e_{ij}\otimes (\delta_{iq}f_{jp}-\delta_{jp}f_{qi})  \\
&\hh - e_{pq}\otimes f_{ij}\otimes (\delta_{iq}f_{jp}-\delta_{jp}f_{qi})-f_{pq}\otimes f_{ij}\otimes (\delta_{iq}e_{jp}-\delta_{jp}e_{qi}))\\
&= \sum_{j,p,q} e_{pq}\otimes e_{qj}\otimes e_{jp}+\sum_{i,p,q}e_{pq}\otimes e_{ip}\otimes e_{qi}+\sum_{j,p,q}f_{pq}\otimes e_{qj}\otimes f_{jp} -\sum_{i,p,q}f_{pq}\otimes e_{ip}\otimes f_{qi}  \\
&\hh - \sum_{j,p,q}e_{pq}\otimes f_{qj}\otimes f_{jp}+\sum_{i,p,q}e_{pq}\otimes f_{ip}\otimes f_{qi} -\sum_{j,p,q}f_{pq}\otimes f_{qj}\otimes e_{jp}+\sum_{i,p,q}f_{pq}\otimes f_{ip}\otimes e_{qi}.
\end{align*}
By comparing this expression and (\ref{omegamixed}) one obtains that (\ref{lastrelation}) is zero. Therefore we have checked all the relations.
\end{proof}

The proof for ``Hecke relations'' lead to an explicit description of the action of $\tilde{x}_i$ and $\tilde{y}_i$ defined in (\ref{Hecke2}) and (\ref{Hecke3}).

\begin{cor}\label{xiyiact}
Based on the action in Theorem~\ref{hacts}, for $1\leq i\leq d$, $\tilde{x}_i$ acts as 
\begin{align*}
\Omega_{M\otimes V^{\otimes i-1},V}=\Omega_{M,i}+\Omega_{1,i}+\cdots+\Omega_{i-1,i},
\end{align*}
and $\tilde{y}_i$ acts as 
\begin{align*}
\Omega_{N\otimes V^{\otimes i-1},V}=\Omega_{N,i}+\Omega_{1,i}+\cdots+\Omega_{i-1,i}.
\end{align*}
\end{cor}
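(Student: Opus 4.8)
The plan is to run an induction on $i$, feeding off the intertwining identities already proved in Theorem~\ref{hacts}. For the base cases, $\tilde{x}_1$ acts as $\Omega_{M,1}$ by definition, which is the asserted formula with $i=1$; and $\tilde{y}_1=\tilde{z}_1-\tilde{x}_1$ acts as $\Omega_{M\otimes N,1}-\Omega_{M,1}=(\Omega_{M,1}+\Omega_{N,1})-\Omega_{M,1}=\Omega_{N,1}$ by Eq.~(\ref{omegadef3}) with $i=1$, matching the asserted formula for $\tilde{y}_1$.

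For the inductive step, suppose $\tilde{x}_i$ acts as $\Omega_{M,i}+\Omega_{1,i}+\cdots+\Omega_{i-1,i}$. Apply the recursion (\ref{Hecke2}), namely $\tilde{x}_{i+1}=s_i\tilde{x}_is_i-(c_i-c_{i+1})s_i$, and conjugate the summands of $\tilde{x}_i$ by $s_i$. Here I would invoke the intertwining relations established in the proof of Theorem~\ref{hacts}: $s_i\Omega_{j,i}=\Omega_{j,i+1}s_i$ for $j\leq i-1$ and $s_i\Omega_{M,i}=\Omega_{M,i+1}s_i$ (and $s_i\Omega_{N,i}=\Omega_{N,i+1}s_i$ for the $\tilde{y}$-case), together with $s_i^2=1$. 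These give $s_i\tilde{x}_is_i=\Omega_{M,i+1}+\Omega_{1,i+1}+\cdots+\Omega_{i-1,i+1}$. The remaining correction term is handled by the identity $-\Omega_{i,i+1}s_i-c_i+c_{i+1}=0$ proved in Theorem~\ref{hacts}; multiplying on the right by $s_i$ and using $s_i^2=1$ yields $-(c_i-c_{i+1})s_i=\Omega_{i,i+1}$. Adding this in gives exactly $\tilde{x}_{i+1}=\Omega_{M,i+1}+\Omega_{1,i+1}+\cdots+\Omega_{i-1,i+1}+\Omega_{i,i+1}$, completing the induction. The argument for $\tilde{y}_i$ is verbatim the same, using (\ref{Hecke3}) in place of (\ref{Hecke2}) and $s_i\Omega_{N,i}=\Omega_{N,i+1}s_i$ in place of $s_i\Omega_{M,i}=\Omega_{M,i+1}s_i$.

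I do not expect a genuine obstacle here: every ingredient is already available from the proof of Theorem~\ref{hacts}, so the only care needed is bookkeeping of indices — in particular confirming that the correction term $-(c_i-c_{i+1})s_i$ supplies precisely the one new summand $\Omega_{i,i+1}$ (the interaction with the freshly appended copy of $V$) and that the conjugation by $s_i$ shifts each old summand $\Omega_{\bullet,i}$ to $\Omega_{\bullet,i+1}$ without introducing or dropping any terms.
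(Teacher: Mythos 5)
Your proof is correct and is essentially the argument the paper leaves implicit: the recursions (\ref{Hecke2}), (\ref{Hecke3}), the intertwining identities $s_i\Omega_{j,i}=\Omega_{j,i+1}s_i$, $s_i\Omega_{M,i}=\Omega_{M,i+1}s_i$, $s_i\Omega_{N,i}=\Omega_{N,i+1}s_i$, and the relation $-\Omega_{i,i+1}s_i-c_i+c_{i+1}=0$ from the Hecke-relations part of Theorem~\ref{hacts} yield exactly the stated formulas by induction. The paper itself records this only by remarking that the Hecke-relation computations lead to the explicit description, so your write-up simply fills in the bookkeeping the paper omits.
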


We mentioned earlier that this action generalizes the construction by Sergeev \cite{Sergeev} and Hill-Kujawa-Sussan \cite{HKS}. We now describe this more precisely. Recall that $H_d$ is the degenerate affine Hecke-Clifford algebra introduced shortly after (\ref{Cdefinition}). Similar to (\ref{Hecke2}) and (\ref{Hecke3}), define elements $w_i \in \operatorname{Ser}_d$ recursively via $w_1=0$, $w_{i+1}=s_iw_is_i-(c_i-c_{i+1})s_i$ for $1\leq i\leq d-1$.

\begin{prop}\label{specialize}
1) There exists a surjective homomorphism $\phi_1: \mathcal{H}_d \to \operatorname{Ser}_d$, sending $\tilde{x}_1\mapsto 0$, $z_0\mapsto 0$, $\tilde{z}_i\mapsto w_i(1\leq i\leq d)$, and $s_i$, $c_i$ to generators under the same name. When $M=N=\mathbb{C}$ is the trivial $\q(n)$-module, the action of $\Hd$ on $M\otimes N\otimes V^{\otimes d}$  factors through the quotient $\operatorname{Ser}_d$ and induces the action introduced in (\ref{sergeev1}) and (\ref{sergeev2}). 

2) There exists a surjective homomorphism $\phi_2: \mathcal{H}_d\to {H}_{d}$, sending $\tilde{x}_1\mapsto 0$, $\tilde{z}_0\mapsto 0$, and $\tilde{z}_i\mapsto -x_ic_i (1\leq i\leq d)$, $s_i$, $c_i$ to generators under the same name.  When $M=\mathbb{C}$ is the trivial $\q(n)$-module, the action of $\mathcal{H}_d$ on $M\otimes N\otimes V^{\otimes d}$  factors through the quotient ${H}_d$ and induces the action introduced in (\ref{omega}).
\end{prop}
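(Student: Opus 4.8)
\textbf{Plan for Proposition~\ref{specialize}.} The proof has two parallel halves, each of which consists of (a) verifying that the assignment on generators extends to an algebra homomorphism, and (b) identifying the composite of this homomorphism with the action of Theorem~\ref{hacts} specialized to a trivial tensor factor. For part (a) in each case, the plan is to check that the images of the defining relations $(\ref{Hecke1})$ through $(\ref{lastindefinition})$ hold in the target algebra. Since $\tilde{x}_1$ and $\tilde{z}_0$ go to $0$, every relation involving either of these generators becomes trivial in the target (the Clifford-twist relations $(\ref{Ctwist1})$, $(\ref{Ctwist2})$ involving $\tilde{x}_1$ or $\tilde{z}_0$, the polynomial relation $\z_0\z_1=-\z_1\z_0$, relation $(\ref{x1x2})$, and the two ``relations between polynomial rings'' all collapse to $0=0$ or reduce to a relation among the remaining generators). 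So the real content is: the images of the $\tilde{z}_i$ ($1\leq i\leq d$) satisfy the Hecke relation $(\ref{Hecke1})$, commute with far-away $s_i$, satisfy $\tilde{z}_i\tilde{z}_j=-\tilde{z}_j\tilde{z}_i$, and anticommute with the $c_i$ as in $(\ref{Ctwist2})$. For $\phi_1$ this amounts to checking that the elements $w_i\in\operatorname{Ser}_d$, built recursively by $w_1=0$, $w_{i+1}=s_iw_is_i-(c_i-c_{i+1})s_i$, obey $s_iw_i=w_{i+1}s_i+c_i-c_{i+1}$ and pairwise anticommute; the first is immediate by rearranging the recursion (using $s_i^2=1$), and the second follows by the same induction pattern used in Proposition~\ref{extragenerators} for $\tilde{x}_i\tilde{x}_j=-\tilde{x}_j\tilde{x}_i$, together with $s_ic_i=c_{i+1}s_i$ and the Clifford relations to get $w_ic_j=-c_jw_i$ and hence $w_iw_j=-w_jw_i$. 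For $\phi_2$ the elements $-x_ic_i\in H_d$ should be checked against the relations of $H_d$ in $(\ref{affinecliffordtwist0})$, $(\ref{affinecliffordtwist})$: one computes $s_i(-x_ic_i) = -(s_ix_i)c_i = -(x_{i+1}s_i-1+c_ic_{i+1})c_i$, pushes $c_i$ left past $s_i$ via $s_ic_i = c_{i+1}s_i$ wait---here we use $s_ic_i=c_{i+1}s_i$ so $c_i s_i = s_i c_{i+1}$, rearranging to land on $(-x_{i+1}c_{i+1})s_i + c_i - c_{i+1}$; the anticommutativity $(-x_ic_i)(-x_jc_j)=-(-x_jc_j)(-x_ic_i)$ for $i\neq j$ follows from $x_ix_j=x_jx_i$, $c_ic_j=-c_jc_i$, and the twist $c_ix_j=x_jc_i$, $c_ix_i=-x_ic_i$.

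For part (b), the key observation is Corollary~\ref{xiyiact} and the formula $(\ref{jm})$: under the action of Theorem~\ref{hacts}, $\tilde{z}_i$ acts as $\Omega_{M\otimes N\otimes V^{\otimes i-1},i}$, $\tilde{x}_1$ acts as $\Omega_{M,1}$, and $\tilde{z}_0$ acts as $\Omega_{M,N}$. When $M=N=\mathbb{C}$ is trivial, every Casimir action involving the factor $M$ or $N$ vanishes ($\Omega_{M,k}=\Omega_{N,k}=\Omega_{M,N}=0$ because $\q(n)$ acts by zero on $\mathbb{C}$), so $\tilde{x}_1$ and $\tilde{z}_0$ act as $0$ and $\tilde{z}_i$ acts as $\Omega_{1,i}+\cdots+\Omega_{i-1,i}$. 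I would then verify that this operator equals the action of $w_i$ under the Sergeev action: $w_1=0$ matches $\tilde{z}_1\mapsto\Omega$ on the empty sum, and the recursion $w_{i+1}=s_iw_is_i-(c_i-c_{i+1})s_i$ is exactly the recursion $(\ref{Hecke2})$-type identity that the Hecke-relation computation in the proof of Theorem~\ref{hacts} shows $\Omega_{1,i+1}+\cdots+\Omega_{i,i+1}$ satisfies (this is essentially the content of the ``Hecke relations'' part of that proof, specialized to the $V$-only factors). Since $\Hd$ acting on $\mathbb{C}\otimes\mathbb{C}\otimes V^{\otimes d}\cong V^{\otimes d}$ sends $s_i,c_i$ to the Sergeev generators by construction, the action factors through $\phi_1$ and induces precisely $(\ref{sergeev1})$, $(\ref{sergeev2})$. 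For part 2, taking only $M=\mathbb{C}$ trivial but $N$ arbitrary, one has $\tilde{x}_1,\tilde{z}_0\mapsto 0$ and $\tilde{z}_i$ acting as $\Omega_{N\otimes V^{\otimes i-1},i}$; comparing with the defining action $(\ref{omega})$ of $H_d$ on $N\otimes V^{\otimes d}$, where $x_i$ acts via $\overline{\Omega}_{N,i}$-type terms, and using $\overline{\Omega}=\Omega(1\otimes C)$ together with the Sergeev formula $(\ref{sergeev2})$ for $c_i$, one gets that $-x_ic_i$ acts as $\Omega_{N\otimes V^{\otimes i-1},i}$, matching the image of $\tilde{z}_i$ under $\phi_2$.

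The surjectivity of $\phi_1$ and $\phi_2$ is immediate: $\operatorname{Ser}_d$ is generated by the $s_i,c_i$, all of which are in the image, and $H_d$ is generated by $s_i,c_i$ together with the $x_i$, and $x_i$ is recovered from $-x_ic_i$ by $x_i = (-x_ic_i)c_i$ (since $c_i^2=-1$), so $x_i$ lies in the image of $\phi_2$ as well.

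\textbf{Main obstacle.} The routine but error-prone step is matching signs in the Casimir computations for part (b), particularly for $\phi_2$: one must carefully track the sign conventions $(\ref{tensor})$, $(\ref{omegadef1})$--$(\ref{omegadef3})$ for the action on a tensor product when passing from $\overline{\Omega}$ to $\Omega$ via the twist by $C$, and then reconcile this with the Sergeev action of $c_i$ in $(\ref{sergeev2})$ and the defining relation $c_iz_i=-x_ic_i$ in $H_d$. The conceptual content, however, is minimal: both halves are formal consequences of the recursion defining the new variables plus the relation computations already carried out in Theorem~\ref{hacts} and Proposition~\ref{extragenerators}, specialized by setting Casimir-on-trivial-module terms to zero.
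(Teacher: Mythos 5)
Your proposal is correct and matches the paper's approach: the paper's (very terse) proof just asserts the relations are satisfied under the specified maps and then cites the identity $\Omega=-\overline{\Omega}(1\otimes C)$ together with the vanishing of $\Omega_{M,i}$, $\Omega_{N,i}$ on trivial factors, which is exactly the content you spell out. Your expansion — checking the Hecke, Clifford-twist, and anticommutativity relations on the images $w_i$ and $-x_ic_i$, observing that relations involving $\tilde{x}_1$ or $\tilde{z}_0$ collapse, and then reconciling the operator $\Omega_{N\otimes V^{\otimes i-1},i}$ with $-x_ic_i$ via the even-to-odd Casimir twist — is the routine verification the paper leaves implicit.
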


\begin{proof}
Under the specified maps, all the relations in $\mathcal{H}_d$ are satisfied. The second half of both claims follows directly from the fact that $\Omega=-\overline{\Omega}(1\otimes c)$, $\Omega_{M,i}=0$ whenever $M= \mathbb{C}$, and $\Omega_{N,i}=0$ whenever $N= \mathbb{C}$. 
\end{proof}

\subsection{When $M$ or $N$ is of Type Q}\label{2.4}
For the remaining sections of this article, we shall impose further conditions on the modules $M$ and $N$. In particular, $\q(n)$-modules admit a special phenomenon in which Schur's lemma no longer holds. Intead, the following super Schur's lemma holds, as mentioned in \cite[Lemma 3.4]{Wang}. A $\q(n)$-module is simple if it has no proper submodules.

\begin{lemma}[Super Schur's Lemma]\cite[Lemma 3.4]{Wang}\label{Pre_3_superschur}
Let $W,U$ be two simple modules for $\q(n)$, then
\begin{align*}
\operatorname{dim}\operatorname{Hom}_{\q(n)}(W,U)=\begin{cases}
0 \hspace{.5 in} \text{ if } W \not\simeq  U,\\
1 \hspace{.5 in} \text{ if } W\simeq U \text{ is of Type M, by definition}, \\
2 \hspace{.5 in} \text{ if } W\simeq U \text{ is of Type Q, by definition}.
\end{cases}
\end{align*}
In the last case, $\operatorname{End}_{\q(n)}(W)$ is spanned by the identity map and an odd endomorphism $c$.
\end{lemma}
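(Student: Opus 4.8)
The plan is to identify $\operatorname{End}_{\q(n)}(W)$ with a finite-dimensional $\Z_2$-graded division algebra over $\C$ and then classify such algebras. First I would record the Schur-type observation: if $f\colon W\to U$ is a nonzero homogeneous $\q(n)$-equivariant map between simple modules, then $\ker f$ and $\operatorname{im}f$ are graded $\q(n)$-submodules, so $\ker f=0$ and $\operatorname{im}f=U$; that is, $f$ is an isomorphism. This immediately yields the first case, $\operatorname{Hom}_{\q(n)}(W,U)=0$ when $W\not\simeq U$, and, after fixing an isomorphism $W\xrightarrow{\sim}U$ in the remaining cases, reduces everything to computing $D:=\operatorname{End}_{\q(n)}(W)$. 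The same observation shows every nonzero homogeneous element of $D$ is invertible, so $D$ is a $\Z_2$-graded division algebra; it is finite-dimensional over $\C$ since (in our setting) all modules are finite-dimensional.

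Next I would analyze the even part $D_{\ev}$. Any $a\in D_{\ev}$ is an ordinary parity-preserving $\q(n)$-endomorphism of $W$, hence either $0$ or invertible; since $\C$ is algebraically closed, $a$ has an eigenvalue $\mu\in\C$, and then $a-\mu\cdot\mathrm{id}\in D_{\ev}$ has nonzero kernel, so it is not invertible and therefore vanishes. Thus $D_{\ev}=\C\cdot\mathrm{id}$. Writing $D=\C\cdot\mathrm{id}\oplus D_{\od}$, there are two possibilities. If $D_{\od}=0$, then $\dim D=1$ and $W$ is by definition of Type M. If $D_{\od}\neq 0$, choose a nonzero $c\in D_{\od}$; then $c^2\in D_{\ev}=\C\cdot\mathrm{id}$, and $c^2\neq 0$ because $c$ is invertible, so after rescaling we may arrange $c^2=-\mathrm{id}$. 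For any $c'\in D_{\od}$ the element $c^{-1}c'$ lies in $D_{\ev}=\C\cdot\mathrm{id}$, so $c'\in\C c$; hence $D_{\od}=\C c$, $\dim D=2$, and $W$ is of Type Q with $\operatorname{End}_{\q(n)}(W)=\C\cdot\mathrm{id}\oplus\C c$, as claimed. Transporting through the fixed isomorphism $W\xrightarrow{\sim}U$ then gives $\dim\operatorname{Hom}_{\q(n)}(W,U)=\dim D\in\{1,2\}$ according to type.

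The only genuinely delicate point is the bookkeeping of parities in the first case: one must fix whether ``$W\simeq U$'' permits odd isomorphisms, and then check that with that convention a nonzero homogeneous hom forces $W\simeq U$ and that the three cases are exhaustive and mutually exclusive — in particular that for a Type M module $W$ the parity-shifted module $\Pi W$ is genuinely non-isomorphic to $W$, so that no odd self-map survives. Everything else is the standard structure theory of finite-dimensional graded division algebras over an algebraically closed field, and I expect no further obstacle; since the statement is precisely \cite[Lemma 3.4]{Wang}, one could alternatively just cite it.
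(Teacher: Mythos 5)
The paper does not actually prove this lemma: it is stated as a quoted fact with the citation to \cite[Lemma 3.4]{Wang} built into the theorem header, and no proof environment follows. So there is no paper argument to compare against, only the reference.

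Your proof is a correct, self-contained derivation and is the standard one. The two load-bearing steps are both sound: (i) a nonzero homogeneous $\q(n)$-map between simple supermodules is an isomorphism, so $\operatorname{End}_{\q(n)}(W)$ is a $\Z_2$-graded division algebra, and (ii) for a finite-dimensional module over $\C$ the usual eigenvalue argument forces $D_{\ev}=\C\cdot\mathrm{id}$, after which $D_{\od}$ is either $0$ or a free rank-one $D_{\ev}$-module spanned by an odd $c$ with $c^2$ a nonzero scalar, normalizable to $c^2=-\mathrm{id}$. Your caveat about finite-dimensionality is necessary and correctly placed; in this paper every simple module under consideration is a direct summand of $V^{\otimes k}$, hence finite-dimensional, so the hypothesis holds. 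The parity caveat you raise is also the right thing to worry about: the paper's definition of $\operatorname{Hom}_{\q(n)}$ includes odd maps, so one must fix whether $W\simeq U$ in the first line of the trichotomy refers to an even isomorphism (the convention that makes the three cases exhaustive and disjoint, and under which a Type M module satisfies $W\not\simeq\Pi W$). Once that is pinned down, your argument closes the gap that the citation leaves implicit; no further obstacle.
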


We shall define two slightly larger algebras $\mathcal{H}^{+}_d$ and $\mathcal{H}^{++}_d$, such that $\mathcal{H}_d\subset \mathcal{H}^{+}_d \subset  \mathcal{H}^{++}_d$. In particular, for two superalgebras $A$ and $B$, $A\otimes B$ is again a superalgebra with a $\mathbb{Z}_2$-grading: 
\begin{align*}
(A\otimes B)_{\overline{0}}=A_{\overline{0}}\otimes B_{\overline{0}} \oplus A_{\overline{1}}\otimes B_{\overline{1}}, \hspace{.2 in}
(A\otimes B)_{\overline{1}}=A_{\overline{0}}\otimes B_{\overline{1}} \oplus A_{\overline{1}}\otimes B_{\overline{0}},
\end{align*}
and the multiplication is defined as 
\begin{align*}
(a_1\otimes b_1)(a_2\otimes b_2)=(-1)^{\overline{b_1}\cdot\overline{a_2}} a_1a_2 \otimes b_1b_2.
\end{align*}
Recall that $\operatorname{Cl}_k$ is the Clifford algebra on $k$ letters, defined in (\ref{clifford}). Let $\mathcal{H}^{+}_d = \mathcal{H}_d \otimes \operatorname{Cl}_1$, and $c_0$ be the extra Clifford generator. Further, let $\mathcal{H}^{++}_d=\mathcal{H}^{+}_d \otimes \operatorname{Cl}_1$ and $c_M$ be the extra Clifford generator. Then these two algebras act on $M\otimes N\otimes V^{\otimes d}$ under the specific conditions on $M$ or $N$.

\begin{prop}
When $N$ is of Type Q, let $c^N\in \operatorname{End}_{\q(n)}(N)$ be an odd endomorphism such that $(c^N)^2=-\operatorname{id}_N$, then there is a well defined action of $\mathcal{H}^{+}_d$ on $M\otimes N\otimes V^{\otimes d}$, where $\mathcal{H}_d$ acts as in Theorem~\ref{hacts}, and $c_0$ acts as $1\otimes c^N \otimes 1^{\otimes d}$. Moreover, this action supercommutes with $\q(n)$ and induces a homomorphism of superalgebras $\mathcal{H}^{+}_d \to \operatorname{End}_{\q(n)}(M\otimes N\otimes V^{\otimes d})$. 

Furthermore, if $M$ is also of Type Q, let $c^M\in \operatorname{End}_{\q(n)}(M)$ be an odd endomorphism such that $(c^M)^2=-\operatorname{id}_M$, then there is a well defined action of $\mathcal{H}^{++}_d$ on $M\otimes N\otimes V^{\otimes d}$, where $\mathcal{H}^{+}_d$ acts as above, and $c_M$ acts as $c^M\otimes 1 \otimes 1^{\otimes d}$. Moreover, this action supercommutes with $\q(n)$ and induces a homomorphism of superalgebras $\mathcal{H}^{++}_d \to \operatorname{End}_{\q(n)}(M\otimes N\otimes V^{\otimes d})$.

\end{prop}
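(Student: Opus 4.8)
The plan is to treat both halves of the statement in the same way, exploiting that $\mathcal{H}^{+}_d=\mathcal{H}_d\otimes\operatorname{Cl}_1$ and $\mathcal{H}^{++}_d=\mathcal{H}^{+}_d\otimes\operatorname{Cl}_1$ are \emph{graded} tensor products of superalgebras. Recall that a superalgebra homomorphism out of $A\otimes\operatorname{Cl}_1$, where $\operatorname{Cl}_1=\C\langle c\rangle/(c^2+1)$, is the same datum as a superalgebra homomorphism $\phi\colon A\to C$ together with an odd element $e\in C$ satisfying $e^2=-1$ and $\phi(a)\,e=(-1)^{\overline a}\,e\,\phi(a)$ for all homogeneous $a\in A$. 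For the first statement I would take $\phi=\rho$ to be the map of Theorem~\ref{hacts} and $e=\rho(c_0):=1\otimes c^N\otimes 1^{\otimes d}$. Then $\rho(c_0)^2=1\otimes(c^N)^2\otimes 1^{\otimes d}=-\operatorname{id}$ is immediate, and a short check from $(\ref{tensor})$ together with $c^N\in\End_{\q(n)}(N)$ shows $\rho(c_0)\in\End_{\q(n)}(\two)$, so that the extended map automatically takes values in $\End_{\q(n)}(\two)$ and is a grading-preserving homomorphism of superalgebras once the commutation with $\rho(\mathcal{H}_d)$ has been checked. The entire content is therefore to verify that $\rho(c_0)$ supergraded-commutes with the generators $s_i,c_i,\x_1,\z_0,\dots,\z_d$ of $\mathcal{H}_d$.

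To organize that verification I would invoke the standard sign rule: two operators on $\two$ supported on disjoint (even if interleaved) sets of tensor factors supergraded-commute, with sign $(-1)^{\overline A\cdot\overline B}$. Since $\rho(c_0)$ is supported on the $N$-slot alone, this at once disposes of $s_i$ and $c_i$ (which on $\two$ are $1_M\otimes 1_N$ tensored with the Sergeev operators on $V^{\otimes d}$), of $\x_1=\Omega_{M,1}$, and of the summands $\Omega_{M,i}$ and $\Omega_{\ell,i}$ $(\ell<i)$ appearing in $\z_i$ — all supported on $M$- and $V$-slots: $s_i$ is even and commutes with $\rho(c_0)$, while $c_i,\x_1$ and these $\Omega$'s are odd and anticommute with it, which is exactly what the odd generators require. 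The only pieces not supported away from $N$ are $\z_0=\Omega_{M,N}$ and the single summand $\Omega_{N,i}$ of $\z_i$. For these I would use that $c^N$, being $\q(n)$-equivariant, commutes with the action on $N$ of the even matrix units $e_{pq}$ of $(\ref{ef})$ and anticommutes with that of the odd $f_{pq}$; combining this with disjoint-support commutation across the remaining $M$- or $V_i$-slots should give $\Omega_{M,N}\,(1\otimes c^N)=-(1\otimes c^N)\,\Omega_{M,N}$ and the analogous identity for $\Omega_{N,i}$, i.e.\ the anticommutation demanded by the odd generators $\z_0$ and $\z_i$.

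The step I expect to require the most care is this last one. In $\Omega=\sum_{p,q}e_{pq}\otimes f_{qp}-\sum_{p,q}f_{pq}\otimes e_{qp}$ the two summands have \emph{different} parities in each tensor slot, so the Koszul signs produced when $\rho(c_0)$ is moved past $\Omega_{M,N}$ or $\Omega_{N,i}$ do not factor uniformly term by term. The point that makes the computation close is that the relative sign between the two summands of $\Omega$ is precisely the discrepancy between ``$c^N$ commutes with the $e_{pq}$-action on $N$'' and ``$c^N$ anticommutes with the $f_{pq}$-action on $N$'', so the two sign effects cancel and one genuinely obtains the clean identities above; I would write this cancellation out in full as the single real calculation of the proof.

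For the second statement I would run the identical argument once more with $A=\mathcal{H}^{+}_d$ and $e=\rho(c_M):=c^M\otimes 1\otimes 1^{\otimes d}$. Again $\rho(c_M)^2=-\operatorname{id}$; $\rho(c_M)$ is supported on the $M$-slot, hence by disjointness of supports supergraded-commutes with $\rho(c_0)$ (forcing $\rho(c_M)\rho(c_0)=-\rho(c_0)\rho(c_M)$, as required by the grading of $\mathcal{H}^{++}_d$), with $s_i$ and $c_i$, and with the summands $\Omega_{N,i}$ and $\Omega_{\ell,i}$ of $\z_i$; while for $\x_1=\Omega_{M,1}$, $\z_0=\Omega_{M,N}$ and the summand $\Omega_{M,i}$ of $\z_i$, which touch the $M$-slot, one reruns the sign cancellation of the previous paragraph with $c^M$ in place of $c^N$ and the $M$-slot in place of the $N$-slot. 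This verifies all defining relations of $\mathcal{H}^{++}_d$ and yields the desired homomorphism of superalgebras $\mathcal{H}^{++}_d\to\End_{\q(n)}(\two)$.
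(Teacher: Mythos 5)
Your argument is correct and follows the paper's route: reduce to verifying that $\rho(c_0)=1\otimes c^N\otimes 1^{\otimes d}$ (and then $\rho(c_M)$) supercommutes with the images of the generators of $\mathcal{H}_d$, using disjoint supports for every piece avoiding the $N$-slot and $\q(n)$-equivariance of $c^N$ for $\Omega_{M,N}$ and $\Omega_{N,i}$. One clarification of the sign heuristic you flag as the ``single real calculation'': the relative minus in $\Omega=\sum e_{pq}\otimes f_{qp}-\sum f_{pq}\otimes e_{qp}$ plays no role in closing this computation. Each summand anticommutes with $\rho(c_0)$ on its own --- for $e_{pq}\otimes f_{qp}$ the Koszul sign from $c^N$ crossing the (even) $M$-slot is $+1$ and the $-1$ comes from $c^N$ anticommuting with the $f_{qp}$-action on $N$, while for $f_{pq}\otimes e_{qp}$ the Koszul sign from crossing the (odd) $M$-slot supplies the $-1$ and $c^N$ commutes with the $e_{qp}$-action --- so the two sources of sign swap roles between the summands but each summand's net sign is $-1$, independent of the sign in $\Omega$.
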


\begin{proof}
When $N$ if of Type Q, it is enough to show that the extra relations 
\begin{align*}
c_0^2=&-1, \hspace{.2 in} c_0c_i=-c_ic_0 \hspace{.1 in} (1\leq i\leq d) &c_0 s_i = s_i c_0 \hspace{.1 in}(1\leq i\leq d-1), \\
c_0 \tilde{x}_i =& -\tilde{x}_i c_0 , \hspace{.2 in} c_0 \tilde{y}_i = -\tilde{y}_i c_0 \hspace{.1 in} (1\leq i\leq d)  &c_0 \tilde{z}_i = -\tilde{z}_i c_0  \hspace{.1 in}(0\leq i\leq d).
\end{align*}
are satisfied. The first line is easy to check. The second line follows from the fact that
\begin{align*}
\Omega_{M,1}(1\otimes c\otimes 1)=-(1\otimes c\otimes 1)\Omega_{M,1}, \hspace{.2 in} \Omega_{N,1}(1\otimes c\otimes 1)=-(1\otimes c\otimes 1)\Omega_{N,1}.
\end{align*}
The case when $M$ is also of Type Q can be proved similarly.
\end{proof}

\section{A quotient of the Hecke-Clifford algebra}

\subsection{Combinatorics of polynomial modules}\label{3.1}
We will now take $M$ and $N$ to be an even more specific type of $\q(n)$-modules called polynomial modules. A \emph{polynomial} $\q(n)$-module is a direct summand of $V^{\otimes k}$ for some $k\in \mathbb{Z}_{\geq 0}$. It is immediate that a tensor product of polynomial modules are still polynomial. By Sergeev \cite{Sergeev}, the category of polynomial $\q(n)$-modules is semisimple, therefore, the tensor product of polynomial modules decomposes into a direct sum of polynomial modules. Furthermore, simple polynomial modules are paramatrized by the following set
\begin{align*}
\Lambda=\{\lambda=(\lambda_1,\dots,\lambda_n)\in \mathbb{Z}_{\geq 0}^n \hspace{.1 in}| \hspace{.1 in} \lambda_1\geq \lambda_2\geq \cdots \geq \lambda_n, \hspace{.1 in} \lambda_i>\lambda_{i+1} \text{ if }\lambda_i\neq 0 \}.
\end{align*}
Let $L(\lambda)$ be the simple module parameterized by $\lambda$. Recall that $e_{ij}\in \q(n)$ are elements defined in (\ref{ef}). The module $L(\lambda)$ is known to have the following property. Let $\mathfrak{h}_{\overline{0}}\subset \mathfrak{q}(n)$ be the subalgebra spanned by $e_{11},\dots,e_{nn}$, and let $\epsilon_i=e_{ii}^*\in \mathfrak{h}^*$ be their duals. For $\mu\in \Lambda$, identify $\mu$ with the element $\mu_1\epsilon_1+\cdots+\mu_n\epsilon_n \in \mathfrak{h}^*$. The $\mu$-weight space for a given $\q(n)$-module $W$ is the vector space
\begin{align}\label{weightspace}
W_{\mu}=\{w\in W \hspace{.1 in} | \hspace{.1 in}  e_{ii}.v=\mu(e_{ii})v, 1\leq i\leq n\},
\end{align}
then the module $L(\lambda)$ is known to have a weight space decomposition $L(\lambda)=\bigoplus_{\mu} L(\lambda)_{\mu}$, where the direct sum is over all weights $\mu\in \mathfrak{h}^*$ for which $L(\lambda)_{\mu}$ is nonzero. Moreover, $L(\lambda)_{\lambda}\neq 0$, and $\lambda$ is highest according to a certain partial order among all the weights occurring in the direct sum. We will refer to any $v_{\lambda}\in L(\lambda)_{\lambda}$ a highest weight vector.

The weights in $\Lambda$ can also be represented using shifted Young diagrams (sometimes also called strict partitions). A \emph{shifted Young diagram} $\lambda=(\lambda_1,\dots,\lambda_n)$ is a stacking of boxes, with $\lambda_1$ boxes in the first row, $\lambda_2$ boxes in the second row, etc, and the beginning of the $i$-th row is the $i$-th column. For example, the diagram associated to $(4,3,2,1)$ is the following 
\ytableausetup{smalltableaux}
\begin{align}\label{staircase}
\ydiagram{4,1+3,2+2,3+1}.
\end{align}
Our choice of $M$ and $N$ depends on the following considerations:

1) $M$ and $N$ should be polynomial, because a tensor product between polynomial modules decomposes into a direct sum of polynomial modules.

2) The multiplicity of each irreducible summand in $M\otimes N$ should be as small as possible.

Our next goal is to choose suitable $M$ and $N$, then give a complete description of all irreducible summands in $M\otimes N\otimes V^{\otimes d}$, and their multiplicities, for any nonnegative integer $d$. To do so we need some combinatorics developed by Stembridge \cite{Stembridge}, in addition to the character theory of $\q(n)$-modules.

For a finite dimensional $\q(n)$-modules with weight space decomposition $W=\oplus_{\mu\in I} W_{\mu}$, where $I$ is the set of weights for which the weight space is nonzero. The character of $W$ is defined as 
\begin{align*}
\operatorname{Ch}W=\sum_{\mu \in I} \operatorname{dim}W_{\mu}x_1^{\mu_1}x_2^{\mu_2}\cdots x_n^{\mu_n}.
\end{align*}

Let $\ell(\lambda)$ be the number of nonzero entries in $\lambda$. As mentioned in \cite[Theorem 4.11]{Brundan} and the fact that the Euler modules coincide with $L(\lambda)$ when $L(\lambda)$ is polynomial,  its character is given by $
 \operatorname{Ch} L(\lambda)= 2^{\lfloor \frac{\ell(\lambda)+1}{2} \rfloor} P_{\lambda}$,  where $P_{\lambda}$ is the Schur's P-function. This is a combinatorially defined function by counting certain types of tableaux of shape $\lambda$. The \emph{multiplicity} $m^{\gamma}_{\lambda,\mu}$ of each irreducible module $L(\gamma)$ in $L(\lambda)\otimes L(\mu)$ is related to the structural constants of $P_{\lambda}$ in the following sense, using properties of characters:
\begin{align*}
2^{\lfloor \frac{\ell(\lambda+1)}{2} \rfloor} P_{\lambda}\cdot 2^{\lfloor \frac{\ell(\mu+1)}{2} \rfloor} P_{\mu}=\operatorname{Ch}L(\lambda)\cdot \operatorname{Ch}L(\mu)=\operatorname{Ch}(L(\lambda)\otimes L(\mu))\\
=\operatorname{Ch}\left(\bigoplus_{\gamma}L(\gamma)^{\oplus m_{\lambda,\mu}^{\gamma}}\right)=\sum_{\gamma} m^{\gamma}_{\lambda,\mu}\operatorname{Ch}L(\gamma)= \sum_{\gamma} m^{\gamma}_{\lambda,\mu} 2^{\lfloor \frac{\ell(\gamma+1)}{2} \rfloor} P_{\gamma}.
\end{align*}

Therefore, if $f^{\gamma}_{\lambda,\mu}$ is the structural constant appearing in  $P_{\lambda}P_{\mu}=\sum_{\gamma} f^{\gamma}_{\lambda,\mu} P_{\gamma}$, then 
\begin{align}
m^{\gamma}_{\lambda,\mu}=f^{\gamma}_{\lambda,\mu}2^{\lfloor \frac{\ell(\lambda+1)}{2} \rfloor} 2^{\lfloor \frac{\ell(\mu+1)}{2} \rfloor}2^{-\lfloor \frac{\ell(\gamma+1)}{2} \rfloor}. \label{structuralconstant}
\end{align} 
In \cite[Theorem 8.3]{Stembridge}, Stembridge computed the structural constants $f^{\gamma}_{\lambda,\mu}$ for the functions $P_{\lambda}$ by counting a certain type of tableaux satisfying the lattice condition. In particular, a \emph{semistandard shifted tableaux} of skew shape $\gamma/\lambda$, is a filling of boxes in $\gamma$ that are not in $\lambda$, with integers $1,2,\dots$ and $1',2',\dots$ under the order $1'<1<2'<2<\cdots$, subject to the \emph{semistandard} condition: the numbers weakly increase along each row and each column, and each $i'$ (the primed integers) occur at most once along each row and each $i$ (the regular integers) occur at most once along each column. Given a semistandard tableau $T$ of shape $\lambda/\gamma$ with entries from $\{1,1',\dots,s,s'\}$, let the \emph{multiplicity} $\mu(T)\in \mathbb{Z}^{\mathbb{N}}$ be $(\mu_1,\mu_2,\dots,\mu_s,0,0,\dots)$, where $\mu_1$ is the total number of $1$ and $1'$, $\mu_2$ is the total number of $2$ and $2'$, etc. 

To compute $f_{\lambda,\mu}^{\gamma}$ we also need to define a \emph{lattice} condition: in particular, let $w=w_1w_2\cdots w_{t}$ be a word in $1,1',\dots,s,s'$. Denote by $\# i$	the number of times entry $i$ appears. For any integer $i\geq 0$, define the occurrence $m_i(k)$ in word $w$ at step $k$ as follows, with the convention $m_i(0)=0$.\\
1) $1\leq k\leq t$, 
\begin{align*}
m_i(k)= \#  i \text{ in }w_{t-k+1}, \dots, w_t,
\end{align*}
2) $t+1\leq k\leq 2t$,
\begin{align*}
m_i(k)=m_i(k-t)+\#  i' \text{ in }w_{1}, \dots, w_{k-t}.
\end{align*}

In other words, the function $m_i(k)$ is counting regular integers  from the end of $w$ backward to the beginning, and on top of that, counting primed integers from the beginning of $w$ forward to the end. Call $w$ a lattice word if the following is true

1) When $1\leq k\leq t$, if $m_i(k)=m_{i+1}(k)$, then $w_{t-k}\neq {i+1},{(i+1)'}$

2) When $t+1\leq k\leq 2t$, if $m_i(k)=m_{i+1}(k)$, then $w_{k-t+1}\neq {i},(i+1)'$

Define the absolute value $|i|=|i'|=i$ for $1\leq i\leq n$. In  \cite{Stembridge}, the Stembridge states the following rule for computing $f_{\lambda,\mu}^{\gamma}$ in (\ref{structuralconstant}), analogous to the Littlewood-Richardson rule for regular Schur functions.

\begin{theorem}\cite[Theorem 8.3]{Stembridge}\label{Pre_3_Stembridge}
$f_{\lambda,\mu}^{\gamma}$ is equal to the number of semistandard tableaux $T$ of shape $\gamma/\lambda$, satisfying the following property: let $w$ be the word obtained by reading the entries in $T$ from bottom row to the top row, from left to right, then $w$ is a lattice word of content $\mu$, and for any integer $i$ with nonzero multiplicity, the first letter in $w$ with absolute value $i$ is a regular integer. 
\end{theorem}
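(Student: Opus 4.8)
The plan is to follow Stembridge's original argument, since Theorem~\ref{Pre_3_Stembridge} is precisely \cite[Theorem 8.3]{Stembridge}; here I indicate the structure of that proof. The starting point is the monomial expansion $Q_\lambda=\sum_T x^{\operatorname{wt}(T)}$, summed over all semistandard shifted tableaux $T$ of shape $\lambda$ on the ordered alphabet $1'<1<2'<2<\cdots$, together with the identities $Q_\lambda=2^{\ell(\lambda)}P_\lambda$ and the duality $\langle P_\lambda,Q_\mu\rangle=\delta_{\lambda\mu}$ for the standard bilinear form on symmetric functions. Expanding the product $P_\lambda P_\mu$ and applying this duality reduces the claim to computing $f_{\lambda,\mu}^{\gamma}=\langle P_\mu, Q_{\gamma/\lambda}\rangle$, where the skew function $Q_{\gamma/\lambda}$ is realized combinatorially as a generating function over semistandard shifted tableaux of skew shape $\gamma/\lambda$ (with the usual convention governing primes on the diagonal cells of the skew part). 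So everything comes down to identifying which skew tableaux contribute to the $P_\mu$-coefficient.

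The main tool is \emph{shifted jeu de taquin}: inward and outward slides on skew shifted tableaux, and the resulting notion of \emph{rectification}. First one must prove that rectification is well defined, i.e.\ independent of the sequence of slides performed; equivalently, that shifted jeu de taquin slides generate a ``coplactic'' equivalence compatible with the shifted Knuth relations on reading words. This confluence statement is the technical heart of the proof and is the step I expect to be the main obstacle --- the early treatments of Worley and Sagan contained gaps here, subsequently repaired; in practice I would invoke a careful account (via the shifted analogue of Haiman's dual equivalence, or Stembridge's own verification) rather than reprove it from scratch.

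Granting confluence, the generating function $\sum_T x^{\operatorname{wt}(T)}$ over skew tableaux of shape $\gamma/\lambda$ is constant on rectification classes, so it can be reorganized according to the straight-shape tableau each $T$ rectifies to. A skew tableau of content $\mu$ rectifies to the canonical row-filled tableau $U_\mu$ of shape $\mu$ precisely when its reading word is shifted-Knuth-equivalent to the reading word of $U_\mu$, and this is exactly the \emph{lattice word} condition stated in the theorem. The remaining point is to separate the $P_\mu$-coefficient from the $Q_\mu$-coefficient: the factor $2^{\ell(\mu)}$ relating them corresponds, at the level of tableaux, to the freedom of priming the leftmost occurrence of each value, and imposing that ``the first letter of absolute value $i$ is a regular (unprimed) integer'' kills exactly this freedom. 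Combining the three ingredients --- the monomial/duality reduction, confluence of shifted jeu de taquin, and the prime bookkeeping that passes from $Q_\mu$ to $P_\mu$ --- yields the asserted count of $f_{\lambda,\mu}^{\gamma}$.

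As an alternative I would keep in mind the representation-theoretic route via Schur's theory of projective representations of the symmetric group (equivalently, via Sergeev's results on $\mathfrak{q}(n)$), where $f_{\lambda,\mu}^{\gamma}$ arises as a tensor/branching multiplicity and the tableau count comes from a shifted Robinson--Schensted--Knuth correspondence; but the jeu-de-taquin argument above is the most self-contained, and in either approach the well-definedness of the shifted rectification (or of shifted insertion) is the essential difficulty.
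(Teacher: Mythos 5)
The paper does not actually prove this statement — it cites it as \cite[Theorem~8.3]{Stembridge}, so there is no internal proof for your sketch to be compared against. That said, your outline is a fair high-level summary of how Stembridge's argument goes: reduce via the Hopf-algebra/duality identities $Q_\lambda = 2^{\ell(\lambda)}P_\lambda$, $\langle P_\lambda, Q_\mu\rangle=\delta_{\lambda\mu}$ and skewing to the pairing $f_{\lambda,\mu}^{\gamma}=\langle P_\mu, Q_{\gamma/\lambda}\rangle$; realize $Q_{\gamma/\lambda}$ as a generating function over semistandard shifted skew tableaux; reorganize by shifted jeu-de-taquin rectification classes; identify the class rectifying to the canonical filling of $\mu$ with the lattice-word condition; and absorb the $2^{\ell(\mu)}$ discrepancy between $P_\mu$ and $Q_\mu$ via the ``first occurrence of each absolute value is unprimed'' constraint. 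You also correctly flag that well-definedness of shifted rectification (confluence of shifted slides) is the genuine technical obstacle, and that it is more sensible to invoke the corrected treatments (Stembridge's own verification or the dual-equivalence machinery) than to reprove it. In short: a reasonable sketch of the cited external proof, but not a substitute for it — if you were asked to use the result rather than establish it, the appropriate move is exactly what the paper does, namely cite Stembridge.
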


\textit{Example.} Let us compute $f_{\lambda,\mu}^{\gamma}$, where 
\begin{align*}
\ytableausetup{smalltableaux}
\lambda=\begin{ytableau}
 *(white) & \\
\none & \\
\end{ytableau} \hspace{.2 in}
\ytableausetup{smalltableaux}
\mu=\begin{ytableau}
 *(white) &  & \\
 \none &\\
\end{ytableau} \hspace{.2 in}
\ytableausetup{smalltableaux}
\gamma=\begin{ytableau}
*(white) &  & & \\
\none &  & &\\
\end{ytableau} \hspace{.2 in}.
\end{align*}
All semistandard shifted tableaux must be of the following form, if they are of shape $\gamma/\lambda$ with fillings of multiplicity $\mu$, where $\star$ is representing one of $1,1',2,2'$.
\begin{align*}
\begin{ytableau}
*(white) &  & \star & \star \\
\none &  & \star &\star \\
\end{ytableau}
\end{align*}
The square can be filled with the following posibilities:
\begin{align*}
\begin{ytableau}
 1&1 \\
 1'&2\\
\end{ytableau} \hspace{.2 in}
\begin{ytableau}
1&1 \\
 1'&2'\\
\end{ytableau} \hspace{.2 in}
\begin{ytableau}
1&1' \\
1'&2\\
\end{ytableau} \hspace{.2 in}
\begin{ytableau}
1&1' \\
1'&2'\\
\end{ytableau} \hspace{.2 in}
\begin{ytableau}
1&2 \\
1'&2'\\
\end{ytableau} \hspace{.2 in}
\begin{ytableau}
1&2' \\
1'&2'\\
\end{ytableau} \hspace{.2 in}
\begin{ytableau}
1&1\\
2&2\\
\end{ytableau} \hspace{.2 in}
\begin{ytableau}
1&1\\
2&2'\\
\end{ytableau} \hspace{.2 in}
\begin{ytableau}
1&1'\\
2&2\\
\end{ytableau} \hspace{.2 in}
\begin{ytableau}
1&1'\\
2&2'\\
\end{ytableau} \hspace{.2 in}
\begin{ytableau}
1&2\\
2&2'\\
\end{ytableau} \hspace{.2 in}
\begin{ytableau}
1&2'\\
2&2'\\
\end{ytableau} \hspace{.2 in}
\end{align*}
However, only the following satisfies the Stembridge rule, and therefore $f^{\gamma}_{\lambda,\mu}=1$:
\begin{align*}
\begin{ytableau}
1&1\\
2&2'\\
\end{ytableau} \hspace{.2 in}
\end{align*}
The others fail for various reasons: 

$\ytableausetup{smalltableaux}
\begin{ytableau}
 1&1 \\
 1'&2\\
\end{ytableau} \hspace{.2 in}$: the associated word $1'211$ has $1'$ before all other $1$.

$\ytableausetup{smalltableaux}
\begin{ytableau}
 1&1' \\
 *(yellow) 2&2\\
\end{ytableau} \hspace{.2 in}$: yellow box violates the lattice condition (1).

$\ytableausetup{smalltableaux}
\begin{ytableau}
 *(yellow)1& 1 \\
2&2'\\
\end{ytableau} \hspace{.2 in}$: yellow box violates the lattice condition (2).\\

Using the above result, Bessenrodt \cite[Theorem~2.2]{Bessenrodt} classified all pairs of partitions $(\lambda,\mu)$ such that $f_{\lambda,\mu}^{\gamma}$ is either $0$ or $1$. Among them, two cases are important for us and they are as follows:

1) $\lambda=(t,t-1,\dots,1)$, a staircase shape, and $\mu=(s,0,\dots,0)$.

2) $\lambda$ is arbitrary and $\mu$ is a single box.

For Case 2, it is immediate that $f_{\lambda,\mu}^{\gamma}=1$ if and only if $\gamma$ can be obtained from adding a single box to $\lambda$. We will refer to this as the \emph{Pieri Rule} for tensoring with $V$. We reformulate Theorem \ref{Pre_3_Stembridge} for Case 1. Denote by $|\lambda|$ the total number of boxes in $\lambda$.

\begin{lemma}\label{Pre_5_com}
When $\lambda=(t,t-1,\dots,1,0,0,\dots)$, $\mu=(s)$, $f_{\lambda,\mu}^{\gamma}=1$ if and only if $\gamma$ is of the form
\begin{align*}
\gamma
&=\lambda+(s_0,1,1,\dots,1)\\
&= (t+s_0,t,t-1,\dots,j+1,j,j-2,j-3,\dots,1).
\end{align*}
and $|\gamma|=|\lambda|+|\mu|$. In other words, $\gamma$ is obtained by pasting an upside down L-shaped diagram to the right of $\lambda$. Otherwise, $f^{\gamma}_{\lambda,\mu}=0$.
\end{lemma}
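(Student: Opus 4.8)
The statement is a reformulation of Stembridge's rule (Theorem~\ref{Pre_3_Stembridge}) in the special case $\lambda=(t,t-1,\dots,1)$ a staircase and $\mu=(s)$ a single row. The plan is to analyze directly which semistandard shifted tableaux $T$ of skew shape $\gamma/\lambda$ with content $(s)$ can satisfy the lattice condition and the ``first letter with absolute value $i$ is unprimed'' condition. Since $\mu=(s)$ has only one part, every box of $T$ is filled with either $1$ or $1'$. The semistandard condition then forces strong constraints: $1'$ occurs at most once per row and $1$ occurs at most once per column, so each row of the skew shape contains at most one $1'$ (and it must be leftmost in that row, since primed entries precede unprimed in the ordering), and each column contains at most one $1$.

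First I would record the shape constraints. Because $\lambda$ is a staircase, the boxes of $\gamma/\lambda$ that lie in rows $2,3,\dots$ can only be added one per row (adding a box in row $i\ge 2$ requires row $i$ to be strictly shorter than row $i-1$ in $\gamma$, but it was already exactly one shorter in $\lambda$, so at most one box can be appended before the strictness $\gamma_i>\gamma_{i+1}$ or $\gamma_i \le \gamma_{i-1}$ fails); meanwhile row $1$ can absorb any number $s_0\ge 0$ of new boxes. This already shows $\gamma = \lambda + (s_0, \varepsilon_2, \varepsilon_3,\dots)$ with each $\varepsilon_i\in\{0,1\}$, and $s_0 + \sum_{i\ge 2}\varepsilon_i = s$. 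Next I would argue the new boxes in rows $2,3,\dots$ occupy a \emph{contiguous} block of rows $2,3,\dots,r$ for some $r$: if row $i$ gets a new box but row $i-1$ (with $i-1\ge 2$) does not, then in $\gamma$ we would have $\gamma_{i-1}=\lambda_{i-1}=\lambda_i+1=\gamma_i$, violating strictness. Combined with $s_0\ge \varepsilon_2$ (row $1$ must be at least as long as row $2$ plus strictness), this pins down exactly the ``upside-down L'' shape asserted, i.e. $\gamma=\lambda+(s_0,1,\dots,1)$ with the string of $1$'s in rows $2,\dots,r$ and $|\gamma/\lambda|=s$.

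Then I would check the lattice/first-letter conditions force $f^\gamma_{\lambda,\mu}\le 1$ and exhibit the unique valid filling. The reading word $w$ (bottom row to top, left to right) over the alphabet $\{1,1'\}$ must be a lattice word with the property that its first letter of absolute value $1$ is unprimed; reading through the definition of $m_0,m_1$ for a two-symbol alphabet, one finds the constraints are: the first letter of $w$ is $1$ (forced by the first-letter condition), and the primed entries must all come ``late enough'' relative to the unprimed ones. For the upside-down L shape, working from the bottom: the box in row $r$ must be filled with $1$ (it is the first letter of $w$), and then column-strictness (at most one unprimed $1$ per column) together with semistandardness propagates the filling up the single column of added boxes in rows $2,\dots,r$ — one checks the only way to keep the word a lattice word is the filling described in the Example computation pattern, giving exactly one tableau. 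Conversely, when $\gamma$ is not of this form, either the shape constraint fails (so no skew tableau of content $(s)$ exists at all) or the lattice/first-letter condition is violated, so $f^\gamma_{\lambda,\mu}=0$.

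\textbf{Main obstacle.} The routine part is the shape analysis; the delicate part will be verifying carefully that the lattice condition (in both its $1\le k\le t$ and $t+1\le k\le 2t$ guises) together with the ``first letter unprimed'' requirement single out exactly one filling of the L-shape, with no room for a primed entry anywhere except possibly forced positions, and that there is genuinely at least one valid filling (nonvanishing). This is essentially a careful, slightly fiddly case-check on the two-letter reading word — conceptually easy but the place where an argument could slip, so I would write it out explicitly rather than gesture at it.
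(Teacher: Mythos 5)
Your shape-only argument for ruling out non--L-shapes has a genuine gap. You claim that because $\lambda_{i-1}=\lambda_i+1$ in the staircase, at most one box can be appended to row $i$ for $i\ge 2$ "before strictness fails." But strictness of $\gamma$ compares $\gamma_i$ with $\gamma_{i-1}$, not with $\lambda_{i-1}$; if row $i-1$ also gains boxes, row $i$ can gain more than one. Concretely, take $t=3$, $s=4$: $\lambda=(3,2,1)$, $\mu=(4)$, and $\gamma=(5,4,1)$ is a perfectly good strict partition containing $\lambda$ with $|\gamma/\lambda|=4$, yet row $2$ gains two boxes. So your shape constraint does not pin down $\varepsilon_i\in\{0,1\}$, and the reduction to the L-shape does not go through at the level of shapes alone.

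The paper closes this gap differently: it notes that the skew shape $\gamma/\lambda$ can never contain a $2\times 2$ square, because such a square cannot be filled from the alphabet $\{1,1'\}$ under the semistandard rules (at most one $1'$ per row, at most one $1$ per column, weakly increasing in rows and columns --- the top row is forced to be $1'\,1$, and then neither $1$ nor $1'$ can go in the lower-right box). Once $2\times 2$ squares are forbidden, the L-shape follows for a staircase $\lambda$, without any appeal to the lattice condition. In the example above, $\gamma=(5,4,1)$ is excluded not by shape considerations but because its skew shape contains a $2\times 2$ block. I would recommend replacing your paragraph on shape constraints with this fillability argument; the rest of your plan (contiguity of the rows gaining a box, the $s_0\ge\varepsilon_2$ remark, and exhibiting the unique lattice filling of the L) is sound and matches the paper's strategy. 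Note also that the paper does not re-prove $f\le 1$: that is already guaranteed by the cited result of Bessenrodt for the staircase-times-row case, so the paper only needs to exhibit one valid tableau for each L-shape.
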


\begin{proof}
By \cite[Theorem 8.3]{Stembridge}, $f^{\gamma}_{\lambda,\mu}\neq 0$ if and only if there exists a semistandard tableau of shape $\gamma/\lambda$ with content $\mu$, satisfying the conditions in this theorem. Since $\mu$ has a single row, these entries must be either $1$ or $1'$. On the other hand, the semistandard condition eliminates a $2$-by-$2$ square to appear in $\gamma/\lambda$, because the first row must be
\begin{ytableau}
 1' & 1\\
  & 
\end{ytableau}
and the lower right box cannot be filled with either $1$ or $1'$.

Therefore, boxes in $\gamma/\lambda$ must form an upside-down L-shape. On the other hand, given any such partition $\lambda+(s_0,1,1,\dots,1)$, the following tableau satisfies the condition:
\begin{center}
\ytableausetup{smalltableaux}
\begin{ytableau}
1' & 1 & \cdots & 1 & 1\\
1' \\
1' \\
\vdots\\
1' \\
1
\end{ytableau}
\end{center}
the word $w=11'1'\cdots1'11\cdots1$ can be checked to satisfy the condition in \cite[Theorem 8.3]{Stembridge}.
\end{proof}

Based on this result, we take $M=L(\alpha)$ with $\alpha=(n,n-1,n-2,\dots,1)$. We will refer to $\alpha$ as the staircase of height $n$ because it has shape similar to (\ref{staircase}). On the other hand, take $N=L(\beta)$ with $\beta=(p,0,0,\dots,0)$, for a fixed positive integer $p$. Pictorially $\beta$ is a single row of $p$ boxes. 

Polynomial representations for $\q(n)$ are known to be parameterized by strict partitions with length at most $n$, and if $\gamma$ contains $\alpha$, $\ell(\gamma)$ is at least $n$.  Therefore any summand $L(\gamma)$ in $M\otimes N\otimes V^{\otimes d}$ has $\ell(\gamma)=n$. Furthermore, $\ell(\lambda)=\ell(\mu)=n$ in equation (\ref{structuralconstant}), and if $\ell(\mu)=1$, $f^{\gamma}_{\lambda,\mu}=1$ then $m^{\gamma}_{\lambda,\mu}=2f^{\gamma}_{\lambda,\mu}=2$ for all $\gamma$. Hence the multiplicity of any irreducible summand in $L(\alpha)\otimes L(\beta)$, or any irreducible summand in $L(\lambda)\otimes V$ in successive tensor products, is always two.

\subsection{Exta relations in the kernel}
When $M$ and $N$ are taken to be specifc modules $L(\alpha)$ and $L(\beta)$, further relations may hold for the action of $\mathcal{H}_d$. As mentioned in \cite[Section 1.4]{Wang}, a $\q(n)$-module $L(\lambda)$ is of Type Q (meaning it admits an odd module endomorphism) if and only if $\ell(\lambda)$ is odd, and is of Type M if and only if $\ell(\lambda)$ is even. Since $\ell(\alpha)=n$ and $\ell(\beta)=1$, $L(\beta)$ is of Type Q. Recall the two algebras $\mathcal{H}_d^{+}$ and $\mathcal{H}_d^{++}$ defined in Section~\ref{2.4}, and $\mathcal{H}_d^{+}$ acts on $L(\alpha)\otimes L(\beta)\otimes V^{\otimes d}$ when $L(\beta)$ is of Type Q. Furthermore, if $n$ is odd, then $L(\alpha)$ is also of Type Q, and the larger algebra $\mathcal{H}_d^{++}$ also acts on $L(\alpha)\otimes L(\beta)\otimes V^{\otimes d}$. Recall that $p$ is the number of boxes in $\beta$, and $\tilde{y}_1\in \mathcal{H}_d$ is defined shortly before Proposition~\ref{extragenerators}.

\begin{theorem}\label{relations}
The following relations are satisfied by the action of $\mathcal{H}_d^{+}$ on $L(\alpha)\otimes L(\beta)\otimes V^{\otimes d}$, and are also satisfied by the action of $\mathcal{H}_d^{++}$ on $L(\alpha)\otimes L(\beta)\otimes V^{\otimes d}$ when $n$ is odd.
\begin{align*}
\tilde{x}_1^2-n(n+1)=&0,\\
\tilde{y}_1^2\big(\tilde{y}_1^2-p(p+1)\big)=&0.
\end{align*}
\end{theorem}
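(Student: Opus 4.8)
The plan is to realize both $\tilde{x}_1$ and $\tilde{y}_1$ in terms of standard central elements of $U(\q(n))$ acting on the highest weight modules $L(\alpha)$ and $L(\beta)$, where their eigenvalues can be computed explicitly. Recall from Theorem~\ref{hacts} that $\tilde{x}_1$ acts on $L(\alpha)\otimes L(\beta)\otimes V^{\otimes d}$ as $\Omega_{M,1}$, i.e. the first factor of the odd Casimir $\Omega$ acts on $M = L(\alpha)$ and the second on the first copy of $V$. Similarly, by Corollary~\ref{xiyiact}, $\tilde{y}_1$ acts as $\Omega_{N,1}$, the analogous operator with $N = L(\beta)$ in the first slot. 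The key input is the anticipated Proposition~\ref{Quo_2_key}, which relates $\Omega$ (and hence $\Omega_{M,1}^2$, $\Omega_{N,1}^2$) to even central elements of $\q(n)\otimes\q(n)$ or of $U(\q(n))$; the standard fact is that $\Omega^2$ can be rewritten, modulo the action of the coproduct, in terms of (sums of) the quadratic Casimir elements of the two tensor factors together with the quadratic Casimir of the diagonal.

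\textbf{Step 1: Reduce to an eigenvalue computation on a single module.} Since $\Omega_{M,1}$ moves $\q(n)$-weight only between $M$ and the first copy of $V$, and since $V$ has a one-dimensional top weight space, it suffices to analyze the operator $\Omega$ on $L(\alpha)\otimes V$, and likewise $\Omega$ on $L(\beta)\otimes V$. The point is that $\Omega^2$, as an operator on any tensor product $W\otimes V$, equals (up to a known scalar and lower-order corrections coming from the $C$-twist and the coproduct) the value on $W\otimes V$ of $c_{W\otimes V} - c_W - c_V$ where $c_{(-)}$ denotes the even quadratic Casimir acting on the relevant module. For $\q(n)$ one has the classical formula for the Casimir eigenvalue on $L(\lambda)$ in terms of $\lambda$; in particular on the natural module $V = L(1,0,\dots,0)$ and on $L(\alpha)$, $L(\beta)$ these are explicitly computable. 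Carrying this out should produce $\tilde{x}_1^2 = n(n+1)$ on the staircase factor (the value $n(n+1)$ being exactly twice the sum of the parts of $\alpha = (n,n-1,\dots,1)$, which is the expected Casimir-type quantity), and an expression for $\tilde{y}_1^2$ in terms of $p$.

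\textbf{Step 2: Handle the $\tilde y_1$ relation, which is degree four.} Unlike $\tilde{x}_1$, the operator $\Omega_{N,1}$ on $L(\beta)\otimes V$ is not simply a scalar: $L(\beta)\otimes V$ decomposes (by the Pieri rule of Section~\ref{3.1}) into at most two polynomial summands $L(\beta + \square)$ for the two addable boxes, plus possibly a $\q_{\ev}$-type degeneracy. So $\Omega_{N,1}$ has (at most) two distinct nonzero eigenvalues together with eigenvalue $0$ on the part that does not survive. The cubic-in-$\tilde y_1^2$ shape $\tilde y_1^2(\tilde y_1^2 - p(p+1))=0$, i.e. $\tilde y_1^2 \in \{0, p(p+1)\}$, says precisely that $\Omega_{N,1}^2$ is $0$ on one Pieri summand and $p(p+1)$ on the other. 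Concretely, for $\beta = (p,0,\dots,0)$ the two addable boxes give $\beta + (1,0,\dots) = (p+1,0,\dots)$ and $\beta+(0,1,0,\dots)$; computing $c_{L(\beta+\square)} - c_{L(\beta)} - c_V$ for each, using the Casimir eigenvalue formula, should yield $0$ and $p(p+1)$ respectively (or a reindexed version thereof). I would then verify that the eigenvalue-$0$ summand is exactly the one that is killed, so that the minimal polynomial of $\tilde y_1^2$ divides $X(X - p(p+1))$, which is the claimed relation after multiplying through by $\tilde y_1^2$ — or more precisely, that $\tilde y_1^2(\tilde y_1^2 - p(p+1))$ annihilates the whole module.

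\textbf{Step 3: Account for the Clifford extensions and the parity hypotheses.} One must check that nothing changes when we pass from $\mathcal{H}_d$ to $\mathcal{H}_d^{+}$ or $\mathcal{H}_d^{++}$: the extra generators $c_0$ (acting as $1\otimes c^N\otimes 1^{\otimes d}$) and $c_M$ (acting as $c^M\otimes 1\otimes 1^{\otimes d}$) anticommute with $\tilde x_1$ and $\tilde y_1$ but commute with their squares, so the relations $\tilde x_1^2 = n(n+1)$ and $\tilde y_1^2(\tilde y_1^2 - p(p+1)) = 0$, being polynomial identities in the squares, hold verbatim on the larger modules. The hypothesis "$n$ odd" for $\mathcal{H}_d^{++}$ is only needed to guarantee $L(\alpha)$ is of Type Q (so that $c^M$ exists, per the Type Q criterion $\ell(\lambda)$ odd recalled in Section~\ref{3.2}); it plays no role in the algebra of $\tilde x_1, \tilde y_1$ themselves.

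\textbf{Main obstacle.} The genuine work is Step 1–2: extracting from $\Omega$ the clean statement $\Omega^2 \sim c_{W\otimes V} - c_W - c_V$ with the correct normalization and correctly tracking the sign/$C$-twist corrections (since $\overline\Omega = \Omega(1\otimes C)$ and $\Omega$ is odd, squaring it introduces signs that must be handled with the super-convention of \eqref{bracket}), and then plugging in the Casimir eigenvalue formula for $\q(n)$ on $L(\alpha)$, $L(\beta)$, $V$ and the various $L(\gamma)$ appearing in the Pieri decompositions. I expect this is exactly the content of Proposition~\ref{Quo_2_key}, so the proof of the theorem should amount to: invoke that proposition to express $\tilde x_1^2$ and $\tilde y_1^2$ via even central elements; read off their scalar action on each polynomial summand of $L(\alpha)\otimes V$ resp. $L(\beta)\otimes V$ from the Casimir eigenvalue formula; and observe these scalars satisfy the asserted polynomial equations.
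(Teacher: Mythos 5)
Your high-level strategy matches the paper's: reduce $\tilde{x}_1^2$ and $\tilde{y}_1^2$ to $\Omega^2$ acting on $L(\alpha)\otimes V$ and $L(\beta)\otimes V$ respectively, express $\Omega^2$ via central elements of $U(\q(n))$, and read off the scalar on each Pieri summand. Two corrections to the details, though. First, there is no quadratic Casimir for $\q(n)$: the center of $U(\q(n))$ is generated by Sergeev's \emph{odd-degree} elements $z_r$ of degree $2r-1$, and $\Omega^2$ has degree four, so the actual formula in Proposition~\ref{Quo_2_key} is $\Omega^2 = \tfrac{1}{3}\big(\Delta(z_2)-z_2\otimes 1-1\otimes z_2+2z_1\otimes z_1\big)$ with $z_2$ \emph{cubic}. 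The "lower-order corrections coming from the $C$-twist and the coproduct" you are hedging about are in fact the whole content; the degree-2 Casimir you propose simply does not exist here. Second, the scalar on a Pieri summand $L(\gamma)\subset L(\lambda)\otimes V$ is $c(b)(c(b)+1)$ where $c(b)$ is the content of the added box $b$ (Corollary~\ref{Piericontent}), not a Casimir difference or "twice the number of boxes." That coincidence ($n(n+1)=2|\alpha|$) holds only for the staircase and would give the wrong answer for $L(\beta)\otimes V$, where the correct values come from the two addable boxes of $\beta=(p)$, with contents $p$ and $0$, giving eigenvalues $p(p+1)$ and $0$.

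You should also make explicit the key combinatorial input for the first relation: the staircase $\alpha$ has a \emph{unique} addable box (at the end of the first row, content $n$), because adding anywhere else violates strictness or the length bound $\ell\leq n$. This is what makes $\tilde{x}_1^2$ act by the single scalar $n(n+1)$ rather than merely satisfying a polynomial identity — a sharper statement than the degree-two relation would a priori suggest, and crucial later for the quotient algebra. Your Step 3 on the Clifford extensions and on "$n$ odd" being purely a Type Q existence condition is correct and matches the paper.
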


To prove this, we need a few lemmas about the odd Casimir tensor $\Omega$ defined in (\ref{oddomega}), using central elements in $U(\q(n))$ introduced by Sergeev in \cite{Sergeev2}. These are elements $x_{ij}(m)$ defined inductively on all $m\in \mathbb{Z}$, $m\geq 1$:
\begin{align*}
x_{ij}(1)&=e_{ij},\hspace{.2 in} x'_{ij}(1)=f_{ij}\hspace{.5 in}(1\leq i,j\leq n),\\
x_{ij}(m)&=\displaystyle\sum_{s=1}^{n}(e_{is}x_{sj}(m-1)+(-1)^{m-1}f_{is}x'_{sj}(m-1)),\\
x'_{ij}(m)&=\displaystyle\sum_{s=1}^{n}(e_{is}x'_{sj}(m-1)+(-1)^{m-1}f_{is}x_{sj}(m-1)).
\end{align*}
Sergeev's central elements are
\begin{align}
z_r=\displaystyle\sum_{i=1}^{n}x_{ii}(2r-1) \label{Quo_2_zdef}
\end{align}
for $r\geq 1$, $r\in \mathbb{Z}$. Notice that $z_r$ is a homogeneous polynomial in $e_{ij}$, $f_{ij}$ of degree $2r-1$, and $\{z_r\}$ provides a list of central elements of odd degrees.

\begin{rk}
For future convenience, here are the explicit formulas for elements of low ranks.
\begin{align*}
x_{ij}(2)&=\sum_{1\leq s\leq n}(e_{is}e_{sj}-f_{is}f_{sj}), \hspace{.4 in}x'_{ij}(2)=\sum_{1\leq s\leq n}(e_{is}f_{sj}-f_{is}e_{sj}),\\
x_{ij}(3)&=\sum_{1\leq s\leq n}(e_{is}x_{sj}(2)+f_{is}x'_{sj}(2))=\sum_{1\leq s,k\leq n}(e_{is}e_{sk}e_{kj}-e_{is}f_{sk}f_{kj}+f_{is}e_{sk}f_{kj}-f_{is}f_{sk}e_{kj}),\\
z_1&=\sum_{1\leq i\leq n}e_{ii}, \hspace{.4 in}z_2=\sum_{1\leq i\leq n}x_{ii}(3).
\end{align*}
\end{rk}

The following proposition will allow us to compute the action of $z_0,\dots,z_d$ on $L(\alpha)\otimes L(\beta)\otimes V^{\otimes d}$:

\begin{prop}\label{Quo_2_key}
Let $z_i$ be the elements defined in  $(\ref{Quo_2_zdef})$. The following is true
\begin{align*}
\Omega^2=\frac{1}{3}\big(\Delta(z_2)-z_2\otimes 1-1\otimes z_2+2 z_1\otimes z_1\big).
\end{align*}
\end{prop}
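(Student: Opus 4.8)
The plan is to verify the identity by expanding both sides in the basis $\{e_{ij},f_{ij}\}$ of $\q(n)$ and comparing term by term inside $U(\q(n))\otimes U(\q(n))$. First I would compute $\Omega^2$ directly. Writing $\Omega=\sum_{i,j}e_{ij}\otimes f_{ji}-\sum_{i,j}f_{ij}\otimes e_{ji}$, squaring in the superalgebra $U(\q(n))\otimes U(\q(n))$ (remembering the Koszul sign rule when the odd right-hand factors $f_{ji}$ and $e_{ji}$ are moved past each other) gives a sum of four families of terms:
\begin{align*}
\Omega^2=\sum_{i,j,p,q}\Big(-e_{ij}e_{pq}\otimes f_{ji}f_{qp}+e_{ij}f_{pq}\otimes f_{ji}e_{qp}+f_{ij}e_{pq}\otimes e_{ji}f_{qp}-f_{ij}f_{pq}\otimes e_{ji}e_{qp}\Big),
\end{align*}
where the extra sign on the first and last groups comes from commuting two odd elements past each other in the second tensor slot. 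I would then rename indices in each group so that the right-hand factors all take the shape $e_{ab}\otimes e_{cd}$-type pure products indexed uniformly, preparing for the comparison.

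Next I would expand the right-hand side. Using the explicit formula $z_2=\sum_i x_{ii}(3)$ from the Remark, together with $z_1=\sum_i e_{ii}$, I would write out $\Delta(z_2)$ via the coproduct $\Delta(x)=x\otimes 1+1\otimes x$ on each primitive generator, extended as an algebra map; the point is that $\Delta(z_2)-z_2\otimes 1-1\otimes z_2$ collects exactly the ``cross terms'' that distribute the three factors of the cubic expression $x_{ii}(3)$ between the two tensor slots with at least one factor on each side. Since $x_{ij}(3)=\sum_{s,k}(e_{is}e_{sk}e_{kj}-e_{is}f_{sk}f_{kj}+f_{is}e_{sk}f_{kj}-f_{is}f_{sk}e_{kj})$, the cross terms split into a ``$2+1$'' part and a ``$1+2$'' part. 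Collecting the $2+1$ part yields sums of the form $(\text{quadratic in }e,f)\otimes(\text{linear in }e,f)$; comparing with the definitions of $x_{ij}(2)$ and $x'_{ij}(2)$, these reassemble into expressions like $\sum x_{\bullet\bullet}(2)\otimes e_{\bullet\bullet}$, and symmetrically for $1+2$. The term $2z_1\otimes z_1=2\sum_{i,j}e_{ii}\otimes e_{jj}$ is needed to absorb the ``diagonal'' pieces that arise when Kronecker deltas from reordering within $x_{ii}(3)$ force repeated indices.

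The heart of the computation is then a bookkeeping match. I expect the key intermediate step to be an identity of the shape
\begin{align*}
3\,\Omega^2+\text{(correction terms)}=\Delta(z_2)-z_2\otimes 1-1\otimes z_2+2z_1\otimes z_1,
\end{align*}
where the factor $3$ reflects that each cubic monomial in $x_{ii}(3)$ contributes to the cross terms in (roughly) three cyclically related ways, matching the square of the quadratic-ish object $\Omega$ after the signs are accounted for. Concretely I would: (i) reduce all four groups of $\Omega^2$ using the superalgebra relations $[e_{ij},e_{pq}]=\delta_{jp}e_{iq}-\delta_{iq}e_{pj}$, $[e_{ij},f_{pq}]=\delta_{jp}f_{iq}-\delta_{iq}f_{pj}$, $\{f_{ij},f_{pq}\}=\delta_{jp}e_{iq}+\delta_{iq}e_{pj}$ (these are exactly the commutators already exploited in the proof of Theorem~\ref{hacts}); the delta-free parts will directly become pieces of $\Delta(z_2)$, while the delta parts produce the $z_1\otimes z_1$ and the one-sided $z_2\otimes 1$, $1\otimes z_2$ contributions that cancel; (ii) compare coefficients of each monomial type $eee$, $eff$, $fef$, $ffe$ (and their index permutations) on both sides.

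\textbf{Main obstacle.} The hard part will be the sign and index-reordering bookkeeping: each of the four groups in $\Omega^2$ must be normal-ordered using the super-(anti)commutators above, which generates lower-degree ``collision'' terms indexed by Kronecker deltas, and one must keep scrupulous track of the Koszul signs introduced both by the tensor-product multiplication rule $(a_1\otimes b_1)(a_2\otimes b_2)=(-1)^{\overline{b_1}\,\overline{a_2}}a_1a_2\otimes b_1b_2$ and by reordering odd elements within a single slot. An efficient route is to avoid reordering altogether on the left side: instead, expand $x_{ii}(3)$ on the right and apply $\Delta$, then normal-order \emph{that}, and check that the purely off-diagonal part reproduces $3\Omega^2$ after identifying the summation indices. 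Since the statement is a polynomial identity in the universal enveloping algebra, once matching coefficients are verified on a spanning set of monomials the identity follows; no convergence or module-theoretic input is needed. Alternatively, one may verify the identity after applying it to a highest-weight vector of $L(\lambda)\otimes V^{\otimes d}$, but the purely algebraic verification is cleaner and is what I would present.
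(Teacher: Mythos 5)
Your overall strategy --- expand both sides in the PBW monomial basis, normal-order with the super-(anti)commutation relations, then match coefficients --- is the paper's own approach, so I have no objection on strategy. However, the one concrete formula you commit to is wrong: the signs in your displayed expansion of $\Omega^2$ are incorrect. Using the multiplication rule $(a_1\otimes b_1)(a_2\otimes b_2) = (-1)^{\overline{b_1}\cdot\overline{a_2}}\,a_1a_2\otimes b_1b_2$ and the signed sum $\Omega = \sum(e_{ij}\otimes f_{ji}) - \sum(f_{ij}\otimes e_{ji})$, the four families carry leading signs $(+,-,-,+)$ from the product of the coefficients, and the Koszul factor $(-1)^{\overline{b_1}\cdot\overline{a_2}}$ equals $-1$ only in the second family (where $b_1=f_{ji}$ and $a_2=f_{pq}$ are both odd); it is $+1$ in the other three. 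The correct expansion is therefore
\begin{align*}
\Omega^2 = \sum\bigl(e_{ij}e_{pq}\otimes f_{ji}f_{qp}+e_{ij}f_{pq}\otimes f_{ji}e_{qp}-f_{ij}e_{pq}\otimes e_{ji}f_{qp}+f_{ij}f_{pq}\otimes e_{ji}e_{qp}\bigr),
\end{align*}
with signs $(+,+,-,+)$, not $(-,+,+,-)$ as you wrote. Your stated justification --- an extra sign from ``commuting two odd elements past each other in the second tensor slot'' --- does not correspond to any step in the product: the expressions $f_{ji}f_{qp}$, $e_{ji}e_{qp}$, etc.\ in each slot are juxtapositions in $U(\mathfrak{q}(n))$, not reorderings, and in your fourth family the second slot $e_{ji}e_{qp}$ contains no odd elements at all, so the explanation contradicts your own formula. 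With this sign pattern the coefficient comparison against $\Delta(z_2)-z_2\otimes1-1\otimes z_2+2z_1\otimes z_1$ would not close; once the signs are corrected, the rest of your bookkeeping plan is precisely the computation the paper performs.
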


\begin{proof}
First,
\begin{align*}
z_2=\sum_{1\leq i,s,k\leq n}(e_{is}e_{sk}e_{ki}-e_{is}f_{sk}f_{ki}+f_{is}e_{sk}f_{ki}-f_{is}f_{sk}e_{ki}),\\
\end{align*}
where
\begin{align*}
 \Delta(\sum_{1\leq i,s,k\leq n}e_{is}e_{sk}e_{ki})&=\sum_{1\leq i,s,k\leq n}(e_{is}\otimes 1+1\otimes e_{is})(e_{sk}\otimes 1+1\otimes e_{sk})(e_{ki}\otimes 1+1\otimes e_{ki})\\
&=\sum_{1\leq i,s,k\leq n}(1\otimes e_{is}e_{sk}e_{ki}+ e_{is}\otimes e_{sk}e_{ki}+e_{sk}\otimes e_{is}e_{ki}+e_{ki}\otimes e_{is}e_{sk}+\\
&= e_{is}e_{sk}\otimes e_{ki}+e_{is}e_{ki}\otimes e_{sk} +e_{sk}e_{ki}\otimes e_{is} +  e_{is}e_{sk}e_{ki}\otimes 1,\\
-\Delta(\sum_{1\leq i,s,k\leq n} e_{is}f_{sk}f_{ki})&=-\sum_{1\leq i,s,k\leq n} (e_{is}\otimes 1 +1\otimes e_{is})(f_{sk}\otimes 1+1\otimes f_{sk})(f_{ki}\otimes 1+1\otimes f_{ki})\\
&=-\sum_{1\leq i,s,k\leq n} (e_{is}f_{sk}f_{ki}\otimes 1+ e_{is}f_{sk}\otimes f_{ki}-e_{is}f_{ki}\otimes f_{sk}+f_{sk}f_{ki}\otimes e_{is}\\
&\hh +e_{is}\otimes f_{sk}f_{ki}+f_{sk}\otimes e_{is}f_{ki}-f_{ki}\otimes e_{is}f_{sk}+1\otimes e_{is}f_{sk}f_{ki}  ),\\
\Delta(\sum_{1\leq i,s,k\leq n} f_{is}e_{sk}f_{ki})&= \sum_{1\leq i,s,k\leq n}(f_{is}\otimes 1+1\otimes f_{is})(e_{sk}\otimes 1+1\otimes e_{sk})(f_{ki}\otimes 1+1\otimes f_{ki})\\
&= \sum_{1\leq i,s,k\leq n}(f_{is}e_{sk}f_{ki}\otimes 1 +f_{is}e_{sk}\otimes f_{ki}+f_{is}f_{ki}\otimes e_{sk}-e_{sk}f_{ki}\otimes f_{is}\\
&\hh +f_{is}\otimes e_{sk}f_{ki}+e_{sk}\otimes f_{is}f_{ki}-f_{ki}\otimes f_{is}e_{sk} +1\otimes f_{is}e_{sk}f_{ki} ),\\
 -\Delta(\sum_{1\leq i,s,k\leq n} f_{is}f_{sk}e_{ki}))&= -\sum_{1\leq i,s,k\leq n}(f_{is}f_{sk}e_{ki}\otimes 1 +f_{is}f_{sk}\otimes e_{ki}+f_{is}e_{ki}\otimes f_{sk}-f_{sk}e_{ki}\otimes f_{is}\\
&\hh +f_{is}\otimes f_{sk}e_{ki}-f_{sk}\otimes f_{is}e_{ki}+e_{ki}\otimes f_{is}f_{sk}+1\otimes f_{is}f_{sk}e_{ki}).
\end{align*}
Therefore,
\begin{align*}
&\hh \Delta(z_2)-z_2\otimes 1 -1\otimes z_2\\
&=\sum_{1\leq i,s,k\leq n}( e_{is}e_{sk}\otimes e_{ki}+e_{is}e_{ki}\otimes e_{sk} +e_{sk}e_{ki}\otimes e_{is} -e_{is}f_{sk}\otimes f_{ki}+e_{is}f_{ki}\otimes f_{sk}-e_{sk}f_{ki}\otimes f_{is}\\
&\hh +f_{is}e_{sk}\otimes f_{ki}-f_{is}e_{ki}\otimes f_{sk}+f_{sk}e_{ki}\otimes f_{is} -f_{is}f_{sk}\otimes e_{ki}+f_{is}f_{ki}\otimes e_{sk}-f_{sk}f_{ki}\otimes e_{is}\\
&\hh -f_{is}\otimes f_{sk}e_{ki}+f_{sk}\otimes f_{is}e_{ki}-f_{ki}\otimes f_{is}e_{sk} +e_{is}\otimes e_{sk}e_{ki}+e_{sk}\otimes e_{is}e_{ki}+e_{ki}\otimes e_{is}e_{sk}\\
&\hh +f_{is}\otimes e_{sk}f_{ki}-f_{sk}\otimes e_{is}f_{ki}+f_{ki}\otimes e_{is}f_{sk} -e_{is}\otimes f_{sk}f_{ki}+e_{sk}\otimes f_{is}f_{ki}-e_{ki}\otimes f_{is}f_{sk} ).
\end{align*}
On the other hand, let us compute $\Omega^2$. All sums without indexing set are understood to be taken over $1\leq i,j,p,q\leq n$.
\begin{align*}
3\Omega^2
&=\sum_{1\leq i,j\leq n} 3(e_{ij}\otimes f_{ji}-f_{ij}\otimes e_{ji})\sum_{1\leq p,q\leq n} (e_{pq}\otimes f_{qp}-f_{pq}\otimes e_{qp})\\
&=\sum 3(e_{ij}e_{pq}\otimes f_{ji}f_{qp}+e_{ij}f_{pq}\otimes f_{ji}e_{qp}-f_{ij}e_{pq}\otimes e_{ji}f_{qp}+f_{ij}f_{pq}\otimes e_{ji}e_{qp})\\
&=\sum \frac{3}{2}(e_{ij}e_{pq}\otimes f_{ji}f_{qp}-e_{pq}e_{ij}\otimes f_{ji}f_{qp}+e_{pq}e_{ij}\otimes f_{ji}f_{qp}+e_{ij}e_{pq}\otimes f_{ji}f_{qp})\\
&\hh  + \sum 3(e_{ij}f_{pq}\otimes f_{ji}e_{qp}-f_{pq}e_{ij}\otimes f_{ji}e_{qp} +f_{pq}e_{ij}\otimes f_{ji}e_{qp} - f_{ij}e_{pq}\otimes e_{ji}f_{qp})\\
&\hh + \sum\frac{3}{2}(f_{ij}f_{pq}\otimes e_{ji}e_{qp}+f_{pq}f_{ij}\otimes e_{ji}e_{qp}-f_{pq}f_{ij}\otimes e_{ji}e_{qp} +f_{ij}f_{pq}\otimes e_{ji}e_{qp})\\
&=\sum \frac{3}{2}((\delta_{jp}e_{iq}-\delta_{iq}e_{pj}))\otimes f_{ji}f_{qp}+e_{pq}e_{ij}\otimes f_{ji}f_{qp}+e_{pq}e_{ij}\otimes f_{qp}f_{ji})\\
&\hh  + \sum 3((\delta_{jp}f_{iq}-\delta_{iq}f_{pj})\otimes f_{ji}e_{qp} +f_{pq}e_{ij}\otimes f_{ji}e_{qp} - f_{pq}e_{ij}\otimes e_{qp}f_{ji})\\
&\hh + \sum\frac{3}{2}((\delta_{jp}e_{iq}+\delta_{iq}e_{pj})\otimes e_{ji}e_{qp}-f_{pq}f_{ij}\otimes e_{ji}e_{qp} +f_{pq}f_{ij}\otimes e_{qp}e_{ji})\\
&=\sum_{i,p,q} \frac{3}{2}e_{iq}\otimes f_{pi}f_{qp}-\sum_{j,p,q}\frac{3}{2}e_{pj}\otimes f_{jq}f_{qp}+\sum\frac{3}{2}e_{pq}e_{ij}\otimes (\delta_{iq}e_{jp}+\delta_{jp}e_{qi})\\
&\hh  + \sum_{i,p,q} 3f_{iq}\otimes f_{pi}e_{qp}-\sum_{j,p,q} 3f_{pj}\otimes f_{jq}e_{qp}+\sum 3f_{pq}e_{ij}\otimes (\delta_{iq}f_{jp}-\delta_{jp}f_{qi}) \\
&\hh + \sum_{i,p,q}\frac{3}{2}e_{iq}\otimes e_{pi}e_{qp}+\sum_{j,p,q}\frac{3}{2}e_{pj}\otimes e_{jq}e_{qp} -\sum\frac{3}{2}f_{pq}f_{ij}\otimes (\delta_{iq}e_{jp}-\delta_{jp}e_{qi})\\
&=\sum_{i,p,q} \frac{3}{2}e_{iq}\otimes f_{pi}f_{qp}-\sum_{j,p,q}\frac{3}{2}e_{pj}\otimes f_{jq}f_{qp}+\sum_{j,p,q}\frac{3}{2}e_{pq}e_{qj}\otimes e_{jp} +\sum_{i,p,q}\frac{3}{2}e_{pq}e_{ip}\otimes e_{qi}\\
&\hh  + \sum_{i,p,q} 3f_{iq}\otimes f_{pi}e_{qp}-\sum_{j,p,q} 3f_{pj}\otimes f_{jq}e_{qp}+\sum_{j,p,q} 3 f_{pq}e_{qj}\otimes f_{jp}-\sum_{i,p,q} 3f_{pq}e_{ip}\otimes f_{qi} \\
&\hh + \sum_{i,p,q}\frac{3}{2}e_{iq}\otimes e_{pi}e_{qp}+\sum_{j,p,q}\frac{3}{2}e_{pj}\otimes e_{jq}e_{qp} -\sum_{j,p,q}\frac{3}{2}f_{pq}f_{qj}\otimes e_{jp}+\sum_{i,p,q}\frac{3}{2}f_{pq}f_{ip}\otimes e_{qi}.
\end{align*}
Comparing the two results,
\begin{align*}
&\hh 3\Omega^2-(\Delta(z_2)-z_2\otimes 1 -1\otimes z_2)\\
&= \sum_{i,p,q} \frac{1}{2}(e_{iq}\otimes f_{pi}f_{qp}+ e_{iq}\otimes f_{qp}f_{pi})+\sum_{j,p,q}\frac{1}{2}(e_{pq}e_{jp}\otimes e_{qj}-e_{jp}e_{pq}\otimes e_{qj})\\
&\hh +\sum_{i,p,q} 2(f_{iq}\otimes f_{pi}e_{qp}-f_{iq}\otimes e_{qp}f_{pi})-\sum_{j,p,q} (f_{pj}\otimes f_{jq}e_{qp}-f_{pj}\otimes  e_{qp}f_{jq})\\
&\hh +\sum_{j,p,q} ( f_{pq}e_{qj}\otimes f_{jp} -e_{qj}f_{pq}\otimes f_{jp})-\sum_{i,p,q} 2(f_{pq}e_{ip}\otimes f_{qi}-e_{ip}f_{pq}\otimes f_{qi})\\
&\hh + \sum_{i,p,q}\frac{1}{2}(e_{iq}\otimes e_{pi}e_{qp}-e_{iq}\otimes e_{qp}e_{pi})+\sum_{j,p,q}\frac{1}{2}(f_{pq}f_{qj}\otimes e_{jp}+f_{qj}f_{pq}\otimes e_{jp})\\
&= \sum_{i,p,q} \frac{1}{2} e_{iq}\otimes (\delta_{iq}e_{pp}+e_{qi})+\sum_{j,p,q}\frac{1}{2}(\delta_{qj}e_{pp}-e_{jq})\otimes e_{qj}\\
&\hh +\sum_{i,p,q} 2f_{iq}\otimes (\delta_{iq}f_{pp}-f_{qi})-\sum_{j,p,q} f_{pj}\otimes (f_{jp}-\delta_{pj}f_{qq})\\
&\hh +\sum_{j,p,q}  (f_{pj}-\delta_{pj}f_{qq})\otimes f_{jp} -\sum_{i,p,q} 2(\delta_{qi}f_{pp}-f_{iq})\otimes f_{qi}\\
&\hh + \sum_{i,p,q}\frac{1}{2}e_{iq}\otimes (\delta_{iq}e_{pp}-e_{ip})+\sum_{j,p,q}\frac{1}{2}(e_{pj}+\delta_{pj}e_{qq})\otimes e_{jp}\\
&= 2\sum_{i,p}e_{ii}\otimes e_{pp} =2(\sum_{i}e_{ii})\otimes(\sum_p e_{pp})=2z_1\otimes z_1.
\end{align*}
\end{proof}

Brundan-Kleshchev calculated the action of $z_i$ on a highest weight vector of weight $\lambda=(\lambda_1,\dots,\lambda_n)$:

\begin{theorem}\cite[Lemma 8.4]{Brundan2}\label{Quo_2_z}
Let $M$ be a $\q(n)$-modules and $v_{\lambda}\in M_{\lambda}$ be a vector annihilated by $e_{ij}$ and $f_{ij}$, $\forall 1\leq i<j \leq n$, then $z_r.v_{\lambda}=z_r(\lambda)v$, where
\begin{align*}
z_r(\lambda)=\displaystyle\sum (-2)^{s+1}\lambda_{i_1}\cdots\lambda_{i_s}(\lambda_{i_1}^2-\lambda_{i_1})^{a_1}\cdots (\lambda_{i_s}^2-\lambda_{i_s})^{a_s},
\end{align*}
and the sum is taken over all $1\leq s\leq r$, $1\leq i_1<\cdots<i_s\leq n$, $a_i\in \mathbb{Z}_{\geq 0}$, $a_1+\cdots+a_s=r-s$.
\end{theorem}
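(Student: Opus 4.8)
The plan is as follows (cf. \cite[Lemma~8.4]{Brundan2}). First I would repackage Sergeev's elements into matrices: let $\mathcal{E}=(e_{ij})$ and $\mathcal{F}=(f_{ij})$ be the $n\times n$ matrices with entries in $U(\q(n))$, and set $X(m)=(x_{ij}(m))$ and $X'(m)=(x'_{ij}(m))$. The recursion defining $x_{ij}(m)$ and $x'_{ij}(m)$ then reads $X(m)=\mathcal{E}X(m-1)+(-1)^{m-1}\mathcal{F}X'(m-1)$ and $X'(m)=\mathcal{E}X'(m-1)+(-1)^{m-1}\mathcal{F}X(m-1)$, using ordinary matrix multiplication with entries kept in their left-to-right order. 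Writing $P=\mathcal{E}+\mathcal{F}$, $Q=\mathcal{E}-\mathcal{F}$, $U(m)=X(m)+X'(m)$ and $W(m)=X(m)-X'(m)$, the system collapses to $U(m)=(\mathcal{E}+(-1)^{m-1}\mathcal{F})U(m-1)$ and $W(m)=(\mathcal{E}-(-1)^{m-1}\mathcal{F})W(m-1)$, with $U(1)=P$ and $W(1)=Q$; hence $U(2r-1)=(PQ)^{r-1}P$ and $W(2r-1)=(QP)^{r-1}Q$, and taking traces gives $z_r=\operatorname{tr}X(2r-1)=\tfrac12\bigl(\operatorname{tr}((PQ)^{r-1}P)+\operatorname{tr}((QP)^{r-1}Q)\bigr)$.

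Next I would specialize to $v_{\lambda}$. Because $e_{ij}v_{\lambda}=f_{ij}v_{\lambda}=0$ for $i<j$, the $(i,j)$-entries of $P$ and $Q$ annihilate $v_{\lambda}$ whenever $i<j$, so $P$ and $Q$ act ``lower-triangularly on $v_{\lambda}$'' with diagonal entries $p_{kk}=e_{kk}+f_{kk}$ and $q_{kk}=e_{kk}-f_{kk}$. Expanding $\operatorname{tr}((PQ)^{r-1}P)v_{\lambda}$ as a sum over index sequences $i_1,\dots,i_{2r-1}$ of products $p_{i_1i_2}q_{i_2i_3}\cdots p_{i_{2r-1}i_1}$ applied to $v_{\lambda}$ from the right, I would argue by downward induction on the position of the rightmost ``raising'' factor (an entry whose first index is strictly smaller than its second): such a factor can be commuted to the right past all later factors using the $\q(n)$-relations, at which point it kills $v_{\lambda}$, leaving shorter words in which a pair of matrix indices has been identified and a diagonal Cartan element $e_{kk}$ or $f_{kk}$ has been inserted. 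Iterating, $z_{r}v_{\lambda}$ is rewritten as a $\mathbb{C}$-linear combination of ``staircase'' terms indexed by a strictly increasing tuple $i_1<\cdots<i_s$ of surviving indices together with a composition $a_1+\cdots+a_s=r-s$ recording how many $PQ$-alternations are absorbed at each index, each such term being a product of the diagonal entries $p_{i_ti_t}$, $q_{i_ti_t}$ applied to $v_{\lambda}$.

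Finally one evaluates these diagonal products using $e_{ii}v_{\lambda}=\lambda_iv_{\lambda}$, $[e_{ii},f_{jj}]=0$, $f_{ii}^{2}=e_{ii}$ (so $f_{ii}^{2}v_{\lambda}=\lambda_iv_{\lambda}$) and $f_{ii}f_{jj}=-f_{jj}f_{ii}$ for $i\neq j$; in particular $p_{ii}q_{ii}=e_{ii}^{2}-f_{ii}^{2}$ acts on $v_{\lambda}$ by $\lambda_i^{2}-\lambda_i$, which is the source of the factors $(\lambda_{i_t}^{2}-\lambda_{i_t})^{a_t}$, while each unmatched diagonal factor contributes a $\lambda_{i_t}$ plus a term carrying a single $f_{i_ti_t}$. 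Since $z_r$ is even and central, $z_{r}v_{\lambda}$ is a scalar multiple of $v_{\lambda}$, so the terms carrying an unpaired $f_{kk}$ --- which enter the two traces with opposite signs --- must cancel, leaving the asserted symmetric function $z_r(\lambda)$; the power of $-2$ is produced by the $PQ$-alternation signs together with the overall $\tfrac12$, and the cases $r=1,2$, computed directly from $z_1=\sum_ie_{ii}$ and $z_2=\sum_ix_{ii}(3)$, fix the remaining constants. The main obstacle is the bookkeeping in the previous paragraph: controlling the signs generated when odd root vectors are commuted past one another, and checking that the combinatorial data $(i_1<\cdots<i_s,\ a_1+\cdots+a_s=r-s)$ occurs with precisely the right multiplicities. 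A cleaner route to this last step, bypassing the $f_{kk}$-cancellation, is to compute the image of $z_r$ under the Harish--Chandra projection $U(\q(n))\to U(\h)$ attached to the triangular decomposition $\q(n)=\mathfrak{n}^{-}\oplus\h\oplus\mathfrak{n}^{+}$ and then apply the linear ``diagonal'' functional on $U(\h)$ determined by $e_{ii}\mapsto\lambda_i$, $f_{ii}^{2}\mapsto\lambda_i$, $f_{ii}\mapsto 0$.
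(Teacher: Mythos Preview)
The paper does not prove this statement: it is quoted verbatim from Brundan--Kleshchev \cite[Lemma~8.4]{Brundan2} and used as a black box (only the explicit values $z_1(\lambda)$ and $z_2(\lambda)$ are ever needed). So there is no proof in the paper to compare your proposal against.

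That said, your sketch is essentially the Brundan--Kleshchev argument. The matrix repackaging and the decoupling $U(m)=(\mathcal{E}+(-1)^{m-1}\mathcal{F})U(m-1)$, $W(m)=(\mathcal{E}-(-1)^{m-1}\mathcal{F})W(m-1)$ leading to $z_r=\tfrac12\bigl(\operatorname{tr}(PQ)^{r-1}P+\operatorname{tr}(QP)^{r-1}Q\bigr)$ are correct and are exactly how Brundan--Kleshchev organise the computation. The triangularity argument (only diagonal factors survive on $v_\lambda$ after commuting raising operators to the right) is also the right mechanism.

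The one genuine soft spot is your final cancellation step. You write that ``since $z_r$ is even and central, $z_r v_\lambda$ is a scalar multiple of $v_\lambda$, so the terms carrying an unpaired $f_{kk}$ must cancel.'' But the hypothesis only says $v_\lambda$ is a highest-weight vector in an arbitrary module $M$; the $\lambda$-weight space of $M$ need not be one-dimensional, and the Harish--Chandra image $\pi(z_r)\in U(\mathfrak{h})_{\bar 0}$ can a priori contain monomials like $f_{ii}f_{jj}$ with $i\neq j$, which act on $v_\lambda$ but not by scalars. So evenness and centrality alone do not force $\pi(z_r)v_\lambda\in\mathbb{C}v_\lambda$; you must actually show, by the explicit bookkeeping you flag as ``the main obstacle,'' that the surviving diagonal word in $U(\mathfrak{h})$ lies in the subalgebra generated by the $e_{ii}$ (equivalently, that all $f$-factors pair off into $f_{kk}^2=e_{kk}$ at the \emph{same} index). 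Your proposed ``diagonal functional'' $e_{ii}\mapsto\lambda_i$, $f_{ii}\mapsto 0$ is not an algebra homomorphism on $U(\mathfrak{h})$, so invoking it does not bypass this computation. In Brundan--Kleshchev this is handled by an explicit induction on the word length; your sketch identifies the right strategy but does not yet close this gap.
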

\begin{rk}
Left multiplication by the element $z_r$ induces an even $\q(n)$-endomorphism on $L(\lambda)$, and hence acts by a scalar on $L(\lambda)$ according to super Schur's Lemma in Lemma~\ref{Pre_3_superschur}. This is the same scalar by which $z_r$ acts on a highest weight vector $v_{\lambda}\in L(\lambda)_{\lambda}$.
\end{rk}
For future use we write down the explicit formula for $z_1(\lambda)$ and $z_2(\lambda)$:
\begin{align*}
z_1(\lambda)&=\lambda_1+\cdots\lambda_n, \hspace{.4 in}z_2(\lambda)=(\lambda_1^3+\cdots \lambda_n^3)-(\lambda_1+\cdots+\lambda_n)^2.
\end{align*}

For a box $b$ in a shifted Young diagram, denote by $c(b)$ the content of $b$:
\begin{align}\label{content}
c(b)=\text{column of } b - \text{row of } b.
\end{align}

Recall the Pieri Rule introduced shortly before Lemma~\ref{Pre_5_com}, which describes the irreducible summands in $L(\lambda)\otimes V$. The following result shows that we can use the action of $\Omega^2$ to distinguish various isotypic components from each other in the decomposition.

\begin{cor}\label{Piericontent}
Let $L(\gamma)$ be an irreducible summand of $L(\lambda)\otimes V$, and let $b$ be the unique box in $\gamma$ that does not belong to $\lambda$. Then $\Omega^2$ acts on $L(\gamma)$ by the scalar $c(b)(c(b)+1)$.
\end{cor}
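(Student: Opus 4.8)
The plan is to reduce the statement to an application of Proposition~\ref{Quo_2_key} together with Theorem~\ref{Quo_2_z}. Since $L(\gamma)$ is simple and $\Omega^2$ acts as an even $\q(n)$-endomorphism (it is the square of the odd endomorphism $\Omega$, which supercommutes with $\q(n)$), super Schur's Lemma guarantees that $\Omega^2$ acts by a scalar on $L(\gamma)$. So it suffices to compute this scalar on a single highest weight vector.

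First I would apply Proposition~\ref{Quo_2_key} in the setting $M' = L(\lambda)$ and $N' = V$, so that $M'\otimes N'$ contains $L(\gamma)$ as a summand and
\begin{align*}
\Omega^2 = \tfrac{1}{3}\bigl(\Delta(z_2) - z_2\otimes 1 - 1\otimes z_2 + 2\, z_1\otimes z_1\bigr)
\end{align*}
as operators on $L(\lambda)\otimes V$. Let $v_\gamma$ be a highest weight vector in the copy of $L(\gamma)$; then $z_2$ acts on $v_\gamma$ (viewed inside $L(\lambda)\otimes V$) by the scalar $z_2(\gamma)$, because $\Delta(z_2)$ is just the action of the central element $z_2$ on the tensor product module, which contains $L(\gamma)$. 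Meanwhile $z_2\otimes 1$ acts by $z_2(\lambda)$ and $z_1\otimes z_1$ acts by $z_1(\lambda)\, z_1(V)$; the subtle point is the value of $z_r$ on the natural module $V = L((1,0,\dots,0))$, which by Theorem~\ref{Quo_2_z} gives $z_1(V) = 1$ and $z_2(V) = 1^3 - 1^2 = 0$. Hence on $v_\gamma$,
\begin{align*}
3\,\Omega^2 \cdot v_\gamma = \bigl(z_2(\gamma) - z_2(\lambda) - 0 + 2\, z_1(\lambda)\bigr) v_\gamma,
\end{align*}
using $z_1(V)=1$.

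The remaining work is the combinatorial identity: if $\gamma = \lambda + \text{(one box $b$)}$, then
\begin{align*}
z_2(\gamma) - z_2(\lambda) + 2\, z_1(\lambda) = 3\, c(b)\bigl(c(b)+1\bigr).
\end{align*}
Writing $b$ in row $k$, adding $b$ changes $\lambda_k$ to $\lambda_k + 1$ and leaves all other parts fixed, so with $m = \lambda_k$ (hence the column index of $b$ is $m+1$ and $c(b) = m+1-k$) one has $z_2(\gamma) - z_2(\lambda) = \bigl((m+1)^3 - m^3\bigr) - \bigl((|\lambda|+1)^2 - |\lambda|^2\bigr) = (3m^2 + 3m + 1) - (2|\lambda| + 1)$ from the explicit formula $z_2(\mu) = \sum \mu_i^3 - (\sum\mu_i)^2$, and $z_1(\lambda) = |\lambda|$. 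Substituting, the $|\lambda|$ terms cancel and one is left with $3m^2 + 3m = 3m(m+1)$, i.e.\ $3\,c\bigl(c+1\bigr)$ after correcting for the shift by $k$. The genuinely delicate bookkeeping is that $c(b) = m+1-k$, not $m$, so I must redo this computation keeping track of the row index: in a shifted diagram adding a box to row $k$ means the part that changes is $\lambda_k$, and one should verify that the contribution of the off-diagonal terms in $z_2(\gamma) - z_2(\lambda)$ — there are none, since only one part changes — together with the $2 z_1(\lambda)$ correction indeed reproduces $c(b)(c(b)+1)$ with the content measured as column minus row. I expect this last content-bookkeeping step to be the main obstacle, since it is easy to be off by the row index; everything else is a direct substitution into the formulas already established.
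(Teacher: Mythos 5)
Your approach is exactly the paper's: apply Proposition~\ref{Quo_2_key} with $M'=L(\lambda)$, $N'=V$, observe $\Omega^2$ acts by a scalar on $L(\gamma)$ (super Schur's Lemma), and evaluate $\tfrac{1}{3}\bigl(z_2(\gamma)-z_2(\lambda)-z_2(\epsilon_1)+2z_1(\lambda)z_1(\epsilon_1)\bigr)$ using Theorem~\ref{Quo_2_z}. Your arithmetic, including $z_1(V)=1$, $z_2(V)=0$, and the identity $z_2(\gamma)-z_2(\lambda)+2z_1(\lambda)=3m(m+1)$ where $m=\lambda_k$, is correct and matches the paper's.

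The one place you go astray is the final content bookkeeping, and it is precisely the point you flag as uncertain. You assert that $c(b)=m+1-k$ (column minus row in an ordinary left-justified Young diagram) and then worry about ``correcting for the shift by $k$.'' But these are \emph{shifted} Young diagrams: row $k$ begins in column $k$, so its boxes have contents $0,1,\dots,\lambda_k-1$, and the box $b$ appended at the end of row $k$ sits in column $k+\lambda_k$, giving $c(b)=(k+\lambda_k)-k=\lambda_k=m$ exactly. No correction is needed; $3m(m+1)$ is already $3\,c(b)\bigl(c(b)+1\bigr)$. This is precisely what the paper's proof notes in its closing sentence (``the content of the first box in any row is zero, and the content of the added box is equal to the number of boxes in the $i$-th row''). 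So your plan is sound and the algebra is right, but the last step should resolve itself immediately once you remember the diagrams are shifted.
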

\begin{proof}
By Proposition~\ref{Quo_2_key}, $\Omega^2$ acts as $\frac{1}{3}(\Delta(z_2)-z_2\otimes 1-1\otimes z_2+2 z_1\otimes z_1)$. Also recall that $z_i$ acts on $L(\lambda)$ by the scalar $z_i(\lambda)$ in Theorem~\ref{Quo_2_z}, and $V=L(\epsilon_1)$. Assume $\gamma=\lambda+\epsilon_i$ for some $1\leq i\leq n$. 
In particular, let $|\lambda|=\lambda_1+\cdots \lambda_n$,
\begin{align*}
z_1(\lambda)&=|\lambda|, \hspace{.2 in} z_2(\lambda)=\lambda_1^3+\cdots \lambda_n^3 -|\lambda|^2, \hspace{.2 in} z_1(\gamma)=|\lambda|+1,\\
z_2(\gamma)&=\lambda_1^3+\cdots+(\lambda_i+1)^2+\cdots+ \lambda_n^3 -(|\lambda|+1)^2\hspace{.2 in} z_1(\epsilon_1)=1, \hspace{.2 in} z_2(\epsilon_1)=0.
\end{align*}
Therefore, $\Omega^2$ acts as the scalar
\begin{align*}
&\hh \frac{1}{3}(z_2(\gamma)-z_2(\lambda)-z_2(\epsilon_1)+2 z_1(\lambda) z_1(\epsilon_1))\\
&=\frac{1}{3}((\lambda_1^3+\cdots+(\lambda_i+1)^2+\cdots+ \lambda_n^3 -(|\lambda|+1)^2)-(\lambda_1^3+\cdots \lambda_n^3 -|\lambda|^2))+2|\lambda|=\lambda_i (\lambda_i+1). 
\end{align*}
and the last equality holds because the content of the first box in any row is zero, and the content of the added box is equal to the number of boxes in the $i$-th row of $\lambda$.
\end{proof}

Recall that $\alpha$ is the staircase Young diagram with length $n$ and $\beta$ is a single row of $p$ boxes. In Lemma~\ref{Pre_5_com} we gave a description of all partitions $\lambda$ such that $L(\lambda)$ is an irreducible summand of $L(\alpha)\otimes L(\beta)$. We can also use the action of $\Omega^2$ to distinguish these summands from each other. This following result will motivate some of the definitions in Section~\ref{4.2}.
\begin{cor}\label{x1squareacts}
Let $L(\lambda)\subset L(\alpha)\otimes L(\beta)$ be an irreducible summand. Then $\Omega^2$ acts on $L(\lambda)$ via the scalar $mp(m-p)$, where $m$ is the number of boxes in the first row of $\lambda$.
\end{cor}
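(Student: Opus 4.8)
The plan is to mimic the proof of Corollary~\ref{Piericontent}, using Proposition~\ref{Quo_2_key} to reduce the computation of $\Omega^2$ to the eigenvalues $z_1(\cdot)$ and $z_2(\cdot)$ supplied by Theorem~\ref{Quo_2_z}. Since $L(\lambda)$ appears in $L(\alpha)\otimes L(\beta)$, left multiplication by the central element $z_r$ acts as a scalar on each of $L(\alpha)$, $L(\beta)$, and $L(\lambda)$, and $\Omega^2 = \tfrac13\big(\Delta(z_2) - z_2\otimes 1 - 1\otimes z_2 + 2 z_1\otimes z_1\big)$ acts on the summand $L(\lambda)$ of $L(\alpha)\otimes L(\beta)$ by the scalar
\begin{align*}
\tfrac13\big(z_2(\lambda) - z_2(\alpha) - z_2(\beta) + 2 z_1(\alpha) z_1(\beta)\big).
\end{align*}
So the whole problem becomes the purely combinatorial evaluation of this expression.

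First I would record the data. We have $\alpha=(n,n-1,\dots,1)$, so $z_1(\alpha)=|\alpha|=\binom{n+1}{2}$ and $z_2(\alpha)=\sum_{k=1}^n k^3 - |\alpha|^2 = \binom{n+1}{2}^2 - \binom{n+1}{2}^2 = 0$, using $\sum k^3 = (\sum k)^2$. For $\beta=(p)$ we get $z_1(\beta)=p$ and $z_2(\beta)=p^3-p^2$. By Lemma~\ref{Pre_5_com}, any summand $L(\lambda)\subset L(\alpha)\otimes L(\beta)$ has $\lambda = \alpha + (s_0,1,1,\dots,1)$ for some $s_0$ with $0\le s_0\le$ (the appropriate bound), with $|\lambda|=|\alpha|+p$; writing $m = n+s_0$ for the number of boxes in the first row, one has $s_0 = m-n$ and the remaining rows of $\lambda$ are obtained from $(n-1,n-2,\dots,1)$ by adding one box to $n-s_0 = 2n-m$ of them. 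Concretely $\lambda$ has parts $m$, then $n-1$ down to $j+1$ unchanged, then $j,j-1,\dots$ shifted — but all I actually need is $\sum_i \lambda_i^3$.

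The computation I'd carry out: $\sum_i \lambda_i^3 - \sum_i \alpha_i^3 = (m^3 - n^3) + \sum (\text{change from adding a box to row } k)$, where adding a box to a row of length $k$ contributes $(k+1)^3-k^3 = 3k^2+3k+1$. The rows receiving a box are a specific consecutive-type set determined by $s_0$; summing $3k^2+3k+1$ over them and combining with $m^3-n^3$ and the constraint $s_0 + (\#\text{boxes added below}) = p$ should collapse, after using $z_2(\lambda) = \sum\lambda_i^3 - (|\alpha|+p)^2$ and the cancellations $z_2(\alpha)=0$, $-z_2(\beta)+2z_1(\alpha)z_1(\beta) = -p^3+p^2+2|\alpha|p$, down to $\tfrac13\cdot(\text{something}) = mp(m-p)$. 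I expect the $|\alpha|$-dependent and $n$-dependent terms to cancel against the telescoping of $\sum(3k^2+3k+1)$ over the affected rows, leaving only $m$ and $p$. The main obstacle is bookkeeping: correctly identifying which rows of the staircase $(n-1,\dots,1)$ receive a box as a function of $s_0$ (the "upside-down L" description in Lemma~\ref{Pre_5_com} must be translated into an explicit index set) and then evaluating $\sum_{k\in S}(3k^2+3k+1)$ cleanly enough that the cancellation is visible. A useful sanity check along the way is the extreme case $s_0 = p$ (the box-addition set empty, $\lambda = \alpha + (p,0,\dots,0)$, $m = n+p$), where the formula predicts $\Omega^2$ acts by $(n+p)p\cdot n$, which can be verified directly from $z_2(\lambda) = (n+p)^3 + \sum_{k=1}^{n-1}k^3 - (|\alpha|+p)^2$; matching this against $mp(m-p)$ pins down the normalization and catches sign errors before doing the general case.
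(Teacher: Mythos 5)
Your plan is correct and matches the paper's proof: apply Proposition~\ref{Quo_2_key}, use Theorem~\ref{Quo_2_z} to get $z_1(\alpha)=\tfrac{n(n+1)}{2}$, $z_2(\alpha)=0$, $z_1(\beta)=p$, $z_2(\beta)=p^3-p^2$, and then evaluate $z_2(\lambda)$ using the upside-down-L shape from Lemma~\ref{Pre_5_com}. The paper handles the bookkeeping you flagged a bit more slickly than the $(3k^2+3k+1)$ telescoping: since each affected row simply shifts its cube up by one index, the affected and unaffected lower-row cubes together reconstitute $\sum_{k=1}^n k^3$ minus the single missing term $(n-p+s)^3$, giving $z_2(\lambda)=m^3+\sum_{k=1}^n k^3-(m-p)^3-(|\alpha|+p)^2$ directly, after which everything cancels to $\tfrac13\big(m^3-(m-p)^3-p^3\big)=mp(m-p)$.
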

\begin{proof}
Let $s=m-n$, Observe that
\begin{align*}
z_1(\alpha)&=\frac{n(n+1)}{2}, \hspace{.2 in} z_2(\alpha)=(1^3+2^3+\cdots+n^3)-(1+2+\dots+n)^2=0,\\
z_1(\beta)&=p, \hspace{.2 in} z_2(\beta)=p^3-p^2,\\
\gamma&=\alpha+(s,1,1,\dots,1)=(n+s,n,n-1,\dots, n-p+s+1,n-p+s-1,\dots,1),\\
z_1(\gamma)&=\frac{n(n+1)}{2}+p, \hspace{.2 in} z_2(\gamma)=(n+s)^3+(1^3+2^3+\cdots+n^3) -(n-p+s)^3-\left(\frac{n(n+1)}{2}+p\right)^2.
\end{align*}
Therefore, by Proposition~\ref{Quo_2_key}, for any $v\in L(\gamma)$, $\Omega^2$ acts by the scalar
\begin{align*}
&\hh \frac{1}{3}(z_2(\gamma)-z_2(\alpha)-z_2(\beta)+2z_1(\alpha)z_1(\beta))\\
&=\frac{1}{3}((n+s)^3+(1^3+2^3+\cdots+n^3) -(n-p+s)^3 \\
&-\left(\frac{n(n+1)}{2}+p)^2 -(p^3-p^2)+2\cdot \frac{n(n+1)}{2}\cdot p\right)\\
&= \frac{1}{3}((n+s)^3-(n-p+s)^3-p^3)= \frac{1}{3}(m^3-(m-p)^3+p^3)\\
&= \frac{1}{3}(3m^2p-3mp^2)= mp(m-p).
\end{align*}
\end{proof}

\begin{proof}[Proof of Theorem~\ref{relations}]
Using the isomorphism $L(\alpha)\otimes L(\beta)\otimes V^{\otimes d}\simeq L(\alpha)\otimes V^{\otimes d} \otimes L(\beta)$, the action of $\tilde{x}_1^2$ is equivalent to the action of $\Omega^2$ on the first two tensor factors $L(\alpha)\otimes V$. By the Pieri Rule, an irreducible summand $L(\gamma)$ is parameterized by $\gamma$ which is obtained by adding a box to $\alpha$. There is only one way to add such a box, namely at the end of the first row with content $n$. By Corollary~\ref{Piericontent}, $\tilde{x}_1^2$ acts as  $n(n+1)$.

Based on the additional formula provided in Corollary~\ref{xiyiact}, $\tilde{y}_1$ acts by $\Omega_{N,1}$, which is applying the two factors in $\Omega$ to $N=L(\beta)$ and the first copy of $V$, respectively. %Similarly, using the isomorphism $L(\alpha)\otimes L(\beta)\otimes V^{\otimes d}\simeq L(\beta)\otimes V^{\otimes d} \otimes L(\alpha)$, 
Since $\Omega^2$ acts by a scalar on each irreducible summand $W$ of $L(\beta)\otimes V$, the action of $\tilde{y}_1^2$ is a scalar on each $M\otimes W \otimes V^{\otimes d-1}$. By the Pieri Rule, an irreducible summand $L(\gamma)$ is parameterized by $\gamma$ which is obtained by adding a box to $\beta$. There are two ways to add such a box: to the end of the first row or the beginning of the second row, whose added box has content $p$ or $0$. By Corollary~\ref{Piericontent}, $\tilde{y}_1^2$ acts as either $p(p+1)$ or $0$.

\end{proof}

\subsection{A quotient of $\mathcal{H}^{+}_d$ or $\mathcal{H}^{++}_d$}

Recall the algebras $\mathcal{H}^{+}_d$ and $\mathcal{H}^{++}_d$ defined in Section~\ref{2.4}. Let $\mathcal{H}^{\operatorname{ev}}_{p,d}$ be a quotient of $\mathcal{H}^{+}_d$ under the extra relations in Theorem~\ref{relations}. Also, let $\mathcal{H}^{\operatorname{od}}_{p,d}$ be a quotient of $\mathcal{H}^{++}_d$ under the extra relations in Theorem~\ref{relations}. By Theorem~\ref{relations}, $\mathcal{H}^{\operatorname{ev}}_{p,d}$ acts on $L(\alpha)\otimes L(\beta)\otimes V^{\otimes d}$. Furthermore, when $n$ is odd, both $\mathcal{H}^{\operatorname{ev}}_{p,d}$ and $\mathcal{H}^{\operatorname{od}}_{p,d}$ act on $L(\alpha)\otimes L(\beta)\otimes V^{\otimes d}$. As we will see in Section~\ref{sec45}, the subtle difference in notation signals the fact that when $n$ is even, $\mathcal{H}^{\operatorname{ev}}_{p,d}$ provides $\mathfrak{q}(n)$-module endomorphisms on $L(\alpha)\otimes L(\beta)\otimes V^{\otimes d}$; where as when $n$ is odd,  both $\mathcal{H}^{\operatorname{ev}}_{p,d}\subset \mathcal{H}^{\operatorname{od}}_{p,d}$ provide $\mathfrak{q}(n)$-module endomorphisms, but we will focus on the larger algebra in light of future study of double-centralizer properties. In other words, $\mathcal{H}^{\operatorname{od}}_{p,d}$ is more likely to be the full centralizer of $\q(n)$ on $L(\alpha)\otimes L(\beta)\otimes V^{\otimes d}$ in the case when $n$ is odd.
 
For later use, it is more convenient to introduce presentations of these two quotients with even polynomial generators. We define even elements in $\mathcal{H}^{\operatorname{ev}}_{p,d}$ and $\mathcal{H}^{\operatorname{od}}_{p,d}$ via $z_0=\tilde{z}_0c_0$, $z_1=\tilde{z}_1c_1$, \dots, $z_d=\tilde{z}_dc_d$ and $x_1=\tilde{x}_1c_1$, and rewrite the relations as follows.

\begin{lemma}\label{evenpresentation}
When $n$ is even, the algebra $\mathcal{H}^{\operatorname{ev}}_{p,d}$ is isomorphic to the algebra generated by $x_1,z_0,\dots,z_d$, $c_0,c_1,\dots,c_d$ and $s_1,\dots,s_{d-1}$, subject to the Sergeev relations and the following relations

\noindent(Hecke relations)
\begin{align}
s_i {z}_i &= {z}_{i+1}s_i-1+c_ic_{i+1}  &(1\leq i\leq d-1), \label{heckerelation1}\\
{x}_1 s_i &= s_i {x}_1 &(2\leq i\leq n),\\
{z}_j s_i &= s_i {z}_j &(j \neq i,i+1). \label{heckerelation3}
\end{align}
(Clifford twist relations)
\begin{align}
x_1 c_1&= -c_1x_1, \hspace{.4 in} c_i{x}_1 = {x}_1 c_i \hspace{.1 in} (i=0,2,3,\dots,d), \\
z_i c_i&= -c_iz_i \hspace{.1 in} (0\leq i\leq d),  \hspace{.4 in} c_i{z}_j = {z}_j c_i  \hspace{.1 in} (j\neq i).
\end{align}
(Polynomial relations)
\begin{align}
{x}_1(s_1{x}_1s_1+(1-c_1c_2)s_1 ) &=(s_1{x}_1s_1+(1-c_1c_2)s_1 ){x}_1,  &\\
{z}_1 {z}_2 &={z}_2{z}_1. \label{zscommute}&
\end{align}
(Relations between polynomial rings)
\begin{align}
{z}_2 {x}_1 &= {x}_1 {z}_2, \label{between1}\\
({z}_0c_0c_1+{z}_1-{x}_1){x}_1 &= -{x}_1({z}_0c_0c_1+{z}_1-{x}_1).\label{importantrelation}
\end{align}
(Additional Sergeev relations for $c_0$)
\begin{align*}
c_0^2&=-1, \hspace{.4 in} c_0c_i=-c_ic_0 \hspace{.1 in}(1\leq i\leq d) \hspace{.4 in} c_0s_i=s_ic_0  \hspace{.1 in}(1\leq i\leq d-1).
\end{align*}
(Extra relations)
\begin{align}
x_1^2-n(n+1)=&0, \label{xeigenvalues}\\
(z_1-x_1)^2\big((z_1-x_1)^2-p(p+1)\big)=&0. \label{yeigenvalues}
\end{align}
\end{lemma}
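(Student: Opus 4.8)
The plan is to establish the isomorphism by exhibiting mutually inverse homomorphisms between $\mathcal{H}^{\operatorname{ev}}_{p,d}$ (as originally defined, a quotient of $\mathcal{H}^{+}_d$ by the relations in Theorem~\ref{relations}) and the algebra $A$ presented by the generators $x_1, z_0,\dots,z_d, c_0,\dots,c_d, s_1,\dots,s_{d-1}$ with the listed relations. The key observation is that the change of variables $z_j = \tilde z_j c_j$ (for $0 \le j \le d$) and $x_1 = \tilde x_1 c_1$ is invertible inside $\mathcal{H}^{+}_d$: since $c_j^2 = -1$, we recover $\tilde z_j = -z_j c_j$ and $\tilde x_1 = -x_1 c_1$. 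So the two generating sets generate the same algebra, and the content of the lemma is purely that the two \emph{lists of relations} cut out the same two-sided ideal after this substitution.

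First I would define a map $\Phi: A \to \mathcal{H}^{\operatorname{ev}}_{p,d}$ on generators by sending $x_1 \mapsto \tilde x_1 c_1$, $z_j \mapsto \tilde z_j c_j$, and $c_i, s_i$ to the generators of the same name, and check that every relation of $A$ is a consequence of the defining relations of $\mathcal{H}^{\operatorname{ev}}_{p,d}$. This is the bulk of the work, but it is a routine if lengthy translation: each relation is obtained from a corresponding relation of $\mathcal{H}_d$ (or the extra relations of Theorem~\ref{relations}, or the $c_0$-relations) by conjugating/commuting the odd scalars $c_i$ past things. For instance, the Hecke relation $s_i\tilde z_i = \tilde z_{i+1}s_i + c_i - c_{i+1}$ becomes, after right-multiplying by $c_i$ and using $s_i c_i = c_{i+1}s_i$ together with $c_i\tilde z_i = -\tilde z_i c_i$, the relation $s_i z_i = z_{i+1}s_i - 1 + c_i c_{i+1}$; the Clifford twist $c_i\tilde x_1 = -\tilde x_1 c_i$ becomes $c_i x_1 = x_1 c_i$ for $i \neq 1$ but $x_1 c_1 = -c_1 x_1$ because of the extra $c_1$ hidden in $x_1$; the anticommutation $\tilde z_1 \tilde z_2 = -\tilde z_2\tilde z_1$ becomes $z_1 z_2 = z_2 z_1$; and relation~(\ref{lastindefinition}), $(\tilde z_0 - \tilde z_1 + \tilde x_1)\tilde x_1 = -\tilde x_1(\tilde z_0 - \tilde z_1 + \tilde x_1)$, becomes (\ref{importantrelation}) once one writes $\tilde z_0 = -z_0 c_0$, $\tilde z_1 = -z_1 c_1$, $\tilde x_1 = -x_1 c_1$ and factors out $c_1$ on the right: note $\tilde z_0 \tilde x_1 = (-z_0 c_0)(-x_1 c_1) = z_0 c_0 x_1 c_1 = z_0 (c_0 c_1) x_1 \cdot(\pm) $, so the coefficient $z_0 c_0 c_1$ in (\ref{importantrelation}) is exactly what appears. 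Finally the extra relations (\ref{xeigenvalues}) and (\ref{yeigenvalues}) come from those of Theorem~\ref{relations}: $\tilde x_1^2 = (-x_1 c_1)(-x_1 c_1) = x_1 c_1 x_1 c_1 = -x_1^2 c_1^2 = x_1^2$ using $c_1 x_1 = -x_1 c_1$, so $\tilde x_1^2 - n(n+1) = 0$ is literally $x_1^2 - n(n+1) = 0$; and $\tilde y_1 = \tilde z_1 - \tilde x_1 = -z_1 c_1 + x_1 c_1 = -(z_1 - x_1)c_1$, hence $\tilde y_1^2 = (z_1-x_1)^2$ by the same sign computation, giving (\ref{yeigenvalues}).

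Next I would define the inverse map $\Psi: \mathcal{H}^{\operatorname{ev}}_{p,d} \to A$ on generators by $\tilde x_1 \mapsto -x_1 c_1$, $\tilde z_j \mapsto -z_j c_j$, $c_i, s_i \mapsto$ the same, $c_0 \mapsto c_0$, and verify symmetrically that each defining relation of $\mathcal{H}^{\operatorname{ev}}_{p,d}$ holds in $A$ — the computations being the exact reverses of those above. Then $\Phi$ and $\Psi$ are mutually inverse on generators, hence mutually inverse algebra isomorphisms, because $\Psi(\Phi(x_1)) = \Psi(\tilde x_1 c_1) = (-x_1 c_1)c_1 = x_1$ and similarly for the others. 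One should also note that $\Phi$ and $\Psi$ are homomorphisms of \emph{superalgebras}: $x_1 = \tilde x_1 c_1$ is a product of two odd elements, hence even, consistent with the claim that the new polynomial generators are even, and one checks the gradings match on every generator.

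The main obstacle is bookkeeping rather than conceptual: one must be scrupulous about the order in which odd elements $c_i$ are moved past the odd generators $\tilde x_1, \tilde z_j$ (which anticommute with all $c_i$) versus past $s_i$ (which permutes the $c_i$) versus past each other, since every transposition of two odd elements produces a sign, and these signs are precisely what convert anticommutation relations in $\mathcal{H}^{+}_d$ into commutation relations in the new presentation (and vice versa for relations like $x_1 c_1 = -c_1 x_1$). The only place where genuine care is needed beyond sign-chasing is relation~(\ref{importantrelation}): one must track that the composite odd scalar attached to $z_0$ is $c_0 c_1$ (an even element) and confirm it commutes appropriately with $x_1$ and $z_1$, using $c_0 x_1 = x_1 c_0$ and $c_1 x_1 = -x_1 c_1$. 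Everything else is mechanical. I would also remark at the end that the $n$ even hypothesis plays no role in the verification itself — it is only relevant for the existence of the $\mathcal{H}^{\operatorname{ev}}_{p,d}$-action in Theorem~\ref{relations} — and that the analogous presentation of $\mathcal{H}^{\operatorname{od}}_{p,d}$ is obtained by the identical argument with the extra generator $c_M$ adjoined and the relations $c_M^2 = -1$, $c_M c_i = -c_i c_M$, $c_M s_i = s_i c_M$, $c_M \tilde x_1 = -\tilde x_1 c_M$, $c_M \tilde z_j = -\tilde z_j c_M$ translated in the same way.
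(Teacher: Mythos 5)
Your proposal is correct and is precisely the argument the paper implicitly relies on (the paper states this lemma without a written proof, merely observing that $z_j = \tilde z_j c_j$, $x_1 = \tilde x_1 c_1$ is the intended change of generators). The spot-checks I ran — the Hecke relation (\ref{heckerelation1}), the translation of $\tilde z_1\tilde z_2 = -\tilde z_2\tilde z_1$ into $z_1 z_2 = z_2 z_1$, the identification $x_2 = s_1 x_1 s_1 + (1-c_1 c_2)s_1$ underlying (\ref{x1x2}) $\leftrightarrow$ the polynomial relation, the equalities $\tilde x_1^2 = x_1^2$ and $\tilde y_1^2 = (z_1 - x_1)^2$, and the more delicate relation (\ref{lastindefinition}) $\leftrightarrow$ (\ref{importantrelation}), where the combination $z_0 c_0 c_1$ appears because $c_0 c_1$ anticommutes with both $x_1$ and $z_0$ — all come out as you describe. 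Your closing observation that the hypothesis ``$n$ even'' is immaterial to the isomorphism itself (it governs only the existence of the action from Theorem~\ref{relations}) is also correct.

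One cosmetic remark: in your account of the Hecke relation you invoke $c_i\tilde z_i = -\tilde z_i c_i$, but that relation is not actually needed there — right-multiplying $s_i\tilde z_i = \tilde z_{i+1}s_i + c_i - c_{i+1}$ by $c_i$ and using only $s_i c_i = c_{i+1}s_i$, $c_i^2 = -1$, and $c_{i+1}c_i = -c_i c_{i+1}$ already yields (\ref{heckerelation1}) directly. This does not affect the correctness of the argument.
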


\begin{lemma}
The algebra $\mathcal{H}^{\operatorname{od}}_{p,d}$ is isomorphic to the algebra generated by $x_1,z_0,\dots,z_d$, $c_0,c_1,\dots,c_d$, $s_1,\dots,s_{d-1}$ and the extra generator $c_M$, subject to the relations in Lemma~\ref{evenpresentation} and extra relations 
\begin{align*}
c_M^2&=-1, \hspace{.4 in} c_Mc_i=-c_ic_M \hspace{.1 in}(0\leq i\leq d), \hspace{.4 in} c_Ms_i=s_ic_M  \hspace{.1 in} (1\leq i\leq d-1),\\
c_Mx_1&=x_1c_M,  \hspace{.4 in} c_Mz_i=z_ic_M \hspace{.1 in} (0\leq i\leq d).
\end{align*}
\end{lemma}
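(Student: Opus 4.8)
The plan is to reduce the statement to Lemma~\ref{evenpresentation} by identifying $\mathcal{H}^{\operatorname{od}}_{p,d}$ with the super tensor product $\mathcal{H}^{\operatorname{ev}}_{p,d}\otimes\operatorname{Cl}_1$, where the extra Clifford factor is the one generated by $c_M$. The first step is to set up this identification at the level of quotients: by definition $\mathcal{H}^{++}_d=\mathcal{H}^{+}_d\otimes\operatorname{Cl}_1$, and $\mathcal{H}^{\operatorname{od}}_{p,d}$ is obtained from it by imposing the relations of Theorem~\ref{relations}, which only involve $\tilde{x}_1$ and $\tilde{y}_1$ inside $\mathcal{H}_d\otimes 1$. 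Since $\tilde{x}_1^2-n(n+1)$ and $\tilde{y}_1^2\big(\tilde{y}_1^2-p(p+1)\big)$ are homogeneous of degree $\ev$, the two-sided ideal they generate in $\mathcal{H}^{+}_d\otimes\operatorname{Cl}_1$ is $I\otimes\operatorname{Cl}_1$, where $I\subset\mathcal{H}^{+}_d$ is the ideal used to define $\mathcal{H}^{\operatorname{ev}}_{p,d}$; hence $\mathcal{H}^{\operatorname{od}}_{p,d}\cong(\mathcal{H}^{+}_d/I)\otimes\operatorname{Cl}_1=\mathcal{H}^{\operatorname{ev}}_{p,d}\otimes\operatorname{Cl}_1$, with $c_M$ corresponding to the generator of the new $\operatorname{Cl}_1$.

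The second step is bookkeeping. Through the subalgebra $\mathcal{H}^{\operatorname{ev}}_{p,d}\otimes 1$, the generators $x_1,z_0,\dots,z_d,c_0,\dots,c_d,s_1,\dots,s_{d-1}$ already satisfy all the relations of Lemma~\ref{evenpresentation}, so it only remains to record the relations between $c_M=1\otimes c$ and these generators. Using $c^2=-1$ and the super tensor product multiplication $(a_1\otimes b_1)(a_2\otimes b_2)=(-1)^{\overline{b_1}\cdot\overline{a_2}}a_1a_2\otimes b_1b_2$, we get $c_M^2=-1$, and $c_M$ anticommutes with every odd element of $\mathcal{H}^{\operatorname{ev}}_{p,d}$ and commutes with every even one. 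The generators $c_0,\dots,c_d$ are odd, while $s_1,\dots,s_{d-1}$ are even and $x_1=\tilde{x}_1c_1$, $z_i=\tilde{z}_ic_i$ are even (each being a product of two odd elements of $\mathcal{H}^{+}_d$); this yields exactly $c_Mc_i=-c_ic_M$, $c_Ms_i=s_ic_M$, $c_Mx_1=x_1c_M$, and $c_Mz_i=z_ic_M$, with the index ranges as stated. Finally, one checks the converse: let $\mathcal{A}$ be the abstract algebra with the stated presentation and $\mathcal{A}_0$ the one from Lemma~\ref{evenpresentation}. Then $\mathcal{A}$ surjects onto $\mathcal{H}^{\operatorname{od}}_{p,d}$ because all of its relations have just been verified there; and since the $c_M$-relations say precisely that $c_M$ squares to $-1$ and super-commutes with each generator of $\mathcal{A}_0$ according to its $\Z_2$-degree, $\mathcal{A}\cong\mathcal{A}_0\otimes\operatorname{Cl}_1$. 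Combining this with $\mathcal{A}_0\cong\mathcal{H}^{\operatorname{ev}}_{p,d}$ from Lemma~\ref{evenpresentation} and $\mathcal{H}^{\operatorname{od}}_{p,d}\cong\mathcal{H}^{\operatorname{ev}}_{p,d}\otimes\operatorname{Cl}_1$ from the first step makes the surjection $\mathcal{A}\to\mathcal{H}^{\operatorname{od}}_{p,d}$ an isomorphism.

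I do not expect a genuine obstacle: the argument parallels (and cites) the proof of Lemma~\ref{evenpresentation}, and the only new input is the short list of $c_M$-relations. The one point worth flagging is that $c_M$ \emph{commutes} with $x_1$ and with each $z_i$ even though $c_M$ behaves like a Clifford generator; this is exactly because $x_1$ and the $z_i$ were rescaled to be even, so no sign appears in the super tensor product — the same reason the polynomial generators become commutative rather than anticommutative after passing from $\tilde{x}_1,\tilde{z}_i$ to $x_1,z_i$.
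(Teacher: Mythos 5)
The paper does not include an explicit proof of this lemma (or of Lemma~\ref{evenpresentation}), but your argument is the natural one given the definitions $\mathcal{H}^{++}_d=\mathcal{H}^{+}_d\otimes\operatorname{Cl}_1$ and $\mathcal{H}^{\operatorname{od}}_{p,d}=\mathcal{H}^{++}_d/(\text{Theorem~\ref{relations} relations})$. The key observations — that the relation elements lie in $\mathcal{H}^{+}_d\otimes 1$ so the ideal they generate is $I\otimes\operatorname{Cl}_1$, that this yields $\mathcal{H}^{\operatorname{od}}_{p,d}\cong\mathcal{H}^{\operatorname{ev}}_{p,d}\otimes\operatorname{Cl}_1$, and that the stated $c_M$-relations are precisely the super-commutation relations of $1\otimes c$ against the generators according to their parities (noting in particular that $x_1=\tilde{x}_1c_1$ and $z_i=\tilde{z}_ic_i$ are even, hence commute with $c_M$) — are all correct and supply exactly the argument the paper leaves implicit.
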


\section{Calibrated modules for $\mathcal{H}^p_d$}
\subsection{The Bratteli graph} \label{4.1}
The goal of this section is to construct certain types of modules for $\mathcal{H}^p_d$ using combinatorial data. Recall in Lemma~\ref{Pre_5_com} we described all partitions $\lambda$ for which $L(\lambda)$ is a direct summand of $L(\alpha)\otimes L(\beta)$, and the Pieri rule describing all irreducible summands of $L(\mu)\otimes V$. Fix positive integers $n$ and $p$, which determines the partitions $\alpha$ and $\beta$. Define the Bratteli graph $\Gamma$ associated to $n$ and $p$, to be the directed graph with vertices in row $i\in \{-1,0,1,2,3,\dots\}$. Row $-1$ contains an only partition $\alpha$, Row $0$ contains the following partitions
\begin{align*}
\mathcal{P}_0=\mathcal{P}_0(\alpha,\beta)=\{\lambda \hspace{.1 in}|\hspace{.1 in} L(\lambda)\text{ is a summand of }L(\alpha)\otimes L(\beta)\}.
\end{align*}
For $i\geq 1$, define the vertices in Row $i$ recursively as follows:
\begin{align*}
\mathcal{P}_i=\mathcal{P}_i(\alpha,\beta)=\{\lambda \hspace{.1 in}|\hspace{.1 in} L(\lambda)\text{ is a summand of }L(\mu)\otimes V, \text{ for some }\mu \text{ in Row }i-1\}.
\end{align*}
Edges in $\Gamma$ are defined as follows. There is an edge from $\alpha$ to every partition $\lambda \in \mathcal{P}_0$. For $i\geq 0$, there is an edge from $\mu\in \mathcal{P}_{i}(\alpha,\beta)$ to $\lambda\in \mathcal{P}_{i+1}(\alpha,\beta)$, if and only if $L(\lambda)$ is a summand of $L(\mu)\otimes V$.

For example, when $n=5$, $p=3$,
\ytableausetup{smalltableaux}
\begin{align*}
\alpha=\begin{ytableau}
 *(white)& & & &\\
 \none & &&&\\
 \none &\none &&&\\
 \none &\none &\none &&\\
 \none &\none &\none &\none &\\
\end{ytableau}\hspace{.5 in} \beta=\begin{ytableau}
*(white) & &\\
\end{ytableau}.
\end{align*}
Partitions in $\mathcal{P}_0(\alpha,\beta)$:
\begin{align*}
A_1=\begin{ytableau}
 *(white)& & & & & *(yellow)& *(yellow) & *(yellow)\\
 \none &&&&\\
  \none &\none &&&\\
 \none &\none &\none &&\\
  \none &\none &\none &\none &\\
\end{ytableau} \hspace{.2 in}
A_2=\begin{ytableau}
 *(white)& & & & & *(yellow)&*(yellow) \\
 \none &&&&&*(yellow)\\
  \none &\none &&&\\
\none &\none &\none & &\\
\none &\none &\none &\none & \\
\end{ytableau}\hspace{.2 in}
A_3=\begin{ytableau}
 *(white)& & & & &*(yellow)  \\
 \none &&&&&*(yellow)\\
 \none &\none &&&&*(yellow)\\
\none &\none &\none & &\\
 \none &\none &\none &\none &\\
\end{ytableau}
\end{align*}
For the remaining partitions we omit the staircase portion $\alpha$ and only display the yellow portion. Partitions in $\mathcal{P}_1(\alpha,\beta)$:
\begin{align*}
B_1=\begin{ytableau}
*(yellow) & *(yellow) &*(yellow) &*(yellow) \end{ytableau}\hspace{.2 in}
B_2=\begin{ytableau}
*(yellow) & *(yellow) &*(yellow) \\
*(yellow) \end{ytableau} \hspace{.2 in}
B_3=\begin{ytableau}
*(yellow) & *(yellow)  \\
*(yellow) &*(yellow) \end{ytableau}\hspace{.2 in}
B_4=\begin{ytableau}
*(yellow) & *(yellow)  \\
*(yellow) \\
*(yellow) \end{ytableau}\hspace{.2 in}
B_5=\begin{ytableau}
*(yellow)  \\
*(yellow) \\
*(yellow) \\
*(yellow) \end{ytableau}
\end{align*}
Partitions in $\mathcal{P}_2(\alpha,\beta)$:
\begin{align*}
C_1=\begin{ytableau}
*(yellow) & *(yellow) &*(yellow) &*(yellow) & *(yellow) \end{ytableau}\hspace{.2 in}
C_2=\begin{ytableau}
*(yellow) & *(yellow) &*(yellow) &*(yellow) \\
 *(yellow) \end{ytableau}\hspace{.2 in}
 C_3=\begin{ytableau}
*(yellow) & *(yellow) &*(yellow) \\
 *(yellow) &*(yellow) \end{ytableau}\hspace{.2 in}\\
C_4=\begin{ytableau}
*(yellow) & *(yellow) &*(yellow) \\
 *(yellow) \\
 *(yellow) \end{ytableau}\hspace{.2 in} 
 C_5=\begin{ytableau}
*(yellow) & *(yellow)  \\
 *(yellow) &*(yellow)\\
 *(yellow) \end{ytableau}\hspace{.2 in}
 C_6=\begin{ytableau}
*(yellow) & *(yellow)  \\
 *(yellow) \\
 *(yellow) \\
 *(yellow)\end{ytableau}\hspace{.2 in}
 C_7=\begin{ytableau}
*(yellow)  \\
 *(yellow) \\
 *(yellow) \\
 *(yellow)\\
 *(yellow) \end{ytableau}\hspace{.2 in}
\end{align*}

The associated Bratteli graph starts with the following as the first few rows: 
\begin{align*}
\xymatrix{
&&& \alpha    \ar[dl]\ar[d]\ar[dr] &&&\\
 && A_1 \ar[dl] \ar[d] & A_2 \ar[dl]\ar[d]\ar[dr] & A_3 \ar[d]\ar[dr]&&\\
& B_1 \ar[dl]\ar[d] & B_2 \ar[dl]\ar[d]\ar[dr] & B_3 \ar[dl]\ar[dr] & B_4 \ar[dl] \ar[d]\ar[dr] & B_5\ar[d] \ar[dr] &\\
 C_1 & C_2 & C_3 & C_4 & C_5 & C_6 & C_7 
}
\end{align*}

Fix a partition $\lambda \in \mathcal{P}_d(\alpha,\beta)$. Let $\Gamma^{\lambda}$ be the set of all directed paths from $\alpha$ to $\lambda$. Suppose a path $T\in \Gamma^{\lambda}$ travels through the vertices $\alpha, T^{(0)},T^{(1)},\dots,T^{(d)}$. It is helpful to record this information using a tableaux of skew shape $\lambda/T^{(0)}$. The position of integer $i$ indicates of the distinct box in $T^{(i)}$ that is not in $T^{(i-1)}$. For example, the path $T:\alpha\to A_2\to B_3 \to C_5$ above is represented by the following tableaux
\begin{align*}
T=\begin{ytableau}
 *(white)& & & & & & \\
 \none &&&&&&1\\
  \none &\none &&&&2\\
\none &\none &\none & &\\
\none &\none &\none &\none & \\
\end{ytableau}.
\end{align*}

A Young tableau of skew shape with each of the entries $1,2,\dots,d$ exactly is said to satisfy the \emph{standard} condition, if the numbers increase along each row and column. A tableau obtained from a path $T$ will automatically satisfy the standard condition, based on the allowed positions of adding a box. Conversely, any standard Young tableau of skew shape $\lambda/\mu$ for some $\mu\in \mathcal{P}_0(\alpha,\beta)$, corresponds to a path $T\in \Gamma^{\lambda}$.

For future use we also need to define actions of $s_0,\dots,s_{d-1}$ on $\Gamma^{\lambda}\cup\{\star\}$, the set of paths enlarged by a symbol $\star$. We first have a corollary as a result of Lemma~\ref{Pre_5_com}. For two partitions $\mu=(\mu_1,\dots,\mu_n)$ and $\gamma=(\gamma_1,\dots,\gamma_n)$, we say $\mu$ is contained in $\gamma$ if and only if $\mu_i\leq \gamma_i$ for $1\leq i\leq n$.
\begin{cor}\label{twopaths}
For any partition $\mu \in \mathcal{P}_1(\alpha,\beta)$, there are at most two partitions in $\mathcal{P}_0(\alpha,\beta)$ contained in $\mu$. Furthermore, if there exists two such partitions, their first rows differ by one box.
\end{cor}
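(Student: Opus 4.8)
The plan is to make Lemma~\ref{Pre_5_com} completely explicit and then do a short coordinate comparison. Combining that lemma with the fact established at the end of Section~\ref{3.1} that every summand $L(\gamma)$ of $L(\alpha)\otimes L(\beta)$ has $\ell(\gamma)=n$, the set $\mathcal{P}_0(\alpha,\beta)$ is exactly $\{\gamma_m\ :\ 0\le m\le\min\{p-1,\,n-1\}\}$, where
\[
\gamma_m \;=\; \alpha + (p-m,\ \underbrace{1,\dots,1}_{m},\ 0,\dots,0);
\]
the bound $p-m\ge 1$ is forced by strictness (it is the inequality $\gamma_{m,1}>\gamma_{m,2}$) and $m\le n-1$ by $\ell(\gamma_m)=n$. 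Written out, $\gamma_m$ has first part $n+p-m$, has parts $n,n-1,\dots,n-m+1$ in rows $2,\dots,m+1$, and has parts $n-m-1,n-m-2,\dots,1$ in rows $m+2,\dots,n$. I expect the only mildly delicate point of the whole argument to be precisely this translation of the informal ``upside-down $L$'' picture into the family $\{\gamma_m\}$ together with its correct index range; once that is in place the rest is routine.

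First I would record the comparison of consecutive vertices: $\gamma_{m+1}=\gamma_m-\epsilon_1+\epsilon_{m+2}$, that is, $\gamma_{m+1}$ is obtained from $\gamma_m$ by moving a single box out of row~$1$ into row~$m+2$. Iterating, for $m<m'$,
\[
\gamma_{m'}-\gamma_m \;=\; -(m'-m)\,\epsilon_1 \;+\; \epsilon_{m+2}+\epsilon_{m+3}+\cdots+\epsilon_{m'+1},
\]
a vector whose first coordinate is $-(m'-m)$, whose coordinates $m+2,\dots,m'+1$ (all distinct and lying in $\{2,\dots,n\}$, since $m'\le n-1$) are $+1$, and whose remaining coordinates are $0$.

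Next I would invoke the Pieri rule recalled just before Lemma~\ref{Pre_5_com}: a vertex $\mu\in\mathcal{P}_1(\alpha,\beta)$ is a strict partition obtained from some $\lambda\in\mathcal{P}_0(\alpha,\beta)$ by adding a single box, and $\lambda\in\mathcal{P}_0(\alpha,\beta)$ is contained in $\mu$ exactly when $\mu=\lambda+\epsilon_j$ for some row index $j$. So suppose $\gamma_m$ and $\gamma_{m'}$ with $m<m'$ are both contained in a fixed $\mu$, say $\mu=\gamma_m+\epsilon_j=\gamma_{m'}+\epsilon_i$. Since $\gamma_m\ne\gamma_{m'}$ we have $i\ne j$, so $\gamma_{m'}-\gamma_m=\epsilon_j-\epsilon_i$ has exactly one coordinate equal to $+1$; comparing with the displayed formula, whose number of $+1$ coordinates is $m'-m$, forces $m'-m=1$, and then comparing coordinates forces $i=1$ and $j=m+2$, so $\mu=\gamma_m+\epsilon_{m+2}=\gamma_{m+1}+\epsilon_1$. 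Hence the index set $\{\,m:\gamma_m\subseteq\mu\,\}$ contains no two indices differing by more than $1$; it therefore has at most two elements, and if it equals $\{m,m+1\}$ the first rows of $\gamma_m$ and $\gamma_{m+1}$ have lengths $n+p-m$ and $n+p-m-1$, differing by one box. This is the statement of the corollary.
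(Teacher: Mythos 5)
Your proof is correct, and it takes a genuinely different route from the paper's. The paper argues pictorially: given $\mu\in\mathcal{P}_1(\alpha,\beta)$, it splits into cases according to whether the skew shape $\mu/\alpha$ contains a $2$-by-$2$ square, and in each case identifies by inspection which box(es) can be removed to leave an upside-down $L$. Your approach instead makes Lemma~\ref{Pre_5_com} fully coordinate-explicit, writing $\mathcal{P}_0(\alpha,\beta)=\{\gamma_m\}$ with $\gamma_m=\alpha+(p-m,1,\dots,1,0,\dots)$, noting $\gamma_{m+1}=\gamma_m-\epsilon_1+\epsilon_{m+2}$, and then comparing $\gamma_{m'}-\gamma_m$ against the single-box difference $\epsilon_j-\epsilon_i$ forced by the Pieri rule. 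The coordinate comparison (counting $+1$ entries) gives $m'-m=1$ directly. What your version buys: it bypasses the geometric case split on $2$-by-$2$ squares entirely, so there is no appeal to visual intuition about ``removing the lower right box'' or ``far ends of the arms''; it also records the explicit parameterization of $\mathcal{P}_0(\alpha,\beta)$ with its correct index range $0\le m\le\min\{p-1,n-1\}$, which is a reusable byproduct. What the paper's version buys: it is shorter and closer to how one would reason at a blackboard, and it makes transparent the dichotomy (unique ancestor vs.\ exactly two ancestors) that underlies the later $s_0.T=\star$ vs.\ $s_0.T\ne\star$ distinction. Both arguments are sound; yours is the more self-contained and rigorous of the two.
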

\begin{proof}
This is a direct consequence of Lemma~\ref{Pre_5_com}. If the skew shape $\mu/\alpha$ contains a $2$-by-$2$ square, then there is a unique $\lambda\subset \mu$ which satisfies the condition in Lemma~\ref{Pre_5_com}, by removing the lower right box in the $2$-by-$2$ square. On the other hand, if $\mu/\alpha$ does not contain such a square, then this skew shape forms an upside-down L, and a box can be removed from either far end of the vertical or horizontal edge, resulting in two partitions $\lambda_1,\lambda_2\in \mathcal{P}_0(\alpha,\beta)$ which are contained in $\mu$.
\end{proof}
As a result, for any path $T\in \Gamma^{\lambda}$, there is at most one other path which differs from $T$ at a vertex in Row $0$. If such a path $S$ exists, define $s_0.T=S$. If there is no such a path, let $s_0.T=\star$ by definition.

\textit{Example.} Let $T^{(0)}\in \mathcal{P}_0(\alpha,\beta)$ be the target of the first edge in $T$. For the following tableaux, we use green to highlight the boxes that are in $T^{(0)}$ but are not in $\alpha$. Then  $s_0.T=L$:
\begin{align*}
T=\begin{ytableau}
*(white)&&&&*(green)&*(green)&1&3 \\
\none & &&&*(green)&2\\
\none &\none &&&4\\
\none&\none& \none &
\end{ytableau}  \hspace{.5 in}
L=\begin{ytableau}
*(white)&&&&*(green)&*(green)&*(green)&3 \\
\none & &&&1&2\\
\none &\none &&&4\\
\none&\none& \none &
\end{ytableau} 
\end{align*}
For the following tableau, $s_0.T=\star$:
\begin{align*}
T=\begin{ytableau}
*(white)&&&&*(green)&*(green)&2&3 \\
\none & &&&*(green)&1\\
\none &\none &&&4\\
\none&\none& \none &
\end{ytableau}
\end{align*}

Similarly, for $1\leq i\leq d-1$, define $s_i.T$ to be the tableau by swapping the entries $i$ and $i+1$, if the resulting tableau is still standard. In terms of paths, this corresponds to reversing the order of adding the two boxes containing $i$ and $i+1$, and the two paths $T$ and $s_i.T$ share all other vertices except the one at Row $i$. If the resulting tableau is no longer standard, let $s_i.T=\star$ by definition.

\subsection{Constructing calibrated modules}\label{4.2} We define a module for $\mathcal{H}^{\operatorname{ev}}_{p,d}$ or $\mathcal{H}^{\operatorname{od}}_{p,d}$ to be \emph{calibrated} if it admits a basis on which the generators $z_0,\dots,z_d$ act by eigenvalues. We now construct an explicit class of calibrated modules for $\mathcal{H}^{\operatorname{ev}}_{p,d}$ and $\mathcal{H}^{\operatorname{od}}_{p,d}$.  Fix a partition $\lambda$ in Row $d$ of the Bratteli graph $\Gamma$, we define scalars based on a given path $T\in \Gamma^{\lambda}$. Recall the content of a box defined in (\ref{content}). Denote by $c_T(i)$ the content of the box containing integer $i$ in the tableau $T$. Recall $T^{(0)}$ is the partition in Row $0$ of path $T$, and let $m$ be the number of boxes in the first row of $T^{(0)}$. Define
\begin{align}\label{kappadefinition}
\kappa_T(0)=\sqrt{mp(m-p)}, \hspace{.3 in}
\kappa_T(i)=\sqrt{c_T(i)(c_T(i)+1)} \hspace{.5 in} (1\leq i\leq d).
\end{align}

Now let $N_0=n(n+1)$, and let $\Gamma^{\lambda}_0$ be the set of paths $T\in \Gamma^{\lambda}$ such that $s_0.T\neq \star$. Fix a function $f:\Gamma^{\lambda}_0\to \mathbb{C}$ satisfying the following condition:
\begin{align}
\hh f(T)f(s_0.T)=-\frac{(\kappa_T^2(0)+p^2\kappa^2_T(1))(N_0-\kappa^2_T(1))(N_0-\kappa^2_{s_0.T}(1))}{(\kappa^2_T(0)+p\kappa^2_T(1))^2((\kappa_T(0)-\kappa_{s_0.T}(0))^2+(\kappa_T(1)+\kappa_{s_0.T}(1))^2)}=:F_T. \label{Cal_2_f}
\end{align}
We will show the right hand side remains unchanged if one replaces $T$ with $s_0.T$, hence such a function exists. Let $\operatorname{Cl}_{d+1}$ be the subalgebra of $\mathcal{H}^{\operatorname{ev}}_{p,d}$ generated by $c_0,\dots,c_d$, and  $\operatorname{Cl}_{d+2}$ be the subalgebra of $\mathcal{H}^{\operatorname{od}}_{p,d}$ generated by $c_M,c_0,\dots,c_d$.

1) Let $\mathcal{D}^{\lambda}_f$ be the free module over $\operatorname{Cl}_{d+1}$ with basis $\{v_T\}_{T\in \Gamma^{\lambda}}$.

2) Let $\mathcal{E}^{\lambda}_f$ be the free module over $\operatorname{Cl}_{d+2}$ with basis $\{v_T\}_{T\in \Gamma^{\lambda}}$.

We impose the $\mathbb{Z}_2$-grading on $\mathcal{D}^{\lambda}_f$ and $\mathcal{E}^{\lambda}_f$ as follows: each vector $v_T$ has the same parity as the number of boxes in the first row of $T^{(0)}$. Extend this grading to all of $\mathcal{D}^{\lambda}_f$ and $\mathcal{E}^{\lambda}_f$, so that multiplication by a Clifford generator reverses the grading.

We now define an $\mathcal{H}^{\operatorname{ev}}_{p,d}$-module structure on $\mathcal{D}^{\lambda}_f$ and an $\mathcal{H}^{\operatorname{od}}_{p,d}$-module structure on $\mathcal{E}^{\lambda}_f$. Using Relations (\ref{cliffordrelations}, \ref{Ctwist1}, \ref{Ctwist2}), the remaining generators $z_0,\dots,z_d,x_1,s_1,\dots,s_{d-1}$ can be moved past the Clifford algebra, and act on $v_T$ directly. Therefore it is enough to define the action on each $v_T$. In the case of either $\mathcal{D}_f$ or $\mathcal{E}^{\lambda}_f$, generators $z_i$ act by $z_i.v_T=\kappa_T(i)v_T$ for $0\leq i\leq d$.

Declare $v_{\star}=0$, and the simple transpositions act by the following.
\begin{align}
s_i. v_T &= (-\frac{1}{\kappa_T(i)-\kappa_T(i+1)}+\frac{1}{\kappa_T(i)+\kappa_T(i+1)}c_ic_{i+1})v_T \notag \\
&\hh +\sqrt{1-\frac{1}{(\kappa_T(i)+\kappa_T(i+1))^2}-\frac{1}{(\kappa_T(i)-\kappa_T(i+1))^2}}v_{s_i.T}. \label{transpositionsaction}
\end{align}
Lastly, to define the action of $x_1$, let $\kappa=\sqrt{\kappa^2_T(0)+\kappa^2_T(1)}$. For $a,b\in \mathbb{C}$, let $D(a,b)$ be the following matrix
\begin{align}
D(a,b)=\begin{bmatrix}
a & b \\ b & -a
\end{bmatrix}, \label{CliffordD}
\end{align}

 and we introduce the following $2\times 2$ matrices for the path $T$:
\begin{align}
Z=&D(
\kappa_T(0)-\kappa_{s_0.T}(0), \kappa_T(1)+\kappa_{s_0.T}(1)),
\label{zdefinition}\\
A=\frac{N_0}{\kappa_T(0)^2+p\kappa_T(1)^2}&D(
p\kappa_T(1), -\kappa_T(0)) + \frac{\kappa_T(0)\kappa_T(1)}{\kappa_T(0)^2+p\kappa_T(1)^2} D(
\kappa_T(0) , \kappa_T(1)),
\\
 B= \frac{N_0}{\kappa_{s_0.T}(0)^2+p\kappa_{s_0.T}(1)^2}&D(
p\kappa_{s_0.T}(1), \kappa_{s_0.T}(0))+\frac{N_0}{\kappa_{s_0.T}(0)^2+p\kappa_{s_0.T}(1)^2}D(
\kappa_{s_0.T}(0) , -\kappa_{s_0.T}(1)).
 \label{Cal_2_M}
\end{align}
When $s_0.T\neq \star$, the action of $x_1$ on $v_T$ is given by the following  $4\times 4$ matrix on the subspace spanned by vectors $v_T$, $c_0c_1v_T$ ,$c_0v_{s_0.T}$ and $c_1v_{s_0.T}$:
\begin{align}
x_1(T)=\begin{bmatrix}
A
& f(s_0.T) Z\\
f(T) Z & B \end{bmatrix}.\label{Cal_2_x1better}
\end{align}
When $s_0.T=\star$, the action of $x_1$ on $\langle v_T,c_0c_1v_T \rangle$ is given by the upper left $2\times 2$ block of the above matrix.

One main result of this paper is as follows.
\begin{theorem}\label{relationshold}
The above construction gives a well-defined action of $\mathcal{H}^{\operatorname{ev}}_{p,d}$ on $\mathcal{D}^{\lambda}_f$ and $\mathcal{H}^{\operatorname{od}}_{p,d}$ on $\mathcal{E}^{\lambda}_f$. In other words, all the relations in $\mathcal{H}^{\operatorname{ev}}_{p,d}$ and $\mathcal{H}^{\operatorname{od}}_{p,d}$ are satisfied.
\end{theorem}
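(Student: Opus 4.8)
The plan is to verify, one family at a time, that the operators of (\ref{transpositionsaction})--(\ref{Cal_2_x1better}) satisfy every defining relation of $\mathcal{H}^{\operatorname{ev}}_{p,d}$ in the presentation of Lemma~\ref{evenpresentation}, and then to add the $c_M$-relations for $\mathcal{H}^{\operatorname{od}}_{p,d}$. The first move is a reduction: since $\mathcal{D}^{\lambda}_f$ is free over $\operatorname{Cl}_{d+1}$ with basis $\{v_T\}$, the Clifford relations, the Clifford twist relations, and the additional Sergeev relations for $c_0$ hold automatically, because $z_i, x_1, s_j$ were introduced precisely so as to be $\operatorname{Cl}_{d+1}$-equivariant with the prescribed twists. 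Hence each remaining relation need only be checked after evaluating both sides on an arbitrary basis vector $v_T$, and equality on $v_T$ then propagates to every $c_I v_T$ by equivariance. The same reduction applies to $\mathcal{E}^{\lambda}_f$ over $\operatorname{Cl}_{d+2}$.

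Next I would dispatch the Sergeev relations and the commuting Hecke relations among $s_1,\dots,s_{d-1}$ and $z_0,\dots,z_d$. The operator $s_i$ acts by the classical calibrated (seminormal) formula and mixes $v_T$ only with $v_{s_i.T}$, a pair of paths agreeing at every row except row $i$, while each $z_j$ acts diagonally. Therefore $z_j s_i = s_i z_j$ for $j \neq i, i+1$, the relation $z_1 z_2 = z_2 z_1$ (\ref{zscommute}), and $z_2 x_1 = x_1 z_2$ (\ref{between1}) follow from the locality of the mixing together with the fact that the boxes governing $z_j$ do not move. The identities $s_i^2 = 1$, $s_i s_j = s_j s_i$ for $|i-j| > 1$, the braid relation $s_i s_{i+1} s_i = s_{i+1} s_i s_{i+1}$, and the $s_i c_j$-twists are finite matrix identities on the span of the $\langle s_i, s_{i+1}\rangle$-orbit of $T$ (of dimension at most $6 \cdot 2^{d+1}$), and reduce to scalar relations among the $\kappa_T(i)$; the essential point is that the square-root coefficient in (\ref{transpositionsaction}) is built to vanish exactly when $s_i.T = \star$, i.e.\ when $i, i+1$ lie in the same row or column of $T$, so that $|c_T(i) - c_T(i+1)| = 1$. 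The one nontrivial Hecke relation $s_i z_i = z_{i+1} s_i - 1 + c_i c_{i+1}$ (\ref{heckerelation1}) is then a $2\times 2$ check using $\kappa_{s_i.T}(i) = \kappa_T(i+1)$ and $\kappa_{s_i.T}(i+1) = \kappa_T(i)$.

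The heart of the proof is the boundary part, involving $x_1$ and $z_0$. Here I would first show that the assignment (\ref{Cal_2_x1better}) is well-defined: the $4 \times 4$ matrix attached to $T$ must agree, under the natural identification of the ordered bases $\{v_T, c_0 c_1 v_T, c_0 v_{s_0.T}, c_1 v_{s_0.T}\}$ and $\{v_{s_0.T}, c_0 c_1 v_{s_0.T}, c_0 v_T, c_1 v_T\}$, with the matrix attached to $s_0.T$; this is exactly the symmetry $F_T = F_{s_0.T}$ claimed after (\ref{Cal_2_f}), which is also what makes the functional equation (\ref{Cal_2_f}) solvable. Granting $x_1$, the relations $x_1 c_1 = -c_1 x_1$, $c_i x_1 = x_1 c_i$ ($i \neq 1$), the commutation of $x_1$ with $s_i$ for $i \geq 2$, of $z_0$ with the $s_i$, and $z_0 z_1 = z_1 z_0$ all follow from the block structure and the fact that $s_i$ ($i \geq 2$) and the diagonal $z_j$ never reach the row-$0$ vertex; the polynomial relation $x_1(s_1 x_1 s_1 + (1 - c_1 c_2)s_1) = (s_1 x_1 s_1 + (1 - c_1 c_2)s_1)x_1$ expresses that $x_1$ and its conjugate ``$x_2$'' anticommute and is again a finite matrix identity. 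The delicate relation is (\ref{importantrelation}), $(z_0 c_0 c_1 + z_1 - x_1)x_1 = -x_1(z_0 c_0 c_1 + z_1 - x_1)$: unwinding it against (\ref{Cal_2_x1better}) turns it into a system of polynomial identities in $\kappa_T(0), \kappa_T(1), \kappa_{s_0.T}(0), \kappa_{s_0.T}(1), N_0, p$ subject to $f(T)f(s_0.T) = F_T$, and these are precisely the identities that the matrices $Z, A, B$ of (\ref{zdefinition})--(\ref{Cal_2_M}) and the constraint (\ref{Cal_2_f}) were reverse-engineered to satisfy. The remaining ``extra'' relations are matrix computations of the same flavour: $x_1^2 - n(n+1) = 0$ (\ref{xeigenvalues}) follows by squaring the $4 \times 4$ matrix, or in the boundary case $s_0.T = \star$ by squaring the block $A$, the relevant scalar identities being those behind Corollaries~\ref{Piericontent} and \ref{x1squareacts} (the box added to the staircase $\alpha$ has content $n$); and $(z_1 - x_1)^2\big((z_1 - x_1)^2 - p(p+1)\big) = 0$ (\ref{yeigenvalues}) follows because $y_1 := z_1 - x_1$ acts by an explicit matrix whose square has eigenvalues $0$ and $p(p+1)$, mirroring the two-branch Pieri rule for $L(\beta) \otimes V$ (Corollary~\ref{Piericontent}).

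Finally, for $\mathcal{E}^{\lambda}_f$ over $\operatorname{Cl}_{d+2}$ every step above applies verbatim to the subalgebra $\mathcal{H}^{\operatorname{ev}}_{p,d} \subset \mathcal{H}^{\operatorname{od}}_{p,d}$, and the additional relations for $c_M$ are immediate from the free $\operatorname{Cl}_{d+2}$-structure: $c_M$ squares to $-1$, anticommutes with the $c_i$, and commutes with all of $z_0,\dots,z_d, x_1, s_1,\dots,s_{d-1}$, since these are even. I expect the main obstacle to be the boundary step: establishing $F_T = F_{s_0.T}$, hence the consistency of $x_1$, and then checking (\ref{importantrelation}) together with the eigenvalue relations (\ref{xeigenvalues}), (\ref{yeigenvalues}). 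These are the genuinely new, two-boundary computations, where the seemingly ad hoc choices of $Z$, $A$, $B$ and of the function $f$ all have to mesh at once; the $s_i$-relations, by contrast, are the Type Q calibrated-module bookkeeping already present in the one-boundary work of Hill-Kujawa-Sussan and Wan.
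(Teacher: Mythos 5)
Your outline is essentially the route the paper follows: reduce to the basis $\{v_T\}$ by Clifford equivariance, dispatch the easy commutations and locality arguments, then grind through the boundary relations involving $x_1$. Two places where your plan diverges or would need shoring up.

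First, for the one-boundary relations — the Sergeev relations, the twisted $s_ic_j$ relations, and crucially $s_iz_i = z_{i+1}s_i - 1 + c_ic_{i+1}$ — you propose verifying them as finite matrix identities. This can be made to work, but the paper takes a shortcut worth noticing: Corollary~\ref{reallylastrelation} observes that $\mathcal D^{\lambda}_f$ restricted to $\phi(H_d)$ decomposes as $\bigoplus_{\mu}\hat D^{\lambda/\mu}$, each summand isomorphic to the calibrated $H_d$-module $\hat H^{\lambda/\mu}$ of Hill--Kujawa--Sussan, and simply cites their Proposition~5.1.1. This dispatches the entire $H_d$-fragment at once. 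Also, be careful with your assertion that the square-root coefficient in (\ref{transpositionsaction}) ``is built to vanish exactly when $s_i.T=\star$.'' It need not vanish; rather, $v_\star$ is declared to be $0$, which kills the mixing term. What is actually proved (Lemma~\ref{Cen_1_lem}) is the converse: when $s_i.T\neq\star$ the coefficient ${e}_T(i)$ is \emph{nonzero}, which is what irreducibility needs.

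Second, the relations (\ref{importantrelation}), (\ref{xeigenvalues}), (\ref{yeigenvalues}) and the braid-type relation (\ref{lastrelation}) are not mere ``reverse engineering.'' You correctly identify them as the genuinely new two-boundary content, but the verification requires nontrivial algebraic work that is not visible from the definitions: the reformulation of $x_1(T)$ in Lemma~\ref{preparation} using the anticommuting matrices $Q,X,R,Y,Z$, the identities of Lemma~\ref{kapparelations} among $\kappa_0,\kappa_1,\kappa_0',\kappa_1'$ (in particular (\ref{nokappas}) and (\ref{anothernokappas}), which feed directly into the degree-$4$ relation), and the equivalent form (\ref{Cal_2_f0}) of the constraint on $f$. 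Lemma~\ref{firstnontrivial} (the $(x_1-z_1)^4 = p(p+1)(x_1-z_1)^2$ relation) in particular is not just ``eigenvalues $0$ and $p(p+1)$ mirroring the Pieri rule''; it requires checking that a specific rational expression in $m,n,p$ vanishes, which the paper offloads to MAGMA. Likewise, Lemma~\ref{lastrelation} (the $s_1x_1s_1$ braid relation) entails a careful orbit analysis of $\{s_0,s_1\}$ on paths into the two cases (the ``L'' and ``T'' patterns), with explicit eigenvalue tables and a long Clifford-coefficient expansion. So while your structure is correct and you have identified the right bottlenecks, the actual proof is dominated by these algebraic identities rather than by the framing arguments.
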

Checking all relations is a little involved. First let us start with some algebraic identities regarding the scalars in (\ref{kappadefinition}). For future references, when the path $T$ is clear from the context and $s_0.T\neq \star$, let us write $\kappa_0=\kappa_T(0)$, $\kappa_1=\kappa_T(1)$, $\kappa_0'=\kappa_{s_0.T}(0)$, $\kappa_1'=\kappa_{s_0.T}(1)$. In addition, let $\kappa^2=\kappa_0^2+\kappa_1^2$. These four scalars are relationed by following identities.
\begin{lemma}\label{kapparelations}
When $s_0.T\neq \star$, the following identities hold
\begin{align}
\kappa_0^2+\kappa_1^2=&(\kappa_0')^2+(\kappa_1')^2, \label{sumconstant}\\
\kappa_0^4+p^2\kappa_1^4=&p^3(p+1)\kappa_1^2+p(p+1)\kappa_0^2+2p\kappa_0^2\kappa_1^2, \label{degree4relation}\\
\kappa'_0=p \kappa_1 \sqrt{\frac{\kappa_0^2+\kappa_1^2}{\kappa_0^2+p^2\kappa_1^2}},
 \kappa'_1=\kappa_0 \sqrt{\frac{\kappa_0^2+\kappa_1^2}{\kappa_0^2+p^2\kappa_1^2}},&
 \kappa_0=p\kappa'_1 \sqrt{\frac{(\kappa'_0)^2+(\kappa'_1)^2}{(\kappa'_0)^2+p^2(\kappa'_1)^2}},
 \kappa_1=\kappa'_0 \sqrt{\frac{(\kappa'_0)^2+(\kappa'_1)^2}{(\kappa'_0)^2+p^2(\kappa'_1)^2}},\label{rewriteprimes}\\
 p(p+1)=&\frac{(\kappa_1+\kappa_1')^2\big((\kappa_0-\kappa_0')^2+(\kappa_1-\kappa_1')^2\big)}{(\kappa_0-\kappa_0')^2+(\kappa_1+\kappa_1')^2},\label{nokappas}\\
p(p+1)+\frac{4(m+1)(m-p)p}{1+p}=&\frac{(\kappa_1+\kappa_1')^2\big((\kappa_0+\kappa_0')^2+(\kappa_1-\kappa_1')^2\big)}{(\kappa_0+\kappa_0')^2+(\kappa_1+\kappa_1')^2}.
\label{anothernokappas}
\end{align}
Here, $m$ is the number of boxes in the first row of partition $T^{(0)}$, the target of the first edge in $T$. In addition, $m>p$, and the right hand side of condition (\ref{Cal_2_f}) is never zero.
\end{lemma}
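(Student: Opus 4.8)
The plan is to reduce the whole lemma to arithmetic in a single integer parameter and then verify six elementary assertions.

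\textbf{Combinatorial set-up.} Assume $s_0.T\neq\star$ and put $\mu:=T^{(1)}=(s_0.T)^{(1)}$. By Corollary~\ref{twopaths}, $\mu$ contains exactly the two members $T^{(0)},(s_0.T)^{(0)}$ of $\mathcal{P}_0(\alpha,\beta)$, obtained by deleting the two removable corners of $\mu/\alpha$, and by Lemma~\ref{Pre_5_com} the shape $\mu/\alpha$ is an ``upside-down L'' over the staircase $\alpha$; in the shifted convention every row of $\alpha$ ends in column $n$, so $\mu/\alpha$ is a horizontal arm in row $1$ (columns $n+1,\dots,n+S_0$) together with a vertical arm in column $n+1$ (rows $2,\dots,h+1$), with $S_0+h=p+1$, $S_0\geq2$, $h\geq1$. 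Let $m$ be the number of boxes in the first row of $T^{(0)}$; relabelling $T\leftrightarrow s_0.T$ if necessary so that $T^{(0)}$ is $\mu$ with its row-$1$ corner removed, one gets $m=n+S_0-1$, the first row of $(s_0.T)^{(0)}$ equal to $m+1$, the box labelled $1$ in $T$ of content $m$, and the box labelled $1$ in $s_0.T$ (the foot of the vertical arm) of content $n-h=m-p$. Hence by the definition~(\ref{kappadefinition})
\[
\kappa_T(0)^2=mp(m-p),\quad \kappa_T(1)^2=m(m+1),\quad \kappa_{s_0.T}(0)^2=(m+1)p(m+1-p),\quad \kappa_{s_0.T}(1)^2=(m-p)(m-p+1).
\]
The row bound $h+1\leq n$ together with $S_0\geq2$ forces $p+1\leq m\leq n+p-1$, giving at once the extra claim $m>p$, and also $m>n$ and $m-p<n$; all four radicands are $\geq1$, so the $\kappa$'s are positive reals, and one reads off $\kappa_T(0)^2+\kappa_T(1)^2=\kappa_{s_0.T}(0)^2+\kappa_{s_0.T}(1)^2=m(m-p+1)(p+1)$, $\kappa_T(0)^2+p^2\kappa_T(1)^2=m^2p(p+1)$, and $\kappa_T(0)\kappa_{s_0.T}(0)=p\,\kappa_T(1)\kappa_{s_0.T}(1)$.

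\textbf{Verification.} With these values, (\ref{sumconstant}) is the first displayed bookkeeping identity. For (\ref{degree4relation}) I would rewrite it as $(\kappa_T(0)^2-p\kappa_T(1)^2)^2=p(p+1)(\kappa_T(0)^2+p^2\kappa_T(1)^2)$; substituting the set-up, both sides equal $p^2m^2(p+1)^2$, and the same computation covers the other orientation. For (\ref{rewriteprimes}) one squares each of the four claimed expressions and uses the bookkeeping identities together with their $s_0.T$-analogues (e.g. $\kappa_{s_0.T}(0)^2+p^2\kappa_{s_0.T}(1)^2=(m-p+1)^2p(p+1)$) to reduce to true rational identities in $m,p$. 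For (\ref{nokappas}) and (\ref{anothernokappas}) (the latter read with $m$ the first row of $T^{(0)}$ in the orientation fixed above), set $K:=\kappa_T(1)\kappa_{s_0.T}(1)=\sqrt{m(m+1)(m-p)(m-p+1)}$; then $K^2\in\mathbb{Q}(m,p)$, $\kappa_T(0)\kappa_{s_0.T}(0)=pK$, and every quantity occurring in those two identities lies in $\mathbb{Q}(m,p)\oplus\mathbb{Q}(m,p)K$, since each $\kappa$ enters only through $\kappa^2$ or through the cross-products $\kappa_T(i)\kappa_{s_0.T}(i)$. Clearing the single denominator, each identity becomes the vanishing of some $a+bK$ with $a,b\in\mathbb{Q}(m,p)$, i.e. the pair of polynomial identities $a=0$, $b=0$, each confirmed by expansion using $K^2=m(m+1)(m-p)(m-p+1)$. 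Finally, in (\ref{Cal_2_f}) the numerator of $F_T$ equals $-\,m^2p(p+1)\big(n(n+1)-m(m+1)\big)\big(n(n+1)-(m-p)(m-p+1)\big)$, whose three factors are nonzero because $m>p\geq1$, $m>n$, and $m-p<n$; the denominator $\big(pm(2m-p+1)\big)^2\big((\kappa_T(0)-\kappa_{s_0.T}(0))^2+(\kappa_T(1)+\kappa_{s_0.T}(1))^2\big)$ is nonzero because $\kappa_T(1)+\kappa_{s_0.T}(1)>0$, so $F_T$ is well-defined and nonzero.

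\textbf{Where the work is.} Essentially all of the content sits in the combinatorial set-up: reading the shifted-diagram contents correctly, deciding which of $T,s_0.T$ carries the longer first row, and extracting the inequalities $p<m$, $n<m$, $m-p<n$ that make the non-vanishing claims work. Once the four scalars are pinned down, the six assertions are mechanical — the only mild subtlety being the $\mathbb{Q}(m,p)\oplus\mathbb{Q}(m,p)K$ bookkeeping for the two ratio identities (\ref{nokappas}) and (\ref{anothernokappas}), and the fact that (\ref{anothernokappas}), unlike the others, is not symmetric under $T\leftrightarrow s_0.T$ and so must be read in the orientation fixed above.
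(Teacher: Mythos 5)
Your proposal is correct and follows essentially the same route as the paper's proof: pin down the four scalars $\kappa_T(0),\kappa_T(1),\kappa_{s_0.T}(0),\kappa_{s_0.T}(1)$ as explicit square roots in the single integer parameter $m$ (the paper's equations~(\ref{specificvalues1})--(\ref{specificvalues2})), then verify each identity by substitution. The small differences are stylistic: the paper establishes the four values via a coloured shifted-diagram picture and outsources the verification of (\ref{nokappas})--(\ref{anothernokappas}) to MAGMA after reducing via the cross-products $\kappa_0\kappa_0'=p\kappa_1\kappa_1'$, whereas you do the same reduction but make the mechanism explicit with the $\mathbb{Q}(m,p)\oplus\mathbb{Q}(m,p)K$ bookkeeping and then check the two resulting rational identities by hand; for the nonvanishing of $F_T$ you factor the numerator explicitly rather than arguing factor-by-factor as the paper does, but the content ($m^2p(p+1)\neq0$, $m>n$, $m-p<n$) is the same. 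One thing you correctly flag that the paper glosses over: (\ref{anothernokappas}) is not invariant under $T\leftrightarrow s_0.T$ even though its right-hand side is, so the lemma's ``$m$ is the first row of $T^{(0)}$'' only yields (\ref{anothernokappas}) under the convention (adopted inside the paper's proof but not stated in the lemma) that $T^{(0)}$ is the member of $\mathcal{P}_0$ with the shorter first row.
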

\begin{proof}
To prove (\ref{sumconstant}), let $\lambda_1=T^{(0)}$ and  $\lambda_2=(s_0.T)^{(0)}$, which are the two partitions in Row 0 of paths $T$ and $s_0.T$. By definition, $T$ and $s_0.T$ travel through the same vertex in any other row, specifically parition $\mu$ in Row $1$. By Lemma~\ref{twopaths}, let $\lambda_1$ be the partition with a shorter first row, then $\mu$ is the following partition, where yellow boxes form the shape of $\alpha$, $\lambda_1$ is the partition without the green box, and $\lambda_2$ is the partition without the red box:

\begin{center}
\ytableausetup{smalltableaux}
\begin{ytableau}
*(yellow) & *(yellow) & \none[\cdot] & \none[\cdot] & \none[\cdot] & \none[\cdot] &  *(yellow) &  &  & \none[\cdot] & \none[\cdot] & & *(green)\\
\none & *(yellow) &   \none[\cdot] & \none[\cdot] & \none[\cdot] & \none[\cdot] & *(yellow) &  \\
\none & \none & \none[\cdot] & \none[\cdot] & \none[\cdot] &\none[\cdot]&\none[\cdot]&\none[\cdot]\\
\none & \none & \none & \none[\cdot] & \none[\cdot] &\none[\cdot]&\none[\cdot]&\none[\cdot]\\
\none & \none & \none & \none & \none[\cdot] &\none[\cdot]&*(yellow)&\\
\none & \none & \none & \none & \none &\none[\cdot]&*(yellow)& *(red) \\
\none & \none & \none & \none & \none &\none[\cdot]&\none[\cdot]\\
\none & \none & \none & \none & \none &\none  & *(yellow)\\
\end{ytableau}
\end{center}
If $\lambda_1$ has $m$ boxes in the first row, then the green box has content $m$. Using the fact that the skew shape $\lambda/\alpha$ has $p$ boxes, the red box has content $m-p$. Since all rows start with a box with content $0$, $m-p>0$ and $m>p$. These are the two boxes added to $\lambda_1$ or $\lambda_2$ to obtain $\mu$, therefore by the definition (\ref{kappadefinition}),
\begin{align}
\kappa_0^2&=mp(m-p), \hspace{.3 in} \kappa_1^2=m(m+1),\label{specificvalues1} \\
(\kappa_0')^2&=(m+1)p(m-p+1), \hspace{.3 in}(\kappa_{1}')^2=(m-p)(m-p+1). \label{specificvalues2}
\end{align}
and (\ref{sumconstant}) is a straightforward calculation. To see (\ref{degree4relation}), notice
\begin{align*}
\kappa_0^2-p\kappa_1^2&=-mp^2-mp=-mp(p+1), \hspace{.3 in} m=-\frac{\kappa_0^2-p\kappa_1^2}{p(p+1)}.
\end{align*}
and substituting the second equality in the expression for $p^2(p+1)^2\kappa_1^2=p^2(p+1)^2m(m+1)$. This leads to (\ref{degree4relation}).

To see (\ref{rewriteprimes}), notice
\begin{align}
\kappa_0\kappa_0'=p\sqrt{m(m+1)(m-p)(m-p+1)},  \hspace{.2 in}
\kappa_1\kappa_1'=\sqrt{m(m+1)(m-p)(m-p+1)}. \label{sameindex}
\end{align}
Therefore $\kappa_0\kappa_0'=p\kappa_1\kappa_1'$. The last equality in (\ref{rewriteprimes}) follows from substituting  $\kappa_0=\frac{p\kappa_1'}{\kappa_0'}\kappa_1$ in (\ref{sumconstant}) and solve for $\kappa_1$. The other equalities can be shown similarly.

To see (\ref{nokappas}), one can use (\ref{sameindex}) and the following
\begin{align*}
\kappa_1^2+(\kappa_1')^2&=m(m+1)+(m-p)(m-p+1), \hspace{.2 in} \kappa^2=m(p+1)(m-p+1)
\end{align*}
to substitute them in
\begin{align*}
\frac{(\kappa_1+\kappa_1')^2\big((\kappa_0-\kappa_0')^2+(\kappa_1-\kappa_1')^2\big)}{(\kappa_0-\kappa_0')^2+(\kappa_1+\kappa_1')^2}
=\frac{\big(\kappa_1^2+(\kappa_1')^2+2\kappa_1\kappa_1'\big)(2\kappa^2-2\kappa_0\kappa_0'-2\kappa_1\kappa_1')}{2\kappa^2-2\kappa_0\kappa_0'+2\kappa_1\kappa_1'}
\end{align*}
and
\begin{align*}
\frac{(\kappa_1+\kappa_1')^2\big((\kappa_0+\kappa_0')^2+(\kappa_1-\kappa_1')^2\big)}{(\kappa_0+\kappa_0')^2+(\kappa_1+\kappa_1')^2}
=\frac{\big(\kappa_1^2+(\kappa_1')^2+2\kappa_1\kappa_1'\big)(2\kappa^2+2\kappa_0\kappa_0'-2\kappa_1\kappa_1')}{2\kappa^2+2\kappa_0\kappa_0'+2\kappa_1\kappa_1'},
\end{align*}
and obtain (\ref{nokappas}) and (\ref{anothernokappas}) via a straightforward calculation. We also verify this using MAGMA codes which are included in Section~\ref{codeskapparelations}.

To prove the last claim, notice that $\kappa_0^2+p^2\kappa_1^2=m^2p+m^2p^2\neq 0$. On the other hand, the content of the red box is at least $1$ and at most $n-1$, therefore $1\leq m-p\leq n-1$, and by (\ref{specificvalues2}) $(\kappa_1')^2\neq n(n+1)$. Also, since $\lambda_1$ has at least $n+1$ boxes in the first row, $m\geq n+1$ and (\ref{specificvalues1}) implies $\kappa_1^2\neq n(n+1)$. Therefore the right hand side of (\ref{Cal_2_f}) is never zero.

\end{proof}

It is also helpful to give a reformulation of the action $x_1$ defined in (\ref{Cal_2_x1better}). This alternative construction is more technical to define, but will be helpful when we check all relations in $\mathcal{H}^{\operatorname{ev}}_{p,d}$ or $\mathcal{H}^{\operatorname{od}}_{p,d}$ . Recall the matrix $D(a,b)$ in (\ref{CliffordD}).

\begin{lemma}\label{preparation}
The following three quantities are equal
\begin{align}
&\frac{2\frac{N_0}{\kappa^2}(\kappa^2-\kappa_1^2-(\kappa_1')^2)+(\kappa_1^2+(\kappa_1')^2-p(p+1))}{2(\kappa_0\kappa_1+\kappa_0'\kappa_1')} \notag \\
=&\frac{\kappa_0\kappa_1}{\kappa_0^2+p\kappa_1^2}(\frac{N_0}{\kappa^2}(p-1)+1)
=\frac{\kappa_0' \kappa_1'}{(\kappa_0')^2+p(\kappa'_1)^2}(\frac{N_0}{\kappa^2}(p-1)+1):=c.\label{cdefinition}
\end{align}
Further let 
\begin{align}
Q=D(\kappa_1, -\kappa_0),\hspace{.2 in}
X=D(\kappa_0,\kappa_1), \hspace{.2 in}
R=D(\kappa_1', \kappa_0'),\hspace{.2 in}
Y =D(\kappa_0', -\kappa_1'). \label{matrixdefinition}
\end{align}
Recall the matrix $Z$ in (\ref{zdefinition}). The following is true, where the scalars represent the corresponding $2$-by-$2$ scalar matrices
\begin{align}
&Q^2=R^2=X^2=Y^2=\kappa^2,\hspace{.3 in} Z^2=(\kappa_0-\kappa_0')^2+(\kappa_1+\kappa_1')^2, \label{matrixidentity1} \\
&QX+XQ=YR+RY=XZ+ZY=ZX+YZ=QZ+ZR=RZ+ZQ=0.\label{matrixidentity2}
\end{align}
Also, the upper-left and lower-right block of the matrix $x_1(T)$ in (\ref{Cal_2_x1better}) can be rewritten, so that  
\begin{align}
x_1(T)=\begin{bmatrix}
 \frac{N_0}{\kappa^2}Q+cX& f(s_0.T)Z \\
 f(T)Z & \frac{N_0}{\kappa^2}R+cY
\end{bmatrix}.  \label{Cal_2_x1acts}
\end{align}
Furthermore, using the newly defined quantity $c$ in (\ref{cdefinition}), the condition $(\ref{Cal_2_f})$ on $f$ is equivalent to 
\begin{align}
f(T)f(s_0.T)\big((\kappa_0-\kappa_0')^2+(\kappa_1+\kappa_1')^2\big) =N_0-\frac{N_0^2}{\kappa^2}-c^2\kappa^2,  \label{Cal_2_f0}
\end{align}
and the right hand side $F_T$ of (\ref{Cal_2_f}) remains invariant when replacing $T$ with $s_0.T$, and therefore such an $f$ exists by taking $f(T)=\sqrt{F_T}$.
\end{lemma}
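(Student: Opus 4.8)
The plan is to dispose of the five assertions of the lemma one at a time, in roughly increasing order of difficulty; all but one reduce either to elementary $2\times2$ matrix algebra or to polynomial identities in $m,n,p$ that become routine once the explicit values (\ref{specificvalues1})--(\ref{specificvalues2}) from Lemma~\ref{kapparelations} are substituted. For the matrix identities (\ref{matrixidentity1})--(\ref{matrixidentity2}) the one fact needed is that the matrices of (\ref{CliffordD}) satisfy $D(a,b)D(c,d)+D(c,d)D(a,b)=2(ac+bd)\,\mathrm{Id}$, in particular $D(a,b)^2=(a^2+b^2)\,\mathrm{Id}$. From this $Q^2=X^2=\kappa_0^2+\kappa_1^2=\kappa^2$ is immediate, while $R^2=Y^2=(\kappa_0')^2+(\kappa_1')^2=\kappa^2$ and the stated value of $Z^2$ use (\ref{sumconstant}); and each anticommutator in (\ref{matrixidentity2}) is the scalar matrix $2(ac+bd)\,\mathrm{Id}$ with the entries of $Q,R,X,Y,Z$ inserted, which one checks vanishes using only (\ref{sumconstant}) (for the three identities involving $Z$ one uses $(\kappa_0')^2+(\kappa_1')^2=\kappa^2$ to cancel the surviving terms). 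This step is purely mechanical.

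Next I would prove that the three expressions for $c$ in (\ref{cdefinition}) agree and deduce the rewriting (\ref{Cal_2_x1acts}). The equality of the second and third is formal: the relation $\kappa_0\kappa_0'=p\,\kappa_1\kappa_1'$ from (\ref{sameindex}) gives $\kappa_0\kappa_1\big((\kappa_0')^2+p(\kappa_1')^2\big)=\kappa_0'\kappa_1'\big(\kappa_0^2+p\kappa_1^2\big)$, and multiplying through by the common factor $1+\tfrac{N_0}{\kappa^2}(p-1)$ finishes it. For the first expression I would substitute (\ref{specificvalues1})--(\ref{specificvalues2}) together with $\kappa^2=m(p+1)(m-p+1)$ and $\kappa_0^2+p\kappa_1^2=mp(2m-p+1)$; using the short identities $\kappa_1^2+(\kappa_1')^2-p(p+1)=2(m+1)(m-p)$ and $\kappa_0^2-(\kappa_1')^2=(p-1)(m+1)(m-p)$ the first quotient collapses to the second. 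For (\ref{Cal_2_x1acts}): since $D$ is additive in its two arguments and $(a,b)\mapsto D(a,b)$ is injective, it suffices to match the two scalar parameters of the single $D$ obtained by summing the terms of $A$ against those of $\tfrac{N_0}{\kappa^2}Q+cX$; matching the off-diagonal parameter reproduces exactly the second expression for $c$, and with that value the diagonal parameter matches because $p\kappa^2=p(\kappa_0^2+\kappa_1^2)$. The block $B$ is handled identically (working with the coefficient $\tfrac{\kappa_0'\kappa_1'}{(\kappa_0')^2+p(\kappa_1')^2}$ on the second term, in parallel with $A$ in (\ref{Cal_2_M})), producing the third expression for $c$.

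The core of the lemma is the equivalence of (\ref{Cal_2_f}) and (\ref{Cal_2_f0}). Both are equalities for the product $f(T)f(s_0.T)$; multiplying (\ref{Cal_2_f}) through by $Z^2=(\kappa_0-\kappa_0')^2+(\kappa_1+\kappa_1')^2$ and using the value of $c$ just found, the equivalence amounts to the scalar identity
\[
-\,\frac{(\kappa_0^2+p^2\kappa_1^2)(N_0-\kappa_1^2)(N_0-(\kappa_1')^2)}{(\kappa_0^2+p\kappa_1^2)^2}\;=\;N_0-\frac{N_0^2}{\kappa^2}-c^2\kappa^2.
\]
I would verify this by substituting the explicit values $\kappa_0^2+p^2\kappa_1^2=m^2p(p+1)$, $\kappa_0^2+p\kappa_1^2=mp(2m-p+1)$, $N_0-\kappa_1^2=-(m-n)(m+n+1)$, $N_0-(\kappa_1')^2=(n-m+p)(n+m-p+1)$, $\kappa^2=m(p+1)(m-p+1)$, $N_0=n(n+1)$, and $c=\tfrac{\kappa_0\kappa_1}{\kappa_0^2+p\kappa_1^2}\big(1+\tfrac{N_0}{\kappa^2}(p-1)\big)$: after clearing all denominators both sides become the same polynomial in $m,n,p$. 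This is the one genuinely lengthy step, a polynomial identity of fairly high degree, and it is the part I expect to be the main obstacle; I would either grind it out directly or cite the symbolic verification in Section~\ref{codeskapparelations}.

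Finally I would settle the invariance of $F_T$ and the existence of $f$. By the previous step the right-hand side of (\ref{Cal_2_f0}), namely $F_T\cdot Z^2=N_0-\tfrac{N_0^2}{\kappa^2}-c^2\kappa^2$, is built only from $N_0$, $\kappa^2$ and $c$. Replacing $T$ by $s_0.T$ interchanges $(\kappa_0,\kappa_1)$ with $(\kappa_0',\kappa_1')$; this fixes $\kappa^2$ by (\ref{sumconstant}), fixes $N_0$, and interchanges the second and third expressions for $c$ in (\ref{cdefinition}), which are equal, hence fixes $c$. Since $Z^2$ is manifestly symmetric under the swap, $F_T=F_{s_0.T}$. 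By the last sentence of Lemma~\ref{kapparelations} we have $F_T\neq 0$, and $s_0.T\neq T$ by Corollary~\ref{twopaths}; so choosing one square root $\sqrt{F_T}$ for each unordered pair $\{T,s_0.T\}\subset\Gamma^{\lambda}_0$ and setting $f(T)=f(s_0.T)=\sqrt{F_T}$ yields a well-defined $f\colon\Gamma^{\lambda}_0\to\mathbb{C}^{\times}$ with $f(T)f(s_0.T)=F_T$, i.e.\ satisfying (\ref{Cal_2_f}).
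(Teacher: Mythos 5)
Your proposal is correct and follows the paper's proof essentially step for step; the one place you are slightly cleaner is the equality of the second and third expressions for $c$, which you obtain directly from $\kappa_0\kappa_0'=p\kappa_1\kappa_1'$ (from (\ref{sameindex})) rather than by substituting the explicit values (\ref{specificvalues1})--(\ref{specificvalues2}) as the paper does. One phrasing to tighten: only $QX+XQ$ and $YR+RY$ in (\ref{matrixidentity2}) are genuine anticommutators to which $D(a,b)D(c,d)+D(c,d)D(a,b)=2(ac+bd)\,\mathrm{Id}$ applies as stated; the four expressions involving $Z$ have the mixed form $AZ+ZB$ with $A\neq B$ and need a separate (equally routine) entry-by-entry check, where the diagonal cancellation uses $(\kappa_0')^2+(\kappa_1')^2=\kappa^2$ as you observe and the off-diagonal entries cancel identically.
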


\begin{proof}
To see (\ref{matrixidentity1}), notice $D^2(a,b)=a^2+b^2$. The identities in (\ref{matrixidentity2}) are straightforward to check. The first equality in (\ref{cdefinition}) is obtained by first using (\ref{rewriteprimes}) in Lemma~\ref{kapparelations} to rewrite $\kappa_0'$ and $\kappa_1'$, and simplifying the result using (\ref{degree4relation}). To see the second equality in (\ref{cdefinition}) holds, we can use (\ref{specificvalues1}) and (\ref{specificvalues2}) in the proof of Lemma~\ref{kapparelations}, and obtain
\begin{align*}
\frac{\kappa_0\kappa_1}{\kappa_0^2+p\kappa_1^2}=\frac{m\sqrt{(m+1)(m-p)p}}{mp(2m-p+1)}= \frac{(m+1-p)\sqrt{(m+1)(m-p)p}}{(m+1-p)p(2m+1-p)}=\frac{\kappa_0' \kappa_1'}{(\kappa_0')^2+p(\kappa'_1)^2}.
\end{align*}
To check that the entries in $x_1(T)$ match those in (\ref{Cal_2_x1better}), notice
\begin{align*}
\frac{N_0}{\kappa^2}Q+cX &=\frac{N_0}{\kappa^2}\begin{bmatrix}
\kappa_1 & -\kappa_0\\
-\kappa_0 & -\kappa_1
\end{bmatrix}+\frac{\kappa_0\kappa_1}{\kappa_0^2+p\kappa_1^2}(\frac{N_0}{\kappa^2}(p-1)+1)\begin{bmatrix}
\kappa_0 & \kappa_1\\
\kappa_1 & -\kappa_0 
\end{bmatrix}\\
&=\frac{N_0}{\kappa^2(\kappa_0^2+p\kappa_1^2)}\begin{bmatrix}
\kappa_1(\kappa_0^2+p\kappa_1^2)+(p-1)\kappa_0^2\kappa_1 & -\kappa_0(\kappa_0^2+p\kappa_1^2)+(p-1)\kappa_0\kappa_1^2\\
-\kappa_0(\kappa_0^2+p\kappa_1^2)+(p-1)\kappa_0\kappa_1^2 & -\kappa_1(\kappa_0^2+p\kappa_1^2)-(p-1)\kappa_0^2\kappa_1
\end{bmatrix}\\
&\hh +\frac{\kappa_0\kappa_1}{\kappa_0^2+p\kappa_1^2}\begin{bmatrix}
\kappa_0 & \kappa_1\\
\kappa_1 & -\kappa_0 
\end{bmatrix}\\
&=\frac{N_0}{\kappa^2(\kappa_0^2+p\kappa_1^2)}\begin{bmatrix}
p\kappa_1(\kappa_0^2+\kappa_1^2) & -\kappa_0(\kappa_0^2+\kappa_1^2)\\
-\kappa_0(\kappa_0^2+\kappa_1^2) & -p\kappa_1(\kappa_0^2+\kappa_1^2)
\end{bmatrix}+\frac{\kappa_0\kappa_1}{\kappa_0^2+p\kappa_1^2}\begin{bmatrix}
\kappa_0 & \kappa_1\\
\kappa_1 & -\kappa_0 
\end{bmatrix}\\
&=\frac{N_0}{\kappa_0^2+p\kappa_1^2}\begin{bmatrix}
p\kappa_1 & -\kappa_0\\
-\kappa_0 & -p\kappa_1
\end{bmatrix}+\frac{\kappa_0\kappa_1}{\kappa_0^2+p\kappa_1^2}\begin{bmatrix}
\kappa_0 & \kappa_1\\
\kappa_1 & -\kappa_0 
\end{bmatrix},
\end{align*}
and the lower-right block can be checked similarly.

To check the last claim, we use (\ref{degree4relation}) in Lemma~\ref{kapparelations}:
\begin{align*}
\frac{(\kappa_0^2+p\kappa_1^2)^2+(p-1)^2\kappa_0^2\kappa_1^2}{\kappa^2}=\frac{\kappa_0^4+p^2\kappa_1^4+(p^2+1)\kappa_0^2\kappa_1^2}{\kappa_0^2+\kappa_1^2}=\kappa_0^2+p^2\kappa_1^2.
\end{align*}
Therefore
\begin{align*}
&\hh N_0-\frac{N_0^2}{\kappa^2}-c^2\kappa^2\\
&=\frac{1}{(\kappa_0^2+p\kappa_1^2)^2}(N_0(\kappa_0^2+p\kappa_1^2)^2-\frac{N_0^2}{\kappa^2}(\kappa_0^2+p\kappa_1^2)^2-\kappa_0^2\kappa_1^2(\frac{N_0}{\kappa^2}(p-1)+1)^2\kappa^2)\\
&=\frac{1}{(\kappa_0^2+p\kappa_1^2)^2}(-\frac{N_0^2}{\kappa^2}((\kappa_0^2+p\kappa_1^2)^2+(p-1)^2\kappa_0^2\kappa_1^2) +N_0((\kappa_0^2+p\kappa_1^2)^2-2(p-1)\kappa_0^2\kappa_1^2)-\kappa_0^2\kappa_1^2\kappa^2)\\
&=\frac{1}{(\kappa_0^2+p\kappa_1^2)^2}(-N_0^2(\kappa_0^2+p^2\kappa_1^2)+N_0(\kappa_0^4+p^2\kappa_1^4+2\kappa_0^2\kappa_1^2)-\kappa_0^2\kappa_1^2\kappa^2)\\
&=-\frac{1}{(\kappa_0^2+p\kappa_1^2)^2}(N_0-\kappa_1^2)(N_0(\kappa_0^2+p^2\kappa_1^2)-\kappa_0^2(\kappa_0^2+\kappa_1^2))\\
&=-\frac{\kappa_0^2+p^2\kappa_1^2}{(\kappa_0^2+p\kappa_1^2)^2}(N_0-\kappa_1^2)(N_0-\kappa_0^2\frac{\kappa_0^2+\kappa_1^2}{\kappa_0^2+p^2\kappa_1^2})=-\frac{\kappa_0^2+p^2\kappa_1^2}{(\kappa_0^2+p\kappa_1^2)^2}(N_0-\kappa_1^2)(N_0-(\kappa'_1)^2).
\end{align*}
and the last equality follows from (\ref{rewriteprimes}).

By Lemma~\ref{kapparelations}, $\kappa^2=\kappa_0^2+\kappa_1^2=(\kappa_0')^2+(\kappa_1')^2$, and by the first claim $c$ is constant for $T$ and $s_0.T$, (\ref{Cal_2_f0}) remains unchanged when we replace $T$ with $s_0.T$, therefore such an $f$ exists.
\end{proof}

To prove Theorem~\ref{relationshold}, let us check each relation in $\mathcal{H}^{\operatorname{od}}_{p,d}$ or $\mathcal{H}^{\operatorname{ev}}_{p,d}$ separately.

\begin{lemma}\label{firstrelation}
The relations $x_1 s_i = s_i x_1(2\leq i\leq d-1)$ are satisfied in the definition of $\mathcal{D}^{\lambda}_f$ and $\mathcal{E}^{\lambda}_f$.
\end{lemma}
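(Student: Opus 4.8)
The goal is to check that $x_1$ and $s_i$ commute for $2 \leq i \leq d-1$, acting on the basis $\{v_T\}_{T \in \Gamma^\lambda}$ of $\mathcal{D}^\lambda_f$ (and $\mathcal{E}^\lambda_f$). The key structural observation is that $x_1$ acts nontrivially only on the ``first two boxes'' of a path — its matrix form in \eqref{Cal_2_x1acts} involves only $\kappa_T(0)$, $\kappa_T(1)$, $\kappa_{s_0.T}(0)$, $\kappa_{s_0.T}(1)$, the function $f$, and the Clifford generators $c_0, c_1$ — whereas $s_i$ for $i \geq 2$ only ever touches the entries $i, i+1$ of the tableau, i.e. boxes added at steps $i$ and $i+1$, and the Clifford generators $c_i, c_{i+1}$. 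So the two operators act on ``disjoint parts'' of the combinatorial data, and one expects them to commute for formal reasons. The plan is to make this precise.

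First I would fix a path $T$ and split into the two cases $s_0.T \neq \star$ and $s_0.T = \star$, since $x_1$ has a $4\times 4$ versus $2\times 2$ form. Next, observe that applying $s_i$ ($i \geq 2$) to $T$ does not change $T^{(0)}$ or $T^{(1)}$: the boxes labelled $1, 2$ in the skew tableau are untouched by swapping labels $i, i+1$. Hence $\kappa_{s_i.T}(0) = \kappa_T(0)$, $\kappa_{s_i.T}(1) = \kappa_T(1)$, and moreover $s_0.(s_i.T) = s_i.(s_0.T)$ as elements of $\Gamma^\lambda \cup \{\star\}$ (reversing the order of two far-apart pairs of box-additions is order-independent), so that also $\kappa_{s_0.(s_i.T)}(j) = \kappa_{s_0.T}(j)$ for $j=0,1$ and $f(s_i.T) = f(s_0.T)$... wait, more carefully: $f$ is defined on $\Gamma^\lambda_0$ and its defining relation \eqref{Cal_2_f} depends only on $\kappa_T(0), \kappa_T(1), \kappa_{s_0.T}(0), \kappa_{s_0.T}(1)$, all of which are invariant under $s_i$; one may therefore choose $f$ so that $f(s_i.T) = f(T)$ (or simply note that the proof goes through for any valid $f$ that is $s_i$-invariant, and such a choice exists, e.g. $f = \sqrt{F_T}$). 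Then the matrix $x_1(s_i.T)$ in \eqref{Cal_2_x1better} is literally the same matrix as $x_1(T)$, expressed on the corresponding translated basis $v_{s_i.T}, c_0c_1 v_{s_i.T}, c_0 v_{s_0.s_i.T}, c_1 v_{s_0.s_i.T}$.

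The computation then reduces to a bookkeeping check. Expand $s_i . v_T$ via \eqref{transpositionsaction}: it is a $\mathbb{C}[c_i, c_{i+1}]$-combination of $v_T$ and $v_{s_i.T}$ (and the coefficients $\kappa_T(i), \kappa_T(i+1)$ are, again, $x_1$-data-disjoint constants). Apply $x_1$ to each term using \eqref{Cal_2_x1better}; since $x_1$ commutes past $c_i, c_{i+1}$ (for $i \geq 2$ these are among $c_2, \ldots, c_d$, and the Clifford-twist relations of Lemma~\ref{evenpresentation} give $c_i x_1 = x_1 c_i$ for $i = 0$, and for $i \geq 2$ — wait, one must be careful: $x_1 c_1 = -c_1 x_1$ but $c_i x_1 = x_1 c_i$ for $i = 0, 2, 3, \ldots, d$, so for $2 \leq i \leq d-1$ both $c_i$ and $c_{i+1}$ commute with $x_1$), the $x_1$-matrix acts only on the $\{v_T, v_{s_i.T}\}$ indexing and leaves the $c_i, c_{i+1}, c_0, c_1$ structure alone. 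Compute $s_i . (x_1 . v_T)$ the same way in the other order. Because $x_1(T) = x_1(s_i.T)$ and because the scalar coefficients in $s_i$'s action are unchanged when $x_1$ is applied (they depend on $\kappa_T(i), \kappa_T(i+1)$, invariant under the $s_0$-swap that $x_1$ encodes), the two orders produce identical expressions. I would present this as: both $x_1 s_i . v_T$ and $s_i x_1 . v_T$ equal the same explicit linear combination of $\{c^{\epsilon} v_U : U \in \{T, s_i.T, s_0.T, s_0.s_i.T\}\}$, term by term.

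The main obstacle — really the only subtlety — is the interaction of signs and the Clifford-module structure: since $v_T$ and $v_{s_0.T}$ have opposite parities (they differ in the first row of the Row-$0$ partition), multiplication by $c_0, c_1$ vs. by $c_i, c_{i+1}$ must be tracked using the $\mathbb{Z}_2$-graded multiplication in $\operatorname{Cl}_{d+1}$ (resp. $\operatorname{Cl}_{d+2}$), and one must confirm that moving $x_1$ past the $c_i, c_{i+1}$ appearing in $s_i.v_T$ introduces no sign (which holds precisely because $2 \leq i \leq d-1$ puts both indices in the ``$x_1$-commuting'' range, unlike $c_1$). Once that is in place the identity is forced. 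I would also remark that the case $s_i.T = \star$ (the swapped tableau is non-standard) is covered since then $v_{s_i.T} = 0$ and both sides collapse to the $x_1$-action on the $\mathbb{C}[c_i,c_{i+1}]$-span of $v_T$, on which $s_i$ acts by the scalar-plus-Clifford part of \eqref{transpositionsaction}, commuting with $x_1$ trivially.
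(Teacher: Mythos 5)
Your proposal follows the same strategy as the paper's proof: use the fact that $s_i$ ($i\geq 2$) fixes the Row~$0$ and Row~$1$ vertices (so $\kappa$ and hence the block data in $x_1(T)$ are preserved), that $s_0$ fixes the Row~$i$ and Row~$(i+1)$ vertices (so the $s_i$-coefficients $b_1,b_2,b_3$ are preserved), that $s_0$ and $s_i$ commute as operators on $\Gamma^\lambda\cup\{\star\}$, and that the Clifford factors $c_0,c_1$ (which $x_1$ carries) commute with $c_i,c_{i+1}$ (which $s_i$ carries), then expand both sides term by term. One genuine improvement in your write-up: you explicitly flag that the argument also needs $f(s_i.T)=f(T)$, which is not automatic from the defining constraint~\eqref{Cal_2_f} on the product $f(T)f(s_0.T)$; the paper's statement ``the entries in $x_1(T)$ are determined completely by $\kappa_0(T)$ and $\kappa_1(T)$'' tacitly assumes this $s_i$-invariance of $f$ (satisfied, e.g., by $f=\sqrt{F_T}$ or by any $f$ depending only on the first edge $T^{(0)}$), and you are right to note it needs to be part of the hypothesis or addressed.
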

\begin{proof}
For a fixed $i$, $2\leq i\leq d-1$, if $S=s_i.T$, then $T$ and $S$ share vertices at all rows except at Row $i$, therefore they share vertices at Row $0$ and Row $1$, $\kappa_0(T)=\kappa_0(S)$ and $\kappa_1(T)=\kappa_1(S)$. Since the entries in $x_1(T)$ are determined completely by $\kappa_0(T)$ and $\kappa_1(T)$, we have $x_1(T)=x_1(S)$. If $a_1,a_2,a_3,a_4$ are entries in the first column of this matrix, then 
\begin{align*}
x_1.v_T=(a_1+a_2c_0c_1)v_T+(a_3c_0+a_4c_1)v_{s_0.T}, \hspace{.2 in} x_1.v_S=(a_1+a_2c_0c_1)v_S+(a_3c_0+a_4c_1)v_{s_0.S}.
\end{align*}

On the other hand, if $s_0.T=R$, then $T$ and $R$ share the same vertices at Row $1$ and beyond, hence $\kappa_i(T)=\kappa_i(R)$ and $\kappa_{i+1}(T)=\kappa_{i+1}(R)$ for $2\leq i\leq d-1$. Let $b_1,b_2,b_3$ be the coefficients in the action of $s_i$ in (\ref{transpositionsaction}), since they are completely determined by $\kappa_i(T)$ and $\kappa_{i+1}(T)$,
\begin{align*}
s_i.v_T=(b_1+b_2c_ic_{i+1})v_T+b_3v_S, \hspace{.2 in} s_i.v_{R}=(b_1+b_2c_ic_{i+1})v_{R}+b_3v_{s_i.R}.
\end{align*}
Lastly,  as actions on the set $\Gamma^{\lambda}$ of paths, $s_i$ $(2\leq i\leq d)$ commutes with $s_0$, and
\begin{align*}
s_0.S=s_0.(s_i.T)=s_i(s_0.T)=s_i.R.
\end{align*}
 We compare the actions on either side:
\begin{align*}
&\hh x_1.(s_iv_T)=x_1((b_1+b_2c_ic_{i+1})v_T+b_3v_S)\\
&=(b_1+b_2c_ic_{i+1})((a_1+a_2c_0c_1)v_T+(a_3c_0+a_4c_1)v_{s_0.T})+b_3((a_1+a_2c_0c_1)v_S+(a_3c_0+a_4c_1)v_{s_0.S}),\\
&\hh s_i.(x_1v_T)=s_i((a_1+a_2c_0c_1)v_T+(a_3c_0+a_4c_1)v_{R})\\
&=(a_1+a_2c_0c_1)((b_1+b_2c_ic_{i+1})v_T+b_3v_S) +(a_3c_0+a_4c_1)((b_1+b_2c_ic_{i+1})v_{R}+b_3v_{s_i.R}).
\end{align*}
The two results are the same by comparison.
\end{proof}

\begin{lemma}
The relation $x_1^2=n(n+1)$ is satisfied in the definition of $\mathcal{D}^{\lambda}_f$ and $\mathcal{E}^{\lambda}_f$.
\end{lemma}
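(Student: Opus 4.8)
I want to show that the $4\times 4$ matrix $x_1(T)$ in $(\ref{Cal_2_x1better})$ (or its $2\times 2$ upper-left block when $s_0.T=\star$) squares to the scalar matrix $N_0 = n(n+1)$. Since $x_1$ acts block-wise on the four-dimensional space $\langle v_T, c_0c_1v_T, c_0v_{s_0.T}, c_1v_{s_0.T}\rangle$ (and the Clifford generators only permute these and introduce signs, commuting appropriately with the entries, which are scalars), it suffices to compute the matrix product $x_1(T)\,x_1(s_0.T)$ — note that the lower-right block of $x_1(T)$ is precisely the upper-left block of $x_1(s_0.T)$, by the symmetry $s_0.(s_0.T)=T$ together with the formulas $(\ref{Cal_2_M})$ and $(\ref{Cal_2_x1acts})$. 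So in the block form $(\ref{Cal_2_x1acts})$, writing $P=\frac{N_0}{\kappa^2}Q+cX$ and $P'=\frac{N_0}{\kappa^2}R+cY$, I need
\begin{align*}
P^2 + f(T)f(s_0.T)Z^2 &= N_0, \\
P\cdot f(s_0.T)Z + f(s_0.T)Z\cdot P' &= 0,
\end{align*}
and the two analogous identities in the lower row (which follow by the $T\leftrightarrow s_0.T$ symmetry).

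**Key steps in order.** First I would compute $P^2$ using the matrix identities in Lemma~\ref{preparation}: $(\ref{matrixidentity1})$ gives $Q^2 = X^2 = \kappa^2$, and $(\ref{matrixidentity2})$ gives $QX+XQ=0$, so $P^2 = \left(\frac{N_0^2}{\kappa^4}+c^2\right)\kappa^2 = \frac{N_0^2}{\kappa^2} + c^2\kappa^2$. Then $Z^2 = (\kappa_0-\kappa_0')^2+(\kappa_1+\kappa_1')^2$ by $(\ref{matrixidentity1})$, so the first identity reads
\begin{align*}
\frac{N_0^2}{\kappa^2} + c^2\kappa^2 + f(T)f(s_0.T)\big((\kappa_0-\kappa_0')^2+(\kappa_1+\kappa_1')^2\big) = N_0,
\end{align*}
which is exactly the reformulated condition $(\ref{Cal_2_f0})$ on $f$ established in Lemma~\ref{preparation}. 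For the off-diagonal vanishing, I expand $PZ + ZP' = \frac{N_0}{\kappa^2}(QZ + ZR) + c(XZ + ZY)$, and both $QZ+ZR=0$ and $XZ+ZY=0$ are among the identities in $(\ref{matrixidentity2})$; hence $PZ+ZP'=0$, and multiplying by the scalar $f(s_0.T)$ gives the required block. The lower-left block is handled symmetrically: $P'Z + ZP = \frac{N_0}{\kappa^2}(RZ+ZQ) + c(YZ+ZX) = 0$, again by $(\ref{matrixidentity2})$. Finally, when $s_0.T=\star$, only the block $P$ survives and I need $P^2 = N_0$; but in that degenerate case $\kappa_1^2 \in \{0, N_0\}$ forces (via the specific values in $(\ref{specificvalues1})$ and the Pieri analysis) the content relations that make $c$ and the $Z$-contribution degenerate, and one checks directly that $P^2 = \frac{N_0^2}{\kappa^2}+c^2\kappa^2$ collapses to $N_0$; I would spell this boundary case out separately using the explicit eigenvalue $\kappa_1^2 = m(m+1)$ with the appropriate $m$.

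**Main obstacle.** The conceptual content is entirely packaged into the algebraic identities of Lemmas~\ref{kapparelations} and~\ref{preparation} — in particular $(\ref{Cal_2_f0})$ and the anticommutation relations $(\ref{matrixidentity2})$ — so once those are in hand the proof is a short block-matrix manipulation. The one place that genuinely requires care is the $s_0.T=\star$ boundary case: there is no second partition in Row $0$ contained in the Row-$1$ vertex, so the $4\times 4$ picture degenerates to $2\times 2$, and I must verify that the formula for $P^2$ still yields $N_0$ without invoking the $f$-condition (which involves $s_0.T$). I expect this to reduce to checking $\frac{N_0^2}{\kappa^2}+c^2\kappa^2 = N_0$ directly from the explicit values of $\kappa_0,\kappa_1$ in that situation, i.e. to an identity of the type $(\ref{degree4relation})$ specialized appropriately; it is routine but is the step most likely to hide a sign or normalization subtlety.
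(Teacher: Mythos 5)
Your block-matrix computation in the generic case ($s_0.T\neq\star$) is exactly the paper's proof: square the $4\times 4$ matrix in (\ref{Cal_2_x1acts}), use $Q^2=X^2=\kappa^2$ and $QX+XQ=0$ from (\ref{matrixidentity1})–(\ref{matrixidentity2}) to get $P^2=\tfrac{N_0^2}{\kappa^2}+c^2\kappa^2$, apply (\ref{Cal_2_f0}) for the $Z^2$ contribution, and kill the off-diagonal blocks with the remaining anticommutation identities. (The passing suggestion to ``compute $x_1(T)x_1(s_0.T)$'' is a slight misstatement — $x_1^2$ acting on the four-dimensional span of $v_T,c_0c_1v_T,c_0v_{s_0.T},c_1v_{s_0.T}$ is just $x_1(T)^2$ as a $4\times 4$ matrix; the block computation you then write out is the correct one.)

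Where your proposal goes wrong is the boundary case $s_0.T=\star$, and the issue is worth spelling out because it is a genuine gap, shared by the paper's own proof. You correctly observe that when $s_0.T=\star$ only the $2\times 2$ block $A=\tfrac{N_0}{\kappa^2}Q+cX$ survives and one must check $\tfrac{N_0^2}{\kappa^2}+c^2\kappa^2=N_0$ directly. But the claim that in this degenerate case $\kappa_1^2\in\{0,N_0\}$ is false. There are three distinct types of $\star$-paths: (i) the box $1$ completes a $2\times 2$ square in $T^{(1)}/\alpha$, in which case $\kappa_1^2 = n(n+1)=N_0$ as you say; (ii) $T^{(0)}/\alpha$ is a single horizontal row of $p$ boxes and box $1$ extends it, so $\kappa_1^2=(n+p)(n+p+1)$; (iii) $T^{(0)}/\alpha$ is a single hook column and box $1$ extends it downward, so $\kappa_1^2=(n-p)(n-p+1)$. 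Cases (ii) and (iii) occur in the paper's own $n=5$, $p=3$ example (the tableaux $B_1$ and $B_5$), where $\kappa_1^2=72$ and $\kappa_1^2=6$ respectively, neither equal to $N_0=30$. The identity $\tfrac{N_0^2}{\kappa^2}+c^2\kappa^2=N_0$ still holds, but for a different reason: the proof of Lemma~\ref{preparation} gives the purely algebraic factorization
\begin{align*}
N_0-\frac{N_0^2}{\kappa^2}-c^2\kappa^2 = -\frac{1}{(\kappa_0^2+p\kappa_1^2)^2}\big(N_0-\kappa_1^2\big)\big(N_0(\kappa_0^2+p^2\kappa_1^2)-\kappa_0^2(\kappa_0^2+\kappa_1^2)\big),
\end{align*}
valid for any $\kappa_0,\kappa_1$ with $c$ given by the second formula in (\ref{cdefinition}). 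In case (i) the first factor vanishes; in cases (ii) and (iii) it is the second factor that vanishes, by a direct substitution using $\kappa_0^2=mp(m-p)$ together with $\kappa_1^2=m(m+1)$ (for case (ii), $m=n+p$) or $\kappa_1^2=(m-p-1)(m-p)$ (for case (iii), $m=n+1$), reducing to a short polynomial identity in $n$ and $p$. So your instinct that the boundary case needs separate verification is correct, and is actually an improvement on the paper's proof, which does not address it; but the mechanism you propose is wrong and would not close the gap in cases (ii) and (iii).
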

\begin{proof}
 On the subspace spanned by $\{v_T, c_0c_1v_T,c_0v_{s_0.T},c_1v_{s_0.T}\}$, $x_1^2$ acts as
\begin{align*}
x_1^2&=\begin{bmatrix}
\frac{N_0}{\kappa^2}Q+cX& f(s_0.T)Z \\
 f(T)Z & \frac{N_0}{\kappa^2}R+cY
\end{bmatrix}^2.
\end{align*}

Let us calculate the $2\times 2$ blocks individually. By (\ref{matrixidentity1}), (\ref{matrixidentity2}) and (\ref{Cal_2_f0}) in Lemma~\ref{preparation}, the upper left block is
\begin{align*}
(x_1^2)_{11}&=\frac{N_0^2}{\kappa^4}Q^2+c^2X^2+\frac{cN_0}{\kappa^2}(QX+XQ) +f(T)f(s_0.T)Z^2\\
&=\frac{N_0^2\kappa^2}{\kappa^4}+c^2\kappa^2+f(T)f(s_0.T)((\kappa_0-\kappa_0')+(\kappa_1+\kappa_1')^2)=n(n+1).
\end{align*}
Similarly, the lower right block is
\begin{align*}
(x_1^2)_{22}&=\frac{N_0^2}{\kappa^4}R^2+c^2X^2+\frac{cN_0}{\kappa^2}(RX+XR) +f(T)f(s_0.T)Z^2\\
&=\frac{N_0^2\kappa^2}{\kappa^4}+c^2\kappa^2+f(T)f(s_0.T)((\kappa_0-\kappa_0')+(\kappa_1+\kappa_1')^2)=n(n+1).
\end{align*}
The other two blocks are as follows
\begin{align*}
(x_1^2)_{12}&=f(s_0.T)(\frac{N_0}{\kappa^2}(QZ+ZR)+f(s_0.T)c^2(XZ+ZY))=0,\\
(x_1^2)_{21}&=f(T)(\frac{N_0}{\kappa^2}(ZQ+RZ)+f(T)c^2(ZX+YZ))=0.
\end{align*}
Therefore $x_1^2=N_0=n(n+1)$.
\end{proof}

To check the next two relations, notice
\begin{align*}
z_0.(c_0c_1 v_T)=-c_0c_1(z_0.v_T)=-\kappa_T(0)c_0c_1v_T.
\end{align*}
One can also obtain the eigenvalues for $c_0v_T$ and $c_1v_T$ via a similar fashion. In particular, let $d(r_1,\dots,r_t)$ be the diagonal matrix with diagonal entries $r_1,\dots,r_t$, then $z_0$ acts on the subspace spanned by $v_T$, $c_0c_1 v_T$, $c_0v_{s_0.T}$, $c_1 v_{s_0.T}$ via the matrix
\begin{align*}
d(\kappa_T(0),-\kappa_T(0),-\kappa_{s_0.T}(0),\kappa_{s_0.T}(0)).
\end{align*}
On the other hand, $z_1$ acts as $d(\kappa_T(1),-\kappa_T(1),\kappa_{s_0.T}(1),-\kappa_{s_0.T}(1))$, and $c_0c_1$ acts as $d(C,C)$, whose diagonal blocks are the $2$-by-$2$ matrix $C$ in (\ref{Cdefinition}).

\begin{lemma}\label{lasttrivial}
The relation $(z_0c_0c_1+z_1-x_1)x_1=-x_1(z_0c_0c_1+z_1-x_1)$ is satisfied in the definition of $\mathcal{D}^{\lambda}_f$ and $\mathcal{E}^{\lambda}_f$.
\end{lemma}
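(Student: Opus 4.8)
The plan is to imitate the strategy already used for Lemma~\ref{firstrelation} and for the relation $x_1^2=n(n+1)$ established just above. Since $\mathcal{D}^{\lambda}_f$ is free over $\operatorname{Cl}_{d+1}$ with homogeneous basis $\{v_T\}_{T\in\Gamma^{\lambda}}$ (and $\mathcal{E}^{\lambda}_f$ free over $\operatorname{Cl}_{d+2}$, the extra generator $c_M$ not appearing in the relation and commuting with $x_1,z_0,z_1$), and each of $x_1,z_0,z_1$ commutes or anticommutes with every Clifford generator by the relations of Lemma~\ref{evenpresentation}, it is enough to check the identity on each basis vector $v_T$; and for that it suffices to work inside the finite-dimensional subspace $\mathcal{U}_T=\langle v_T,\,c_0c_1v_T,\,c_0v_{s_0.T},\,c_1v_{s_0.T}\rangle$ (which degenerates to $\langle v_T,c_0c_1v_T\rangle$ when $s_0.T=\star$), because $\mathcal{U}_T$ is preserved by the three operators $z_0c_0c_1$, $z_1$ and $x_1$ that occur in the relation — exactly the subspace on which the matrices for $x_1$, $z_0$, $z_1$ and $c_0c_1$ were recorded above. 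Expanding the relation and using $x_1^2=n(n+1)=N_0$, the assertion is equivalent to the operator identity
\[
\{\,z_0c_0c_1+z_1,\ x_1\,\}=2N_0
\]
on $\mathcal{U}_T$.

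The key step is the observation that $z_0c_0c_1+z_1$ acts on $\mathcal{U}_T$ by a block-diagonal matrix whose two $2\times2$ blocks are precisely the matrices $Q$ and $R$ of Lemma~\ref{preparation}. Indeed, recalling that $z_0$ acts by $d(\kappa_0,-\kappa_0,-\kappa_0',\kappa_0')$, that $z_1$ acts by $d(\kappa_1,-\kappa_1,\kappa_1',-\kappa_1')$, and that $c_0c_1$ acts by $d(C,C)$ with each diagonal block the $2\times2$ matrix $C$ of (\ref{Cdefinition}), a short $2\times2$ multiplication gives $d(\kappa_0,-\kappa_0)\,C=D(0,-\kappa_0)$ and $d(-\kappa_0',\kappa_0')\,C=D(0,\kappa_0')$, hence $z_0c_0c_1+z_1$ acts by $\operatorname{diag}\big(D(\kappa_1,-\kappa_0),\,D(\kappa_1',\kappa_0')\big)=\operatorname{diag}(Q,R)$, with $Q,R$ as in (\ref{matrixdefinition}). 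Getting the orientation of the $2\times2$ block of $C$ and the order of matrix multiplication right is the one place one must be careful.

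With this identification, I would finish by computing the anticommutator $\operatorname{diag}(Q,R)\,x_1(T)+x_1(T)\,\operatorname{diag}(Q,R)$ blockwise from the presentation (\ref{Cal_2_x1acts}) of $x_1(T)$. The two diagonal blocks become $\tfrac{2N_0}{\kappa^2}Q^2+c(QX+XQ)$ and $\tfrac{2N_0}{\kappa^2}R^2+c(RY+YR)$, each equal to $2N_0$ because $Q^2=R^2=\kappa^2$ and $QX+XQ=RY+YR=0$ by (\ref{matrixidentity1}) and (\ref{matrixidentity2}); the two off-diagonal blocks become $f(s_0.T)(QZ+ZR)$ and $f(T)(RZ+ZQ)$, both zero by (\ref{matrixidentity2}). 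Thus the anticommutator equals $2N_0$ times the identity, which is the claimed identity; when $s_0.T=\star$ only the upper-left block survives and one gets $\{Q,\ \tfrac{N_0}{\kappa^2}Q+cX\}=2N_0$ in the same way. It is worth noting that, unlike the more delicate relations, this one uses only the matrix identities of Lemma~\ref{preparation} and not the constraint (\ref{Cal_2_f0}) on $f$. The only genuine obstacle is the sign and ordering bookkeeping in reducing to $\mathcal{U}_T$ and in identifying $z_0c_0c_1+z_1$ with $\operatorname{diag}(Q,R)$; after that the computation is immediate.
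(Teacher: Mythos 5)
Your argument is essentially the paper's own: you reduce to the subspace $\mathcal{U}_T = \langle v_T, c_0c_1v_T, c_0v_{s_0.T}, c_1v_{s_0.T}\rangle$, identify $z_0c_0c_1+z_1$ with $\operatorname{diag}(Q,R)$, and then read off the anticommutator blockwise from the presentation of $x_1(T)$ using the identities $Q^2=R^2=\kappa^2$ and $QX+XQ=RY+YR=QZ+ZR=ZQ+RZ=0$ of Lemma~\ref{preparation}. The only addition is your explicit $2\times2$ check that $\operatorname{diag}(\kappa_0,-\kappa_0)C + \operatorname{diag}(\kappa_1,-\kappa_1)=Q$ (and similarly for $R$), which the paper leaves to the discussion preceding the lemma; this matches, and your observation that the $f$-constraint (\ref{Cal_2_f0}) plays no role here is also correct.
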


\begin{proof}
Under the above discussion, $z_0c_0c_1+z_1$ acts as the matrix $\begin{bmatrix}
Q & 0 \\ 0 & R
\end{bmatrix}$ where $Q$ and $R$ are defined in Lemma~\ref{preparation}. Therefore, using (\ref{matrixidentity1}) and (\ref{matrixidentity2}),
we obtain, 
\begin{align*}
&\hh (z_0c_0c_1+z_1)x_1+x_1(z_0c_0c_1+z_1)\\
&=\begin{bmatrix}
Q & 0 \\ 0 & R
\end{bmatrix}\begin{bmatrix}
\frac{N_0}{\kappa^2}Q+cX& f(s_0.T)Z \\
 f(T)Z & \frac{N_0}{\kappa^2}R+cY
\end{bmatrix}+\begin{bmatrix}
\frac{N_0}{\kappa^2}Q+cX& f(s_0.T)Z \\
 f(T)Z & \frac{N_0}{\kappa^2}R+cY
\end{bmatrix}\begin{bmatrix}
Q & 0 \\ 0 & R
\end{bmatrix}\\
&=\begin{bmatrix}
\frac{2N_0}{\kappa^2}Q^2+c(QX+XQ) & f(s_0.T) (QZ+ZR)\\
f(T)(ZQ+RZ) &  \frac{2N_0}{\kappa^2}Q^2+c(RY+YR)
\end{bmatrix}=2N_0=2x_1^2.
\end{align*}

\end{proof}

\begin{lemma}\label{firstnontrivial}
The relation $(x_1-z_1)^4=p(p+1)(x_1-z_1)^2$ is satisfied in the definition of $\mathcal{D}^{\lambda}_f$ and $\mathcal{E}^{\lambda}_f$.
\end{lemma}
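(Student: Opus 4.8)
The relation to verify is exactly (\ref{yeigenvalues}): writing $W:=x_{1}-z_{1}$, we must show $W^{2}\bigl(W^{2}-p(p+1)\bigr)=0$. Since $W$ is even and the Clifford twist relations of Lemma~\ref{evenpresentation} give $Wc_{i}=c_{i}W$ for $i\neq 1$ and $Wc_{1}=-c_{1}W$, the operator $W$ stabilizes each subspace $U_{T}:=\langle v_{T},\,c_{0}c_{1}v_{T},\,c_{0}v_{s_{0}.T},\,c_{1}v_{s_{0}.T}\rangle$ (which is $4$-dimensional when $s_{0}.T\neq\star$ and equals $\langle v_{T},\,c_{0}c_{1}v_{T}\rangle$ otherwise), and $\mathcal{D}^{\lambda}_{f}$ (resp.\ $\mathcal{E}^{\lambda}_{f}$) is spanned by the images of such subspaces under Clifford words, across which $W$ is represented by the same matrices up to signs. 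So it suffices to verify the relation on each $U_{T}$.

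The main device is to sidestep squaring the $4\times 4$ matrix of $x_{1}$: since $x_{1}^{2}=n(n+1)=N_{0}$ was already proved, one has $W^{2}=N_{0}I+z_{1}^{2}-(x_{1}z_{1}+z_{1}x_{1})$ on $U_{T}$. On $U_{T}$ (generic case $s_{0}.T\neq\star$) the generator $z_{1}$ acts by $d(\kappa_{1},-\kappa_{1},\kappa_{1}',-\kappa_{1}')=\mathrm{diag}\bigl(D(\kappa_{1},0),D(\kappa_{1}',0)\bigr)$ in the block notation of (\ref{CliffordD}) (discussion preceding Lemma~\ref{lasttrivial}), while $x_{1}$ acts by (\ref{Cal_2_x1acts}). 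Using $D(s,t)^{2}=(s^{2}+t^{2})I$ and the polarization identity $D(s,t)D(u,v)+D(u,v)D(s,t)=2(su+tv)I$ --- and, in the cross terms, $\kappa_{1}^{2}-(\kappa_{1}')^{2}=(\kappa_{0}')^{2}-\kappa_{0}^{2}$ from (\ref{sumconstant}) --- one computes $x_{1}z_{1}+z_{1}x_{1}$ blockwise and finds that $W^{2}|_{U_{T}}$ is a $2\times 2$ block matrix with scalar diagonal blocks $\beta_{1}I$, $\beta_{2}I$, where $\beta_{1}=N_{0}+\kappa_{1}^{2}-\tfrac{2N_{0}\kappa_{1}^{2}}{\kappa^{2}}-2c\kappa_{0}\kappa_{1}$ and $\beta_{2}$ is the same expression with primes, and with off-diagonal blocks that are combinations with scalar coefficients of $I$ and $J:=\left(\begin{smallmatrix}0&1\\-1&0\end{smallmatrix}\right)$. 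Since $I,J$ commute and $J^{2}=-I$, diagonalizing the scalar $J$ splits $W^{2}|_{U_{T}}$ into two $2\times 2$ blocks with common eigenvalues
\[
\mu_{\pm}=\frac{\beta_{1}+\beta_{2}}{2}\pm\sqrt{\Bigl(\tfrac{\beta_{1}-\beta_{2}}{2}\Bigr)^{2}+f(T)f(s_{0}.T)(\kappa_{0}-\kappa_{0}')^{2}\bigl((\kappa_{1}+\kappa_{1}')^{2}+(\kappa_{0}+\kappa_{0}')^{2}\bigr)},
\]
so $W^{2}|_{U_{T}}$ is diagonalizable with eigenvalues $\mu_{+},\mu_{-}$; it remains to show $\mu_{+}+\mu_{-}=p(p+1)$ and $\mu_{+}\mu_{-}=0$, which forces the minimal polynomial of $W^{2}|_{U_{T}}$ to divide $t\bigl(t-p(p+1)\bigr)$.

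For the sum, $\mu_{+}+\mu_{-}=\beta_{1}+\beta_{2}$, and substituting the first expression for $c$ in (\ref{cdefinition}) to clear $2c(\kappa_{0}\kappa_{1}+\kappa_{0}'\kappa_{1}')$ and using $\tfrac{N_{0}}{\kappa^{2}}\kappa^{2}=N_{0}$, the terms cancel to leave exactly $p(p+1)$. For the product, $\mu_{+}\mu_{-}=\beta_{1}\beta_{2}-f(T)f(s_{0}.T)(\kappa_{0}-\kappa_{0}')^{2}\bigl((\kappa_{1}+\kappa_{1}')^{2}+(\kappa_{0}+\kappa_{0}')^{2}\bigr)$; here one substitutes the explicit values (\ref{specificvalues1})--(\ref{specificvalues2}) of $\kappa_{0},\kappa_{1},\kappa_{0}',\kappa_{1}'$ in terms of $m$ and $p$, the value $F_{T}$ of $f(T)f(s_{0}.T)$ from (\ref{Cal_2_f}) (equivalently (\ref{Cal_2_f0})), and the closed form of $c$, and verifies that the resulting rational function of $m$ and $p$ vanishes identically. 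I expect this last identity to be the main obstacle: once the square roots are eliminated it is a bulky, purely mechanical polynomial identity, and it is exactly what the MAGMA verification of Section~\ref{codeskapparelations} is designed to handle; by contrast the block reduction, the eigenvalue formula, and the sum identity are all short.

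It remains to treat the degenerate case $s_{0}.T=\star$, where $U_{T}=\langle v_{T},c_{0}c_{1}v_{T}\rangle$, $z_{1}$ acts by $D(\kappa_{1},0)$, and $x_{1}$ by the upper-left block $\tfrac{N_{0}}{\kappa^{2}}Q+cX$ of (\ref{Cal_2_x1acts}). Running the same computation ($x_{1}^{2}=N_{0}$, $z_{1}Q+Qz_{1}=2\kappa_{1}^{2}I$, $z_{1}X+Xz_{1}=2\kappa_{0}\kappa_{1}I$) gives $W^{2}|_{U_{T}}=\beta_{1}I$, which by the second expression for $c$ in (\ref{cdefinition}) together with $\kappa^{2}=\kappa_{0}^{2}+\kappa_{1}^{2}$ collapses to the scalar $\tfrac{\kappa_{0}^{2}-p\kappa_{1}^{2}}{\kappa_{0}^{2}+p\kappa_{1}^{2}}\bigl(N_{0}-\kappa_{1}^{2}\bigr)$. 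Because $s_{0}.T=\star$ forces $T^{(1)}$ to have a unique predecessor in Row $0$ (Corollary~\ref{twopaths}), only a few positions for the box added at step $1$ occur; feeding these, together with $\kappa_{0}^{2}=mp(m-p)$ and the ensuing constraints among $m,n,p$ (for instance $m=n+p$ when $T^{(0)}/\alpha$ is all horizontal, so that (\ref{degree4relation}) applies), into the displayed scalar shows that it equals $0$ when $\kappa_{1}^{2}=n(n+1)$ --- mirroring the vanishing of $\Omega^{2}$ in Corollary~\ref{Piericontent} --- and $p(p+1)$ otherwise. In every case the minimal polynomial of $W^{2}|_{U_{T}}$ divides $t\bigl(t-p(p+1)\bigr)$, so $W^{4}=p(p+1)W^{2}$, proving the lemma.
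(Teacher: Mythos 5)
Your proposal follows the same basic line as the paper's proof: with $W=x_1-z_1$, use $x_1^2=N_0$ to rewrite $W^2=N_0+z_1^2-(x_1z_1+z_1x_1)$, compute its $2\times 2$ blocks on $\langle v_T, c_0c_1v_T, c_0v_{s_0.T}, c_1v_{s_0.T}\rangle$, observe that the diagonal blocks $\beta_1=A_{11}$ and $\beta_2=A_{22}$ are scalar, establish the trace identity $\beta_1+\beta_2=p(p+1)$ from the closed form of $c$, and then reduce to a remaining polynomial identity in $m,n,p$. Your version packages this more cleanly: diagonalizing the antisymmetric generator $J=\left(\begin{smallmatrix}0&1\\-1&0\end{smallmatrix}\right)$ (the off-diagonal blocks $A_{12},A_{21}$ are polynomials in $J$, so commute) splits the $4\times 4$ operator into two $2\times 2$ pieces with common eigenvalues $\mu_\pm$, whence the relation is exactly $\mu_++\mu_-=p(p+1)$, $\mu_+\mu_-=0$; by Cayley--Hamilton this gives $W^2\bigl(W^2-p(p+1)\bigr)=0$ on each $U_T$ without even invoking diagonalizability. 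The term under your square root, $f(T)f(s_0.T)(\kappa_0-\kappa_0')^2\bigl((\kappa_1+\kappa_1')^2+(\kappa_0+\kappa_0')^2\bigr)$, looks different from the paper's simplification $f(T)f(s_0.T)(\kappa_1+\kappa_1')^2\bigl((\kappa_0-\kappa_0')^2+(\kappa_1-\kappa_1')^2\bigr)$ of $A_{12}A_{21}$, but they are equal via $(\ref{sumconstant})$: $(\kappa_1^2-(\kappa_1')^2)^2=((\kappa_0')^2-\kappa_0^2)^2$. One genuine improvement is that you address the degenerate case $s_0.T=\star$ explicitly, which the paper's proof of this lemma does not: when $s_0.T=\star$ there is no $A_{12}A_{21}$ to invoke, and the relation reduces to $\beta_1\in\{0,p(p+1)\}$. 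Your formula $\beta_1=\dfrac{(\kappa_0^2-p\kappa_1^2)(N_0-\kappa_1^2)}{\kappa_0^2+p\kappa_1^2}$ is correct, and checking the three configurations that force $s_0.T=\star$ (box at step $1$ completing a $2\times 2$ square in $T^{(1)}/\alpha$, giving content $n$ and $\beta_1=0$; the all-horizontal and all-vertical extremes, both giving $\beta_1=p(p+1)$) verifies the claim. Note, though, that $(\ref{degree4relation})$ does not hold in the $2\times2$-square subcase (there $\kappa_1^2=n(n+1)$, not $m(m+1)$), so that identity can only be invoked in the extremal subcases as you suggest; the $2\times 2$-square subcase relies instead on the direct factor $N_0-\kappa_1^2=0$. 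With that caveat, the argument is sound.
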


\begin{proof}
Let $J=\begin{bmatrix}
1&0\\0&-1
\end{bmatrix}$. Based on the reformulation of $x_1$ in Lemma~\ref{preparation}, and the discussion before Lemma~\ref{lasttrivial}, $x_1-z_1$ acts on the space spanned by $\{v_T,c_0c_1 v_T,c_0v_{s_0.T},c_1 v_{s_0.T}\}$ via the following $4$-by-$4$ matrix:
\begin{align*}
x_1-z_1&=\begin{bmatrix}
 \frac{N_0}{\kappa^2}Q+cX-\kappa_1J& f(s_0.T)Z \\
 f(T)Z & \frac{N_0}{\kappa^2}R+cY-\kappa_1'J.
\end{bmatrix}
\end{align*}
Let us compute each $2\times 2$ block in $(x_1-z_1)^2=(A_{ij})$ individually, using (\ref{matrixidentity1}), (\ref{matrixidentity2}) and (\ref{Cal_2_f0}) developed in Lemma~\ref{preparation}. The scalars represent suitable scalar matrices:
\begin{align*}
A_{11}&= 
(\frac{N_0^2}{\kappa^4}Q^2+c^2X^2+f(T)f(s_0.T)Z^2)+\kappa_1^2-\frac{N_0c}{\kappa^2}(QX+XQ)\\
& -\frac{N_0\kappa_1}{\kappa^2}(JQ+QJ)-c\kappa_1(JX+XJ)=N_0+\kappa_1^2-\frac{2N_0\kappa_1^2}{\kappa^2}-2c\kappa_0\kappa_1,\\
A_{22}&=(\frac{N_0^2}{\kappa^4}R^2+c^2Y^2+f(T)f(s_0.T)Z^2)+(\kappa_1')^2-\frac{N_0c}{\kappa^2}(RX+XR)\\
& -\frac{N_0\kappa_1}{\kappa^2}(JR+RJ)-c\kappa_1(JY+YJ)=N_0+\kappa_1^2-\frac{2N_0(\kappa_1')^2}{\kappa^2}-2c\kappa_0'\kappa_1',\\
A_{12}&=f(s_0.T)(\frac{N_0^2}{\kappa^4}(QZ+ZR)+c(XZ+ZY)-(\kappa_1JZ+\kappa_1'ZJ))=-f(s_0.T)(\kappa_1JZ+\kappa_1'ZJ),\\
A_{21}&=f(T)(\frac{N_0^2}{\kappa^4}(ZQ+RZ)+c(ZX+YZ)-(\kappa_1'JZ+\kappa_1ZJ))=-f(T)(\kappa_1'JZ+\kappa_1ZJ).
\end{align*}
Therefore the relation becomes
\begin{align}
(x_1-z_1)^4-p(p+1)(x_1-z_1)^2=
\begin{bmatrix}
A_{11}^2+A_{12}A_{21} & (A_{11}+A_{22})A_{12}\\
 (A_{11}+A_{22})A_{21} & A_{22}^2+A_{21}A_{12}
\end{bmatrix}-p(p+1) \begin{bmatrix}
A_{11} & A_{12} \\
A_{21} & A_{22}
\end{bmatrix}. \label{allinone}
\end{align}
The upper-right and lower-left blocks can be checked as follows, using the first definition of $c$, in (\ref{cdefinition}) of Lemma~\ref{preparation}:
\begin{align*}
\hh A_{11}+A_{22}&=(N_0+\kappa_1^2-\frac{2N_0\kappa_1^2}{\kappa^2}-2c\kappa_0\kappa_1)+(N_0+(\kappa_1')^2-\frac{2N_0(\kappa_1')^2}{\kappa^2}-2c\kappa_0'\kappa_1')\\
&=\frac{2N_0}{\kappa^2}(\kappa^2-\kappa_1^2-(\kappa_1')^2)+(\kappa_1^2+(\kappa_1')^2)-2c(\kappa_0\kappa_1+\kappa_0'\kappa_1')=p(p+1).
\end{align*}
Now let us check the upper-left block and the remaining block can be checked similarly. Notice
\begin{align*}
JZJZ&=\begin{bmatrix}
1 & 0\\ 0 & -1
\end{bmatrix}
\begin{bmatrix}
\kappa_0-\kappa_0' & \kappa_1+\kappa_1'\\ \kappa_1+\kappa_1' & -(\kappa_0-\kappa_0')
\end{bmatrix}
\begin{bmatrix}
1 & 0\\ 0 & -1
\end{bmatrix}
\begin{bmatrix}
\kappa_0-\kappa_0' & \kappa_1+\kappa_1'\\ \kappa_1+\kappa_1' & -(\kappa_0-\kappa_0')
\end{bmatrix}\\
&=\begin{bmatrix}
\kappa_0-\kappa_0' & \kappa_1+\kappa_1'\\ -(\kappa_1+\kappa_1') & \kappa_0-\kappa_0'
\end{bmatrix}^2=(\kappa_0-\kappa_0')^2-(\kappa_1+\kappa_1')^2.
\end{align*}
Similarly, 
\begin{align*}
JZZJ=ZJJZ=(\kappa_0-\kappa_0')^2+(\kappa_1+\kappa_1')^2,
\hspace{.2 in}
ZJZJ=(\kappa_0-\kappa_0')^2-(\kappa_1+\kappa_1')^2.
\end{align*}
Using (\ref{nokappas}) in Lemma~\ref{kapparelations}, and the equivalent condition on $f$ in (\ref{Cal_2_f0}) of Lemma~\ref{preparation},
\begin{align*}
&\hh A_{12}A_{21}\\
&=f(T)f(s_0.T)(\kappa_1JZ+\kappa_1'ZJ)(\kappa_1'JZ+\kappa_1ZJ)\\
&=f(T)f(s_0.T)\left(2\kappa_1\kappa_1'((\kappa_0-\kappa_0')^2-(\kappa_1+\kappa_1')^2) +(\kappa_1^2+(\kappa_1')^2)((\kappa_0-\kappa_0')^2+(\kappa_1+\kappa_1')^2)\right)\\
&=f(T)f(s_0.T)\left((\kappa_1^2+\kappa_1')^2(\kappa_0-\kappa_0')^2+(\kappa_1^2+\kappa_1')^2(\kappa_1-\kappa_1')^2\right)\\
&=f(T)f(s_0.T)((\kappa_0-\kappa_0')^2+(\kappa_1+\kappa_1')^2)p(p+1)=p(p+1)(N_0-\frac{N_0^2}{\kappa^2}-c^2\kappa^2).
\end{align*}
Now we check the upper-left block of (\ref{allinone}) by rewriting $N_0=x\kappa^2$, and viewing the expression as a polynomial in $x$. In particular, the second expression for $c$ in (\ref{cdefinition}) becomes
\begin{align*}
c=\frac{\kappa_0\kappa_1}{\kappa_0^2+p\kappa_1^2}(x(p-1)+1),
\end{align*}
and
\begin{align*}
&\hh A_{11}^2+A_{12}A_{21}-p(p+1)A_{11}=\left(x\kappa^2+\kappa_1^2-2x\kappa_1^2-2\frac{\kappa_0^2\kappa_1^2}{\kappa_0^2+p\kappa_1^2}(x(p-1)+1)\right)^2\\
&+p(p+1)\left(x\kappa^2-x^2\kappa^2-\frac{\kappa^2\kappa_0^2\kappa_1^2}{(\kappa_0^2+p\kappa_1^2)^2}(x(p-1)+1)^2\right)\\
& -p(p+1)\left(x\kappa^2+\kappa_1^2-2x\kappa_1^2-\frac{2\kappa_0^2\kappa_1^2}{\kappa_0^2+p\kappa_1^2}(x(p-1)+1)\right).
\end{align*}
This is zero by checking coefficients of powers of $x$. We check the coefficient of $x^2$ and the other two can be checked in a similar fashion. For example, each coefficient of $x^i$ is a rational expression of $\kappa$, $\kappa_0$, $\kappa_1$ and $p$. Using (\ref{specificvalues1}) in the proof of Lemma~\ref{kapparelations},  one can rewrite it as a rational expression of $m$, $n$ and $p$, and check whether its denominator is the zero polynomial. Using this method one can check
\begin{align}
\left( \kappa^2-2\kappa_1^2-\frac{2\kappa_0^2\kappa_1^2}{\kappa_0^2+p\kappa_1^2}(p-1) \right)^2-p(p+1)\kappa^2-p(p+1)\frac{\kappa^2\kappa_0^2\kappa_1^2}{(\kappa_0^2+p\kappa_1^2)^2}(p-1)=0. \label{code1}
\end{align}
This can be done via a lengthy yet straightforward calculation, or by a few lines of codes in MAGMA. The codes are included in the Appendix.
\end{proof}

\begin{lemma}
The relation
\begin{align}
x_1 (s_1x_1s_1+(1-c_1c_2)s_1) &= (s_1x_1s_1+(1-c_1c_2)s_1) x_1 \label{lastrelation}
\end{align}
is satisfied in the definition of $\mathcal{D}^{\lambda}_f$ and $\mathcal{E}^{\lambda}_f$.
\end{lemma}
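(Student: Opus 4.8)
The relation to be verified is of the form $x_1 x_2 = x_2 x_1$, where $x_2 := s_1 x_1 s_1 + (1-c_1c_2)s_1$ is the even-generator incarnation of the element $\tilde{x}_2$ of (\ref{Hecke2}); so the plan is to write the operator $x_2$ on $\mathcal{D}^{\lambda}_f$ (resp.\ $\mathcal{E}^{\lambda}_f$) explicitly and then verify that it commutes with $x_1$. As in the preceding lemmas, it suffices to check the identity on the subspace generated by a single $v_T$ under $x_1$, $s_1$, $c_0$, $c_1$, $c_2$, since the remaining generators act by scalars on such vectors or have already been shown to (anti)commute past $x_1$ and $s_1$.

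First I would record the combinatorics. The operator $x_1$ links $v_T$ only with $v_{s_0.T}$, while $s_1$ links $v_T$ with $v_{s_1.T}$; one checks that the partial maps $s_0$ and $s_1$ commute on $\Gamma^{\lambda}$ whenever both are defined, that $(s_1.T)^{(0)}=T^{(0)}$, and that $\kappa_{s_1.T}(0)=\kappa_T(0)$ while $\kappa_{s_1.T}(1)=\kappa_T(2)$ (the box labelled $1$ in $s_1.T$ being the box labelled $2$ in $T$). Hence the only paths appearing are $T$, $s_0.T$, $s_1.T$, and $s_0 s_1.T = s_1 s_0.T$, and the relevant subspace is spanned by the images of $v_T, v_{s_0.T}, v_{s_1.T}, v_{s_0 s_1.T}$ under $1, c_0c_1, c_0c_2, c_1c_2$ and the like. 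On this subspace $x_1$ is block diagonal: it acts by the matrix $x_1(T)$ of (\ref{Cal_2_x1acts}) on the span of $\{v_T, c_0c_1v_T, c_0v_{s_0.T}, c_1v_{s_0.T}\}$, and by a matrix of the same shape on the parallel span built from $v_{s_1.T}$ and $v_{s_0 s_1.T}$, the latter obtained from the former by replacing throughout the Row-$1$ data $\kappa_T(1),\kappa_{s_0.T}(1)$ by the Row-$2$ data $\kappa_T(2),\kappa_{s_0.T}(2)$ (and $f$ accordingly). The operator $s_1$ of (\ref{transpositionsaction}) links the two blocks with coefficients depending only on $\kappa_T(1),\kappa_T(2)$ and commutes with $c_0$; thus $x_2 = s_1 x_1 s_1 + (1-c_1c_2)s_1$ is an explicit product of these matrices.

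With $x_1$ and $x_2$ written out, $x_1 x_2 = x_2 x_1$ becomes an equality of block matrices, which I would reduce blockwise to scalar identities via the $D(a,b)$-calculus of Lemma~\ref{preparation} — in particular (\ref{matrixidentity1}), (\ref{matrixidentity2}) and the formula (\ref{Cal_2_f0}) for $f(T)f(s_0.T)$ — together with the arithmetic relations (\ref{sumconstant})--(\ref{anothernokappas}) of Lemma~\ref{kapparelations}, ultimately to rational identities in $m,n,p$ and in the contents $c_T(1),c_T(2)$. These can be verified by hand or, as with (\ref{code1}), by the MAGMA code in the Appendix. The degenerate cases are handled separately: if $s_1.T=\star$ then $x_2$ collapses to a diagonal-plus-$v_{s_0.T}$ operator and the check reduces to the ones already carried out in Lemma~\ref{lasttrivial} and Lemma~\ref{firstnontrivial}; the cases where exactly one of $s_0.T$, $s_0 s_1.T$ equals $\star$ are treated by restricting the matrices to the appropriate sub-blocks.

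The main obstacle is bookkeeping rather than conceptual: the product $x_1 x_2$ lives on a space several times larger than in the previous lemmas, $s_1$ is only ``tridiagonal'' (a diagonal Clifford piece plus the path-swap piece), and one must show that the mixed terms pairing a $v_T$-block with a $v_{s_0.T}$-block through the $s_1$-sandwich cancel. This cancellation is exactly where the relations among $\kappa_T(0),\kappa_T(1),\kappa_T(2)$ and the $\kappa_{s_0.T}$-values forced by Lemma~\ref{kapparelations} enter, and it is the step most likely to require the automated verification. Once the diagonal blocks and the four mixed blocks are shown to agree, the relation holds, which completes the verification of all relations and hence the proof of Theorem~\ref{relationshold}.
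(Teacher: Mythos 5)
Your proposal has a genuine gap, and it is concentrated in the combinatorial claim that the partial maps $s_0$ and $s_1$ commute on $\Gamma^{\lambda}$ and that the relevant subspace is spanned by vectors built from only $T$, $s_0.T$, $s_1.T$, $s_0s_1.T = s_1s_0.T$. This is false, and it is exactly the point the paper has to work hardest around. The orbit of $\langle s_0, s_1\rangle$ on $\Gamma^{\lambda}\cup\{\star\}$ is not a commuting square of size four; it is a chain of length four bracketed by $\star$'s. The paper exhibits the two possible patterns explicitly: either $\star \leftrightarrow L_1 \leftrightarrow L_2 \leftrightarrow L_3 \leftrightarrow L_4 \leftrightarrow \star$ with the arrows alternating $s_0, s_1, s_0, s_1, s_0$, or $\star \leftrightarrow T_1 \leftrightarrow T_2 \leftrightarrow T_3 \leftrightarrow T_4 \leftrightarrow \star$ with the arrows alternating $s_1, s_0, s_1, s_0, s_1$. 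Concretely, in the first case $s_0s_1.L_2 = s_0.L_1 = \star$ while $s_1s_0.L_2 = s_1.L_3 = L_4 \ne \star$, so the two compositions genuinely disagree. The reason is that $s_0$ moves a box between the first row and a lower row of the Row-$0$ partition (changing $T^{(0)}$), while $s_1$ swaps the entries $1$ and $2$; when the box labelled $1$ in $T$ is adjacent to the boundary of $T^{(0)}$, swapping $1$ and $2$ changes which $s_0$-moves are legal.

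Because of this, the operator products $x_1 s_1 x_1 s_1$ and $s_1 x_1 s_1 x_1$ applied to $v_T$ produce terms supported on up to seven distinct paths, namely $T$, $s_0.T$, $s_1.T$, $s_0s_1.T$, $s_1s_0.T$, $s_1s_0s_1.T$, and $s_0s_1s_0.T$ (some of which coincide with $\star$ in a given orbit), and the coefficients involve genuinely new eigenvalues $\kappa_0''$, $\kappa_0'''$, $\kappa_2''$, $\kappa_2'''$ attached to those extra paths, as recorded in Table~\ref{kappatripleprimes}. Your block-matrix reduction on a $4$-path subspace would silently drop the $v_{s_1s_0.T}$-, $v_{s_1s_0s_1.T}$-, and $v_{s_0s_1s_0.T}$-components, and the cancellations you describe would be checked against the wrong ansatz. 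The paper instead expands $x_1s_1x_1s_1.v_T$ and $s_1x_1s_1x_1.v_T$ term by term using the coefficients $a_T, b_T, d_T, e_T$ and the twist maps $\phi_0, \phi_1$, then verifies coefficient-by-coefficient that each of the seven path components vanishes, reducing in the end to rational identities in $m, n, p$ checked by the MAGMA code in the Appendix. Your fallback to Lemma~\ref{lasttrivial} and Lemma~\ref{firstnontrivial} in degenerate cases is also not quite what is needed: those lemmas concern $z_0c_0c_1+z_1$ and $(x_1-z_1)^4$, not $x_2$, and the degenerations here are path-theoretic ($s_1.T=\star$ or $s_0.T=\star$) rather than algebraic collapses of those relations.
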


\begin{proof}
To verify the relation, we need to discuss $T$ by cases. In particular, the orbit of the action of $\{s_0,s_1\}$ on the set of paths plus $\star$, is always of cardinality $5$ and of the form of one of following two cases. We omit the staircase portion of the tableaux, and leave boxes empty if they are filled with integers $3$, $4$, \dots. The green boxes outline the partition in Row $0$ of $T$.

Case 1)
\begin{align*}
& \hspace{.1 in} s_0 \hspace{.5 in} L_1 \hspace{.4 in} s_1 \hspace{.5 in} L_2 \hspace{.4 in} s_0 \hspace{.5 in} L_3 \hspace{.5 in} s_1 \hspace{.5 in} L_4 \hspace{.4 in} s_0\\
\ytableausetup{smalltableaux} 
\star   \hspace{.1 in} &\longleftrightarrow  \hspace{.1 in}
\begin{ytableau}
*(green)&*(green) &  \none[\cdot]  &*(green) &2 \\
 *(green) &  1  \\
 *(green) \\
 \none[\cdot]\\
 *(green) \\
  *(green) \\
  \\
\end{ytableau} \hspace{.1 in}  \longleftrightarrow  \hspace{.1 in}
 \begin{ytableau}
*(green)&*(green) &  \none[\cdot]  &*(green) &1 \\
 *(green) &  2  \\
 *(green) \\
 \none[\cdot]\\
 *(green) \\
  *(green) \\
  \\
\end{ytableau} \hspace{.1 in}
\longleftrightarrow \hspace{.1 in}
\begin{ytableau}
*(green)&*(green) &  \none[\cdot]  &*(green) & *(green)\\
 *(green) &  2  \\
 *(green) \\
 \none[\cdot]\\
 *(green) \\
  1 \\
  \\
\end{ytableau} \hspace{.1 in}
\longleftrightarrow  \hspace{.1 in}
\begin{ytableau}
*(green)&*(green) &  \none[\cdot]  &*(green) & *(green) \\
 *(green) &  1  \\
 *(green) \\
 \none[\cdot]\\
 *(green) \\
  2 \\
  \\
\end{ytableau} \hspace{.1 in}\longleftrightarrow  \hspace{.1 in} \star
\end{align*}

Since $v_{\star}=0$ by definition, $v_{s_0.L_1}=v_{s_0.L_4}=0$. 
Let $m$ be the number of green boxes in the first row of $L_2$. By (\ref{specificvalues1}) and (\ref{specificvalues2}) in Lemma~\ref{kapparelations} and the definition of $\kappa_T(i)$ in (\ref{kappadefinition}), the eigenvalues for $z_0,z_1,z_2$ are as follows
\begin{table}[!h]
\caption{Eigenvalues for paths in Case 1}\label{firsteigenvalues}
\begin{tabular}{c|c|c|c}
\hline
  & $\kappa(0)$ & $\kappa(1)$  & $\kappa(2)$ \\
  $L_1$ & $\sqrt{mp(m-p)}$ & $\sqrt{N_0}$ & $\sqrt{m(m+1)}$\\
$L_2$ & $\sqrt{mp(m-p)}$ & $\sqrt{m(m+1)}$ &$\sqrt{N_0}$\\
$L_3$ & $\sqrt{(m+1)p(m+1-p)}$ & $\sqrt{(m-p)(m-p+1)}$&$\sqrt{N_0}$\\
$L_4$ &$\sqrt{(m+1)p(m+1-p)}$ & $\sqrt{N_0}$&$\sqrt{(m-p)(m-p+1)}$\\
\hline
\end{tabular}
\end{table}

Case 2)
\begin{align*}
& \hspace{.1 in} s_1 \hspace{.5 in} T_1 \hspace{.6 in} s_0 \hspace{.5 in} T_2 \hspace{.6 in} s_1 \hspace{.5 in} T_3 \hspace{.5 in} s_0 \hspace{.5 in} T_4 \hspace{.6 in} s_1\\
\ytableausetup{smalltableaux} 
\star   \hspace{.1 in} &\longleftrightarrow  \hspace{.1 in}
\begin{ytableau}
*(green)&*(green) &  \none[\cdot]  &*(green) &1 & 2 \\
 *(green)   \\
 *(green) \\
 \none[\cdot]\\
 *(green) \\
  *(green) \\
  *(green)\\
\end{ytableau}  \hspace{.1 in}  \longleftrightarrow  \hspace{.1 in}
 \ytableausetup{smalltableaux}
\begin{ytableau}
*(green)&*(green) &  \none[\cdot]  &*(green) & *(green) & 2 \\
 *(green)   \\
 *(green) \\
 \none[\cdot]\\
 *(green) \\
  *(green) \\
 1 \\
\end{ytableau}\hspace{.1 in}
\longleftrightarrow \hspace{.1 in}
\begin{ytableau}
*(green)&*(green) &  \none[\cdot]  &*(green) &*(green) & 1 \\
 *(green)  \\
 *(green) \\
 \none[\cdot]\\
 *(green) \\
  *(green) \\
  2\\
\end{ytableau} \hspace{.1 in}
\longleftrightarrow  \hspace{.1 in}
\begin{ytableau}
*(green)&*(green) &  \none[\cdot]  &*(green) &  *(green)&*(green)\\
 *(green)\\
 *(green) \\
 \none[\cdot]\\
  *(green) \\
 1\\
  2\\
\end{ytableau} \hspace{.1 in}\longleftrightarrow  \hspace{.1 in} \star
\end{align*}

Let $m$ be the number of green boxes in the first row of $T_3$, then the eigenvalues for $z_0,z_1,z_2$ are as follows.
\begin{table}[!h]
\caption{Eigenvalues for paths in Case 2}\label{secondeigenvalues}
 \begin{tabular}{c|c|c|c}
 \hline
  & $\kappa(0)$ & $\kappa(1)$  & $\kappa(2)$ \\
  $T_1$ & $\sqrt{(m-1)p(m-1-p)}$ & $\sqrt{m(m-1)}$ & $\sqrt{m(m+1)}$\\
$T_2$ & $\sqrt{mp(m-p)}$ & $\sqrt{(m-p-1)(m-p)}$ &$\sqrt{m(m+1)}$\\
$T_3$ & $\sqrt{mp(m-p)}$ & $\sqrt{m(m+1)}$&$\sqrt{(m-p-1)(m-p)}$\\
$T_4$ &$\sqrt{(m+1)p(m+1-p)}$ & $\sqrt{(m-p)(m-p+1)}$&$\sqrt{(m-p-1)(m-p)}$\\
\hline
\end{tabular}
\end{table}

Recall the definition of $\kappa_0$, $\kappa_1$, $\kappa_0'$, $\kappa_1'$ associated to path $T$ shortly before Lemma~\ref{kapparelations}. We now define new scalars $\kappa_0''$, $\kappa_0'''$, $\kappa_2$, $\kappa_2'$, $\kappa_2''$, $\kappa_2'''$ as eigenvalues for associated paths in the left half of following table. The eigenvalues for the right half can be deducted using the left half, using the fact that the action of $s_1$ interchanges the eigenvalues for $z_1$ and $z_2$. If any of the paths in the top row is $\star$, then the associated eigenvalues in that column are undefined.
\begin{table}[!h]
\caption{Notation of some eigenvalues}\label{kappatripleprimes}
\begin{tabular}{c||c|c|c|c||c|c|c}
\hline
 & $T$ & $s_0.T$ & $s_0s_1.T$  & $s_0s_1s_0.T$& $s_1.T$ & $s_1s_0.T$& $s_1s_0s_1.T$\\
$z_0$   & $\kappa_0$ & $\kappa_0'$& $\kappa_0''$ & $\kappa_0'''$& $\kappa_0$& $\kappa_0'$& $\kappa_0''$\\
$z_1$  & $\kappa_1$ & $\kappa_1'$ & $\kappa_2''$& $\kappa_2'''$  & $\kappa_2$& $\kappa_2$&$\kappa_1$\\
$z_2$  & $\kappa_2$ & $\kappa_2$ &$\kappa_1$&$\kappa_1'$& $\kappa_1$& $\kappa_1'$&$\kappa_2''$\\
\hline
\end{tabular}
\end{table}

To obtain explicit formulas for every entry in this table, we also list the following results of applying all possible moves to each of the eight paths.
\begin{center}
\begin{tabular}{c|c|c|c|c|c|c|c|c}
\hline
$T$&$L_1$&$L_2$&$L_3$&$L_4$&$T_1$&$T_2$&$T_3$&$T_4$\\
$s_0.T$&$\star$&$L_3$&$L_2$&$\star$&$T_2$&$T_1$&$T_4$&$T_3$\\
$s_0s_1.T$&$L_3$&$\star$&$\star$&$L_2$&$\star$&$T_4$&$T_1$&$\star$\\
$s_0s_1s_0.T$&$\star$&$\star$&$\star$&$\star$&$T_4$&$\star$&$\star$&$T_1$\\
\hline
\end{tabular}
\end{center}
By superimposing (the transpose) of this table with Tables~\ref{firsteigenvalues} and \ref{secondeigenvalues}, one obtain explicit expressions for all eigenvalues in Table~\ref{kappatripleprimes} and for all paths. For example, for $T_1$, 
\begin{align*}
\kappa_0'''=\kappa_{s_0s_1s_0.T_1}(0)=\kappa_{T_4}(0)=\sqrt{(m+1)p(m+1-p)}.
\end{align*}

Let $C_3$ be the Clifford algebra generated by $c_0, c_1,c_2$. For any path $T \in \Gamma^{\lambda}$, let $a_T,b_T\in C_3$ be the coefficients defined in (\ref{Cal_2_x1better}), and $d_T,e_T\in C_3$ be the ones in (\ref{transpositionsaction}), such that
\begin{align}
x_1.v_T=a_Tv_T+b_Tv_{s_0.T}, \hspace{.2 in}s_1.v_T=d_Tv_T+e_{s_1.T}v_T.  \label{defineabde}
\end{align}
Notice $e_T\in \mathbb{C}$. Specifically, these coefficients have explicit expressions for each relevant path: 
\begin{align*}
a_T&=\gamma+\delta c_0c_1=\frac{(N_0p+\kappa_0^2)\kappa_1}{\kappa_0^2+p\kappa_1^2}+\frac{(-N_0+\kappa_1^2)\kappa_0}{\kappa_0^2+p\kappa_1^2}c_0c_1,\\
a_{s_0.T}&=\gamma'+\delta' c_0c_1=\frac{(N_0p+(\kappa_0')^2)\kappa_1'}{(\kappa_0')^2+p(\kappa_1')^2}+\frac{(-N_0+(\kappa_1')^2)\kappa_0'}{(\kappa_0')^2+p(\kappa_1')^2}c_0c_1,\\
a_{s_1.T}&=\gamma_1+\delta_1 c_0c_1=\frac{(N_0p+\kappa_0^2)\kappa_2}{\kappa_0^2+p\kappa_2^2}+\frac{(-N_0+\kappa_2^2)\kappa_0}{\kappa_0^2+p(\kappa_2)^2}c_0c_1,\\
a_{s_1s_0.T}&=\gamma'''+\delta''' c_0c_1=\frac{(N_0p+(\kappa_0')^2)\kappa_2}{(\kappa_0')^2+p\kappa_2^2}+\frac{(-N_0+\kappa_2^2)\kappa_0'}{(\kappa_0')^2+p\kappa_2^2}c_0c_1,\\
a_{s_0s_1.T}&=\gamma_2+\delta_2 c_0c_1=\frac{(N_0p+(\kappa_0'')^2)\kappa_2}{(\kappa_0'')^2+p(\kappa_2'')^2}+\frac{(-N_0+(\kappa_2'')^2)\kappa_0''}{(\kappa_0'')^2+p(\kappa_2'')^2}c_0c_1.
\end{align*}
The quantities $b_T$:
\begin{align*}
b_T&=Ac_0+Bc_1=f(T)(A'c_0+B'c_1)=f(T)\big((\kappa_0-\kappa_0')c_0+(\kappa_1+\kappa_1')c_1\big),\\
b_{s_0.T}&=Dc_0+Ec_1=f(s_0.T)(-A'c_0+B'c_1)=f(s_0.T)\big(-(\kappa_0-\kappa_0')c_0+(\kappa_1+\kappa_1')c_1\big),\\
b_{s_1.T}&=f(s_1.T)(Fc_0+Gc_1)=f(s_1.T)\big((\kappa_0-\kappa_0'')c_1+(\kappa_2+\kappa_2'')c_1\big),\\
b_{s_0s_1.T}&=f(s_0s_1.T)(-Fc_0+Gc_1),\\
b_{s_1s_0.T}&=f(s_1s_0.T)(Hc_0+Ic_1)=f(s_1s_0.T)\big((\kappa_0'-\kappa_0''')c_0+(\kappa_2+\kappa_2''')c_1\big).
\end{align*}
The quantities $d_T$:
\begin{align*}
d_T=\alpha+\beta c_1c_2=-\frac{1}{\kappa_1-\kappa_2}+\frac{1}{\kappa_1+\kappa_2}c_1c_2, \hspace{.1 in}
d_{s_0.T}=\alpha'+\beta' c_1c_2=-\frac{1}{\kappa_1'-\kappa_2}+\frac{1}{\kappa_1'+\kappa_2}c_1c_2,\\
d_{s_1.T}=\alpha_1+\beta_1 c_1c_2=-\frac{1}{\kappa_2-\kappa_1}+\frac{1}{\kappa_2+\kappa_1}c_1c_2,\hspace{.1 in}
d_{s_0s_1.T}=\alpha_2+\beta_2 c_1c_2=-\frac{1}{\kappa_2''-\kappa_1}+\frac{1}{\kappa_2''+\kappa_1}c_1c_2,\\
d_{s_0s_1s_0.T}=\alpha_3+\beta_3c_1c_2=-\frac{1}{\kappa_2'''-\kappa_1'}+\frac{1}{\kappa_2'''+\kappa_1'}c_1c_2.
\end{align*}
The quantities $e_T$:
\begin{align*}
e_T=e_{s_1.T}&=\sqrt{1-\frac{1}{(\kappa_1-\kappa_2)^2}-\frac{1}{(\kappa_1+\kappa_2)^2}}, \hspace{.2 in}
e_{s_0.T}=e_{s_1s_0.T}&=\sqrt{1-\frac{1}{(\kappa_1'-\kappa_2)^2}-\frac{1}{(\kappa_1'+\kappa_2)^2}}.
\end{align*}

We now calculate the action of both sides of Relation~(\ref{lastrelation}) on a vector $v_T$. To shorten our notation, let $\phi_0,\phi_1: C_3 \to C_3$ be the $\mathbb{C}$-algebra homomorphism such that 
\begin{align}
\phi_0(c_0)=c_0, \hspace{.1 in}\phi_0(c_1)=-c_1, \hspace{.1 in} \phi_0(c_2)=c_2, \hspace{.1 in} \phi_1(c_0)=c_0, \hspace{.1 in} \phi_1(c_1)=c_2, \hspace{.1 in} \phi_1(c_2)=c_1. \label{twistmaps}
\end{align}
Then some of the relations can be written compactly as $x_1c_i=\phi_0(c_i)x_1$ and $s_1c_i=\phi_1(c_i)s_1$ for $i=0,1,2$. 
\begin{align*}
&\hh x_1s_1.v_T=x_1(d_Tv_T+e_Tv_{s_1.T})=\phi_0(d_T)(a_Tv_T+b_Tv_{s_0.T})+e_T(a_{s_1.T}v_{s_1.T}+b_{s_1.T}v_{s_0s_1.T}),\\
&x_1s_1x_1s_1.v_T=x_1s_1.\big(\phi_0(d_T)(a_Tv_T+b_Tv_{s_0.T})+e_T(a_{s_1.T}v_{s_1.T}+b_{s_1.T}v_{s_0s_1.T})\big)\\
=&x_1.\big(\phi_1\phi_0(d_T)\phi_1(a_T)(d_Tv_T+e_Tv_{s_1.T}) +\phi_1\phi_0(d_T)\phi_1(b_T)(d_{s_0.T}v_{s_0.T}+e_{s_0.T}v_{s_1s_0.T})\\
&+e_T\phi_1(a_{s_1.T})(d_{s_1.T}v_{s_1.T}+e_{s_1.T}v_T) +e_T\phi_1(b_{s_1.T})(d_{s_0s_1.T}v_{s_0s_1.T}+e_{s_0s_1.T}v_{s_1s_0s_1.T})\big)\\
=&\phi_0\phi_1\phi_0(d_T)\phi_0\phi_1(a_T)\phi_0(d_T)(a_Tv_T+b_Tv_{s_0.T}) +\phi_0\phi_1\phi_0(d_T)\phi_0\phi_1(a_T)e_T(a_{s_1.T}v_{s_1.T}+b_{s_1.T}v_{s_0s_1.T})\\
& + \phi_0\phi_1\phi_0(d_T)\phi_0\phi_1(b_T)\phi_0(d_{s_0.T})(a_{s_0.T}v_{s_0.T}+b_{s_0.T}v_T) \\
&+ \phi_0\phi_1\phi_0(d_T)\phi_0\phi_1(b_T)e_{s_0.T}(a_{s_1s_0.T}v_{s_1s_0.T}+b_{s_1s_0.T}v_{s_0s_1s_0.T})\\
&+e_T\phi_0\phi_1(a_{s_1.T})\phi_0(d_{s_1.T})(a_{s_1.T}v_{s_1.T}+b_{s_1.T}v_{s_0s_1.T})+e_T\phi_0\phi_1(a_{s_1.T})e_{s_1.T}(a_Tv_T+b_Tv_{s_0.T})\\
&+e_T\phi_0\phi_1(b_{s_1.T})\phi_0(d_{s_0s_1.T}) (a_{s_0s_1.T}v_{s_0s_1.T}+b_{s_0s_1.T}v_{s_1T})\\
&+e_T\phi_0\phi_1(b_{s_1.T})e_{s_0s_1.T}(a_{s_1s_0s_1.T}v_{s_1s_0s_1.T}+b_{s_1s_0s_1.T}v_{s_0s_1s_0s_1.T}).
\end{align*}
Similarly
\begin{align*}
&s_1x_1.v_T= s_1(a_Tv_T+b_Tv_{s_0.T})=\phi_1(a_T)(d_Tv_T+e_{T}v_{s_1.T})+\phi_1(b_T)(d_{s_0.T}v_{s_0.T}+e_{s_0.T}v_{s_1s_0.T}),\\
&s_1x_1s_1x_1.v_T=s_1x_1\big( \phi_1(a_T)(d_Tv_T+e_{T}v_{s_1.T})+\phi_1(b_T)(d_{s_0.T}v_{s_0.T}+e_{s_0.T}v_{s_1s_0.T})\big)\\
=&s_1\big(\phi_0\phi_1(a_T)\phi_0(d_T)(a_Tv_T+b_Tv_{s_0.T})+\phi_0\phi_1(a_T)e_T(a_{s_1.T}v_{s_1.T}+b_{s_1.T}v_{s_0s_1.T})\\
&+ \phi_0\phi_1(b_T)\phi_0(d_{s_0.T})(a_{s_0.T}v_{s_0.T}+b_{s_0.T}v_{T})+\phi_0\phi_1(b_T)e_{s_0.T}(a_{s_1s_0.T}v_{s_1s_0.T}+b_{s_1s_0.T}v_{s_0s_1s_0.T})\big)\\
=& \phi_1\phi_0\phi_1(a_T)\phi_1\phi_0(d_T)\phi_1(a_T)(d_Tv_T+e_Tv_{s_1.T}) \\
&+ \phi_1\phi_0\phi_1(a_T)\phi_1\phi_0(d_T)\phi_1(b_T)(d_{s_0.T}v_{s_0.T}+e_{s_0.T}v_{s_1s_0.T})\\
& + \phi_1\phi_0\phi_1(a_T)e_T\phi_1(a_{s_1.T})(d_{s_1.T}v_{s_1.T}+e_{s_1.T}v_T)\\
& +\phi_1\phi_0\phi_1(a_T)e_T\phi_1(b_{s_1.T})(d_{s_0s_1.T}v_{s_0s_1.T}+e_{s_0s_1.T}v_{s_1s_0s_1.T})\\
& + \phi_1\phi_0\phi_1(b_T)\phi_1\phi_0(d_{s_0.T})\phi_1(a_{s_0.T})(d_{s_0.T}v_{s_0.T}+e_{s_0.T}v_{s_1s_0.T})\\
& + \phi_1\phi_0\phi_1(b_T)\phi_1\phi_0(d_{s_0.T})\phi_1(b_{s_0.T})(d_Tv_T+e_Tv_{s_1.T})\\
&+ \phi_1\phi_0\phi_1(b_T)e_{s_0.T}\phi_1(a_{s_1s_0.T})(d_{s_1s_0.T}v_{s_1s_0.T}+e_{s_1s_0.T}v_{s_0.T})\\
& + \phi_1\phi_0\phi_1(b_T)e_{s_0.T}\phi_1(b_{s_1s_0.T})(d_{s_0s_1s_0.T}v_{s_0s_1s_0.T}+e_{s_0s_1s_0.T}v_{s_1s_0s_1s_0.T}).
\end{align*}
Relation (\ref{lastrelation}) can also be written as
\begin{align*}
x_1s_1x_1s_1-s_1x_1s_1x_1+(1+c_1c_2)x_1s_1-(1-c_1c_2)s_1x_1=0.
\end{align*}
For any of the eight paths listed in the beginning of the proof, $s_0s_1s_0s_1.T=s_1s_0s_1s_0.T=\star$, therefore $v_{s_0s_1s_0s_1.T}=v_{s_1s_0s_1s_0.T}=0$ by definition. We now act the left hand side on $v_T$, and claim that the coefficients of $v_T$, $v_{s_0.T}$, $v_{s_1.T}$, $v_{s_0s_1.T}$, $v_{s_1s_0.T}$, $v_{s_1s_0s_1.T}$, $v_{s_0s_1s_0.T}$ are zero.  To illustrate the steps needed, we compute the coefficient of $v_{s_1.T}$, and the other five coefficients can be checked via a similar computation. Using the explicit formulas of $a_T,b_T,d_T,e_T$ in (\ref{defineabde}) for all paths $T$, and the definition of $\phi_0$ and $\phi_1$ in (\ref{twistmaps}), the coefficients of $v_{s_1.T}$ is 
$e_T W $, where
\begin{align*}
W=&(1+c_1c_2)a_{s_1.T}-(1-c_1c_2)\phi_1(a_{T})+ \phi_0\phi_1\phi_0(d_T)\phi_0\phi_1(a_T)a_{s_1.T} +\phi_0\phi_1(a_{s_1.T})\phi_0(d_{s_1.T})a_{s_1.T}\\
&-\phi_1\phi_0\phi_1(a_T)\phi_1\phi_0(d_T)\phi_1(a_T)
 -\phi_1\phi_0\phi_1(a_T)\phi_1(a_{s_1.T})d_{s_1.T}+\phi_0\phi_1(b_{s_1.T})\phi_0(d_{s_0s_1.T})b_{s_0s_1.T}\\
& - \phi_1\phi_0\phi_1(b_T)\phi_1\phi_0(d_{s_0.T})\phi_1(b_{s_0.T})\\
=&(1+c_1c_2)(\gamma_1+\delta_1c_0c_1)-(1-c_1c_2)(\gamma+\delta c_0c_2) +(\alpha+\beta c_2c_1)(\gamma-\delta c_0c_2)(\gamma_1+\delta_1 c_0c_1)\\
&+(\gamma_1-\delta_1 c_0c_2)(\alpha_1+\beta_1c_1c_2)(\gamma_1+\delta_1c_0c_1)-(\gamma-\delta c_0c_1)(\alpha+\beta c_2c_1)(\gamma+\delta c_0c_2)\\
&-(\gamma-\delta c_0)(\gamma_1+\delta_1 c_0c_2)(\alpha_1+\beta_1 c_1c_2)\\
&+f(s_1.T)f(s_0s_1.T)(-Fc_0+Gc_2)(\alpha_2+\beta_2c_1c_2)(-Fc_0+Gc_1)\\
&-f(T)f(s_0.T)(-A'c_0+B'c_1)(\alpha'+\beta'c_2c_1)(-A'c_0+B'c_2).
\end{align*}
Here, we organize the terms using a $\mathbb{C}$-basis of $C_3$,  $W=W_0c_1c_2+W_1c_0c_2+W_2c_0c_1+W_3$, where
\begin{align*}
W_0=&f(s_1.T)f(s_0s_1.T)(-F^2\beta_2-G^2\alpha_2)-f(T)f(s_0.T)((A')^2\beta'+(B')^2\alpha')\\
&-\gamma\beta\gamma_1-\alpha\delta\delta_1-\gamma_1^2\beta_1 -\delta_1^2\alpha_1-\gamma^2\beta-\delta^2\alpha-\delta\delta_1\alpha_1-\gamma\gamma_1\beta_1+\gamma_1+\gamma,\\
W_1=&f(s_1.T)f(s_0s_1.T)(GF\alpha_2-FG\beta_2)-f(T)f(s_0.T)(-A'B'\beta'+A'B'\alpha')\\
&+\alpha\delta\gamma_1-\gamma\beta\delta_1+\delta\alpha\gamma_1 -\gamma_1\beta_1\delta_1+\delta\beta\gamma-\gamma\alpha\delta-\gamma\delta_1\alpha_1+\delta\gamma_1\beta_1+\delta_1-\delta,\\
W_2=&f(s_1.T)f(s_0s_1.T)(-GF\beta_2+GF\alpha_2)-f(T)f(s_0.T)(A'B'\alpha'-A'B'\beta')\\
&+\beta\delta\gamma_1+\alpha\gamma\delta_1-\delta_1\beta_1\gamma_1+\gamma_1\alpha_1\delta_1-\delta\alpha\gamma+\gamma\beta\delta-\delta\gamma_1\alpha_1-\gamma\delta_1\beta_1+\delta_1-\delta,\\
W_3=&f(s_1.T)f(s_0s_1.T)(F^2\alpha_2+G^2\beta_2)-f(T)f(s_0.T)((A')^2\alpha'+(B')^2\beta')\\
&+\alpha\gamma\gamma_1-\beta\delta\delta_1+\gamma_1^2\alpha_1 + \delta_1^2\beta_1-\gamma^2\alpha-\delta^2\beta-\gamma\gamma_1\alpha_1+\delta\delta_1\beta_1+\gamma_1-\gamma.
\end{align*}
and $f(T)f(s_0.T)$, $f(s_1.T)f(s_0s_1.T)$ are subject to condition (\ref{Cal_2_f}) by definition. It is straightforward to check that, one can use explicit expressions for $a,b,d,e$ given shortly after (\ref{defineabde}), and replace all symbols with expressions of $\kappa$'s. Then one can use the discussion shortly before (\ref{defineabde}) and Tables~\ref{firsteigenvalues},\ref{secondeigenvalues},\ref{kappatripleprimes}, to further replace $\kappa$'s with expressions of $m,n$ and $p$, and check that
\begin{align}
W_0=W_1=W_2=W_3=0 \label{s1Tiszero}
\end{align}
as rational expressions of $m,n$ and $p$ (in fact, we check their denominators are zero.) Since $s_1.T_1$ and $s_1.T_4$ are both $=\star$, we only need to check (\ref{s1Tiszero}) for the remaining six paths. This can be checked via MAGMA using codes that will be included in the Appendix.

\end{proof}

To verify the last relation, we review the combinatorially constructed modules in \cite[Section 5.1]{HKS}. Recall that $H_d$ is the affine Hecke-Clifford algebra introduced in Section~\ref{2.1}. Let $\operatorname{Cl}_d$ be the subalgebra generated by $c_1,\dots,c_d$. For given a skew shape $\lambda/\mu$, Hill-Kujawa-Sussan defined a module $\hat{H}^{\lambda/\mu}$, which is free over $\operatorname{Cl}_d$ with basis $\{u_T\}_{T\in I_{\lambda/\mu}}$. Here, $I_{\lambda/\mu}$ is the set of standard tableaux of shape $\lambda/\mu$, filled with with each of the entries $1,\dots,d$ exactly once. The $H_d$-module structure on $\hat{H}^{\lambda/\mu}$ is defined as follows: using relations (\ref{cliffordrelations}, \ref{affinecliffordtwist}), the generators $x_i$ and $s_i$ can be moved past $c_i$ and act on $u_T$ directly. The action of $x_i$ is given via $x_i.u_T=\kappa_T(i)u_T$ for $1\leq i\leq d$, similar as the construction given in the beginning of Section~\ref{4.2}, with the understanding that $x_i$ is a generator in $H_d$. Similarly, the action of $s_i \in H_d$ is given as (\ref{transpositionsaction}). They further checked that all relations in $H_d$ are satisfied:
\begin{prop}(\cite[Proposition 5.1.1]{HKS})\label{HKSaction}
The action of $x_i$ and $s_i$ above endows $\hat{H}^{\lambda/\mu}$ with a well-defined $H_d$-module structure.
\end{prop}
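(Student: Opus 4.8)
The plan is to verify that the stated formulas for $x_i$ and $s_i$ satisfy every defining relation of $H_d$. Because $\hat{H}^{\lambda/\mu}$ is constructed as a free $\operatorname{Cl}_d$-module on $\{u_T\}_{T\in I_{\lambda/\mu}}$, and because $x_i$ and $s_i$ are extended from $\{u_T\}$ to the whole module precisely by the twisting rules encoded in $(\ref{cliffordrelations})$ and $(\ref{affinecliffordtwist})$ (so $s_i$ is $\operatorname{Cl}_d$-semilinear for the automorphism swapping $c_i\leftrightarrow c_{i+1}$, and $x_i$ for the automorphism $c_i\mapsto -c_i$), the relations of $H_d$ that couple a Clifford generator to some $x_i$ or $s_i$ -- namely $c_ix_i=-x_ic_i$, $c_ix_j=x_jc_i$ $(i\neq j)$, $s_ic_i=c_{i+1}s_i$, $s_ic_{i+1}=c_is_i$, $s_ic_j=c_js_i$ $(j\neq i,i+1)$ -- hold by construction, as do the Clifford relations $(\ref{clifford})$. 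It therefore remains to check, working on the basis $\{u_T\}$: the commutativity $x_ix_j=x_jx_i$; the symmetric group relations $s_i^2=1$, $s_is_j=s_js_i$ $(|i-j|>1)$, $s_is_{i+1}s_i=s_{i+1}s_is_{i+1}$; the commutations $s_ix_j=x_js_i$ $(j\neq i,i+1)$; and the affine relation $s_ix_i=x_{i+1}s_i-1+c_ic_{i+1}$.

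First I would dispose of the easy relations. Since $x_i$ acts on $u_T$ by the scalar $\kappa_T(i)$ and twists $c_i\mapsto -c_i$, any two of the $x_i$ manifestly commute. The formula $(\ref{transpositionsaction})$ for $s_i$ involves only the entries $i,i+1$ of the tableau and the generators $c_i,c_{i+1}$, while $\kappa_T(j)$ depends only on the box of $T$ holding $j$ and is unchanged by $s_i$ when $j\neq i,i+1$; hence $s_ix_j=x_js_i$ $(j\neq i,i+1)$ and $s_is_j=s_js_i$ $(|i-j|>1)$ reduce to commutativity of operators acting on disjoint coordinates.

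The substance lies in $s_i^2=1$, the affine relation, and the braid relation -- the Clifford-twisted analogue of Young's seminormal computation. For $s_i^2=1$, write $a=\kappa_T(i)$, $b=\kappa_T(i+1)$, $p_0=-\tfrac{1}{a-b}$, $p_1=\tfrac{1}{a+b}$, $q=\sqrt{1-p_0^2-p_1^2}$; applying $(\ref{transpositionsaction})$ twice and using $\kappa_{s_i.T}(i)=b$, $\kappa_{s_i.T}(i+1)=a$ together with $(c_ic_{i+1})^2=-1$, the $u_T$-coefficient comes out to be $(p_0^2+p_1^2)+q^2=1$ and the $u_{s_i.T}$-coefficient cancels (the Clifford-diagonal parts attached to $T$ and $s_i.T$ being negatives of each other); when $s_i.T=\star$ one has $q=0$, $p_0^2+p_1^2=1$, and the same conclusion. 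For the affine relation, restricting to $\langle u_T,u_{s_i.T}\rangle$ (or $\langle u_T\rangle$ if $s_i.T=\star$) and using $\kappa_{s_i.T}(i+1)=\kappa_T(i)$, the relation $s_ix_i-x_{i+1}s_i+1-c_ic_{i+1}=0$ reduces to the two scalar identities $bp_0-1=ap_0$ and $1-bp_1=ap_1$, immediate from the definitions of $p_0,p_1$; the $-1+c_ic_{i+1}$ term is exactly what is needed to convert $\kappa_T(i+1)$ into $\kappa_T(i)$ in the diagonal part after commuting $x_{i+1}$ past the Clifford-diagonal part of $s_i$. The braid relation is handled by tracking the orbit of $T$ under $\langle s_i,s_{i+1}\rangle$ (up to six standard tableaux) and matching the coefficient of each basis vector; both sides induce the same $S_3$-twist on $\{c_i,c_{i+1},c_{i+2}\}$, so only the scalar coefficients need to be compared, and this is the usual seminormal identity modulo signs picked up from conjugating $c_ic_{i+1}$.

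The hard part will be the case analysis underlying the braid and affine relations: one must separate the subcases in which $s_i.T$, $s_{i+1}.T$, $s_is_{i+1}.T$, and so on, are genuine standard tableaux versus $\star$, and check in each case that the content differences keep the denominators in $(\ref{transpositionsaction})$ nonzero (distinct addable boxes of a shifted diagram have distinct non-negative contents, so $\kappa_T(i)\neq\pm\kappa_T(i+1)$ whenever $s_i.T\neq\star$, and when $s_i.T=\star$ the square-root factor degenerates to zero, keeping the truncated formula consistent) and that the off-diagonal square-root factors multiply correctly around each $\langle s_i,s_{i+1}\rangle$-orbit. Once the contents are substituted, each remaining identity is elementary; assembling this bookkeeping is where the work goes. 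For the complete verification we refer to \cite{HKS}.
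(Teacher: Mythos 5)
The paper does not actually prove this proposition; it simply cites \cite[Proposition 5.1.1]{HKS}, so there is no ``paper's own proof'' to compare against beyond that citation. Your sketch correctly identifies which relations hold by construction (the Clifford-twist relations, via the $\operatorname{Cl}_d$-semilinearity of $x_i$ and $s_i$), correctly reduces the rest to the scalar seminormal identities, and your explicit checks of $s_i^2=1$ (using $(c_ic_{i+1})^2=-1$) and of the affine relation $s_ix_i=x_{i+1}s_i-1+c_ic_{i+1}$ (reducing to $(a-b)p_0=-1$ and $(a+b)p_1=1$) are accurate. You then defer the braid-relation bookkeeping to \cite{HKS}, which matches what the paper itself does; so the proposal is a faithful and correct outline of the argument the paper invokes by reference.
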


Since all relations in $H_d$ are already relations in $\mathcal{H}^{\operatorname{ev}}_{p,d}$ or $\mathcal{H}^{\operatorname{ev}}_{p,d}$, there exists following algebra maps $\phi_1:H_d \to \mathcal{H}^{\operatorname{ev}}_{p,d}$ and $\phi_2:H_d \to \mathcal{H}^{\operatorname{od}}_{p,d}$, sending $x_i\mapsto z_i$ for $1\leq i\leq d$, and $c_i$, $s_i$ to generators under the same name. Given a partition $\mu\in \mathcal{P}_0(\alpha)$, and let $\hat{D}^{\lambda/\mu}$ be the $\operatorname{Cl}_{d}$-module for $\phi_1(H_d)$ generated by vectors $\{v_T\}_{T\in I_{\lambda/\mu}}$. %Then $\hat{D}^{\lambda/\mu}$ is also an $H_d$-module and it is straightforward to see that  $\hat{D}^{\lambda/\mu}\simeq \hat{H}^{\lambda/\mu}$, under the obvious map $v_T \mapsto u_T$. 
Similarly, let $\hat{E}^{\lambda/\mu}$ be the $\operatorname{Cl}_{d}$-module for $\phi_2(H_d)$ generated by vectors $\{v_T\}_{T\in I_{\lambda/\mu}}$. %then $\hat{E}^{\lambda/\mu}\simeq \hat{H}^{\lambda/\mu}$. 
We have the following.

\begin{cor}\label{reallylastrelation}
The relations  $s_i {z}_i = {z}_{i+1}s_i-1+c_ic_{i+1}$, $(1\leq i\leq d-1)$ are satisfied in the definition of $\mathcal{D}^{\lambda}_f$ or  $\mathcal{E}^{\lambda}_f$.
\end{cor}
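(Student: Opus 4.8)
The plan is to identify $\mathcal{D}^{\lambda}_f$ and $\mathcal{E}^{\lambda}_f$, after restriction along $\phi_1$ and $\phi_2$ to the image of $H_d$, with direct sums of (Clifford translates of) the Hill--Kujawa--Sussan modules $\hat H^{\lambda/\mu}$, so that the desired identity becomes the $\phi_1$-image (resp.\ $\phi_2$-image) of relation (\ref{affinecliffordtwist0}) of $H_d$, which holds by Proposition~\ref{HKSaction}. The first observation is that the relation $s_i z_i = z_{i+1}s_i - 1 + c_ic_{i+1}$, $1\le i\le d-1$, involves only the generators $z_i,z_{i+1},s_i,c_i,c_{i+1}$ and, in particular, none of the ``boundary'' data $x_1$, $z_0$, $c_0$, $c_M$. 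The second observation is that for $1\le i\le d-1$ the combinatorial operator $s_i$ on paths (swapping the entries $i$ and $i+1$, or sending $T$ to $\star$) does not change the Row-$0$ vertex $T^{(0)}$, since it only alters the target of the $i$-th edge; hence for each $\mu\in\mathcal{P}_0(\alpha,\beta)$ the set $I_{\lambda/\mu}$ of standard tableaux of skew shape $\lambda/\mu$ coincides with $\{T\in\Gamma^{\lambda}\;:\;T^{(0)}=\mu\}$.

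Given these observations, I would argue as follows. Since $z_i.v_T=\kappa_T(i)v_T$ for $1\le i\le d$ and $s_i$ acts by the formula (\ref{transpositionsaction}) involving only $v_T$ and $v_{s_i.T}$, the $\operatorname{Cl}_d$-span of $\{v_T:T\in I_{\lambda/\mu}\}$ inside $\mathcal{D}^{\lambda}_f$ is stable under $z_1,\dots,z_d$, $c_1,\dots,c_d$, $s_1,\dots,s_{d-1}$, and as a module over this subalgebra it is exactly $\hat D^{\lambda/\mu}$, i.e.\ the pullback $\phi_1^{*}\hat H^{\lambda/\mu}$ of the HKS module with $x_i$ acting as $z_i$ (the scalars $\kappa_T(i)$ in (\ref{kappadefinition}) are precisely the HKS ones). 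Because $\operatorname{Cl}_{d+1}$ is free of rank two over $\operatorname{Cl}_d$ with basis $\{1,c_0\}$, and $c_0$ (super)commutes past $z_i$, $c_j$, $s_k$ for $i,j\ge 1$ and $k\ge 1$ by the relations recorded in Lemma~\ref{evenpresentation}, one obtains $\operatorname{Res}\mathcal{D}^{\lambda}_f\;\cong\;\bigoplus_{\mu\in\mathcal{P}_0(\alpha,\beta)}\bigl(\hat D^{\lambda/\mu}\oplus c_0\hat D^{\lambda/\mu}\bigr)$ as modules over $\phi_1(H_d)$; the same argument with $\operatorname{Cl}_{d+2}$, $\phi_2$ and $c_M$ gives the corresponding decomposition of $\mathcal{E}^{\lambda}_f$. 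Applying $\phi_1$ (resp.\ $\phi_2$) to relation (\ref{affinecliffordtwist0}) of $H_d$, which holds on every $\hat H^{\lambda/\mu}$ by Proposition~\ref{HKSaction}, then yields $s_i z_i = z_{i+1}s_i - 1 + c_ic_{i+1}$ on each summand, hence on all of $\mathcal{D}^{\lambda}_f$ and $\mathcal{E}^{\lambda}_f$.

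I expect the only delicate point --- and thus the main, if modest, obstacle --- to be the bookkeeping showing that the presence of the extra Clifford generators $c_0$ (and $c_M$) together with the $\mathbb{Z}_2$-grading convention on the $v_T$ really does make $\mathcal{D}^{\lambda}_f$ (resp.\ $\mathcal{E}^{\lambda}_f$) restrict to a direct sum of copies of the $\hat D^{\lambda/\mu}$ (resp.\ $\hat E^{\lambda/\mu}$) as $\phi_1(H_d)$-modules, i.e.\ that twisting the HKS action through by $c_0$ (or $c_M$) is harmless for the generators occurring in this relation. This is immediate from $c_0c_i=-c_ic_0$, $c_0s_i=s_ic_0$ and $c_0z_i=z_ic_0$ for $i\ge1$ (and analogously for $c_M$), all already present in the presentations of $\mathcal{H}^{\operatorname{ev}}_{p,d}$ and $\mathcal{H}^{\operatorname{od}}_{p,d}$, so no genuinely new computation is required beyond those performed in \cite{HKS}.
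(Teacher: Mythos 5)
Your proposal takes essentially the same route as the paper: restrict the relation to the subalgebra $\phi_1(H_d)$ (resp.\ $\phi_2(H_d)$), decompose $\mathcal{D}^{\lambda}_f$ (resp.\ $\mathcal{E}^{\lambda}_f$) into $\phi_1(H_d)$-invariant pieces indexed by $\mu\in\mathcal{P}_0(\alpha,\beta)$, and invoke Proposition~\ref{HKSaction} on each piece. The one place where you are more careful than the paper's own proof is the handling of the extra Clifford generator $c_0$ (and $c_M$): the paper asserts $\mathcal{D}^{\lambda}_f\simeq\bigoplus_\mu\hat D^{\lambda/\mu}$ as vector spaces, which as stated misses the factor of two coming from $\operatorname{Cl}_{d+1}$ versus $\operatorname{Cl}_d$; your decomposition $\bigoplus_\mu(\hat D^{\lambda/\mu}\oplus c_0\hat D^{\lambda/\mu})$ together with the observation that $c_0$ commutes with $s_i$, $z_j$ and $c_ic_{i+1}$ for $i,j\ge 1$ correctly closes this small gap. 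The argument is sound.
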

\begin{proof}
This is because relations $s_i {x}_i = {x}_{i+1}s_i-1+c_ic_{i+1}$, $(1\leq i\leq d-1)$ in (\ref{affinecliffordtwist0}) are satisfied for the $H_d$-action on $\hat{H}^{\lambda/\mu}$, by Proposition~\ref{HKSaction}. Moreover, the construction of $\hat{H}^{\lambda/\mu}$ and $\hat{D}^{\lambda/\mu}$ (as a subspace of $\mathcal{D}^{\lambda}_f$) implies that the map $\rho: \hat{H}^{\lambda/\mu} \to \hat{D}^{\lambda/\mu}$, $u_T\ \mapsto v_T$ intertwines the map $\phi$, i.e. $a.u_T=\phi(a)(\rho(u_T))$ for any generator $a\in H_d$, therefore the relation $\phi(s_i {x}_i )= \phi({x}_{i+1}s_i-1+c_ic_{i+1})$ is satisfied by the action of $\phi(H_d)$ on $\hat{D}^{\lambda/\mu}$. This is the relation in the lemma. In addition, $\mathcal{D}^{\lambda}_f\simeq \oplus_{\mu \in \mathcal{P}_0(\alpha,\beta)}\hat{D}^{\lambda/\mu}$ as a vector space, and each  $\hat{D}^{\lambda/\mu}$ is invariant under the action of $\phi(H_d)$. All relations $s_i {z}_i = {z}_{i+1}s_i-1+c_ic_{i+1}$ are inside $\phi(H_d)$, therefore it is enough to check these relations on each $\hat{D}^{\lambda/\mu}$. The argument for $\mathcal{E}^{\lambda}_f$ is similar.

\end{proof}

\begin{proof}[Proof of Theorem~\ref{relationshold}]
Since the acton of $s_i$ on a tableau $T$ only interchanges the integers $i$ and $i+1$ while fixing the other entries, both $v_T$ and $v_{s_i.T}$ have the same eigenvalue $\kappa_T(j)=\kappa_{s_i.T}(j)$ for $j\neq i,i+1$. Therefore in the action of $s_i$ in (\ref{transpositionsaction}), $s_i$ preserves the eigenvalue of $v_T$ for all $z_j$ with $j\neq i,i+1$. Relations (\ref{heckerelation3}) hold in the presentation given by Lemma~\ref{evenpresentation}. Similarly, the action of $s_0$ on $T$ fixes entries $2,3,\dots,d$, and $\kappa_T(j)=\kappa_{s_0.T}(j)$ for all $j\neq 0,1$, therefore the action of $x_1$ in (\ref{Cal_2_x1acts}) perserves the eigenvalue of $v_T$ for all $z_j$ with $j\geq 2$, Relation (\ref{between1}) holds. Relations (\ref{zscommute}) hold because $z_i$ acts semisimply. The Clifford relations hold by definition. The rest of the relations are satisfied by Lemma~\ref{firstrelation} through Corollary~\ref{reallylastrelation}.
\end{proof}

\subsection{Isomorphism condition and irreducibility}

The definition of $\mathcal{D}_f$ depends on the choice of a function $f$ satisfying condition (\ref{Cal_2_x1acts}). By the last claim in Lemma~\ref{preparation}, this function $f$ exists and can simply taken as $f(T)=f(s_0.T)=\sqrt{F_T}=\sqrt{F_{s_0.T}}$, therefore we have constructed at least one $\mathcal{D}_f$-module for each partition $\lambda$. Fixing $\lambda$, we now explore whether these modules are isomorphic for different choices of $f$. We first point out an obvious but important fact. Recall that $\Gamma^{\lambda}$ is the set of paths from $\alpha$ to $\lambda$.

\begin{lemma}\label{distincteigenvalues}
A tableau $T\in \Gamma^{\lambda}$ is uniquely determined by its eigenvalues $\kappa_T(1),\dots,\kappa_T(d)$. 
\end{lemma}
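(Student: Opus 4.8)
The plan is to recover the path $T$ from the sequence $(\kappa_T(1),\dots,\kappa_T(d))$ by reconstructing, step by step, the entire chain of shifted diagrams $T^{(0)}\subset T^{(1)}\subset\cdots\subset T^{(d)}=\lambda$ through which $T$ travels. Since $\lambda$ is fixed, knowing this chain is the same as knowing $T$ as a standard skew tableau: the box filled with $i$ is precisely the unique box of $T^{(i)}$ not in $T^{(i-1)}$.

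First I would work backwards from $\lambda$. Recall from the Pieri rule (stated before Lemma~\ref{Pre_5_com}) that $T^{(d-1)}$ is obtained from $\lambda$ by deleting a single removable box, and $T^{(d)}/T^{(d-1)}$ is the box containing $d$. By the definition of $\kappa_T(d)$ in (\ref{kappadefinition}) we have $\kappa_T(d)^2 = c_T(d)(c_T(d)+1)$, and since $c_T(d)\geq 0$ this determines the content $c_T(d)$ uniquely (the map $x\mapsto x(x+1)$ is injective on $\mathbb{Z}_{\geq 0}$). The key combinatorial fact I need is that a shifted diagram has at most one removable box of any given content --- indeed, two boxes in the same diagonal of a shifted Young diagram cannot both be removable, since the one closer to the diagonal blocks the other. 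Hence $c_T(d)$ pins down which box of $\lambda$ is removed, giving $T^{(d-1)}$. Iterating, at stage $i$ the content $c_T(i)$ (read off from $\kappa_T(i)^2$) together with the already-determined diagram $T^{(i)}$ determines $T^{(i-1)}$ uniquely; after $d$ steps we have reconstructed $T^{(0)}$ and the filling of all boxes $1,\dots,d$, hence $T$.

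One point needs a small remark: the chain of diagrams together with the fillings is exactly the data of $T$ as a path in $\Gamma^{\lambda}$, and distinct paths give distinct such chains, so the reconstruction map is well-defined and injective. I should also note that $\kappa_T(0)$ plays no role here --- indeed $T^{(0)}$ is already recovered from the $\kappa_T(i)$ with $i\geq 1$ --- which is worth flagging since it explains why the hypothesis in Lemma~\ref{distincteigenvalues} only mentions $\kappa_T(1),\dots,\kappa_T(d)$.

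The main obstacle, such as it is, is the combinatorial claim that a (shifted) Young diagram has at most one removable box on each diagonal; this is elementary but should be stated carefully, perhaps as a one-line observation, because the shifted setting differs slightly from the straight-shape case (the first box of each row lies on the main diagonal, and the staircase $\alpha$ is always contained in $T^{(i)}$, so all diagrams in sight genuinely are shifted diagrams and the diagonal argument applies). Beyond that, the proof is a straightforward downward induction on $i$ with no real computation.
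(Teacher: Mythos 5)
Your proof is correct and follows essentially the same backward-reconstruction strategy as the paper's: starting from $\lambda=T^{(d)}$, use $\kappa_T(i)^2=c_T(i)(c_T(i)+1)$ to recover the content of box $i$, hence the box itself and the diagram $T^{(i-1)}$, iterating from $i=d$ down to $i=1$. You additionally spell out the elementary combinatorial fact the paper leaves implicit --- that a strict partition has at most one removable box of a given content, since distinct rows have distinct lengths --- which is a reasonable thing to make explicit.
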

\begin{proof}
The path $T$ ends at the partition $\lambda$. Using $\kappa_T(d)=\sqrt{c_T(d)(c_T(d+1))}$ one can recover the content of integer $d$ and identify the last added box, as well as the partition prior to $\lambda$. Similarly, one can identify each of the consecutively added box and identify all partitions in Row $1,\dots,d$. The remaining partition is the one in Row $0$.
\end{proof}

We also need a few lemmas. Let ${e}_T(i)=\sqrt{1-\frac{1}{(\kappa_T(i)+\kappa_T(i+1))^2}-\frac{1}{(\kappa_T(i)-\kappa_T(i+1))^2}}$ be the coefficient in the action of $s_i$ in (\ref{transpositionsaction}).

\begin{lemma}\label{Cen_1_lem}
Fix a path $T\in \Gamma^{\lambda}$ and an integer $i$ ($1\leq i\leq d-1$). If $s_i.T\neq\star$, then the scalar  ${e}_T(i)$ is nonzero.
\end{lemma}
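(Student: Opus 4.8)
The plan is to show that the quantity under the square root in $e_T(i)$ is strictly positive whenever $s_i.T \neq \star$, i.e.\ whenever swapping the entries $i$ and $i+1$ in $T$ yields a valid standard skew tableau. Write $a = c_T(i)$ and $b = c_T(i+1)$ for the contents of the boxes containing $i$ and $i+1$, so that $\kappa_T(i) = \sqrt{a(a+1)}$ and $\kappa_T(i+1) = \sqrt{b(b+1)}$. The first thing I would do is observe that the condition $s_i.T \neq \star$ forces the boxes of $i$ and $i+1$ to be neither in the same row nor the same column — if they were adjacent in a row or column, swapping would violate the standard (strictly increasing) condition — and moreover since $i+1$ is added after $i$ along a valid path, the two added boxes are not ``dependent'' in the sense that one must precede the other. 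The upshot is that $|a - b| \geq 2$, or more precisely that $a$ and $b$ are distinct with $|c_T(i) - c_T(i+1)| \geq 2$; I would extract this combinatorial fact carefully from the description of allowed box-additions in Section~\ref{4.1} (the Pieri rule and the shape of $\Gamma$).

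Next I would reduce the positivity of $e_T(i)^2 = 1 - \frac{1}{(\kappa_T(i)+\kappa_T(i+1))^2} - \frac{1}{(\kappa_T(i)-\kappa_T(i+1))^2}$ to an elementary inequality. Setting $u = \kappa_T(i)$, $v = \kappa_T(i+1)$ with $u \neq v$, the expression is positive iff $(u+v)^2(u-v)^2 > (u-v)^2 + (u+v)^2$, i.e.\ iff $(u^2 - v^2)^2 > 2(u^2+v^2)$. Substituting $u^2 = a(a+1)$ and $v^2 = b(b+1)$, this becomes $\big(a(a+1) - b(b+1)\big)^2 > 2\big(a(a+1) + b(b+1)\big)$. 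Since $a(a+1) - b(b+1) = (a-b)(a+b+1)$, the left side is $(a-b)^2(a+b+1)^2$. Using $|a-b| \geq 2$ and noting that $a, b \geq 0$ are contents in a shifted diagram so $a+b+1 \geq 1$, I would verify $(a-b)^2(a+b+1)^2 \geq 4(a+b+1)^2 > 2(a(a+1)+b(b+1))$; the last step holds because $4(a+b+1)^2 - 2a(a+1) - 2b(b+1) = 2a^2 + 2b^2 + 4ab + 6a + 6b + 4 > 0$ for all $a, b \geq 0$. One should also double-check edge cases where a content could be negative (contents of boxes in rows below the first can be negative), but the inequality $4(a+b+1)^2 > 2a(a+1) + 2b(b+1)$ in fact holds for all real $a,b$ with $|a-b|\geq 2$ since it reduces to a statement about $(a-b)^2 \geq 4$ bounding a quadratic, so this is not a real obstacle.

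The main obstacle I anticipate is not the algebraic inequality, which is routine, but rather pinning down precisely the combinatorial claim that $s_i.T \neq \star$ implies $|c_T(i) - c_T(i+1)| \geq 2$. One must rule out not only the same-row and same-column cases but also confirm that in the Bratteli graph $\Gamma$ the boxes added at consecutive steps, when they can be reordered, always differ in content by at least $2$; this follows because two boxes addable in either order to a shifted diagram must lie in different rows and different columns and hence have contents differing by at least $2$ (a content $c$ and content $c\pm1$ would mean the boxes are in consecutive diagonals, forcing a row- or column-adjacency that breaks reorderability). I would state this as a short preliminary observation, citing the structure of $\Gamma$ from Section~\ref{4.1}, and then the rest of the proof is the elementary computation above. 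Thus the proof is short: establish $|c_T(i)-c_T(i+1)|\geq 2$, then conclude $e_T(i)^2 > 0$, hence $e_T(i) \neq 0$.
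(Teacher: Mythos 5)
Your proposal is correct and is essentially the paper's argument: clear denominators, use the combinatorial fact that $s_i.T \neq \star$ forces the two contents to differ by at least $2$, and verify a polynomial inequality. The one real difference is cosmetic but arguably an improvement: you substitute $\kappa_T(i)^2 = c_T(i)(c_T(i)+1)$ and reduce everything to a statement about the integer contents $a = c_T(i)$, $b = c_T(i+1)$, factoring $a(a+1)-b(b+1) = (a-b)(a+b+1)$. The paper instead keeps the argument in the $\kappa$-values, factors the cleared expression as $\mathcal{Y}_T(i)=\big((\kappa_T(i)+\kappa_T(i+1))^2-1\big)\big((\kappa_T(i)-\kappa_T(i+1))^2-1\big)-1$, and asserts $|\kappa_T(i)-\kappa_T(i+1)|\geq 2$ — a bound that is true but not immediate from the content gap and requires a small extra computation to justify. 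Your version sidesteps that step entirely. (The paper also names the cleared quantity $\mathcal{Y}_T(i)$ and reuses it later, e.g.\ in Lemma~\ref{rescalingvectors}, which your phrasing does not preserve, but that is immaterial to this lemma.)

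Two minor slips, neither fatal. First, the expansion should read $2a^2+2b^2+8ab+6a+6b+4$, not $4ab$; still positive for $a,b\geq 0$, so no harm. Second, the parenthetical claim that $4(a+b+1)^2 > 2a(a+1)+2b(b+1)$ holds for \emph{all} real $a,b$ with $|a-b|\geq 2$ is false (take $a=-1$, $b=1$, where both sides equal $4$), but this is irrelevant here because contents of boxes in a shifted diagram are nonnegative — in fact at least $1$ for the boxes of $\lambda/\alpha$ — so the hypothesis $a,b\geq 0$ does all the work. The combinatorial claim you flag as the ``main obstacle,'' that two boxes simultaneously addable to the same strict partition have contents differing by at least $2$, is exactly what the paper also invokes (somewhat tersely, phrased as ``not adjacent''), and your sketch of why it holds is sound.
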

\begin{proof}
By clearing the denominators, it is equivalent to showing the following quantity is never zero, where $a=\kappa_T(i)$ and $b=\kappa_T(i+1)$.
\begin{align}
\mathcal{Y}_T(i)=&(\kappa_T(i)^2-\kappa_{T}(i+1)^2)^2-(\kappa_T(i)-\kappa_{T}(i+1))^2-(\kappa_T(i)+\kappa_{T}(i+1))^2 \label{ydefinition}\\
=&(a^2-b^2)^2-(a-b)^2-(a+b)^2=((a+b)^2-1)((a-b)^2-1)-1 \notag
\end{align}
if $s_i.T\neq\star$, then the boxes containing $i$ and $i+1$ are not adjacent to each other, therefore $|a-b|\geq 2$. Moreover, all boxes in a shifted tableau have contents at least $1$, hence one of $a,b$ is at least $1$ and $|a+b|\geq 3$. Therefore the above quantity is at least $3\cdot 1 -1 =2$.

\end{proof}
	
We also need the following well-known result in combinatorics of Young tableaux. Recall the definition of standard Young tableaux shortly before Section~\ref{4.2}. Given a tableau $T\in \Gamma^{\lambda}$, let $T^{\operatorname{row}}$ be the tableau of the same skew shape, whose entries are consecutive integers $1,2,3,\dots$ filled from left to right in each row, then from the top row to the bottom row.

\begin{lemma}\label{symmetrictransitive}
Given $T\in \Gamma^{\lambda}$, there exists a word $w$ in $s_1,\dots,s_{d-1}$ such that $w.T=T^{\operatorname{row}}$. In other words, the symmetric group $S_d$ acts transitively on the set of standard tableaux of the same skew shape.
\end{lemma}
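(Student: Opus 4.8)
The plan is to induct on the number of inversions of a permutation naturally attached to $T$. Given a standard tableau $T$ of skew shape $\theta=\lambda/\mu$ with $d$ boxes, list the boxes of $\theta$ in \emph{row-reading order} (left to right within a row, top row first, as in the definition of $T^{\operatorname{row}}$) and define $\sigma_T\in S_d$ by letting $\sigma_T(k)$ be the entry of $T$ in the $k$-th box; thus $\sigma_T=\operatorname{id}$ if and only if $T=T^{\operatorname{row}}$, and $\sigma_T$ depends only on the shape when $T=T^{\operatorname{row}}$. If $S=s_j.T\neq\star$ for some $1\leq j\leq d-1$, then $S$ is obtained from $T$ by transposing the two entries $j$ and $j+1$, so $\sigma_S=s_j\,\sigma_T$ (left multiplication by the transposition $(j,j+1)$). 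I would then invoke the standard fact that left multiplication by a simple transposition changes the number of inversions by exactly $\pm 1$, the change being $-1$ precisely when the value $j+1$ occurs to the left of the value $j$ in the one-line notation of $\sigma_T$, i.e.\ precisely when the box of $T$ containing $j+1$ precedes the box containing $j$ in row-reading order.

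First I would note that if $\sigma_T\neq\operatorname{id}$ then such a descent $j$ exists: if, on the contrary, the box containing $k$ preceded the box containing $k+1$ in row-reading order for every $k$, then the box containing $k$ would be the $k$-th box in that order for all $k$, forcing $\sigma_T=\operatorname{id}$. Having chosen $j$ with the box $q$ of $j+1$ preceding the box $p$ of $j$, the main point is to check that the move is legal, $s_j.T\neq\star$. Since $T$ is standard and $q$ precedes $p$, the boxes $p,q$ cannot lie in the same row (else $j+1$ would be strictly left of $j$ in a row) nor in the same column (else $j+1$ would be strictly above $j$ in a column); hence $q$ lies in a strictly higher row and a different column than $p$. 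One then verifies directly that exchanging the entries of $p$ and $q$ keeps the tableau standard: the at most four lattice neighbours of $p$ other than $q$ carry, in $T$, entries $<j$ (left or above) or $>j$ (right or below), and none of them equals $j+1$ because the only box with entry $j+1$ is $q$, which shares neither the row nor the column of $p$; thus each still compares correctly with the new entry $j+1$ in $p$, and symmetrically for the neighbours of $q$. This small case check is the only nonformal step, and it is where I expect the only real work to lie.

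Finally I would induct on $\ell(\sigma_T)$. If $\ell(\sigma_T)=0$ then $T=T^{\operatorname{row}}$ and the empty word suffices. Otherwise, with $j$ as above, $T':=s_j.T$ is a standard tableau of the same shape $\theta$ with $\ell(\sigma_{T'})=\ell(\sigma_T)-1$, so by induction there is a word $w'$ in $s_1,\dots,s_{d-1}$ with $w'.T'=T^{\operatorname{row}}$, and then $w=w's_j$ satisfies $w.T=T^{\operatorname{row}}$, which is the assertion of the lemma. Since for any $T\in\Gamma^{\lambda}$ with $s_i.T\neq\star$ one has $s_i.(s_i.T)=T$, reachability by such words is an equivalence relation, and the existence of the common target $T^{\operatorname{row}}$ for every $T$ of shape $\theta$ gives at once that $S_d$ acts transitively on the set of standard tableaux of skew shape $\theta$.
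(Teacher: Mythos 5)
Your argument is correct. The paper states this lemma as a well-known fact without giving a proof, so there is nothing to compare against; your induction on $\ell(\sigma_T)$, together with the verification that choosing $j$ with $j+1$ preceding $j$ in reading order always yields a legal move $s_j.T\neq\star$ (because those two boxes lie in distinct rows and columns, so exchanging their entries preserves standardness), is a complete and standard proof of the transitivity.
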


The following theorem gives a condition on the functions $f,g$ to produce two isomorphic modules.

\begin{theorem}
Let $f,g$ be $\mathbb{C}$-valued functions on ${\Gamma^{\lambda}}$ which satisfy condition $(\ref{Cal_2_f})$, then $D_f^{\lambda}\simeq D_g^{\lambda}$ if and only if there exists a $\mathbb{C}^{\times}$-valued function $H$ on  $\Gamma^{\lambda}$, such that $H$ is constant for all paths that share the same first edge, and for any $T\in \Gamma^{\lambda}$, $s_0.T\neq \star$,
\begin{align}
\frac{f(T)}{g(T)}=\frac{H(T)}{H(s_0.T)}. \label{isomorphismcondition1}
\end{align}
equivalently, a more direct check is the following condition
\begin{align}
\frac{f(T)}{g(T)}=\frac{f(s_i.T)}{g(s_i.T)}, \hspace{.2 in} 2\leq i\leq d. \label{isomorphismcondition2}
\end{align}
\end{theorem}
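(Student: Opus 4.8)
The plan rests on the observation that $\mathcal D^{\lambda}_f$ and $\mathcal D^{\lambda}_g$ have the \emph{same} underlying graded $\operatorname{Cl}_{d+1}$-module $\bigoplus_{T}\operatorname{Cl}_{d+1}v_T$ and the \emph{same} action of $z_0,\dots,z_d$, $c_0,\dots,c_d$ and $s_1,\dots,s_{d-1}$; the only generator whose action changes is $x_1$, and from the first column of $(\ref{Cal_2_x1better})$ its action reads $x_1.v_T=a_Tv_T+f(T)\,b_T\,v_{s_0.T}$ where $a_T,b_T\in\operatorname{Cl}_{d+1}$ depend only on the scalars $\kappa$ and $b_T$ contains the nonzero summand $(\kappa_T(1)+\kappa_{s_0.T}(1))\,c_1$. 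For the ``if'' direction, given such an $H$ I would set $\Phi(w\,v_T)=w\,H(T)v_T$ for $w\in\operatorname{Cl}_{d+1}$: it is bijective (as $H$ is $\mathbb C^{\times}$-valued), even, and $\operatorname{Cl}_{d+1}$-linear; it commutes with $z_0,\dots,z_d$ since they act diagonally on the $v_T$ with function-independent eigenvalues; it commutes with each $s_i$ ($1\le i\le d-1$) by $(\ref{transpositionsaction})$ because $s_i$ fixes $T^{(0)}$ and $H$ is constant on paths with a common first edge; and it commutes with $x_1$ because, comparing $\Phi(x_1.v_T)$ with $x_1.\Phi(v_T)$ and looking at the coefficient of $c_1v_{s_0.T}$, the only surviving requirement is $f(T)H(s_0.T)=g(T)H(T)$, i.e.\ $(\ref{isomorphismcondition1})$ (vacuous when $s_0.T=\star$). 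Hence $\Phi$ is an isomorphism of $\mathcal H^{\operatorname{ev}}_{p,d}$-modules.

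For the ``only if'' direction, let $\Phi\colon\mathcal D^{\lambda}_f\to\mathcal D^{\lambda}_g$ be an isomorphism; the heart of the matter is a spectral rigidity argument. Since every vertex of the Bratteli graph has exactly $n$ nonzero rows, every box added along a path from $\alpha$ to $\lambda$ merely extends an existing row, so its content is $\ge1$; together with $m>p$ (Lemma~\ref{kapparelations}) this makes all of $\kappa_T(0),\dots,\kappa_T(d)$ strictly positive. Using this positivity and Lemma~\ref{distincteigenvalues}, the simultaneous $(z_0,\dots,z_d)$-eigenspace of $\mathcal D^{\lambda}_f$ with eigenvalue $(\kappa_T(0),\dots,\kappa_T(d))$ equals $\mathbb C v_T$: any Clifford monomial $c_{i_1}\cdots c_{i_k}v_S$ lying there must have empty index set (a nonempty one would flip some $\kappa$ to its negative) and then $\kappa_S(i)=\kappa_T(i)$ for all $i$, forcing $S=T$. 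As $\Phi$ commutes with the $z_i$, it therefore has the form $\Phi(v_T)=H(T)v_T$ with $H(T)\in\mathbb C^{\times}$ (in particular any nonzero module map is automatically even and of this shape). Commuting $\Phi$ with each $s_i$ and using that the coefficient $e_T(i)$ in $(\ref{transpositionsaction})$ is nonzero whenever $s_i.T\neq\star$ (Lemma~\ref{Cen_1_lem}) gives $H(s_i.T)=H(T)$ for $1\le i\le d-1$; applying this along the words furnished by Lemma~\ref{symmetrictransitive} shows $H(T)$ depends only on $T^{(0)}$, so $H$ is constant on paths sharing a first edge. Finally, commuting $\Phi$ with $x_1$ and reading off the coefficient of $c_1v_{s_0.T}$ reproduces $(\ref{isomorphismcondition1})$.

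To see that $(\ref{isomorphismcondition1})$ is equivalent to $(\ref{isomorphismcondition2})$, put $r(T)=f(T)/g(T)$. If $H$ as above exists then $r(T)=H(T)/H(s_0.T)$, and since each $s_i$ with $i\ge2$ commutes with $s_0$ on paths and preserves the first edge, $r(s_i.T)=H(T)/H(s_0.T)=r(T)$, which is $(\ref{isomorphismcondition2})$. Conversely assume $(\ref{isomorphismcondition2})$. Then $r$ is invariant under $\langle s_2,\dots,s_{d-1}\rangle$, whose orbits consist of paths with a fixed pair $(T^{(0)},T^{(1)})$; by Corollary~\ref{twopaths} the Row-$1$ vertex $T^{(1)}$ of a path with $s_0.T\neq\star$ is determined by the unordered pair of Row-$0$ vertices it contains, so $r(T)$ depends only on $T^{(0)}$ and $(s_0.T)^{(0)}$. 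Thus $r$ is an edge-weighting of a graph on $\{T^{(0)}:T\in\Gamma^{\lambda}\}\subseteq\mathcal P_0(\alpha,\beta)$; by Lemma~\ref{Pre_5_com} an edge only joins two partitions whose first rows differ by one box, so the graph is a forest, and $(\ref{Cal_2_f})$ gives $f(T)f(s_0.T)=g(T)g(s_0.T)=F_T$, hence $r(T)r(s_0.T)=1$ and the two orientations of each edge carry reciprocal weights. Integrating $r$ along the forest yields $\bar H\colon\{T^{(0)}\}\to\mathbb C^{\times}$ with $\bar H(T^{(0)})/\bar H((s_0.T)^{(0)})=r(T)$, and $H(T):=\bar H(T^{(0)})$ is the required function.

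The step I expect to be the main obstacle is the spectral rigidity in the second paragraph: establishing that the simultaneous $z$-eigenspace is one-dimensional. This rests on the combinatorial input that no content-zero box is ever added along a path, equivalently that all Bratteli vertices have exactly $n$ rows; once this is in hand the rest of the ``only if'' direction is formal. The most bookkeeping-intensive part is the converse of the condition-equivalence, where one must verify that the $r$-weights on $\mathcal P_0(\alpha,\beta)$ are genuinely well defined on edges (via Corollary~\ref{twopaths}) and that the resulting graph is acyclic (via the first-row description of Lemma~\ref{Pre_5_com}), so that the potential $\bar H$ exists.
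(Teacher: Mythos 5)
Your proof is correct and follows essentially the same route as the paper's: one-dimensionality of simultaneous $(z_0,\dots,z_d)$-eigenspaces forces any isomorphism to act diagonally on the $v_T$; commuting with $s_i$ (via Lemma~\ref{Cen_1_lem}) shows the diagonal factor depends only on $T^{(0)}$; commuting with $x_1$ and reading off the coefficient of $v_{s_0.T}$ yields condition~(\ref{isomorphismcondition1}); and the equivalence with~(\ref{isomorphismcondition2}) is handled by building a potential on $\mathcal{P}_0(\alpha,\beta)$. The only cosmetic difference is in the last step: the paper identifies $\mathcal{P}_0$ with an interval of integers via first-row length and defines $h$ recursively along that interval, while you phrase the same fact as acyclicity of the induced graph on $\mathcal{P}_0$; in doing so you make explicit the needed reciprocity $r(T)\,r(s_0.T)=1$ (coming from $f(T)f(s_0.T)=g(T)g(s_0.T)=F_T$), a point the paper's construction uses tacitly when extending $h$ to both orientations of each edge. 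Your spectral-rigidity justification of one-dimensionality (positivity of all $\kappa_T(i)$, via $m>p$ and content $\ge 1$, plus Lemma~\ref{distincteigenvalues}) is the same argument the paper gives, just spelled out in more detail, including the remark that a nonzero intertwiner is automatically even because $v_T$ and $w_T$ carry the same parity.
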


\begin{proof}
$D_f^{\lambda}\simeq D_g^{\lambda}\Rightarrow (\ref{isomorphismcondition1})$: Let $\{v_T\}_{T\in \Gamma^{\lambda}}$, $\{w_T\}_{T\in \Gamma^{\lambda}}$ be the $\operatorname{Cl}_d$-bases of $\mathcal{D}^{\lambda}_f$ and $\mathcal{D}^{\lambda}_g$ in their construction. By Lemma~\ref{distincteigenvalues}, if $T\neq S$ are two paths, the list of eigenvalues for $z_0,\dots,z_d$ is different for $v_T$ and $v_S$. Moreover, by the ``Clifford twist relations'', $x_i$ anticommutes with $c_i$ and commutes with $c_j$ if $j\neq i$, therefore $c_0^{\epsilon_0}c_1^{\epsilon_1}\cdots c_d^{\epsilon_d}v_T$ has eigenvalues $(-1)^{\epsilon_0}\kappa_T(0)$, $(-1)^{\epsilon_1}\kappa_T(1)$, \dots, $(-1)^{\epsilon_d}\kappa_T(d)$, therefore each simultaneous eigenspace has dimension $1$.

If there exists a module isomorphism $\phi:D_f^{\lambda}\to  D_g^{\lambda}$, then $\phi$ preserves each simultaneous eigenspace, and  $\phi(v_T)=H(T)w_T$ for some $H(T)\in \mathbb{C}^{\times}$. Let $d_T,e_T$ be the coefficients in (\ref{transpositionsaction}) such that $s_i.v_T=d_Tv_T+e_Tv_{s_i.T}$, and similarly $s_i.w_T=d_Tw_T+e_Tw_{s_i.T}$. Whenever $s_i.T\neq\star$, 
\begin{align*}
\phi(s_i.v_T)&=d_T\phi(v_T)+e_T\phi(v_{s_i.T})=d_TH(T)w_T+e_TH(s_i.T)w_{s_i.T},\\
s_i.\phi(v_T)&=s_i(h(T)w_T)=H(T)(d_Tw_T+e_Tw_{s_i.T}).
\end{align*}
By comparing the coefficients of $w_{s_i.T}$, and the fact that $e_T\neq 0$ from Lemma~\ref{Cen_1_lem}, $H(s_i.T)=H(T)$ for any $1\leq i\leq d-1$. By applying all possible permutations, $H$ must be constant on all paths which go through the same partition at Row $0$, or equivalently, constant on all tableaux of the same skew shape.

Let $a_T,b'_T$ be the coefficients defined in (\ref{Cal_2_x1acts}) such that $x_1.v_T=a_Tv_T+f(T)b'_Tv_{s_0.T}$, and similarly $x_1.w_T=a_Tw_T+g(T)b'_Tw_{s_0.T}$. Whenever $s_0.T\neq \star$,
\begin{align*}
\phi(x_1.v_T)&=a_T\phi(v_T)+f(T)b'_T\phi(v_{s_0.T})=a_TH(T)w_T+f(T)b'_TH(s_0.T)w_{s_0.T},\\
x_1.\phi(v_T)&=x_1.(h(T)w_T)=H(T)(a_Tw_T+b'_Tg(T)w_{s_0.T}).
\end{align*}
In (\ref{Cal_2_x1acts}), $b_T'=(\kappa_0-\kappa_0')c_0+(\kappa_1+\kappa_1')c_1\neq 0$. By comparing the coefficients of $w_{s_0.T}$ we obtain ${g(T)}{H(T)}={f(T)}{H(s_0.T)}$. In Lemma~\ref{preparation} we showed $g(T)$ is never zero, and $H(s_0.T)$ is nonzero because $\phi$ is an isomorphism. Therefore we obtain the identity in the claim.

Conversely, if there exists such a function $H$
, then the map $\phi: \mathcal{D}_f^{\lambda}\to \mathcal{D}_g^{\lambda}$ defined by $v_T \mapsto H(T)w_T$ defines a module isomorphism, based on the calculations given above.

$ (\ref{isomorphismcondition1}) \Rightarrow (\ref{isomorphismcondition2})$: Condition (\ref{isomorphismcondition1}) implies that for all $2\leq i\leq d$,
\begin{align*}
\frac{f(s_i.T)}{g(s_i.T)}=\frac{H(s_i.T)}{H(s_0s_i.T)}=\frac{H(s_i.T)}{H(s_is_0.T)}=\frac{H(T)}{H(s_0.T)}=\frac{f(T)}{g(T)}.
\end{align*}
$  (\ref{isomorphismcondition2})\Rightarrow (\ref{isomorphismcondition1}) $: Any partition in Row $0$ of the Bratteli graph is uniquely determined by the number of boxes in the first row, that is, there is a bijection
\begin{align*}
\phi:\mathcal{P}_0 \to I=\{n+p,n+p-1,\dots,\operatorname{max}\{n+1,p+1\}\}, \hspace{.2 in}\phi(\mu)=\mu_1.
\end{align*}
Recall that $T^{(0)}$ is the target of the first edge in $T$, or equivalently the partition in $\mathcal{P}_0$ among vertices in $T$.  Let $h:I \to \mathbb{C}$ be the unique map (up to scalar multiples) determined by $\frac{h(i)}{h(i+1)}=\frac{f(T)}{g(T)}$ for all suitable $i$, where $T$ is a tableau such that $\phi(T^{(0)})=i$ and $\phi(s_0.T^{(0)})=i+1$. The function $h$ is well-defined, because even though the fraction $\frac{f(T)}{g(T)}$ depends on the choice of $T$, the first two edges in $T$ are fixed, and $T$ can only be paths related by operations of $s_2,\dots,s_{d-1}$, which leaves $\frac{f(T)}{g(T)}$ invariant by condition (\ref{isomorphismcondition2}).

Define $H$ to be the function on $\Gamma^{\lambda}$ such that $H(T)=h(i)$ if $\phi(T^{(0)})=i$ and $\phi(s_0.T^{(0)})=i+1$, then condition (\ref{isomorphismcondition1}) is satisfied by design.
\end{proof}

We now aim to show that the modules $\mathcal{D}^{\lambda}_f$ and $\mathcal{E}^{\lambda}_f$ are simple. Recall the definition of $T^{\operatorname{row}}$ shorlty before Lemma~\ref{symmetrictransitive}. Specifically, the set $\Gamma^{\lambda}$ contains tableaux of various skew shapes. We now show the analogue of Lemma~\ref{symmetrictransitive} once we include the action of $s_0$.

\begin{lemma}\label{Cal_3_comb}
The set $\{s_0,\dots,s_{d-1}\}$ acts on $\Gamma^{\lambda}\cup \{\star\}$ transitively.
\end{lemma}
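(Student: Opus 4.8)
The plan is to show that any two tableaux $T, S \in \Gamma^{\lambda}$ can be connected by a sequence of moves $s_0, s_1, \ldots, s_{d-1}$, where a move is only "legal" when it does not send a tableau to $\star$. The strategy is to reduce to a canonical target by a two-stage argument: first normalize the portion of the path above Row $0$, then adjust the vertex in Row $0$.

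First I would use Lemma~\ref{symmetrictransitive}: given $T \in \Gamma^{\lambda}$, there is a word $w$ in $s_1, \ldots, s_{d-1}$ with $w.T = T^{\operatorname{row}}$, the row-reading tableau of the same skew shape $\lambda/T^{(0)}$. Since the skew shape $\lambda/T^{(0)}$ is determined by the number $m$ of boxes in the first row of $T^{(0)}$ (by the bijection $\mathcal{P}_0 \to I$ noted in the isomorphism section, together with Lemma~\ref{Pre_5_com}), every $T \in \Gamma^{\lambda}$ can be moved, using only $s_1, \ldots, s_{d-1}$, to the row-reading tableau $R_m$ attached to the Row-$0$ partition with first row of length $m$. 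So it suffices to connect $R_m$ and $R_{m'}$ for the (at most two, by Corollary~\ref{twopaths}) admissible values of $m$. If only one value of $m$ occurs among paths in $\Gamma^{\lambda}$, we are done immediately. If two values $m$ and $m+1$ occur, then by Corollary~\ref{twopaths} the corresponding Row-$0$ partitions $\lambda_1 \subset \lambda_2$ both sit inside the common Row-$1$ vertex $\mu$, differing by one box in the first row; I would exhibit an explicit path $T$ passing through $\lambda_1$ (or $\lambda_2$) for which $s_0.T \neq \star$, namely the path whose first two added boxes are precisely the two boxes of $\mu/\lambda_1$ that can be added in either order, and then move $T$ to $R_m$ by $s_1, \ldots, s_{d-1}$ and $s_0.T$ to $R_{m+1}$ by $s_1, \ldots, s_{d-1}$. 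This links $R_m$ and $R_{m+1}$, completing transitivity on $\Gamma^{\lambda}$; the element $\star$ is then in the orbit as well since, for instance, $s_0.R_m = \star$ whenever the first edge of $R_m$ cannot be reordered.

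The main obstacle I anticipate is the bookkeeping around when $s_0$ is actually legal: $s_0.T \neq \star$ exactly when there are two Row-$0$ partitions contained in the Row-$1$ vertex of $T$, and I must make sure that the specific representative path I choose to bridge the two skew shapes genuinely has $s_0.T \neq \star$ — i.e. that its first two boxes (the entries $1$ and $2$) occupy the two cells of $\mu/\lambda_1$ forming the removable corner pair, not cells that force one particular order. This is exactly the situation analyzed in Corollary~\ref{twopaths} and in the $s_0$-action examples preceding Section~\ref{4.2}, so the tools are in place; it is just a matter of selecting the representative carefully and checking the standardness condition. Everything else is an application of the classical transitivity of $S_d$ on standard tableaux of a fixed skew shape (Lemma~\ref{symmetrictransitive}) together with the combinatorial description of $\mathcal{P}_0$ from Lemma~\ref{Pre_5_com}.
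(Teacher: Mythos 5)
Your overall strategy -- reduce to row-reading tableaux via Lemma~\ref{symmetrictransitive}, then bridge between row-reading tableaux attached to adjacent Row-$0$ partitions by one application of $s_0$ followed by a word in $s_1,\dots,s_{d-1}$ -- is exactly the paper's approach. However, there is a genuine gap in how you invoke Corollary~\ref{twopaths}. That corollary bounds the number of Row-$0$ partitions lying \emph{below a fixed vertex $\mu\in\mathcal{P}_1$}; it does \emph{not} say that at most two Row-$0$ vertices can occur among paths in $\Gamma^{\lambda}$ when $\lambda$ sits in Row $d\geq 1$. In fact in the paper's running example ($n=5$, $p=3$), the vertex $C_4\in\mathcal{P}_2$ admits paths through all three of $A_1$, $A_2$, $A_3$, so three values of $m$ occur. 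Your case split ``one value / two values $m,m+1$'' therefore does not cover the general situation, and you never state an iteration step. The paper handles this by explicitly saying the single-box bridge is to be ``applied repeatedly,'' together with the (unstated, but true) observation that the set of admissible $m$ forms an interval. You should replace the appeal to Corollary~\ref{twopaths} by the observation that the Row-$0$ partitions $\lambda_1$ (first row $m$) and $\lambda_2$ (first row $m+1$) are not nested (so $\lambda_1\subset\lambda_2$ as written is incorrect) but satisfy $\lambda_1\cup\lambda_2\in\mathcal{P}_1$; since $\lambda_1,\lambda_2\subset\lambda$ implies $\lambda_1\cup\lambda_2\subset\lambda$, the path through $\alpha\to\lambda_1\to\lambda_1\cup\lambda_2\to\cdots\to\lambda$ exists, $s_0$ is legal on it, and it bridges $R_m$ to $R_{m+1}$; then iterate over consecutive $m$ in the interval. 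With that correction your argument matches the paper's.
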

\begin{proof}
Given two tableaux $T_1,T_2\in \Gamma^{\lambda}$, it is enough to show there exists a word $w$ in $s_0,\dots,s_{d-1}$ such that $w.T_1^{\operatorname{row}}=T_2^{\operatorname{row}}$. We first assume the unfilled boxes in $T_1^{\operatorname{row}}$ and $T_2^{\operatorname{row}}$ (or equivalently, the targets of the first edge in these two paths) differ by a single box in the first row. The other cases can be obtained from applying the following moves repeatly. In the picture we omit the staircase portion, as well as entries $4,5,6,\dots$, where empty boxes are labeled green.\\

\ytableausetup{smalltableaux}
\begin{ytableau}
*(green)&*(green) &  \none[\cdot]  &*(green) &1 & 2& 3& \none[\cdot]  &\\
 *(green) &  \none[\cdot] &\none[\cdot]  \\
 *(green) & \none[\cdot]\\
 \none[\cdot]\\
 *(green) &\none[\cdot]\\
  *(green) & \none[\cdot]\\
  & \none[\cdot]\\
 \none[\cdot]
\end{ytableau}
(via $s_0$)
\begin{ytableau}
*(green)&*(green) &  \none[\cdot]  &*(green) & *(green) & 2& 3 & \none[\cdot]  &\\
 *(green) &  \none[\cdot] &\none[\cdot]  \\
 *(green) & \none[\cdot]\\
 \none[\cdot]\\
 *(green) & \none[\cdot]\\
 1 &\none[\cdot]\\
  & \none[\cdot]\\
 \none[\cdot]
\end{ytableau}
(via a word in $s_1,\dots,s_d$)
\begin{ytableau}
*(green)&*(green) &  \none[\cdot]  &*(green) & *(green) & 1& 2 & \none[\cdot]  &\\
 *(green) &  \none[\cdot] &\none[\cdot]  \\
 *(green) & \none[\cdot]\\
 \none[\cdot]\\
 *(green) & \none[\cdot]\\
  &\none[\cdot]\\
  & \none[\cdot]\\
 \none[\cdot]
\end{ytableau}
\end{proof}

\begin{theorem}\label{irreducibility}
The modules $\mathcal{D}^{\lambda}_f$ and $\mathcal{E}^{\lambda}_f$ are simple.
\end{theorem}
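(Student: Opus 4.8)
The plan is to prove simplicity of $\mathcal{D}^{\lambda}_f$ directly, by the familiar ``eigenvalue separation and propagation'' argument for calibrated modules (compare Hill--Kujawa--Sussan and Wan \cite{HKS,Wan}), and then to deduce simplicity of $\mathcal{E}^{\lambda}_f$ from that of $\mathcal{D}^{\lambda}_f$ by restricting to $\mathcal{H}^{\operatorname{ev}}_{p,d}$ and tracking the extra generator $c_M$. For $\mathcal{D}^{\lambda}_f$, I would take a nonzero submodule $U$ and first show it contains some basis vector $v_T$. Writing $c_I=c_{i_1}c_{i_2}\cdots$ for $I=\{i_1<i_2<\cdots\}\subseteq\{0,\dots,d\}$, the vectors $c_Iv_T$ form a $\mathbb{C}$-basis on which the commuting generators $z_0,\dots,z_d$ act diagonally, with $z_j.(c_Iv_T)=(-1)^{[j\in I]}\kappa_T(j)\,c_Iv_T$. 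The key point is that the joint $(z_0,\dots,z_d)$-eigenspaces are one-dimensional: by Lemma~\ref{distincteigenvalues} the tuple $(\kappa_T(1),\dots,\kappa_T(d))$ already determines $T$, and then the signs of the eigenvalues determine $I$, provided each $\kappa_T(i)\neq0$. This last point is where the choice $\alpha=(n,n-1,\dots,1)$ matters: since $\alpha$ is the full staircase and polynomial $\q(n)$-modules have at most $n$ rows, every box added along a path lies in a row $k\le n$ and hence has content equal to the current number of boxes in row $k$, which is $\ge\alpha_k=n+1-k\ge1$; so $\kappa_T(i)\ge\sqrt2$ for $1\le i\le d$, while $\kappa_T(0)=\sqrt{mp(m-p)}\neq0$ because $m>p\ge1$ by Lemma~\ref{kapparelations}. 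Since the $z_i$ act semisimply, $U$ decomposes along these one-dimensional joint eigenspaces, so it contains some $c_Iv_T$; applying $c_I^{-1}\in\operatorname{Cl}_{d+1}\subseteq\mathcal{H}^{\operatorname{ev}}_{p,d}$ yields $v_T\in U$.

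Next I would propagate: whenever $v_{T'}\in U$ and $s_i.T'\neq\star$, I claim $v_{s_i.T'}\in U$. For $1\le i\le d-1$ this is immediate from $(\ref{transpositionsaction})$, since $s_i.v_{T'}=d_{T'}v_{T'}+{e}_{T'}(i)v_{s_i.T'}$ with $d_{T'}\in\operatorname{Cl}_{d+1}$ and ${e}_{T'}(i)\neq0$ by Lemma~\ref{Cen_1_lem}. For $i=0$, formula $(\ref{Cal_2_x1better})$ expresses $x_1.v_{T'}$ as a $\operatorname{Cl}_{d+1}$-combination of $v_{T'}$ plus $f(T')$ times $(\kappa_{T'}(0)-\kappa_{s_0.T'}(0))\,c_0v_{s_0.T'}+(\kappa_{T'}(1)+\kappa_{s_0.T'}(1))\,c_1v_{s_0.T'}$; since $f(T')\neq0$ by Lemma~\ref{preparation} and $(\kappa_{T'}(0)-\kappa_{s_0.T'}(0))^2+(\kappa_{T'}(1)+\kappa_{s_0.T'}(1))^2>0$ (both $\kappa(1)$-values are positive), acting by $c_0$ and $c_1$ and taking an appropriate linear combination recovers $v_{s_0.T'}\in U$. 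By (the proof of) Lemma~\ref{Cal_3_comb}, for any $S\in\Gamma^{\lambda}$ there is a word in $s_0,\dots,s_{d-1}$ carrying $T$ to $S$ through tableaux in $\Gamma^{\lambda}$, so iterating gives $v_S\in U$ for all $S$. Hence $U$ contains $\operatorname{span}\{v_S:S\in\Gamma^{\lambda}\}$, and applying $\operatorname{Cl}_{d+1}$ we get $U=\mathcal{D}^{\lambda}_f$; thus $\mathcal{D}^{\lambda}_f$ is simple.

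For $\mathcal{E}^{\lambda}_f$ I would run the same argument over $\operatorname{Cl}_{d+2}$, the new feature being $c_M$, which commutes with $x_1,z_0,\dots,z_d,s_1,\dots,s_{d-1}$ and anticommutes with every $c_i$. The cleanest completion is to restrict to $\mathcal{H}^{\operatorname{ev}}_{p,d}$: the subspace $\operatorname{Cl}_{d+1}\cdot\operatorname{span}\{v_T:T\in\Gamma^{\lambda}\}$ is an $\mathcal{H}^{\operatorname{ev}}_{p,d}$-submodule of $\operatorname{Res}\mathcal{E}^{\lambda}_f$ isomorphic to $\mathcal{D}^{\lambda}_f$, and $c_M$ carries it onto a complementary $\mathcal{H}^{\operatorname{ev}}_{p,d}$-submodule isomorphic to the parity-reversal of $\mathcal{D}^{\lambda}_f$; so $\operatorname{Res}^{\mathcal{H}^{\operatorname{od}}_{p,d}}_{\mathcal{H}^{\operatorname{ev}}_{p,d}}\mathcal{E}^{\lambda}_f$ is a direct sum of two simple $\mathcal{H}^{\operatorname{ev}}_{p,d}$-modules interchanged by $c_M$. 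Once one verifies these two summands are non-isomorphic — equivalently, that $\mathcal{D}^{\lambda}_f$ does not extend to an $\mathcal{H}^{\operatorname{od}}_{p,d}$-module, equivalently that the $s_0$-action on $\Gamma^{\lambda}$ admits a cycle through an odd number of $s_0$-edges — a nonzero $\mathcal{H}^{\operatorname{od}}_{p,d}$-submodule $U$ of $\mathcal{E}^{\lambda}_f$ restricts to a nonzero $\mathcal{H}^{\operatorname{ev}}_{p,d}$-submodule, hence contains one of the two summands, and being $c_M$-stable contains the other, whence $U=\mathcal{E}^{\lambda}_f$.

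The propagation step is routine, relying only on the non-vanishing of the scalars ${e}_T(i)$, $f(T)$ and $(\kappa_T(0)-\kappa_{s_0.T}(0))^2+(\kappa_T(1)+\kappa_{s_0.T}(1))^2$, together with the connectivity coming from Lemma~\ref{Cal_3_comb}. The two points that need genuine care are: (i) the combinatorial fact that every $\kappa_T(i)$ is nonzero, so that joint $z$-eigenspaces of $\mathcal{D}^{\lambda}_f$ are one-dimensional — this really uses $\alpha$ being the full staircase and the at-most-$n$-rows constraint on polynomial $\q(n)$-modules; and (ii) in the $\mathcal{E}^{\lambda}_f$ case, the bookkeeping with $c_M$ and the proof that $\mathcal{D}^{\lambda}_f$ is not isomorphic to its parity-reversal over $\mathcal{H}^{\operatorname{ev}}_{p,d}$, which is precisely what separates ``$\mathcal{E}^{\lambda}_f$ simple'' from ``$\mathcal{E}^{\lambda}_f$ splits into two simples''. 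I expect (ii) to be the main obstacle.
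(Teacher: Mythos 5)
Your argument for $\mathcal{D}^{\lambda}_f$ is essentially the paper's: the paper uses explicit projection operators $P_{T,\epsilon}$ built from $z_0,\dots,z_d$ to isolate a single $c^{\epsilon}v_T$, then propagates via the $s_i$ (with $e_T(i)\neq0$, Lemma~\ref{Cen_1_lem}) and via a twisted version of $x_1$ for $s_0$, then invokes Lemma~\ref{Cal_3_comb}. Your ``joint eigenspaces are one-dimensional, plus semisimplicity'' is the same content as the projections; your careful justification that $\kappa_T(i)\geq\sqrt{2}$ for $i\geq1$ (content $\geq 1$ for boxes outside the full staircase $\alpha$) and $\kappa_T(0)\neq0$ (via $m>p$) is precisely what makes the projection denominators nonvanishing and the eigenvalue lists distinct across $(T,\epsilon)$, so this part is correct and matches the paper.

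Where you depart from the paper is $\mathcal{E}^{\lambda}_f$, and there your route is both more complicated than needed and saddled with an obstacle you correctly flag but cannot immediately resolve — whether the two $\mathcal{H}^{\operatorname{ev}}_{p,d}$-summands of $\operatorname{Res}\mathcal{E}^{\lambda}_f$ are non-isomorphic (equivalently, whether $\mathcal{D}^{\lambda}_f$ is of Type M). The paper simply re-runs the direct projection argument over $\operatorname{Cl}_{d+2}$, and this works without that extra input: since $c_M$ commutes with all $z_i$, each joint $(z_0,\dots,z_d)$-eigenspace in $\mathcal{E}^{\lambda}_f$ is two-dimensional, spanned by $c^{\sigma}v_T$ and $c_M c^{\sigma}v_T$; but these two vectors have opposite parities, and a supermodule submodule is by definition homogeneous, so a nonzero submodule must contain one of them on the nose, not merely a linear combination. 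Inverting the Clifford element again gives $v_T$, $c_Mv_T$, and hence both, after which the propagation and Lemma~\ref{Cal_3_comb} close the argument exactly as before. So your reduction to a question about Type M/Type Q is unnecessary; the $\mathbb{Z}_2$-grading of submodules already disposes of the two-dimensionality of the joint eigenspaces, and the direct argument goes through uniformly for $\mathcal{D}^{\lambda}_f$ and $\mathcal{E}^{\lambda}_f$.
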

\begin{proof}
We first give the proof when $n$ is even. When $n$ is odd, the proof is very similar. Let $E=\{0,1\}^{d+1}$. For any sequence $\epsilon=(\epsilon_0,\dots,\epsilon_d)\in E$ and $T\in \Gamma^{\lambda}$ define

\begin{align}
P_{{T},\epsilon}&=\prod_{\sigma\in E, {S}\in{\Gamma^{\lambda}},
{S}\neq{T}} \frac{(z_0-(-1)^{\sigma_0}\kappa_0({S}))^2+\cdots+(z_d-(-1)^{\sigma_d}\kappa_d({S}))^2}{((-1)^{\epsilon_0}\kappa_0({T})-(-1)^{\sigma_0}\kappa_0({S}))^2+\cdots+((-1)^{\epsilon_d}\kappa_d({T})-(-1)^{\sigma_d}\kappa_d({S}))^2}\notag \\
&\hh \cdot \prod_{\sigma \in E, \sigma\neq \epsilon} \frac{(z_0-(-1)^{\sigma_0}\kappa_0({T}))^2+\cdots+(z_d-(-1)^{\sigma_d}\kappa_d({T}))^2}{((-1)^{\epsilon_0}\kappa_0({T})-(-1)^{\sigma_0}\kappa_0({T}))^2+\cdots+((-1)^{\epsilon_d}\kappa_d({T})-(-1)^{\sigma_d}\kappa_d({T}))^2} \label{Cal_3_proj}
\end{align}

Let $c^{\epsilon}=c_1^{\epsilon_1}\cdots c_d^{\epsilon_d}$. Since $z_i. (c^{\epsilon}v_{T})=(-1)^{\epsilon_i} c^{\epsilon}v_{T}$, it follows that $P_{{T},\epsilon}:\mathcal{D}^{\lambda}_f\to \mathcal{D}^{\lambda}_f$ acts as a projection onto the space spanned by $c^{\epsilon}v_{T}$:
\begin{align*}
P_{{T},\epsilon}. (c^{\sigma}v_S)=0, \hspace{.1 in} (S\neq T) \hspace{.3 in} P_{{T},\epsilon}.(c^{\sigma}v_T)=0, \hspace{.1 in}(\sigma\neq \epsilon) \hspace{.3 in} P_{{T},\epsilon}.(c^{\epsilon}v_T)=c^{\epsilon}v_T
\end{align*}

Assume on the contrary that $\mathcal{D}^{\lambda}_f$ is not simple. Let $W$ be a proper submodule of $\mathcal{D}^{\lambda}$, then there must exist ${T}\in {\Gamma}$ and $\sigma \in E$ such that $P_{T,\sigma}w \neq 0$, and  $_{T,\sigma}w=ac^{\sigma}v_T$ for some $a\neq 0$. Recall the scalar $e_T(i)$ defined in Lemma~\ref{Cen_1_lem} based on (\ref{transpositionsaction}), so that by definition if $s_i.T\neq \star$,
\begin{align*}
\left(s_i+\frac{1}{\kappa_T(i)-\kappa_{T}(i+1)}-\frac{1}{\kappa_T(i)+\kappa_T(i+1)}c_ic_{i+1} \right) v_T=e_T(i)v_{s_i.T}
\end{align*} 
In Lemma~\ref{Cen_1_lem} we showed $e_T(i)\neq 0$, therefore $v_{s_i.T}\in W$ if $v_{T}\in W$, for any $1\leq i\leq d-1$.

On the other hand, by (\ref{Cal_2_x1acts}) if $s_0.T\neq \star$,
\begin{align*}
&\hh ((\kappa_T(0)-\kappa'_T(0))c_0+(\kappa_T(1)+\kappa'_T(1))c_1)\\
&\hh \cdot (x_1-\frac{N_0}{\kappa^2}(\kappa_T(1)-\kappa_T(0)c_0c_1)-c(\kappa_T(0)+\kappa_T(1)c_0c_1))v_T\\
&= ((\kappa_T(0)-\kappa'_T(0))^2+(\kappa_T(1)+\kappa'_T(1))^2)v_{s_0.T}
\end{align*}
The coefficient of $v_{s_0.T}$ is nonzero since  $\kappa_T(1)\neq 0$, therefore $v_{s_0.T}\in W$ if $v_{T}\in W$.

By Lemma~\ref{Cal_3_comb}, the set $\{s_0,\dots,s_{d-1}\}$ acts transitively on $\Gamma^{\lambda}$, therefore $v_T\in W$ for all $T\in \Gamma^{\lambda}$, and $c^{\epsilon}v_T\in W$ for all $\epsilon\in E$. This contradicts the fact that $W$ is proper.

\end{proof}

\subsection{Classfying calibrated modules}
In fact, the action of $x_1$ and transpositions $s_i$ in the definition of $\mathcal{D}^{\lambda}_f$  and $\mathcal{E}^{\lambda}_f$ are determined by the action of $z_0,\dots,z_d$, up to a choice of the function $f$.
\begin{theorem}\label{classify}
Assume $n$ is even, and furthermore, $n^2(n+1)^2+p^2(p+1)^2$ is not a perfect square. Fix $\lambda$ in Row $d$ of the Bratteli diagram $\Gamma$. Let $\mathcal{W}$ be a module for $\mathcal{H}^{\operatorname{ev}}_{p,d}$. If $\mathcal{W}$ is a free module over $\operatorname{Cl}_{d+1}$ with basis $\{v_T\}_{T\in \Gamma^{\lambda}}$, and $z_i.v_T=\kappa_T(i)v_T$ for all $T\in \Gamma^{\lambda}$ and $0\leq i\leq d$, then $W\simeq \mathcal{D}^{\lambda}_f$ for some choice of $f$.
\end{theorem}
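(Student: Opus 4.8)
## Proof Proposal for Theorem~\ref{classify}

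\textbf{Overall strategy.} The plan is to reconstruct the action of the remaining generators $s_1,\dots,s_{d-1}$ and $x_1$ on $\mathcal{W}$ purely from the known eigenvalue data $z_i.v_T=\kappa_T(i)v_T$, and to show that the formulas forced on us are exactly those of (\ref{transpositionsaction}) and (\ref{Cal_2_x1acts}) up to the choice of $f$. The key input is Lemma~\ref{distincteigenvalues}, which guarantees that the simultaneous eigenspaces of $z_0,\dots,z_d$ on $\mathcal{W}$ are one-dimensional (each spanned by a monomial $c^{\epsilon}v_T$), exactly as in the proof of Theorem~\ref{irreducibility}. So once we know how a generator changes the $(z_0,\dots,z_d)$-eigenvalue of a vector, the matrix coefficients are pinned down up to scalars living in $\operatorname{Cl}_{d+1}$, and the relations of $\mathcal{H}^{\operatorname{ev}}_{p,d}$ must then force those scalars.

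\textbf{Step 1: the action of $s_i$ for $1\le i\le d-1$.} First I would use relation (\ref{heckerelation3}) to see that $s_i$ commutes with $z_j$ for $j\neq i,i+1$, hence preserves those eigenvalues; combined with relation (\ref{heckerelation1}), $s_iz_i=z_{i+1}s_i-1+c_ic_{i+1}$, this forces $s_i$ to map the $(z_i,z_{i+1})$-eigenspace of $v_T$ into the span of $v_T$ and $v_{s_i.T}$ (where $s_i.T$ is the tableau obtained by swapping $i,i+1$, or $\star$ if that is not standard). Writing $s_i.v_T = d_T v_T + e_T v_{s_i.T}$ with $d_T,e_T$ a priori elements of $\operatorname{Cl}_{d+1}$, the Clifford twist relations and the requirement $s_i^2=1$, together with (\ref{heckerelation1}), give a system determining $d_T$ and $\|e_T\|$ exactly as in Hill--Kujawa--Sussan (cf. Proposition~\ref{HKSaction}); after absorbing a sign/Clifford ambiguity into a rescaling of the basis $\{v_T\}$ by units of $\operatorname{Cl}_{d+1}$ we recover precisely (\ref{transpositionsaction}). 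Here I would invoke Lemma~\ref{Cen_1_lem} to know $e_T\ne 0$ whenever $s_i.T\ne\star$, so the rescaling is legitimate and consistent along $S_d$-orbits by Lemma~\ref{symmetrictransitive}.

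\textbf{Step 2: the action of $x_1$.} By (the presentation) relation $x_1s_i=s_ix_1$ for $2\le i\le d$, $x_1$ commutes with $z_j$ for $j\ge 2$ and with $z_0,z_1$ it interacts through relations (\ref{importantrelation}) and the polynomial relation $x_1(s_1x_1s_1+(1-c_1c_2)s_1)=(s_1x_1s_1+(1-c_1c_2)s_1)x_1$; following the same reasoning as in Corollary~\ref{x1squareacts} and Lemma~\ref{lasttrivial}, $x_1$ must preserve $\mathrm{span}\{c^\epsilon v_T, c^\epsilon v_{s_0.T}\}$. Thus on the four-dimensional block $\langle v_T, c_0c_1v_T, c_0v_{s_0.T}, c_1v_{s_0.T}\rangle$ (or the $2\times2$ block when $s_0.T=\star$) the matrix of $x_1$ has the shape (\ref{Cal_2_x1better}) with unknown entries. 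The diagonal blocks $A,B$ are forced by relations (\ref{importantrelation}), (\ref{xeigenvalues}) and Lemma~\ref{lasttrivial} to be the matrices $\tfrac{N_0}{\kappa^2}Q+cX$ and $\tfrac{N_0}{\kappa^2}R+cY$ of Lemma~\ref{preparation} — this is where the hypothesis ``$n^2(n+1)^2+p^2(p+1)^2$ not a perfect square'' enters: it guarantees that the quadratic constraints coming from $x_1^2=n(n+1)$ and $(x_1-z_1)^4=p(p+1)(x_1-z_1)^2$ have the solution set we expect, with no spurious extra solution arising from a coincidental square root. The off-diagonal entries are of the form $f(s_0.T)Z$ and $f(T)Z$ for scalars $f(T),f(s_0.T)$; the relation $x_1^2=n(n+1)$ forces $f(T)f(s_0.T)=F_T$, i.e. condition (\ref{Cal_2_f}), and (\ref{isomorphismcondition2})-type consistency across $s_2,\dots,s_{d-1}$-moves (which I would derive from $x_1s_i=s_ix_1$) makes $T\mapsto f(T)$ a genuine function on $\Gamma^\lambda_0$ satisfying (\ref{Cal_2_f}). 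Then $\mathcal{W}\simeq\mathcal{D}^\lambda_f$ for this $f$.

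\textbf{Main obstacle.} The hardest part is Step 2: showing that the diagonal blocks of $x_1$ are forced to be \emph{exactly} $A$ and $B$ rather than some other matrices with the same square. The constraints $x_1^2=n(n+1)$, $(z_0c_0c_1+z_1-x_1)x_1=-x_1(z_0c_0c_1+z_1-x_1)$, and $(x_1-z_1)^4=p(p+1)(x_1-z_1)^2$ are polynomial matrix equations in a $2\times2$ (or $4\times4$) unknown, and one must check their simultaneous solution set, on each block, is a single orbit under the residual Clifford/sign freedom. The perfect-square hypothesis is precisely what rules out the degenerate branch where two eigenvalue expressions collide and an alternative block would satisfy all relations; verifying this requires carefully tracking the discriminants of the relevant quadratics (the $\kappa$-identities of Lemma~\ref{kapparelations}, especially (\ref{degree4relation}) and (\ref{nokappas}), do most of the bookkeeping). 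Once the block is pinned down, matching it to (\ref{Cal_2_x1acts}) and extracting $f$ is routine given Lemma~\ref{preparation}.
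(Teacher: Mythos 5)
Your overall strategy matches the paper's proof, which splits into Lemma~\ref{x1determined} (determining $x_1$), Lemma~\ref{rescalingvectors}, and Lemma~\ref{sidetermined} (determining $s_i$), in the reverse of your Step 1/Step 2 order. However, there is a genuine gap in your Step 2. You assert that $x_1$ acts on the $4\times 4$ block $\langle v_T, c_0c_1v_T, c_0v_{s_0.T}, c_1v_{s_0.T}\rangle$ by a matrix of shape (\ref{Cal_2_x1better}), but the full simultaneous eigenspace for $z_2,\dots,z_d$ is $8$-dimensional, also containing $v_{s_0.T}$, $c_0c_1 v_{s_0.T}$, $c_0 v_T$, $c_1v_T$. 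Since $x_1=\tilde{x}_1c_1$ is \emph{even} and therefore parity-preserving, the $4\times 4$ block structure you write down only holds once you know that $v_T$ and $v_{s_0.T}$ have \emph{opposite} $\mathbb{Z}_2$-parities. Establishing that parity claim is the entire content of Part 3 of the paper's proof of Lemma~\ref{x1determined}, and it is exactly where the hypothesis that $n^2(n+1)^2 + p^2(p+1)^2$ is not a perfect square is used: assuming $v_T$ and $v_{s_0.T}$ have the same parity, the constraints $x_1^2=N_0$ and $(x_1-z_1)^4=p(p+1)(x_1-z_1)^2$ force a quadratic equation in $m$ (the first-row length of $T^{(0)}$) with discriminant $\frac{4(p-1)^2}{p^2(p+1)^2}\bigl(p^2(p+1)^2+n^2(n+1)^2\bigr)$, making $m$ irrational, a contradiction. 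Your characterisation of the hypothesis as ruling out a branch ``where two eigenvalue expressions collide'' does not describe this; what must coincide in the excluded branch is the $\mathbb{Z}_2$-parity of two basis vectors, not a pair of eigenvalues.

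Your Step 1 (determining the $s_i$-action) is essentially workable, but the claim that the rescaling is ``consistent along $S_d$-orbits by Lemma~\ref{symmetrictransitive}'' overstates what that lemma gives: it provides transitivity of the $S_d$-action on tableaux of a fixed skew shape, not well-definedness of the rescaling. The paper addresses this in Lemma~\ref{rescalingvectors} via the Nazarov intertwiners $\Phi_i$, defining $\phi$ by induction on reduced-word length and then verifying invariance under both types of Coxeter relation by direct computation with the $a_T(i)$ and the quantities $\mathcal{Y}_T(i)$. Working directly with $s_i^2=1$ and (\ref{heckerelation1}) as you propose would require the same consistency check, so the two routes are comparable in effort; the intertwiner formulation simply packages the bookkeeping more cleanly and exposes the Coxeter-relation verification as the nontrivial step.
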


We break down the proof into two lemmas.
\begin{lemma}\label{x1determined}
Let $\mathcal{W}$ be an $\mathcal{H}^{\operatorname{ev}}_{p,d}$-module satisfying the condition in Theorem~\ref{classify}, then the action of $x_1$ is given by (\ref{Cal_2_x1acts}) for some choice of the function $f$. If $s_0.T\neq \star$, the parity of $v_T$ differs from the parity of $v_{s_0.T}$.
\end{lemma}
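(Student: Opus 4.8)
The plan is to reconstruct the matrix of $x_1$ from the relations it must satisfy together with the Clifford-module structure, and to do this blockwise on the $\operatorname{Cl}_{d+1}$-span of $v_T$ and $v_{s_0.T}$.

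First I would fix a path $T$ with $s_0.T\neq\star$ and work inside the four-dimensional subspace $\langle v_T, c_0c_1v_T, c_0v_{s_0.T}, c_1v_{s_0.T}\rangle$; the point is that, since $x_1$ anticommutes with $c_1$ and commutes with $c_0,c_2,\dots,c_d$ (relation from Lemma~\ref{evenpresentation}), and since $x_1$ commutes with $z_2,\dots,z_d$, the element $x_1.v_T$ must be a $\operatorname{Cl}$-combination of basis vectors whose $(z_2,\dots,z_d)$-eigenvalues match those of $v_T$, i.e.\ only $v_T$ and $v_{s_0.T}$ (here one uses Lemma~\ref{distincteigenvalues} / the fact that $s_0$ fixes entries $2,\dots,d$, so $\kappa_T(j)=\kappa_{s_0.T}(j)$ for $j\ge 2$, and that $T$ is the \emph{only} other path with those tail-eigenvalues). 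Matching eigenvalues of $z_0$ and $z_1$ and using $x_1c_1=-c_1x_1$, $x_1c_0=c_0x_1$ then forces $x_1.v_T = a_Tv_T + b_Tv_{s_0.T}$ with $a_T$ of the form $\gamma + \delta c_0c_1$ and $b_T$ of the form $\mu c_0 + \nu c_1$ (an even, respectively odd, element of $\operatorname{Cl}_2=\langle c_0,c_1\rangle$), because conjugation by $c_1$ sends $v_T\mapsto \pm v_T$ depending on parity and we need the correct sign. Writing this out gives exactly the block shape of \eqref{Cal_2_x1better}/\eqref{Cal_2_x1acts} with unknown scalars $\gamma,\delta$ in the diagonal blocks and unknown scalars in the off-diagonal blocks.

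Next I would pin down the scalars using the three polynomial relations available: $x_1^2 = n(n+1)=N_0$, $(z_0c_0c_1+z_1-x_1)x_1 = -x_1(z_0c_0c_1+z_1-x_1)$, and $(z_1-x_1)^4 = p(p+1)(z_1-x_1)^2$. The first two are \emph{linear-to-quadratic} in the entries of the diagonal block and, combined with the known action of $z_0,z_1,c_0c_1$ (which on this subspace are the explicit diagonal/block matrices recorded just before Lemma~\ref{lasttrivial}), force the diagonal block of $x_1$ to equal $\frac{N_0}{\kappa^2}Q + cX$ in the notation of Lemma~\ref{preparation}, with $c$ the scalar \eqref{cdefinition} — this is essentially running the computations of Lemma~\ref{lasttrivial} and Lemma~\ref{firstnontrivial} in reverse. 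For the off-diagonal blocks: $x_1$ anticommutes with $c_1$ and commutes with $c_0$, so the block sending the $v_T$-part to the $v_{s_0.T}$-part is a scalar multiple of $Z = D(\kappa_0-\kappa_0',\kappa_1+\kappa_1')$ (the unique, up to scalar, $2\times2$ matrix intertwining the $(z_0,z_1,c_0c_1)$-actions on the two two-dimensional pieces with the right Clifford-twisting); call the two scalars $f(T)$ and $f(s_0.T)$. Then $x_1^2=N_0$ on the off-diagonal blocks gives $QZ+ZR=0$ etc.\ automatically, while on the diagonal blocks it yields precisely $f(T)f(s_0.T)Z^2 = N_0 - \frac{N_0^2}{\kappa^2} - c^2\kappa^2$, i.e.\ condition \eqref{Cal_2_f0}; by Lemma~\ref{preparation} this is exactly condition \eqref{Cal_2_f} on $f$. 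When $s_0.T=\star$ the same argument on the two-dimensional space $\langle v_T, c_0c_1v_T\rangle$ gives the upper-left block and no constraint relating different paths.

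For the parity claim: the grading on $\mathcal{W}$ is part of the data (it is a $\operatorname{Cl}_{d+1}$-module with homogeneous basis $\{v_T\}$, and $x_1$ is an \emph{odd} generator of $\mathcal{H}^{\operatorname{ev}}_{p,d}$). Since $x_1.v_T$ has a nonzero $v_{s_0.T}$-component (the off-diagonal block is $f(T)Z$ with $f(T)\neq 0$ by the last part of Lemma~\ref{preparation} and $Z\neq 0$ since $\kappa_1+\kappa_1'\neq 0$), and $x_1$ reverses parity, the vector $v_{s_0.T}$ — which appears as $c_0v_{s_0.T}$, $c_1v_{s_0.T}$ in that component, i.e.\ as odd multiples of $v_{s_0.T}$ — must have parity opposite to $c_0v_T$'s counterpart; unwinding, $v_T$ and $v_{s_0.T}$ must have opposite parity. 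I expect the main obstacle to be the bookkeeping in the second paragraph: proving rigorously that the \emph{only} possible matrix entries are scalar multiples of the specific matrices $Q,X,Z$ — i.e.\ that the relations $x_1^2=N_0$ and the two anticommutation-with-$(z_0c_0c_1+z_1)$ and $(z_1-x_1)$-relations together cut the a priori $\operatorname{Cl}_2$-worth of freedom in each block down to a one-parameter family — requires carefully solving a small system of quadratic equations in $\gamma,\delta$ and the off-diagonal scalars, and checking the assumption that $n^2(n+1)^2+p^2(p+1)^2$ is not a perfect square is what rules out a spurious extra solution (it guarantees $N_0 \neq$ certain combinations of $\kappa_i^2$, so that the eigenvalue-matching step isolates $v_T,v_{s_0.T}$ and the quadratic system is nondegenerate). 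Everything else is a reversal of the verifications already carried out in Lemmas~\ref{lasttrivial}--\ref{firstnontrivial} and Lemma~\ref{preparation}.
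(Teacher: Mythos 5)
Your overall plan — reconstruct $x_1$ blockwise from the relations, solve a small quadratic system, and extract condition \eqref{Cal_2_f0} — is the right shape, and determining the diagonal blocks as $\frac{N_0}{\kappa^2}Q + cX$ via relations \eqref{xeigenvalues}, \eqref{importantrelation}, and $(z_1-x_1)^4 = p(p+1)(z_1-x_1)^2$ is essentially what the paper does. However, there is a serious gap in the parity argument, and it also infects your starting point.

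You begin by working ``inside the four-dimensional subspace $\langle v_T, c_0c_1v_T, c_0v_{s_0.T}, c_1v_{s_0.T}\rangle$''. This subspace is only $x_1$-invariant if $v_T$ and $v_{s_0.T}$ have \emph{opposite} parity: since $x_1 = \tilde{x}_1 c_1$ is \emph{even} (odd times odd), not odd as you claim, $x_1$ preserves parity, and the natural $x_1$-invariant space is the full 8-dimensional simultaneous $z_2,\dots,z_d$-eigenspace, which splits by parity into two $4$-dimensional halves. Whether the even half is $\langle v_T, c_0c_1v_T, c_0v_S, c_1v_S\rangle$ or $\langle v_T, c_0c_1v_T, v_S, c_0c_1 v_S\rangle$ is exactly the parity question, so your choice of subspace presupposes the answer. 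Your later parity argument compounds this: you cite the off-diagonal block $f(T)Z$ with $f(T)\ne 0$, but a priori $b_T$ in $x_1.v_T=a_Tv_T+b_Tv_{s_0.T}$ is an arbitrary element of $\operatorname{Cl}_2$; its parity (as an element of $\operatorname{Cl}_2$) is forced by the parity of $v_T$ versus $v_{s_0.T}$, not the other way around, so this is circular. And finally, with $x_1$ even and $b_T$ odd, the parity bookkeeping gives $|v_T| = |v_{s_0.T}|+1$ — your deduction accidentally lands on the right answer only because you also mis-signed $x_1$.

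The parity claim is in fact the hard step, and it is the \emph{only} place the hypothesis that $n^2(n+1)^2+p^2(p+1)^2$ is not a perfect square is used. The paper's strategy is: first constrain the blocks in the full $8$-dimensional space using $x_1c_0c_1 = -c_0c_1x_1$ and \eqref{importantrelation}, leaving free scalars; then assume for contradiction that $v_T, v_{s_0.T}$ share parity, so $x_1$ preserves $\langle v_T, c_0c_1v_T, v_S, c_0c_1v_S\rangle$, where the coupling block is proportional to $E = D(\kappa_0+\kappa_0', \kappa_1+\kappa_1')$ (note the plus sign, not $Z$); then impose $(x_1-z_1)^4 = p(p+1)(x_1-z_1)^2$, which forces $N_0 = \sqrt{-a_0/a_2}$ to be a specific rational function of $m$ and $p$, and in turn forces $m$ to satisfy a quadratic with discriminant proportional to $p^2(p+1)^2 + n^2(n+1)^2$, an irrational square root — contradicting $m\in\mathbb{Z}$. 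Your write-up attributes the non-perfect-square hypothesis to ``the eigenvalue-matching step'' and ``nondegeneracy of the quadratic system'', but that is not where it lives. Without this contradiction argument, the parity claim is unproved and the block structure you assume from the start is not justified.

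Once parity is settled, the remainder — extracting \eqref{Cal_2_f0} from $x_1^2=N_0$ and recognizing it as \eqref{Cal_2_f} via Lemma~\ref{preparation} — goes through as you sketch.
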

\begin{proof}

Since $x_1z_j=z_jx_1$ for $j\geq 2$, $x_1$ preserves the subspace whose eigenvalues are $\kappa_T(2),\dots,\kappa_T(d)$ for $z_2,\dots,z_d$. 	These eigenvalues determine the last $d-1$ edges in $T$, and $T$ has fixed vertices except at Row $0$. Therefore this simultaneous eigenspace is spanned by vectors $v_T$, $c_0c_1v_T$, $c_0v_S$, $c_1v_S$, $v_S$, $c_0c_1v_S$, $c_0v_T$, $c_1v_T$ over $\mathbb{C}$, where $S=s_0.T$ (these vectors are arranged in this order for parity considerations). The linear maps will be given as matrices in this basis. Let $d(X_1,\dots,X_t)$ be the block diagonal matrix with diagonal blocks $X_1,\dots,X_t$. Recall the matrix $C$ in \ref{Cdefinition}, the matrix $D(a,b)$ in (\ref{CliffordD}) for $a,b\in \mathbb{C}$ and let $H=D(1,0)$. Then it is straightforward to calculate that the matrix associated to the action of the following elements.
\begin{align*}
c_0c_1=d(C,C,C,C), \hspace{.3 in} 
z_0 =d(\kappa_0 H, -\kappa_0' H,\kappa_0'H ,-\kappa_0 H), \hspace{.3 in}
z_1 = d(\kappa_1 H , \kappa_1' H , \kappa_1'H , \kappa_1 H)
\end{align*}
Therefore $z_0c_0c_1+z_1=d(P_1 ,P_2)$,
where
\begin{align}
P_1= d(
\kappa_0 HJ+\kappa_1 H ,-\kappa_0'HJ+\kappa_1'H), \hspace{.3 in} P_2= d(
\kappa_0' HJ+\kappa_1' H, -\kappa_0 HJ+\kappa_1 H) \label{Pdefinition}
\end{align}

Assume 
$
x_1=\begin{bmatrix}
A & B\\ C & D
\end{bmatrix}$, where $A=\begin{bmatrix}
A_{11} & A_{12} \\A_{21} & A_{22}
\end{bmatrix}
$
with each $A_{ij}$ being a $2\times 2$ matrix, and the notation is similar for Blocks $B$, $C$ and $D$. Now we use some of the relations to determine the action of $x_1$.

1) First we use the relation $x_1(c_0c_1)+(c_0c_1)x_1=0$. The matrix form of this relation implies $JA_{ij}+A_{ij}J=0$. If  $A_{ij}=\begin{bmatrix}
a & b \\ c & d
\end{bmatrix}$, then
\begin{align*}
\begin{bmatrix}
-c & -d \\ a & b 
\end{bmatrix}
+\begin{bmatrix}
b & -a \\ d & -c 
\end{bmatrix}=0
\end{align*}
therefore $a=-d$ and $b=c$, and $A_{ij}$ is in the form of $D(a,b)$. Similarly, $B_{ij}$, $C_{ij}$ and $D_{ij}$ are also of this form (with potentially different scalars $a,b$).

2) Next, we combine relations (\ref{importantrelation}) and (\ref{xeigenvalues}) to obtain
\begin{align*}
x_1(z_0c_0c_1+z_1)+(z_0c_0c_1+z_1)x_1=2x_1^2=2n(n+1)=2N_0
\end{align*}
Using the matrix  for $z_0c_0c_1+z_1$ in (\ref{Pdefinition}), and the blocks $A,B,C,D$ in the matrix for $x_1$,
\begin{align}
\begin{bmatrix}
A & B\\ C & D
\end{bmatrix}
\begin{bmatrix}
P_1 & 0 \\ 0 & P_2
\end{bmatrix}+\begin{bmatrix}
P_1 & 0 \\ 0 & P_2
\end{bmatrix}\begin{bmatrix}
A & B\\ C & D
\end{bmatrix}=2N_0 \label{blocksinx1condition}
\end{align}
therefore 
\begin{align}
P_1A+AP_1=2n(n+1),\hspace{.1 in}
P_1B+BP_2 =0,\hspace{.1 in}
P_2C+CP_1=0, \hspace{.1 in} P_2D+DP_2=2n(n+1) \label{blocksinx1}
\end{align}
We now use the first equation:
\begin{align*}
\begin{bmatrix}
\kappa_0 HJ+\kappa_1 H & 0 \\ 0 & -\kappa_0'HJ+\kappa_1'H
\end{bmatrix} \begin{bmatrix}
A_{11} & A_{12} \\ A_{21} & A_{22}
\end{bmatrix} +\begin{bmatrix}
A_{11} & A_{12} \\ A_{21} & A_{22}
\end{bmatrix}\begin{bmatrix}
\kappa_0 HJ+\kappa_1 H & 0 \\ 0 & -\kappa_0'HJ+\kappa_1'H
\end{bmatrix}\\
=2N_0
\end{align*}
which is equivalent to
\begin{align}
(\kappa_0 HJ+\kappa_1 H)A_{11}+A_{11}(\kappa_0 HJ+\kappa_1 H)=&2N_0 \label{smallblock1}\\
(\kappa_0 HJ+\kappa_1 H)A_{12}+A_{12}( -\kappa_0'HJ+\kappa_1'H)=&0\label{smallblock2}\\
 (-\kappa_0'HJ+\kappa_1'H)A_{21}+A_{21}  (\kappa_0 HJ+\kappa_1 H)=&0\label{smallblock3}\\
  (-\kappa_0'HJ+\kappa_1'H)A_{22}+A_{22}(-\kappa_0'HJ+\kappa_1'H)=&2N_0 \label{smallblock4}
\end{align}
By the claim in 1), $A_{11}=D(a,b)$ for some $a,b\in \mathbb{C}$. Once computed more explicitly, (\ref{smallblock1}) becomes
\begin{align*}
\kappa_0 D(
-b,a)+\kappa_1  D(
a, b)+\kappa_0  D(
-b, -a)
+\kappa_1  D(
a, -b)=2N_0, \hspace{.1 in}
a\kappa_1-b\kappa_0&=N_0
\end{align*}
therefore $
a= \frac{N_0}{\kappa^2}\kappa_1 + c \kappa_0$ and $
b=-\frac{N_0}{\kappa^2}\kappa_0 + c \kappa_1$ for some $c\in \mathbb{C}$. Using the matrices $Q$ and $X$ in (\ref{matrixdefinition}) of Lemma~\ref{preparation}, $A_{11}=\frac{N_0}{\kappa^2}Q+cX$ for some undetermined $c\in\mathbb{C}$. By using (\ref{smallblock2}), (\ref{smallblock3}), (\ref{smallblock4}) in similar calculations, we obtain
\begin{align}
A_{11}=\frac{N_0}{\kappa^2}Q+cX,  \hspace{.1 in} A_{12}=e Z,  \hspace{.1 in}
A_{21}= fZ,  \hspace{.1 in}  A_{22}= \frac{N_0}{\kappa^2}R+dY \label{blocksinA}
\end{align}
for the matrices $X$, $Y$, $R$, $Q$, $Z$ defined in (\ref{matrixdefinition}), $Z$ in (\ref{zdefinition}) and some undetermined $c,e,f,d \in \mathbb{C}$.

The steps in determining the block $D$ are similar to that of determining $A$, using the last identity in (\ref{blocksinx1}).  
Define matrices $Q',R',X',Y',Z'$ similar to those in (\ref{matrixdefinition}), by setting $\kappa_0=\kappa_{s_0.T}(0)=\kappa_0'$ and $\kappa_1=\kappa_{s_0.T}(1)=\kappa_1'$ in (\ref{matrixdefinition}), we have
\begin{align*}
D_{11}=\frac{N_0}{\kappa^2}Q'+c'X', \hspace{.1 in}D_{12}=e' Z',\hspace{.1 in}D_{21}= f'Z, \hspace{.1 in} D_{22}= \frac{N_0}{\kappa^2}R'+d'Y'
\end{align*}
for some undetermined $c',e',f',d'\in \mathbb{C}$.

Using the remaining identities in (\ref{blocksinx1}), we can also determine the Blocks $B$ and $C$. In particular, let 
\begin{align}
E=D(
\kappa_0+\kappa_0',\kappa_1+\kappa_1'), \hspace{.1 in} F=D(
-(\kappa_0+\kappa_0'), \kappa_1+\kappa_1') \label{EFdefinition}
\end{align}
then by repeating similar steps for the upper right and lower left blocks in (\ref{blocksinx1condition}),
\begin{align*}
 B_{11} = b_{11} E, \hspace{.1 in}  B_{12} = b_{12} \begin{bmatrix}
0 & 1 \\ 1 & 0
\end{bmatrix}, \hspace{.1 in}
B_{21}= b_{21} \begin{bmatrix}
0 & 1 \\ 1 & 0
\end{bmatrix}, \hspace{.1 in} B_{22} = b_{22} F\\
 C_{11} = c_{11} E, \hspace{.1 in}  C_{12} = c_{12} \begin{bmatrix}
0 & 1 \\ 1 & 0
\end{bmatrix},\hspace{.1 in}
C_{21}= c_{21} \begin{bmatrix}
0 & 1 \\ 1 & 0
\end{bmatrix}, \hspace{.1 in} C_{22} = c_{22} F
\end{align*}
for some undetermined complex numbers $b_{11}$, $b_{12}$, $b_{21}$, $b_{22}$, $c_{11}$, $c_{12}$, $c_{21}$, $c_{22}$. 

3) We now use relations (\ref{xeigenvalues}) and (\ref{yeigenvalues}) to determine the grading on $\mathcal{W}$. We claim that $v_{T}$ and $v_{s_0.T}$ must have different parities if $s_0.T \neq \star$. Since these vectors are homogeneous by assumption, assume on the contrary they have the same parity. Then $x_1$ acts invariantly on the subspace spanned by  $v_T$, $c_0c_1v_T$, $v_S$, $c_0c_1 v_S$. Using the matrix identities (\ref{matrixidentity1}) and (\ref{matrixidentity2}) in Lemma~\ref{preparation},  on this subspace $x_1^2=N_0$ acts by the $4\times 4$ matrix
\begin{align*}
\begin{bmatrix}
A_{11} & B_{11}\\
C_{11} & D_{11}
\end{bmatrix}^2
=\begin{bmatrix}
\frac{N_0}{\kappa^2}Q+cX & b_{11}E\\
c_{11}E & \frac{N_0}{\kappa^2}Q'+c'X'
\end{bmatrix}^2=\begin{bmatrix}
\frac{N_0^2}{\kappa^2}+c^2\kappa^2+b_{11}c_{11}E^2 & b_{11}(c-c')XE \\
c_{11}(c-c')EX & \frac{N_0^2}{\kappa^2}+(c')^2\kappa^2b_{11}c_{11}E^2
\end{bmatrix}
\end{align*}
Using $E$ in (\ref{EFdefinition}) and the fact that $E^2=(\kappa_0+\kappa_0')^2+(\kappa_1+\kappa_1')^2$, we have
\begin{align}
b_{11}c_{11}=\frac{N_0- \frac{N_0^2}{\kappa^2}-c^2\kappa^2}{(\kappa_0+\kappa_0')^2+(\kappa_1+\kappa_1')^2} \label{b11c11}
\end{align}
We now use the relation $(x_1-z_1)^4=p(p+1)(x_1-z_1)^2$ to reach a contradiction. Recall that $H=D(1,0)$, then $z_1$ acts on the subspace spanned by  $v_T$, $c_0c_1v_T$, $v_S$, $c_0c_1 v_S$ via the matrix $z_1=d(\kappa_1H,\kappa_1'H)$. Furthermore, let $x_1z_1+z_1x_1$ act by the matrix 
$
G=\begin{bmatrix}
G_{11} & G_{12} \\G_{21} & G_{22}
\end{bmatrix}$, each $G_{ij}$ being a $2\times 2$ block matrix. In Part 2) we determined the blocks $A_{11}$, $B_{11}$, $C_{11}$, $D_{11}$ up to some scalars, and explicitly
\begin{align}
G&=\begin{bmatrix}
A_{11} & B_{11} \\ C_{11} & D_{11}
\end{bmatrix}\begin{bmatrix}
\kappa_1H & 0 \\ 0 &\kappa_1'H
\end{bmatrix} +\begin{bmatrix}
\kappa_1H & 0 \\ 0 &\kappa_1'H
\end{bmatrix} \begin{bmatrix}
A_{11} & B_{11} \\ C_{11} & D_{11}
\end{bmatrix}\\
G_{11}&=\kappa_1\left(\frac{N_0}{\kappa^2}(HQ+QH)+c(HX+XH)\right)=2\kappa_1\left(\frac{N_0}{\kappa^2}\kappa_1+c\kappa_0\right)\label{G11}\\
G_{22}&=\kappa_1'\left(\frac{N_0}{\kappa^2}(HQ'+Q'H)+c'(HX'+X'H)\right)=2\kappa_1'\left(\frac{N_0}{\kappa^2}\kappa_1'-c\kappa_0'\right)\label{G22}\\
G_{12}&=b_{11}(\kappa_1HE+\kappa_1'EH)=b_{11} \begin{bmatrix}
(\kappa_0+\kappa_0')(\kappa_1+\kappa_1') & \kappa_1^2-(\kappa_1')^2\\  (\kappa_1')^2-\kappa_1^2 & (\kappa_0+\kappa_0')(\kappa_1+\kappa_1')
\end{bmatrix}\label{G12}\\
G_{21}&=c_{11}(\kappa_1'HE+\kappa_1 EH)=c_{11}\begin{bmatrix}
(\kappa_0+\kappa_0')(\kappa_1+\kappa_1') & (\kappa_1')^2-\kappa_1^2 \\ \kappa_1^2- (\kappa_1')^2 & (\kappa_0+\kappa_0')(\kappa_1+\kappa_1')
\end{bmatrix} \label{G21}
\end{align}
Here, the first equality in (\ref{G11}) holds because from (\ref{blocksinA}),
\begin{align*}
G_{11}=&\kappa_1(AH+HA)=\kappa_1(\frac{N_0}{\kappa^2}Q+cX)H+\kappa_1H(\frac{N_0}{\kappa^2}Q+cX)\\
=&\kappa_1\left(\frac{N_0}{\kappa^2}(HQ+QH)+c(HX+XH)\right)
\end{align*}
Then using $H=D(1,0)$ from the beginning of this proof, $Q=D(\kappa_1,-\kappa_0)$, $X=D(\kappa_0,\kappa_1)$ from (\ref{matrixdefinition}), where $
D(a,b)=\begin{bmatrix}
a & b \\ b & -a
\end{bmatrix}$ from (\ref{CliffordD}), it is a straightforward calculation that 
\begin{align*}
HQ+QH=D(1,0)D(\kappa_1,-\kappa_0)+D(\kappa_1,-\kappa_0)D(1,0)=2\kappa_1,\\
HX+XH=D(1,0)D(\kappa_0,\kappa_1)+D(\kappa_0,\kappa_1)D(1,0)=2\kappa_0,
\end{align*}
where the scalars on the right hand sides represent scalar matrices. Hence the second equality in (\ref{G11}) follows. The other blocks in $G$ can be computed similarly.

In particular, if scalars represent the proper $2\times 2$ scalar matrices,
\begin{align}
G_{12}G_{21}=&b_{11}c_{11}\big((\kappa_0+\kappa_0')^2(\kappa_1+\kappa_1')^2+(\kappa_1^2-(\kappa_1')^2)^2\big)\\
=&b_{11}c_{11}(\kappa_1+\kappa_1')^2\big((\kappa_0+\kappa_0')^2+(\kappa_1-\kappa_1')^2\big)\label{productofG12G21}
\end{align}
Using (\ref{anothernokappas}) in Lemma~\ref{kapparelations} and (\ref{b11c11}) above, we have
\begin{align*}
G_{12}G_{21}=&\left(N_0-\frac{N_0^2}{\kappa^2}-c^2\kappa^2\right)\frac{(\kappa_1+\kappa_1')^2\big((\kappa_0+\kappa_0')^2+(\kappa_1-\kappa_1')^2\big)}{(\kappa_0+\kappa_0')^2+(\kappa_1+\kappa_1')^2}\\
=&\left(N_0-\frac{N_0^2}{\kappa^2}-c^2\kappa^2\right)\left(p(p+1)+\frac{4(m+1)(m-p)p}{1+p}\right) 
\end{align*}
To use relation $(x_1-z_1)^4=p(p+1)(x_1-z_1)^2$, notice 
\begin{align*}
(x_1-z_1)^2&=x_1^2+z_1^2-G=N_0+\begin{bmatrix}
\kappa_1^2 I_2 & 0 \\ 0 & (\kappa_1')^2 I_2
\end{bmatrix}-G  
\end{align*}
Therefore
\begin{align}
\left(N_0+\begin{bmatrix}
\kappa_1^2 I_2 & 0 \\ 0 & (\kappa_1')^2 I_2
\end{bmatrix}-G\right)^2-p(p+1)\left(N_0+\begin{bmatrix}
\kappa_1^2 I_2 & 0 \\ 0 & (\kappa_1')^2 I_2
\end{bmatrix}-G\right)&=0 \label{quadraticoriginalform}
\end{align}
Or more explicitly,
\begin{align}
&\hh N_0^2-p(p+1)N_0+\begin{bmatrix}
\kappa_1^4 I_2 & 0 \\ 0 & (\kappa_1')^4 I_2
\end{bmatrix}+G^2+(2N_0-p(p+1))\begin{bmatrix}
\kappa_1^2 I_2 & 0 \\ 0 & (\kappa_1')^2 I_2
\end{bmatrix} \notag \\
&\hh -(2N_0-p(p+1))G -\left(\begin{bmatrix}
\kappa_1^2 I_2 & 0 \\ 0 & (\kappa_1')^2 I_2
\end{bmatrix}G+G\begin{bmatrix}
\kappa_1^2 I_2 & 0 \\ 0 & (\kappa_1')^2 I_2
\end{bmatrix}\right)=0. \label{mostcomplicated}
\end{align}
The top right block is as follows,
\begin{align}
G_{11}G_{12}+G_{12}G_{22} -\big(2N_0-p(p+1)\big)G_{12}-(\kappa_1^2+(\kappa_1')^2)G_{12}=0 \label{toprightincomplicated}
\end{align}

We now assume $G_{12}\neq 0$ (the case when $G_{12}=0$ will be discussed at the end of part 3). Using (\ref{G11}) and (\ref{G22}),
\begin{align*}
2\kappa_1\left(\frac{N_0}{\kappa^2}\kappa_1+c\kappa_0\right)+2\kappa_1'\left(\frac{N_0}{\kappa^2}\kappa_1'-c\kappa_0'\right)-2N_0+p(p+1)-\big(\kappa_1^2+(\kappa_1')^2\big)=0
\end{align*}
Therefore
\begin{align}
c=\left(\frac{N_0}{\kappa^2}\right)\frac{\kappa^2-\kappa_1^2-(\kappa_1')^2}{\kappa_0\kappa_1-\kappa_0'\kappa_1'}+\frac{\kappa_1^2+(\kappa_1')^2-p(p+1)}{2(\kappa_0\kappa_1-\kappa_0'\kappa_1')} \label{constantcthirdtime}
\end{align}
We now check the top left block of (\ref{mostcomplicated}): using (\ref{G11}) and (\ref{G22}), the following quantity must be zero.
\begin{align}
& N_0^2-p(p+1)N_0+\kappa_1^4+G_{11}^2+G_{12}G_{21} +\big(2N_0-p(p+1)\big)\kappa_1^2-\big(2N_0-p(p+1)+2\kappa_1^2\big)G_{11}\label{topleftiszero}\\
=&N_0^2-p(p+1)N_0+\kappa_1^4+\left(\frac{2N_0\kappa_1^2}{\kappa^2}+2c\kappa_0\kappa_1\right)^2\notag\\
&+\left(N_0-\frac{N_0^2}{\kappa^2}-c^2\kappa^2\right)\left(p(p+1)+\frac{4(m+1)(m-p)p}{1+p}\right)\notag\\
&+\big(2N_0-p(p+1)\big)\kappa_1^2-\big(2N_0-p(p+1)+2\kappa_1^2\big)\left(\frac{2N_0\kappa_1^2}{\kappa^2}+2c\kappa_0\kappa_1\right)=a_2N_0^2+a_1 N_0 +a_0\notag
\end{align}
and the last equality is obtained by rearranging the terms as a polynomial in $N_0$, with coefficients $a_2$, $a_1$, $a_0 \in \mathbb{Z}(m,p)$, after subsituting all $\kappa$ values via (\ref{specificvalues1}) and (\ref{specificvalues2}) in Lemma~\ref{kapparelations}, and the expression for $c$ in (\ref{constantcthirdtime}).

In particular, $a_1=0$. This is because, by (\ref{constantcthirdtime})
\begin{align}
&a_1=-p(p+1)+4\kappa_0\kappa_1\left( \frac{2\kappa_1^2}{\kappa^2}+\frac{2\kappa_0\kappa_1\big(\kappa^2-\kappa_1^2-(\kappa_1')^2\big)}{\kappa^2(\kappa_0\kappa_1-\kappa_0'\kappa_1')} \right)\cdot \frac{\kappa_1^2+(\kappa_1')^2-p(p+1)}{2(\kappa_0\kappa_1-\kappa_0'\kappa_1')} \label{a1iszero} \\
&+\left(1-\frac{2\kappa^2\big(\kappa^2-\kappa_1^2-(\kappa_1')^2\big)\big(\kappa_1^2+(\kappa_1')^2-p(p+1)\big)}{2\kappa^2(\kappa_0\kappa_1-\kappa_0'\kappa_1')^2}\right)\cdot\left( p(p+1)+\frac{4(m+1)(m-p)p}{p+1} \right) \notag \\
&+2\kappa_1^2-4\kappa_0\kappa_1\frac{\kappa_1^2+(\kappa_1')^2-p(p+1)}{2(\kappa_0\kappa_1-\kappa_0'\kappa_1')}-\big(-p(p+1)+2\kappa_1^2\big)\left( \frac{2\kappa_1^2}{\kappa^2}+\frac{2\kappa_0\kappa_1\big(\kappa^2-\kappa_1^2-(\kappa_1')^2\big)}{\kappa^2(\kappa_0\kappa_1-\kappa_0\kappa_1')} \right) \notag
\end{align}
This is zero by a direct calculation. We also include MAGMA codes as verification in Section~\ref{a1iszerocodes}.

We also claim that 
\begin{align}
a_2=&1+\left( \frac{2\kappa_1^2}{\kappa^2}+2\kappa_0\kappa_1\frac{\kappa^2-\kappa_1^2-(\kappa_1')^2}{\kappa^2(\kappa_0\kappa_1-\kappa_0'\kappa_1')}\right)\frac{1}{\kappa^2}\left( p(p+1)+\frac{4(m+1)(m-p)p}{1+p}\right) \notag\\
&-2\left( \frac{2\kappa_1^2}{\kappa^2}+2\kappa_0\kappa_1\frac{\kappa^2-\kappa_1^2-(\kappa_1')^2}{\kappa^2(\kappa_0\kappa_1-\kappa_0'\kappa_1')} \right)=\frac{(2m-p+1)^2(m-p)(m+1)}{(p+1)^2(m-p+1)m} \label{a2iszero}
\end{align}
Again, this is a straightforward calculation subsituting all $\kappa$ values via (\ref{specificvalues1}). We also include MAGMA codes as a computer check in Section~\ref{a1iszerocodes}.

We also claim that 
\begin{align}
a_0=&\kappa_1^4+\left(2\kappa_0\kappa_1 \frac{\kappa_1^2+(\kappa_1')^2-p(p+1)}{2(\kappa_0\kappa_1-\kappa_0'\kappa_1')} \right)^2-p(p+1)\kappa_1^2\notag\\
&-\kappa^2\left(\frac{\kappa_1^2+(\kappa_1')^2-p(p+1)}{2(\kappa_0\kappa_1-\kappa_0'\kappa_1')}\right)^2 \left(p(p+1)+\frac{4(m+1)(m-p)p}{p+1}\right)\notag\\
&-\big(p(p+1)+2\kappa_1^2\big)\left(2\kappa_0\kappa_1 \frac{\kappa_1^2+(\kappa_1')^2-p(p+1)}{2(\kappa_0\kappa_1-\kappa_0'\kappa_1')} \right)=\frac{-4p^2m(m+1)(m-p)(m-p+1)}{(p-1)^2}\label{a0iszero}
\end{align}
This is a straightforward calculation subsituting all $\kappa$ values via (\ref{specificvalues1}). We also include MAGMA codes as a computer check in Section~\ref{a1iszerocodes}.

Combining the expressions for $a_0$ and $a_2$ in (\ref{a0iszero}) and (\ref{a2iszero}), we have
\begin{align*}
N_0=\sqrt{\frac{-a_0}{a_2}}=\frac{2mp(m-p+1)(p+1)}{(p-1)(2m-p+1)}
\end{align*}
Or in other words, 
\begin{align*}
2m(m-p+1)=(p-1)(2m-p+1)\frac{N_0}{p(p+1)}
\end{align*}
By arranging this into a polynomial in $m$, it has rational coefficients with discriminant 
\begin{align*}
4(p-1)^2\left(1+\frac{N_0}{p(p+1)}\right)^2-8(p-1)^2\frac{N_0}{p(p+1)}=\frac{4(p-1)^2}{p^2(p+1)^2}\big(p^2(p+1)^2+n^2(n+1)^2\big)
\end{align*}
since $p^2(p+1)^2+n^2(n+1)^2$ is not a perfect square by the assumption in Theorem~\ref{classify}, the solution $m$ will never be rational, contradicting the fact that $m$ is an integer, and relation (\ref{topleftiszero}) is never satisfied.

On the other hand, if $G_{12}=0$ in (\ref{toprightincomplicated}), then by (\ref{productofG12G21}),  either $b_{11}$ or $c_{11}$ is zero. Without a loss of generality, assume $b_{11}=0$, then by the results in Part 2) and (\ref{matrixidentity1}), (\ref{matrixidentity2}) in Lemma~\ref{preparation}, the upper-left block in $x_1^2=N_0$ is
\begin{align}
N_0=A_{11}^2=(\frac{N_0}{\kappa^2}Q+cX)^2=\frac{N_0}{\kappa^2}+c^2\kappa^2, \hspace{.2 in} c^2=\frac{N_0}{\kappa^2}-\frac{N_0^2}{\kappa^4} \label{cfourthtime}
\end{align}
Also, $G_{12}=0$ from (\ref{G12}), therefore $G^2$ is block diagonal, with upper-left block given by $G_{11}^2$. Then the upper-left block in (\ref{quadraticoriginalform}) becomes
\begin{align}
(N_0+\kappa_1^2-G_{11})^2-p(p+1)(N_0+\kappa_1^2-G_{11})=&0\\
G_{11}=\begin{cases} N_0+\kappa_1^2\\
 N_0+\kappa_1^2-p(p+1) \end{cases} \label{G11possibilities}
\end{align}
Using (\ref{cfourthtime}), (\ref{G11}), and the definition $\kappa^2=\kappa_0^2+\kappa_1^2$ shortly before Lemma~\ref{kapparelations}, the first possibility in (\ref{G11possibilities}) becomes
\begin{align*}
4 \left( \frac{N_0}{\kappa^2}-\frac{N_0^2}{\kappa^4} \right)\kappa_0^2\kappa_1^2=&\left( \left(1-\frac{2\kappa_1^2}{\kappa^2}N_0\right)+\kappa_1^2\right)^2\\
\left( \frac{4\kappa_0^2\kappa_1^2}{\kappa^4}+\left( 1-\frac{2\kappa_1^2}{\kappa^2} \right)^2 \right)N_0^2-\left( 2\kappa_1^2\left(1-\frac{2\kappa_1^2}{\kappa^2} \right)-\frac{4\kappa_0^2\kappa_1^2}{\kappa^2} \right)N_0+\kappa_1^4=&0\\
(N_0+\kappa_1^2)^2=&0
\end{align*}
This contradicts $N_0=n(n+1)>0$. Similarly, the second possibility in (\ref{G11possibilities}) reduces to
\begin{align*}
\big(N_0+\kappa_1^2-p(p+1) \big)^2=-2p(p+1)\frac{2\kappa_1^2}{\kappa^2}N_0
\end{align*}
which is also a contradiction.

4) Since $v_T$ and $v_{s_0.T}$ have different parities, the blocks $B$ and $C$ in $x_1$ act as zero, and $x_1$ acts invariantly on the subspace spanned by $\{v_T, c_0c_1v_T, c_0v_S, c_1v_S\}$ via the matrix $A$, whose blocks $A_{11}$, $A_{12}$, $A_{21}$, $A_{22}$ were determined in Part 2). 

We now use the relation $(x_1-z_1)^4-p(p+1)(x_1-z_1)^2=0$. Similar to Part 3), let $L=\begin{bmatrix}
L_{11} & L_{12} \\L_{21} & L_{22}
\end{bmatrix}$ be the matrix of $x_1z_1+z_1x_1$ acting on the subspace spanned by $\{v_T, c_0c_1v_T, c_0v_S, c_1v_S\}$. Recall $H=D(1,0)$, and $z_1$ acts as $d(\kappa_1H,\kappa_1'H)$ on this subspace. A straightforward calculation shows that 
\begin{align}
L_{11}&=\kappa_1(\frac{N_0}{\kappa^2}(KQ+QK)+c(KX+KX))=2\kappa_1(\frac{N_0}{\kappa^2}\kappa_1+c\kappa_0) \label{L11}\\
L_{22}&=\kappa_1'(\frac{N_0}{\kappa^2}(KR+RK)+c(KY+YK))=2\kappa_1'(\frac{N_0}{\kappa^2}\kappa_1'+c\kappa_0')\label{L22}\\
L_{12}&=e(\kappa_1'ZH+\kappa_1HZ)=e(\kappa_1+\kappa_1')\begin{bmatrix}
\kappa_0-\kappa_0' & \kappa_1-\kappa_1' \\-(\kappa_1-\kappa_1') & \kappa_0-\kappa_0' 
\end{bmatrix}\label{L12}\\
L_{21}&=f(\kappa_1ZH+\kappa_1'HZ)=f(\kappa_1+\kappa_1')\begin{bmatrix}
\kappa_0-\kappa_0' & -(\kappa_1-\kappa_1') \\\kappa_1-\kappa_1' & \kappa_0-\kappa_0' 
\end{bmatrix}\label{L21}
\end{align}
In particular,
\begin{align*}
L_{12}L_{21}=ef(\kappa_1+\kappa_1')^2\big((\kappa_0-\kappa_0')^2+(\kappa_1-\kappa_1')^2\big)
\end{align*}

On one hand, $ef\neq 0$. If not, then the same argument at the end of Part 3) applies and provides a contradiction. We now use $(x_1-z_1)^4-p(p+1)(x_1-z_1)^2=0$ and observe that 
\begin{align}
(x_1-z_1)^2&=x_1^2+z_1^2-L=N_0+\begin{bmatrix}
\kappa_1^2 I_2 & 0 \\ 0 & (\kappa_1')^2 I_2
\end{bmatrix}-L  \label{mostcomplicatedagain}
\end{align}
and (\ref{mostcomplicated}) is true by replacing $G$ with $L$. Since $ef\neq 0$, we assume $e\neq 0$ without a loss of generality. The upper right block is given by the following, similar to (\ref{toprightincomplicated}):
\begin{align}
L_{11}L_{12}+L_{12}L_{22} -\big(2N_0-p(p+1)\big)L_{12}-(\kappa_1^2+(\kappa_1')^2)L_{12}=0
\end{align}
If $f\neq 0$, we use the lower left of (\ref{mostcomplicatedagain}) to reach the same conclusion. The matrix $L_{12}$ in (\ref{L12}) is not the zero matrix, therefore the scalar 
\begin{align*}
L_{11}+L_{22} -\big(2N_0-p(p+1)\big)-(\kappa_1^2+(\kappa_1')^2)=&0\\
\left(\frac{2\kappa_1^2}{\kappa^2}N_0+2c\kappa_0\kappa_1\right)+\left(\frac{2(\kappa_1')^2}{\kappa^2}N_0+2c\kappa_0'\kappa_1'\right)-\big(2N_0-p(p+1) \big)-\big(\kappa_1^2+(\kappa_1')^2\big)=&0\\
\frac{2\frac{N_0}{\kappa^2}(\kappa^2-\kappa_1^2-(\kappa_1')^2)+\big(\kappa_1^2+(\kappa_1')^2-p(p+1)\big)}{2(\kappa_0\kappa_1+\kappa_0'\kappa_1')}=&c
\end{align*}
This is consistent with the definition of the constant $c$ in (\ref{cdefinition}), Lemma~\ref{preparation}, in which we showed that the upper-left block in $x_1$ given by $A_{11}=\frac{N_0}{\kappa^2}Q+cX$ is equivalent to the construction (\ref{Cal_2_x1better}). To determine the remaining blocks, let us first show that the constants $e$ and $f$ satisfy the condition in (\ref{Cal_2_f}), or equivalently, (\ref{Cal_2_f0}) in Lemma~\ref{preparation}. This is because $x_1^2=N_0$ should act as
\begin{align*}
&\begin{bmatrix}
\frac{N_0}{\kappa^2}Q+cX & eZ \\ fZ & \frac{N_0}{\kappa^2}+dY
\end{bmatrix}^2\\
=&
\begin{bmatrix}
(\frac{N_0}{\kappa^2}Q+cX)^2 +  efZ^2 & e((\frac{N_0}{\kappa^2}Q+cX)Z+Z(\frac{N_0}{\kappa^2}R+dY)) \\
f (Z(\frac{N_0}{\kappa^2}Q+cX)+(\frac{N_0}{\kappa^2}R+dY)Z) & (\frac{N_0}{\kappa^2}R+dY)^2+  efZ^2
\end{bmatrix}^2\\
=&
\begin{bmatrix}
\frac{N_0^2}{\kappa^2}+c\kappa^2 +  efZ^2 & e(c-d)XZ \\
f (c-d)ZX & \frac{N_0^2}{\kappa^2}R+d\kappa^2+  efZ^2
\end{bmatrix}
\end{align*}
The upper-right and lower-left block implies $c=d$, and the block $A_{22}$ in $x_1$ also follows the form given in (\ref{Cal_2_x1acts}). The upper-left block implies that 
\begin{align*}
\frac{N_0^2}{\kappa^2}+c\kappa^2 +  ef \big((\kappa_0-\kappa_0')^2+(\kappa_1+\kappa_1')^2 \big) =N_0
\end{align*}
and the product $ef$ satisfies condition (\ref{Cal_2_f0}). In Lemma~\ref{preparation} we showed this is equivalent to (\ref{Cal_2_f}) in the original definition.
\end{proof}

In order to determine the action of the simple transpositions, we review the following intertwiners introduced by Nazarov in \cite[Section~3]{Naz}.
\begin{align*}
\Phi_i = s_i (z_i^2-z_{i+1}^2)+(z_i+z_{i+1})-c_0c_1(z_i-z_{i+1})
\end{align*}
and it is straightforward to check that   
\begin{align*}
\Phi_i z_i&=z_{i+1}\Phi_i, \hspace{.3 in}
\Phi_i z_{i+1}=z_i \Phi_i, \hspace{.3 in}
\Phi_i z_j =z_j \Phi_i \hspace{.1 in} (j\neq i,i+1)\\
\Phi_i^2 &= 2(z_i^2+z_{i+1}^2)-(z_i^2-z_{i+1}^2)^2= -(z_i^2-z_{i+1}^2)^2+(z_i-z_{i+1})^2+(z_i+z_{i+1})^2
\end{align*}
Also recall $\mathcal{Y}_T(i)$ defined in (\ref{ydefinition}) of Lemma~\ref{Cen_1_lem}. Since the action of $s_i$ on $T$ interchanges the eigenvalues for $z_i$ and  $z_{i+1}$, it follows that 
$\mathcal{Y}_{T}(i)=\mathcal{Y}_{s_i.T}(i)$ and $\Phi^2_i v_T =-\mathcal{Y}_{T}(i) v_T$.

Based on the assumptions in Theorem~\ref{classify}, let $\mathcal{W}$ be a free $\operatorname{Cl}_{d+1}$-module with basis $\{w_{T}\}_{T\in \Gamma^{\lambda}}$, with $z_i.w_T=\kappa_T(i)w_T$. Using the above intertwiners, we have
\begin{align*}
\kappa_T(i)\Phi_i w_T&=\Phi_i (z_i w_T)= z_{i+1}(\Phi_i w_T)\\
\kappa_T(i+1)\Phi_i w_T&=\Phi_i (z_{i+1} w_T)= z_{i}(\Phi_i w_T)\\
\kappa_T(j)\Phi_i w_T&=\Phi_i (z_j w_T)= z_{j}(\Phi_i w_T) \hspace{.5 in} j\neq i,i+1
\end{align*}
Therefore $\Phi_i w_T$ has eigenvalues $\kappa_T(0),\dots,\kappa_T(i+1),\kappa_T(i),\dots,\kappa_T(d)$ under the action of $z_0,\dots,z_d$. On the other hand, the simultaneous eigenspaces for $\kappa_T(0),\dots,\kappa_T(i+1),\kappa_T(i),\dots,\kappa_T(d)$ are all one-dimensional: the list of eigenvalues for $w_T$ is distinct by Lemma~\ref{distincteigenvalues}, with all positive eigenvalues. A general vector $c_0^{\epsilon_0}c_1^{\epsilon_1}\cdots c_d^{\epsilon_d}v_T$ has eigenvalue $(-1)^{\epsilon_i}\kappa_T(i)$ for $z_i$, whose list of eigenvalues is distinct from all other choices of the tuple $(\epsilon_0,\epsilon_1,\dots,\epsilon_d)$.

Since $w_{s_i.T}$ is a vector with the same list of eigenvalues as $\Phi_i w_T$, $\Phi_i w_T$ is a scalar multiple of $w_{s_i.T}$. By a similar argument, $\Phi_i w_{s_i.T}$ is also a scalar multiple of $w_T$. Recall the nonzero constants $\mathcal{Y}_T(i)$ defined in (\ref{ydefinition}). Let $a_T(i)\in \mathbb{C}$ be the unique nonzero scalar such that $\Phi_i w_T={a_T(i)}\sqrt{-\mathcal{Y}_T(i)} w_{s_i.T}$. Even though the simple transpositions $s_i$ may not act exactly by (\ref{transpositionsaction}), we can use scalar multiples of $w_T$ to produce a new $\operatorname{Cl}_{d+1}$-basis on which $s_i$ acts as (\ref{transpositionsaction}). 

Recall that the generators $s_1,\dots,s_{d-1}$ satisfy exactly those relations for the symmetric group in (\ref{symmetricgrouprelations}), therefore they generate the symmetric group $S_d$, which is a Coxeter group. For an element $w\in S_d$, define the \emph{length} $\ell(w)$ of $w$ to be the smallest integer $t$ such that $w$ can be written as $w=s_{i_1}s_{i_2}\cdots s_{i_t}$, where $1\leq i_1,\dots,i_t\leq d-1$, and we call such an expression \emph{reduced}. In particular, it is straightfoward to see that two reduced expressions of the same $w\in S_d$ are related only by relations in (\ref{symmetricgrouprelations}) excluding $s_i^2=1$, $(1\leq i\leq d-1)$. For other material related to this definition, we refer readers to \cite[Section~1.6]{Hum}.

\begin{lemma}\label{rescalingvectors}
There is a well-defined map $\phi:\Gamma^{\lambda}\to \mathbb{C}^{\times}$ such that $\sqrt{-1}\phi({s_i.T}) =a_T(i)\phi(T)$ for any $1\leq i\leq d$ and any $T\in \Gamma^{\lambda}$ with $s_i.T\neq \star$. 
\end{lemma}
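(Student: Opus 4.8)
The plan is to read the desired identity as a consistency condition on a graph and to build $\phi$ one connected component at a time. Form the graph $G$ on the vertex set $\Gamma^{\lambda}$ with an edge joining $T$ to $s_i.T$ whenever $1\le i\le d-1$ and $s_i.T\neq\star$ (only these $s_i$ are relevant, since $s_0$ is not a generator of $\mathcal{H}^{\operatorname{ev}}_{p,d}$ and does not enter the intertwiners $\Phi_i$, which involve $z_i$ and $z_{i+1}$). Because the moves $s_1,\dots,s_{d-1}$ only permute the entries $1,\dots,d$ and fix the target of the first edge of a path, $G$ decomposes as the disjoint union over $\mu\in\mathcal{P}_0(\alpha,\beta)$ of the pieces $I_{\lambda/\mu}$, and by Lemma~\ref{symmetrictransitive} each $I_{\lambda/\mu}$ is connected in $G$: the symmetric group $S_d=\langle s_1,\dots,s_{d-1}\rangle$ acts transitively on standard tableaux of shape $\lambda/\mu$, with every intermediate tableau standard. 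First I would fix a base point $T_\mu\in I_{\lambda/\mu}$, set $\phi(T_\mu)=1$, and for an arbitrary $T$ choose a path in $G$ from $T_\mu$ to $T$ and propagate the rule $\phi(s_i.U)=a_U(i)\phi(U)/\sqrt{-1}$ along its edges; since each $a_U(i)$ is a nonzero scalar and $\mathcal{Y}_U(i)\neq 0$ by Lemma~\ref{Cen_1_lem}, every transition factor lies in $\mathbb{C}^{\times}$, so $\phi$ will be $\mathbb{C}^{\times}$-valued.

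Next I would prove that this is independent of the chosen path, i.e.\ well-defined. Reading a closed walk in $G$ based at $T_\mu$ as a word in the $s_i$, it suffices — using the Coxeter presentation of $S_d$ and Matsumoto's theorem that two reduced expressions of the same element are connected by braid relations — to check compatibility of the transition factors with (i) backtracking $U\to s_i.U\to U$, (ii) the commuting braid moves $s_is_j=s_js_i$ for $|i-j|>1$, and (iii) the cubic braid moves $s_is_{i+1}s_i=s_{i+1}s_is_{i+1}$. Each translates into an identity for the Nazarov intertwiners acting on the $w_T$: (i) amounts to $a_T(i)\,a_{s_i.T}(i)=-1$, which follows from $\Phi_i^2 v_T=-\mathcal{Y}_T(i)v_T$ together with $\mathcal{Y}_T(i)=\mathcal{Y}_{s_i.T}(i)$ (the square-root normalization used in the definition of $a_T(i)$ being chosen so that this sign is correct); (ii) follows from $\Phi_i\Phi_j=\Phi_j\Phi_i$ for $|i-j|>1$, which is immediate because $\Phi_i$ only involves $s_i,z_i,z_{i+1},c_i,c_{i+1}$; and (iii) follows from the braid relation $\Phi_i\Phi_{i+1}\Phi_i=\Phi_{i+1}\Phi_i\Phi_{i+1}$ for the intertwiners, applied to each $w_T$ in the orbit. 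Alternatively, on a single component $I_{\lambda/\mu}$ this is exactly the rescaling used to put the $H_d$-modules $\hat H^{\lambda/\mu}$ of \cite[Section~5.1]{HKS} into seminormal form, so one may invoke that verbatim and then glue over $\mu$, since the $s_i$ with $1\le i\le d-1$ do not mix different components.

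The main obstacle I anticipate is item (iii): establishing (or citing, with the right normalization) the length-three braid relation for the $\Phi_i$ and tracking how it descends to a closed hexagon of scalars $a_T(i)$. This is the standard Nazarov-type computation, but it must be reconciled with the square-root sign conventions built into the definition of $a_T(i)$, and with the presence of $\star$: one must confirm that the hexagon still closes when some of its six vertices equal $\star$ (so that the corresponding basis vectors vanish), which requires a short separate check in those degenerate configurations. Once (i)–(iii) are in hand, the propagation above gives a well-defined $\phi:\Gamma^{\lambda}\to\mathbb{C}^{\times}$ satisfying $\sqrt{-1}\,\phi(s_i.T)=a_T(i)\phi(T)$ for all $1\le i\le d-1$ and all $T$ with $s_i.T\neq\star$, as claimed.
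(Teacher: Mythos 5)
Your approach is structurally the same as the paper's — decompose $\Gamma^{\lambda}$ into pieces $I_{\lambda/\mu}$ indexed by the common source $T^{(0)}$, fix a base point per piece, propagate along moves $s_1,\dots,s_{d-1}$, and verify consistency via the Coxeter relations — but there is one substantive divergence that exposes a real subtlety.

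You propagate $\phi$ along arbitrary walks in the graph $G$ and then quotient by all of its relations, so you must check consistency around backtracking loops $T\to s_i.T\to T$, your item (i). The paper instead propagates along \emph{reduced} expressions $w$ with $S=w.T_0$ and invokes only the Matsumoto moves (commuting and cubic braid), so a backtracking check never arises: a reduced word never contains $s_is_i$. This is not a cosmetic difference. Your item (i) requires $a_T(i)\,a_{s_i.T}(i)=-1$ (so that the loop transition factor $\tfrac{a_T(i)}{\sqrt{-1}}\cdot\tfrac{a_{s_i.T}(i)}{\sqrt{-1}}=1$). But from the definition $\Phi_i w_T=a_T(i)\sqrt{-\mathcal{Y}_T(i)}\,w_{s_i.T}$, applying $\Phi_i$ twice gives $\Phi_i^2 w_T=a_T(i)a_{s_i.T}(i)\sqrt{-\mathcal{Y}_T(i)}\sqrt{-\mathcal{Y}_{s_i.T}(i)}\,w_T$, and since $\mathcal{Y}_T(i)=\mathcal{Y}_{s_i.T}(i)$, with a \emph{fixed} branch of the square root this equals $-a_T(i)a_{s_i.T}(i)\mathcal{Y}_T(i)w_T$; comparing with $\Phi_i^2 w_T=-\mathcal{Y}_T(i)w_T$ yields $a_T(i)a_{s_i.T}(i)=+1$, which is exactly what the paper records in the proof of the lemma following this one. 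With that sign, your backtracking loop has transition factor $-1$, so the graph-walk construction collapses ($\phi(T)=-\phi(T)$). Your parenthetical ``the square-root normalization being chosen so that this sign is correct'' is thus carrying real weight: you would need to insist that $\sqrt{-\mathcal{Y}_T(i)}$ and $\sqrt{-\mathcal{Y}_{s_i.T}(i)}$ are opposite branches (so that their product is $+\mathcal{Y}_T(i)$, forcing $a_T(i)a_{s_i.T}(i)=-1$), but there is no canonical way to make that choice for an unordered pair $\{T,s_i.T\}$ — this is precisely the difficulty the reduced-word route sidesteps. (The same tension is latent in the paper: the stated two-sided identity $\sqrt{-1}\phi(s_i.T)=a_T(i)\phi(T)$, if it held symmetrically in $T$ and $s_i.T$, would also force $a_T(i)a_{s_i.T}(i)=-1$; the reduced-word construction only ever applies the rule ``outward'' from the base point, so in effect it uses a one-directional convention.) So: adopt the reduced-word propagation, or at minimum record and justify the tableau-dependent branch choice your backtracking check requires.

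Your items (ii) and (iii), and the observation that the hexagon must close even when some vertices are $\star$, match the paper. One small clarification on (iii): the paper does not invoke the operator identity $\Phi_i\Phi_{i+1}\Phi_i=\Phi_{i+1}\Phi_i\Phi_{i+1}$ as a black box; it separately verifies that the products of $a$-scalars and the products of $\mathcal{Y}$-scalars agree for both parenthesizations, by direct tableau computation, which is what the well-definedness of $\phi$ actually needs.
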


\begin{proof}
If $T,S\in \Gamma^{\lambda}$ are tableaux of the same skew shape (or equivalently the first edge in either path ends at the same partition $T^{(0)}=S^{(0)}$), by Lemma~\ref{symmetrictransitive}, they are related by an element in the symmetric group $S_d$. Conversely, the action of $S_d$ does not affect $T^{(0)}$ or $S^{(0)}$, therefore if $T$ and $S$ are related by the action of $S_d$, they must be tableaux of the same skew shape. We now fix a skew shape and focus only on tableaux with such skew shape. Fix any $T_0$, and let $\phi({T_0})=1$. Let $S$ be another tableau with the same skew shape, and $S=w.T_0$ for a reduced word $w\in S_d$. Define $\phi$ on $S$ by induction on the length of $w$: $\sqrt{-1}\phi(s_i.T)=a_T(i){T}$. We claim that this map is well-defined independent of the reduced expression. In other words, it is invariant under the Coxeter relations: $\phi({s_is_j.T})=\phi({s_js_i.T})$ for $|i- j|>1$ and $\phi({s_is_{i+1}s_i.T})=\phi({s_{i+1}s_is_{i+1}.T})$ for $1\leq i\leq d-2$.

1) If $|i-j|>1$, then $\Phi_i\Phi_j=\Phi_j\Phi_i$. By the definition of $a_T(i)$,
\begin{align*}
\Phi_i\Phi_j w_T  &=\Phi_i ({a_T(j)}\sqrt{-\mathcal{Y}_T(j)} w_{s_j.T})={a_T(j) a_{s_j.T}(i)} \sqrt{(-\mathcal{Y}_T(j))(-\mathcal{Y}_{s_j.T}(i))}w_{s_is_j.T}\\
\Phi_j \Phi_i w_T  &=\Phi_j ({a_T(i)}\sqrt{-\mathcal{Y}_T(i)} w_{s_i.T})={a_T(i) a_{s_i.T}(j)} \sqrt{(-\mathcal{Y}_T(i))(-\mathcal{Y}_{s_i.T}(j))}w_{s_js_i.T}
\end{align*}
Since $\mathcal{Y}_{s_i.T}(j)=\mathcal{Y}_{T}(j)$ and $\mathcal{Y}_T(i)=\mathcal{Y}_{s_j.T}(i)$,  it follows that $a_{s_j.T}(i)a_T(j) = a_{s_i.T}(j)a_T(i)$ for any tableau $T$, therefore 
\begin{align*}
\phi({s_is_j .T}) =  -a_{s_j.T}(i)   a_{T}(j) w_T=-a_{s_i.T}(j)a_T(i)=\phi( {s_js_i .T}) 
\end{align*}

2) Similarly, 
\begin{align}
\Phi_{i+1}\Phi_i\Phi_{i+1}w_T&={a_{s_i s_{i+1}.T}(i+1) a_{s_{i+1}.T}(i)a_T(i+1)}\\
\cdot& \sqrt{(-\mathcal{Y}_{s_is_{i+1}.T}(i+1))(-\mathcal{Y}_{s_{i+1}.T}(i))(-\mathcal{Y}_T(i+1))}w_T \label{checkbraid1}\\
\Phi_{i}\Phi_{i+1}\Phi_{i}w_T&={a_{s_{i+1} s_{i}.T}(i) a_{s_{i}.T}(i+1)a_T(i)}\sqrt{(-\mathcal{Y}_{s_{i+1}s_{i}.T}(i))(-\mathcal{Y}_{s_{i}.T}(i+1))(-\mathcal{Y}_T(i))} w_T \label{checkbraid2}
\end{align}

On the other hand, we claim that the following is true
\begin{align*}
\mathcal{Y}_{T}(i) \mathcal{Y}_{s_i.T}(i+1) \mathcal{Y}_{s_{i+1}s_i.T}(i)=\mathcal{Y}_{T}(i+1) \mathcal{Y}_{s_{i+1}.T}(i) \mathcal{Y}_{s_is_{i+1}.T}(i+1)
\end{align*}

Let $\kappa_i=\kappa_T(i),\kappa_{i+1}=\kappa_T(i+1),\kappa_{i+2}=\kappa_T(i+2)$, then the eigenvalues for various paths are as follows
\begin{center}
\begin{tabular}{c|c|c|c}
\hline
  & $\kappa(i)$ & $\kappa(i+1)$  & $\kappa(i+2)$ \\
  $T$ & $\kk_i$ & $\kk_{i+1}$ & $\kk_{i+2}$\\
$s_i.T$ & $\kk_{i+1}$&$\kk_i$ &$\kk_{i+2}$\\
$s_{i+1}.T$ & $\kk_i$ & $\kk_{i+2}$&$\kk_{i+1}$\\
$s_is_{i+1}.T$ &$\kk_{i+2}$ & $\kk_i$&$\kk_{i+1}$\\
$s_{i+1}s_{i}.T$ &$\kk_{i+1}$ &$\kk_{i+2}$ & $\kk_i$\\
\hline 
\end{tabular}
\end{center}

If $\mathcal{Y}(a,b)=(a^2-b^2)^2-(a-b)^2-(a+b)^2$, then both sides are equal to the quantity of $\mathcal{Y}(\kk_i, \kk_{i+1})\mathcal{Y}(\kk_{i+1},\kk_{i+2})\mathcal{Y}(\kk_i,\kk_{i+2})$.

By comparing (\ref{checkbraid1}) and (\ref{checkbraid2}), $a_{s_i s_{i+1}.T}(i+1) a_{s_{i+1}.T}(i)a_T(i+1)=a_{s_{i+1} s_{i}.T}(i) a_{s_{i}.T}(i+1)a_T(i)$. Therefore
\begin{align*}
&\hh \phi({s_is_{i+1}s_i .T})=\sqrt{-1}a_{s_{i+1} s_{i}.T}(i) a_{s_{i}.T}(i+1)a_T(i) w_T\\
&=\sqrt{-1}a_{s_i s_{i+1}.T}(i+1) a_{s_{i+1}.T}(i)a_T(i+1) w_T =\phi({s_{i+1}s_i s_{i+1} .T}),
\end{align*}
since $\phi$ satisfies the required condition by design, we have proved the lemma.

\end{proof}

\begin{lemma}\label{sidetermined}
Let $\mathcal{W}$ be a $\mathcal{H}^{\operatorname{ev}}_{p,d}$-module satisfying the condition in Theorem~\ref{classify}, and $\{w_T\}$ be the given $\operatorname{Cl}_{d+1}$-basis. Then after rescaling the vectors $v_T=\phi(T)w_T$ using the map $\phi$ in Lemma~\ref{rescalingvectors}, the action of $s_i$ is given by (\ref{transpositionsaction}). Moreover, if $s_i.T \neq \star$, then $v_T$ and $v_{s_i.T}$ have the same parity.
\end{lemma}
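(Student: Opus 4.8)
The plan is to read off the action of $s_i$ from Nazarov's intertwiner $\Phi_i$ recalled just above. First I record a combinatorial observation that legitimizes every division below: for any $T\in\Gamma^{\lambda}$ and $1\le i\le d-1$ the contents $c_T(i)$ and $c_T(i+1)$ are distinct. Indeed, were they equal the boxes of $i$ and $i+1$ would lie on a common NW--SE diagonal, say with the box of $i$ at $(r,c)$ and the box of $i+1$ at $(r',c')$ with $r<r'$, $c<c'$; but then the box $(r,c')$ necessarily lies in the skew shape of $T$, and standardness of $T$ would force it to carry a value strictly between $i$ and $i+1$, which is impossible. Since $c_T(i),c_T(i+1)\ge 0$, this gives both $\kappa_T(i)^2\ne\kappa_T(i+1)^2$ and $\kappa_T(i)+\kappa_T(i+1)\ne 0$, so $z_i^2-z_{i+1}^2$ acts on $v_T=\phi(T)w_T$ by the nonzero scalar $\kappa_T(i)^2-\kappa_T(i+1)^2$. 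Applying $\Phi_i=s_i(z_i^2-z_{i+1}^2)+(z_i+z_{i+1})-c_ic_{i+1}(z_i-z_{i+1})$ to $v_T$ and solving for $s_i.v_T$ gives
\begin{align*}
s_i.v_T=\frac{\Phi_i.v_T}{\kappa_T(i)^2-\kappa_T(i+1)^2}-\frac{1}{\kappa_T(i)-\kappa_T(i+1)}v_T+\frac{1}{\kappa_T(i)+\kappa_T(i+1)}c_ic_{i+1}v_T ,
\end{align*}
so the whole statement reduces to identifying the vector $\Phi_i.v_T$.

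For this, recall from the discussion preceding Lemma~\ref{rescalingvectors} that $\Phi_i.w_T$ is a simultaneous $(z_0,\dots,z_d)$-eigenvector whose eigenvalue list is that of $w_T$ with the $i$-th and $(i+1)$-st entries interchanged, and that each joint $z$-eigenspace is one-dimensional (Lemma~\ref{distincteigenvalues} together with the $c^{\epsilon}$-bookkeeping). If $s_i.T=\star$ there is no basis vector with that eigenvalue list, so $\Phi_i.v_T=0$; the displayed formula then reads $s_i.v_T=-\frac{1}{\kappa_T(i)-\kappa_T(i+1)}v_T+\frac{1}{\kappa_T(i)+\kappa_T(i+1)}c_ic_{i+1}v_T$, which is exactly $(\ref{transpositionsaction})$ with $v_{s_i.T}=v_{\star}=0$; and in this case $\mathcal{Y}_T(i)=0$ because the boxes of $i$ and $i+1$ are adjacent, so the coefficient $e_T(i)$ appearing in $(\ref{transpositionsaction})$ vanishes too. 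No rescaling enters this case, as the formula involves only $v_T$ itself.

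If $s_i.T\ne\star$, then by the definition of $a_T(i)$ preceding Lemma~\ref{rescalingvectors} we have $\Phi_i.w_T=a_T(i)\sqrt{-\mathcal{Y}_T(i)}\,w_{s_i.T}$. Substituting $w_T=\phi(T)^{-1}v_T$ and $w_{s_i.T}=\phi(s_i.T)^{-1}v_{s_i.T}$ and invoking $\sqrt{-1}\,\phi(s_i.T)=a_T(i)\phi(T)$ from Lemma~\ref{rescalingvectors}, the coefficient collapses to a single universal constant and $\Phi_i.v_T=\sqrt{-1}\sqrt{-\mathcal{Y}_T(i)}\,v_{s_i.T}$. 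Dividing by $\kappa_T(i)^2-\kappa_T(i+1)^2$ and using $e_T(i)^2=\mathcal{Y}_T(i)/(\kappa_T(i)^2-\kappa_T(i+1)^2)^2$, which is immediate from the definition of $e_T(i)$ and $(\ref{ydefinition})$, the first term of the displayed formula equals $e_T(i)\,v_{s_i.T}$; this recovers $(\ref{transpositionsaction})$. For parity, note that $z_0,\dots,z_d$, $s_i$ and $c_ic_{i+1}$ are all even, hence $\Phi_i$ is an even operator, so $\Phi_i.v_T$ is homogeneous of the same parity as $v_T$; being a nonzero scalar multiple of $v_{s_i.T}$, this forces $v_T$ and $v_{s_i.T}$ to have equal parity, consistently with the grading on $\mathcal{D}^{\lambda}_f$ since $s_i$ fixes the Row~$0$ vertex $T^{(0)}$ for $i\ge 1$. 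The argument for $\mathcal{E}^{\lambda}_f$ is word for word the same, working over $\operatorname{Cl}_{d+2}$.

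The only delicate point is the sign bookkeeping in the previous paragraph: aligning the branches of $\sqrt{-\mathcal{Y}_T(i)}$, of $\sqrt{-1}$, and of the radical defining $e_T(i)$ so that the coefficient of $v_{s_i.T}$ is $+e_T(i)$ rather than $-e_T(i)$. This is precisely what the careful normalization of $a_T(i)$ and the self-consistency of the rescaling $\phi$ established in Lemma~\ref{rescalingvectors} are there to supply; everything else is a one-line computation.
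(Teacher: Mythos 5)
Your proof takes essentially the same route as the paper: apply Nazarov's intertwiner $\Phi_i$, solve for $s_i.v_T$, and use the rescaling identity $\sqrt{-1}\,\phi(s_i.T)=a_T(i)\phi(T)$ from Lemma~\ref{rescalingvectors} to collapse the coefficient of $v_{s_i.T}$ to $e_T(i)$. The conclusion about parity is also drawn the same way, by observing that the resulting identity expresses $v_{s_i.T}$ as a scalar multiple of an even operator applied to $v_T$.

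Your write-up is actually a bit more careful than the paper's proof at several points worth noting: you explicitly verify that $c_T(i)\neq c_T(i+1)$ for standard skew tableaux (which underwrites every division by $\kappa_T(i)^2-\kappa_T(i+1)^2$, a fact the paper uses implicitly); you treat the degenerate case $s_i.T=\star$ separately and check that $(\ref{transpositionsaction})$ is recovered with $v_\star=0$ because $\mathcal{Y}_T(i)=0$ for adjacent boxes; and your parity argument correctly rests on $\Phi_i$ being built from even generators $z_j$, $s_i$, $c_ic_{i+1}$ (the paper's printed justification ``Since $s_i$ is odd'' is a typo for ``even''). You also correctly use $c_ic_{i+1}$ in the intertwiner rather than the $c_0c_1$ that appears in the paper's display, which is the form required for consistency with $(\ref{transpositionsaction})$. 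The one step present in the paper but omitted in your version is the observation $a_T(i)a_{s_i.T}(i)=1$ derived from $\Phi_i^2=-\mathcal{Y}_T(i)$; this is not needed for your argument, since the rescaling identity alone determines the coefficient, so the omission is harmless.
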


\begin{proof}
First, observe
\begin{align*}
\Phi_i^2w_T=\Phi_i\left( {a_T(i)}\sqrt{-\mathcal{Y}_T(i)} w_{s_0.T}\right ) =-{a_T(i)a_{s_i.T}(i)}\mathcal{Y}_T(i)w_T.
\end{align*}

Since $\Phi_i^2$ acts on $v_T$ by the scalar $-\mathcal{Y}_T(i)$, it follows that $a_T(i)a_{s_i.T}(i)=1$. 

By the definition of $a_T(i)$ shortly before Lemma~\ref{rescalingvectors}, 
\begin{align*}
&{a_T(i)}\sqrt{-\mathcal{Y}_T(i)}w_T=\Phi_i.w_T\\
=&s_i(z_i^2-z_{i+1}^2)w_T+(z_i+z_{i+1})w_T-c_0c_1(z_i-z_{i+1})w_T\\
=&s_i (\kappa_T(i)^2-\kappa_T(i+1)^2)w_T+(\kappa_T(i)+\kappa_T(i+1))w_T-c_0c_1(\kappa_T(i)-\kappa_T(i+1))w_T.
\end{align*}
Therefore
\begin{align*}
s_i.w_T=&-\frac{1}{\kappa_T(i)-\kappa_T(i+1)}w_T+\frac{1}{\kappa_T(i)+\kappa_{T}(i+1)} c_0c_1w_T+\frac{{a_T(i)}\sqrt{-\mathcal{Y}_T(i)}}{\kappa_T(i)^2-\kappa_T(i+1)^2}w_{s_i.T}.
\end{align*}
We now multiply both sides by $\phi(T)$, then use the fact that $\phi(T)a_T(i)=\sqrt{-1}\phi({s_i.T})$, as well as $v_T=\phi(T)w_T$ for both $T$ and $s_i.T$,
\begin{align*}
s_i.v_T=-\frac{1}{\kappa_T(i)-\kappa_T(i+1)}v_T+\frac{1}{\kappa_T(i)+\kappa_{T}(i+1)} c_0c_1v_T+\frac{\sqrt{\mathcal{Y}_T(i)}}{\kappa_T(i)^2-\kappa_T(i+1)^2}v_{s_i.T}.
\end{align*}
This is consistent with (\ref{transpositionsaction}). Since $s_i$ is odd, the right hand side is a homogeneous vector with the same parity as $v_T$, therefore $v_{s_i.T}$ has the same parity as $v_T$.
\end{proof}

\begin{proof}[Proof of Theorem~\ref{classify}]
This is simply a result of Lemma~\ref{x1determined} and \ref{sidetermined}.
\end{proof}

By definition, $\mathcal{H}^{\operatorname{od}}_{p,d}\simeq\mathcal{H}^{\operatorname{ev}}_{p,d}\otimes \operatorname{Cl}_1 $, and it is known that their module categories are related by taking the Clifford twist of each other. For a superalgebra $A$ and its module $W$, let $\Pi W$ be the module with the same underlying vector space with reversed grading. For $v\in W$, if we use $\Pi v$ to denote the same vector $v$ in the module $\Pi W$, a homogeneous $x \in A$ acts on $\Pi W$ via $x.(\Pi v)=(-1)^{\overline{x}}\Pi(x.v)$. The theory of taking the Clifford twist of a category is well-developed by Kang-Kashiwara-Tsuchioka \cite{Kashiwara} and Brundan-Davidson \cite{Brundan3}. In particular, for a superalgebra $A$, recall the superalgebra $A\otimes \operatorname{Cl}_1$ defined in Section~\ref{2.4}. Let $A$-{smod} be the category whose objects are $A$-supermodules and whose morphisms are linear combinations of homogeneous $A$-supermodule homomorphisms (we will drop the prefix ``super'' and simply refer to them as modules and module homomorphisms). 

In addition, recall the definition of $\q(n)$-modules of Type Q and Type M in Lemma~\ref{Pre_3_superschur}. This notion can be generalized to a given module $V$ for an arbitrary superalgebra $A$: $V$ is defined to be of Type M if and only if $\operatorname{End}_A(V)$ is one-dimensional, and Type Q if and only if $\operatorname{End}_A(V)$ is spanned by ${\operatorname{id}}_V$ and an odd map. Then $A$-{smod} and $A\otimes \operatorname{Cl}_1$-{smod} are related in the following sense: if $U$ is a Type M $A$-module, and $\operatorname{Cl}_1$ is the regular module for $\operatorname{Cl}_1$, then $U\otimes \operatorname{Cl}_1$ is known to be a Type Q $A\otimes \operatorname{Cl}_1$-module, and $\operatorname{Res}^{A\otimes \operatorname{Cl}_1}_{A}U\otimes \operatorname{Cl}_1\simeq U \oplus \Pi U$. On the other hand, every Type Q module $W$ for $A\otimes \operatorname{Cl}_1$ can be obtained this way: let $\phi\in \operatorname{End}_{A\otimes \operatorname{Cl}_1}(W)$ be an odd endomorphism such that $\phi^2=1$ (such a map exists based on the definition of Type Q modules and after rescaling); let $c$ be the Clifford generator in $ \operatorname{Cl}_1$.  Then $c\phi\in \operatorname{End}_{A}(\operatorname{Res}^{A\otimes \operatorname{Cl}_1}_A W)$ is an even endomorphism with $(c\phi)^2=1$, and $A$ preserves each eigenspace of $c\phi$. In particular, the $1$-eigenspace $W_1$ and $(-1)$-eigenspace $W_{-1}$ are both $A$-modules. It is known that $W_{-1}\simeq \Pi W_{1}$, and  $W\simeq W_{1}\otimes \operatorname{Cl}_1$ as $A\otimes \operatorname{Cl}_1$-modules. Since $\mathcal{H}^{\operatorname{od}}_{p,d}\simeq \mathcal{H}^{\operatorname{ev}}_{p,d}\otimes \operatorname{Cl}_1$, we use this result to prove the following case when $n$ is odd.

\begin{theorem}\label{oddclassify}
Assume $n$ is odd, and furthermore, $n^2(n+1)^2+p^2(p+1)^2$ is not a perfect square. Fix $\lambda$ in Row $d$ of the Bratteli diagram $\Gamma$. Let $\mathcal{W}$ be a module for $\mathcal{H}^{\operatorname{od}}_{p,d}$ of Type Q. If $\mathcal{W}$ is a free module over $\operatorname{Cl}_{d+2}$ with basis $\{v_T\}_{T\in \Gamma^{\lambda}}$, and $z_i.v_T=\kappa_T(i)v_T$ for all $T\in \Gamma^{\lambda}$ and $0\leq i\leq d$, then $W\simeq \mathcal{E}^{\lambda}_f$ for some choice of $f$.
\end{theorem}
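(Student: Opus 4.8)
The plan is to reduce the odd case to the even case (Theorem~\ref{classify}) via the Clifford-twist equivalence between $\mathcal{H}^{\operatorname{ev}}_{p,d}$-smod and $\mathcal{H}^{\operatorname{od}}_{p,d}\simeq\mathcal{H}^{\operatorname{ev}}_{p,d}\otimes\operatorname{Cl}_1$-smod summarized just before the statement. Given the Type Q module $\mathcal{W}$, I would first choose an odd endomorphism $\psi\in\operatorname{End}_{\mathcal{H}^{\operatorname{od}}_{p,d}}(\mathcal{W})$ with $\psi^2=1$ (possible after rescaling, by the definition of Type Q), let $c_M$ be the extra Clifford generator of $\mathcal{H}^{\operatorname{od}}_{p,d}$ over $\mathcal{H}^{\operatorname{ev}}_{p,d}$, and form the even endomorphism $c_M\psi\in\operatorname{End}_{\mathcal{H}^{\operatorname{ev}}_{p,d}}(\operatorname{Res}\mathcal{W})$ with $(c_M\psi)^2=1$. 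Then $\mathcal{H}^{\operatorname{ev}}_{p,d}$ preserves the $(\pm1)$-eigenspaces $\mathcal{W}_{\pm1}$ of $c_M\psi$, we have $\mathcal{W}_{-1}\simeq\Pi\mathcal{W}_{1}$, and $\mathcal{W}\simeq\mathcal{W}_1\otimes\operatorname{Cl}_1$ as $\mathcal{H}^{\operatorname{od}}_{p,d}$-modules. So it suffices to identify $\mathcal{W}_1$ as an $\mathcal{H}^{\operatorname{ev}}_{p,d}$-module and check that it satisfies the hypotheses of Theorem~\ref{classify}.

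The key step is to verify that $\mathcal{W}_1$ is free over $\operatorname{Cl}_{d+1}$ (the subalgebra generated by $c_0,\dots,c_d$) with a basis of the required eigenvector form. Since $\mathcal{W}$ is free over $\operatorname{Cl}_{d+2}=\langle c_M,c_0,\dots,c_d\rangle$ with homogeneous basis $\{v_T\}_{T\in\Gamma^\lambda}$ satisfying $z_i.v_T=\kappa_T(i)v_T$, and $c_M$ commutes with all of $z_0,\dots,z_d,x_1,s_1,\dots,s_{d-1}$ while anticommuting with $c_0,\dots,c_d$, the endomorphism $c_M\psi$ commutes with the $z_i$ and hence preserves each joint eigenspace. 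Because all eigenvalues $\kappa_T(i)$ are real and, by Lemma~\ref{distincteigenvalues}, the tuple $(\kappa_T(1),\dots,\kappa_T(d))$ determines $T$, the joint $\operatorname{Cl}_{d+1}$-module generated by $v_T$ is a copy of the regular $\operatorname{Cl}_{d+1}$-module, and $c_M\psi$ acts on the span of $\{v_T,c_0v_T,\dots\}$ in a way compatible with this $\operatorname{Cl}_{d+1}$-structure; picking, for each $T$, a unit-eigenvector $\tilde v_T$ of $c_M\psi$ inside $\operatorname{Cl}_{d+1}v_T$ that is still a $z_i$-eigenvector with eigenvalue $\kappa_T(i)$ gives the desired free $\operatorname{Cl}_{d+1}$-basis $\{\tilde v_T\}_{T\in\Gamma^\lambda}$ of $\mathcal{W}_1$. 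The parity condition and homogeneity are inherited since $c_M\psi$ is even and the $v_T$ are homogeneous.

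Having established that $\mathcal{W}_1$ meets the hypotheses of Theorem~\ref{classify} (the arithmetic assumption that $n^2(n+1)^2+p^2(p+1)^2$ is not a perfect square is carried over verbatim, and $n$ odd is not used in the statement of Theorem~\ref{classify} except through the even-case hypothesis, which is fine because $\mathcal{W}_1$ is by construction an $\mathcal{H}^{\operatorname{ev}}_{p,d}$-module), Theorem~\ref{classify} yields $\mathcal{W}_1\simeq\mathcal{D}^\lambda_f$ for some $f$ satisfying $(\ref{Cal_2_f})$. Then, by the Clifford-twist formalism, $\mathcal{W}\simeq\mathcal{W}_1\otimes\operatorname{Cl}_1\simeq\mathcal{D}^\lambda_f\otimes\operatorname{Cl}_1$. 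The final bookkeeping is to check that $\mathcal{D}^\lambda_f\otimes\operatorname{Cl}_1$ is exactly $\mathcal{E}^\lambda_f$: both are free over $\operatorname{Cl}_{d+2}$ with basis indexed by $\Gamma^\lambda$, the generators $z_i,x_1,s_i$ act by the same matrices $(\ref{Cal_2_x1acts})$, $(\ref{transpositionsaction})$ (which involve only $c_0,\dots,c_d$ and the scalars $\kappa_T$), and the extra generator $c_M$ acts by the adjoined Clifford generator in both cases — so the identification is immediate from the construction in Section~\ref{4.2}.

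I expect the main obstacle to be the second step: making precise that the free $\operatorname{Cl}_{d+2}$-basis $\{v_T\}$ of $\mathcal{W}$ descends, after intersecting with the $c_M\psi=1$ eigenspace and rechoosing eigenvectors, to a genuine free $\operatorname{Cl}_{d+1}$-basis of $\mathcal{W}_1$ on which $z_i$ still acts diagonally with the prescribed eigenvalues. This requires care because $\psi$ need not be $\operatorname{Cl}$-linear a priori, so one must argue that $c_M\psi$, being an even $\mathcal{H}^{\operatorname{ev}}_{p,d}$-endomorphism, is in particular $\operatorname{Cl}_{d+1}$-semilinear in the right way and that its restriction to each two-dimensional $z$-eigenline pair $\langle v_T, c_Mv_T\rangle$ (or the appropriate $\operatorname{Cl}$-submodule) has a one-dimensional $1$-eigenspace spanned by a $z_i$-eigenvector; the remaining relation checks are then purely formal consequences of Theorem~\ref{classify} and the definition of $\mathcal{E}^\lambda_f$.
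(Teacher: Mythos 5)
Your proposal follows essentially the same route as the paper: reduce to the even case via the Clifford-twist equivalence $\mathcal{H}^{\operatorname{od}}_{p,d}\simeq\mathcal{H}^{\operatorname{ev}}_{p,d}\otimes\operatorname{Cl}_1$, extract an $\mathcal{H}^{\operatorname{ev}}_{p,d}$-module satisfying the hypotheses of Theorem~\ref{classify}, and then tensor back by $\operatorname{Cl}_1$ to land on $\mathcal{E}^{\lambda}_f$. The obstacle you flag at the end --- that $c_M\psi$ restricted to each two-dimensional joint $z$-eigenspace $\langle v_T, c_Mv_T\rangle$ should have a one-dimensional eigenspace spanned by a $z$-eigenvector --- is exactly the hinge, and the paper closes it with a short parity argument you did not supply: since $\phi$ is odd, $\phi(v_T)$ has the opposite parity to $v_T$, while $v_T$ and $c_Mv_T$ span the two-dimensional eigenspace and have opposite parities; hence $\phi(v_T)$ must be a scalar multiple of $c_Mv_T$, and then $\phi^2=1$ forces that scalar to be $\pm 1$, so that $v_T$ itself is already a $c_M\phi$-eigenvector with no rechoosing needed. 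Your phrase ``a unit-eigenvector $\tilde v_T$ inside $\operatorname{Cl}_{d+1}v_T$'' presupposes this conclusion rather than establishing it, since a priori the $1$-eigenvector could sit transversally to $\operatorname{Cl}_{d+1}v_T$ inside $\langle v_T, c_Mv_T\rangle$. With the parity observation inserted, the rest of your argument (including the identification $\mathcal{D}^{\lambda}_f\otimes\operatorname{Cl}_1\simeq\mathcal{E}^{\lambda}_f$, which is immediate from the definitions in Section~4.2) matches the paper's proof.
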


\begin{proof}
Let $\{v_T\}_{T \in \Gamma^{\lambda}}$ be the given $\operatorname{Cl}_{d+2}$-basis of $\mathcal{W}$ satisfying the assumptions in Theorem~\ref{classify}. By the short discussion before Lemma~\ref{rescalingvectors} and the fact that $c_Mz_i=z_ic_M$ for all $0\leq i\leq d$, each simultaneous eigenspace for $z_0,\dots,z_d$ is two dimensional, spanned by $c^{\epsilon}v_T$ and $c_Mc^{\epsilon}v_T$ for some $c^{\epsilon}=c_0^{\epsilon_0}\cdots c_d^{\epsilon_d}$. Let $\phi\in \operatorname{End}_{\mathcal{H}^{\operatorname{od}}_{p,d}}(\mathcal{W})$ be an odd endomorphism with $\phi^2=1$. Since $\phi$ commutes with the action of $z_0,\dots,z_d$ and preserves their simultaneous eigenspaces, by parity considerations, $\phi v_T$ is a scalar multiple of $c_Mv_T$. Call this scalar $\gamma$ and we have
\begin{align*}
\phi^2(v_T)=\phi(\gamma c_Mv_T)= -c_M \gamma\phi(v_T)= -c_M^2 \gamma^2 v_T = \gamma^2 v_T,
\end{align*}
therefore $\gamma= \pm 1$.  It follows that 
\begin{align*}
(c_M\phi) (v_T)= \gamma c_M^2 v_T =-\gamma v_T
\end{align*}
lies in the $(-\gamma)$-eigenspace of $(c_M\phi)$, and so does any $c^{\epsilon}v_T$. Therefore the free $\operatorname{Cl}_{d+1}$-module generated by $c_0,\dots,c_d$ and the vectors $\{v_T\}_{\Gamma^{\lambda}}$ is a module for $\mathcal{H}^{\operatorname{ev}}_{p,d}$, and it satisfies the assumptions in Theorem~\ref{classify}. In particular, the generators $x_1$ and $s_i$ act as specified in Lemmas~\ref{x1determined} and \ref{sidetermined}, and the grading is as desired.
\end{proof}

\subsection{Irreducible $\mathcal{H}^p_d$-summands of $L(\alpha)\otimes L(\beta)\otimes V^{\otimes d}$}\label{sec45}
The original motivation for constructing modules in Section~\ref{4.2} is that they naturally occur as irreducible summand of  $L(\alpha)\otimes L(\beta)\otimes V^{\otimes d}$. In particular, let $\mathcal{Z}_d=\operatorname{End}_{\q(n)}(L(\alpha)\otimes L(\beta)\otimes V^{\otimes d})$ be the centralizer algebra for the action of $\q(n)$. In \cite[Proposition 3.5]{Wang}, there is a $\q(n)$-analogue of the double centralizer theorem. In particular, if a $\q(n)$-modules $W$ and $U$ are both of Type $Q$, then $W\otimes U$ is no longer irreducible, but is the direct sum of two isomorphic irreducible Type M $\q(n)$-modules. Denote either of them as $2^{-1}W\otimes U$. Recall that $\mathcal{P}_d(\alpha,\beta)$ is the set of partitions in Row $d$ in the Bratteli graph $\Gamma$, which parametrizes all irreducible $\q(n)$-summands in $L(\alpha)\otimes L(\beta)\otimes V^{\otimes d}$.  Then according to \cite[Proposition 3.5]{Wang}, the module $L(\alpha)\otimes L(\beta)\otimes V^{\otimes d}$ decomposes into irreducible $(\q(n),\mathcal{Z}_d)$-bimodules:
\begin{align}
L(\alpha)\otimes L(\beta)\otimes V^{\otimes d}\simeq \bigoplus_{\lambda\in \mathcal{P}_d(\alpha,\beta)} 2^{\delta(\lambda)} L(\lambda) \otimes \mathcal{L}^{\lambda}. \label{doublecentralizer}
\end{align}
Here, $L(\lambda)$ is an irreducible $\q(n)$-module, $\mathcal{L}^{\lambda}$ is an irreducible $\mathcal{Z}_d$-module. Moreover, $L(\lambda)$ is of Type Q if and only $\mathcal{L}^{\lambda}$ is of Type Q. Also, $\delta(\lambda)=0$ if and only if $\ell(\lambda)=n$ is even; $\delta(\lambda)=-1$ if and only if $\ell(\lambda)=n$ is odd.

When $n$ is even, the action $\rho: \mathcal{H}^{\operatorname{ev}}_{p,d}\to \operatorname{End}_{\q(n)}(L(\alpha)\otimes L(\beta)\otimes V^{\otimes d})$ has image $\rho(\mathcal{H}^{\operatorname{ev}}_{p,d})$ lying in the centralizer $\mathcal{Z}_d$, therefore the irreducible $\mathcal{Z}_d$-module $\mathcal{L}^{\lambda}$ can be restricted to the subalgebra  $\rho(\mathcal{H}^{\operatorname{ev}}_{p,d})$, and yield an $\mathcal{H}^{\operatorname{ev}}_{p,d}$-module, $\operatorname{Res}^{\mathcal{Z}_d}_{\rho(\mathcal{H}^{\operatorname{ev}}_{p,d})}\mathcal{L}^{\lambda}$,  using the action of the image. Similarly, when $n$ is odd, we obtain modules $\operatorname{Res}^{\mathcal{Z}_d}_{\rho(\mathcal{H}^{\operatorname{od}}_{p,d})}\mathcal{L}^{\lambda}$. The goal of this section is to prove that these modules  actually coincide with the calibrated modules we constructed in Section~\ref{4.2}.

We  first mention the following lemmas, as a result of Corollary~\ref{Piericontent} and Corollary~\ref{x1squareacts}. In Lemma~\ref{Pre_5_com} we gave a description of the set $\mathcal{P}_0$ of partitions $\lambda$ such that $L(\lambda)$ is an irreducible summand of $L(\alpha)\otimes L(\beta)$. Also recall the odd map $C\in \operatorname{End}(V)$ in (\ref{Cdefinition}), and the even Casimir tensor $\overline{\Omega}=\Omega (1\otimes C)$ in (\ref{evenomega}).

\begin{lemma}\label{Quo_3_scalar}

1) Let $L(\lambda)$ be an irreducible summand of $L(\alpha)\otimes L(\beta)$. Then $L(\lambda)$ has multiplicity two, and there exists one copy of  $L(\lambda)$ in its isotopic component, on which $\overline{\Omega}$ acts via the nonzeroscalar $\sqrt{mp(m-p)}$ or $-\sqrt{mp(m-p)}$, where $m$ is the number of boxes in the first row of $\lambda$.

2) Let $L(\lambda)$ be an irreducible summand of $L(\mu)\otimes V$, where $b$ is the distinct box in $\lambda$ not in $\mu$. Then $L(\lambda)$ has multiplicity two, and there exists one copy of $L(\lambda)$ in its isotypic component, on which $\overline{\Omega}$ acts via the nonzeroscalar $\sqrt{c(b)(c(b)+1)}$ or $-\sqrt{c(b)(c(b)+1)}$.
\end{lemma}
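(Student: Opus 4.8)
\textbf{Proof proposal for Lemma~\ref{Quo_3_scalar}.}

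The plan is to deduce this statement directly from the two earlier corollaries on the squared action of $\Omega$, using only super Schur's Lemma and a multiplicity count. The key observation is that multiplying by $\overline{\Omega} = \Omega(1\otimes C)$ is an \emph{even} $\q(n)$-endomorphism of $L(\alpha)\otimes L(\beta)$ (resp. of $L(\mu)\otimes V$), whereas $\Omega$ itself is odd; squaring kills the parity issue and lands us in the setting of Corollary~\ref{Piericontent} and Corollary~\ref{x1squareacts}, which compute $\Omega^2$ as an explicit scalar on each relevant isotypic summand. For part 2), Corollary~\ref{Piericontent} gives that $\Omega^2$ acts on any copy of $L(\lambda)\subset L(\mu)\otimes V$ by $c(b)(c(b)+1)$; since $C$ commutes with the $\q(n)$-action up to a sign that disappears after squaring (indeed $\overline{\Omega}^2 = \Omega(1\otimes C)\Omega(1\otimes C)$, and one checks $(1\otimes C)$ intertwines the two tensor factors appropriately so that $\overline{\Omega}^2 = \pm\Omega^2$ on each homogeneous summand), we get $\overline{\Omega}^2$ acting by $c(b)(c(b)+1)$ as well. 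Because $\overline{\Omega}$ is an even endomorphism, it acts by a scalar on each multiplicity-one piece, hence by a scalar on each copy of $L(\lambda)$ in the isotypic component when that copy is $\overline{\Omega}$-stable; that scalar squares to $c(b)(c(b)+1)$, so it is $\pm\sqrt{c(b)(c(b)+1)}$. Part 1) is identical, invoking Corollary~\ref{x1squareacts} (with its scalar $mp(m-p)$) in place of Corollary~\ref{Piericontent}.

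More precisely, here are the steps in order. First I would record that the multiplicity of $L(\lambda)$ in $L(\alpha)\otimes L(\beta)$ (resp. $L(\mu)\otimes V$) is exactly two; this was established in Section~\ref{3.1} via the structural constant formula (\ref{structuralconstant}) together with $f^\gamma_{\lambda,\mu}=1$ from Lemma~\ref{Pre_5_com} and the Pieri rule, using $\ell(\lambda)=\ell(\mu)=n$. Second, I would observe that the two-dimensional space $\operatorname{Hom}_{\q(n)}(L(\lambda), L(\alpha)\otimes L(\beta))$ carries a natural action of the even operator $\overline{\Omega}$ (postcomposition); since $\overline{\Omega}$ is even it preserves this Hom-space, and I want to diagonalize it there. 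Third, I would argue that $\overline{\Omega}$ restricted to this Hom-space has eigenvalues that are square roots of the scalar by which $\overline{\Omega}^2$ acts on the isotypic component. Fourth, I would identify $\overline{\Omega}^2$ with $\Omega^2$ (up to a uniform sign absorbed into the ambiguity $\pm$) on the isotypic component and quote Corollary~\ref{x1squareacts} (resp. Corollary~\ref{Piericontent}) for the value $mp(m-p)$ (resp. $c(b)(c(b)+1)$). Fifth, I would note these scalars are nonzero: for part 1), $m>p\geq 1$ and $m\geq n+1>0$ by the discussion in the proof of Lemma~\ref{kapparelations}, so $mp(m-p)\neq 0$; for part 2), $c(b)\geq 0$ and when $c(b)=0$ the added box is at the start of a row, but then $L(\lambda)$ still appears and the relevant eigenvalue is $0$ — here I must be slightly careful, since the statement asserts the scalar is nonzero, so part 2) should be read as: the relevant copies appear in the Bratteli graph context where $c(b)\geq 1$ (the content of a box added to the staircase-containing partitions is at least $1$, as the first box of each row already belongs to $\alpha$); I would spell this out. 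Finally, existence of \emph{one} copy of $L(\lambda)$ on which $\overline{\Omega}$ acts by $+\sqrt{\cdot}$ or $-\sqrt{\cdot}$ follows because $\overline{\Omega}$ is diagonalizable on the $2$-dimensional Hom-space (its square is a nonzero scalar, so its minimal polynomial divides $t^2 - (\text{nonzero})$, which is separable), hence has at least one eigenvector, and that eigenvector spans an $\overline{\Omega}$-stable copy of $L(\lambda)$.

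The main obstacle I anticipate is bookkeeping the precise relationship between $\overline{\Omega}^2$ and $\Omega^2$, including the sign introduced by the Clifford element $C$ when it is pushed past $\Omega$. Since $\overline{\Omega} = \Omega(1\otimes C)$ and $\Omega$ is odd while $1\otimes C$ is odd, one has $\overline{\Omega}^2 = \Omega(1\otimes C)\Omega(1\otimes C)$, and to simplify this I need the commutation of $(1\otimes C)$ with $\Omega$: from the proof of Theorem~\ref{hacts}, step 2, we have $\Omega(c\otimes 1) = -(c\otimes 1)\Omega$ and similarly $\Omega(1\otimes c) = -(1\otimes c)\Omega$, hence $\overline{\Omega}^2 = -\Omega^2 (1\otimes C)^2 = -\Omega^2(1\otimes C^2)$. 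Now $C^2 = -\operatorname{id}_V$ by (\ref{Cdefinition}), so $\overline{\Omega}^2 = \Omega^2$ exactly — no residual sign — which makes the argument clean. (For part 1) the analogous computation with $C$ acting on the appropriate tensor factor gives the same conclusion, since the relevant $C$ appears squared.) Once this identity is in hand, the lemma is immediate from Corollaries~\ref{Piericontent} and \ref{x1squareacts}, and the rest is the elementary linear algebra of a diagonalizable operator on a $2$-dimensional space. I would also double-check that ``acts via the scalar'' in the statement means ``acts as that scalar times the identity on that copy,'' which is automatic by super Schur's Lemma (Lemma~\ref{Pre_3_superschur}) since the copy, being a simple submodule isomorphic to $L(\lambda)$, has $\overline{\Omega}|_{\text{copy}}$ an even endomorphism, hence a scalar when $L(\lambda)$ is of Type M, and still a scalar on the even part of $\operatorname{End}$ when of Type Q.
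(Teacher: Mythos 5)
Your proposal is correct and follows essentially the same route as the paper: multiplicity two from the structural-constant formula and Lemma~\ref{Pre_5_com}, the identity $\overline{\Omega}^2=\Omega^2$ by the sign cancellation you compute, the scalars from Corollaries~\ref{x1squareacts} and \ref{Piericontent}, and extraction of an eigenvector of $\overline{\Omega}$ on the isotypic component (which you rephrase as diagonalizing $\overline{\Omega}$ on the Hom-space). Two small remarks: you correctly supply the observation that $c(b)\geq 1$ in part 2 (because the added box lies outside $\alpha$), a point the paper leaves implicit, but your claim that $\operatorname{Hom}_{\q(n)}(L(\lambda), L(\alpha)\otimes L(\beta))$ is two-dimensional should instead say four-dimensional when $n$ is odd and $L(\lambda)$ is of Type Q — this does not, however, affect your argument, since you only use that $\overline{\Omega}^2=a\operatorname{id}$ with $a\neq 0$ and hence $\overline{\Omega}$ is diagonalizable there.
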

\begin{proof}
We first prove case 1), and case 2) follows by a similar argument. Recall the structural constants $f^{\lambda}_{\lambda,\mu}$ of the Schur's P-functiton and their relation to the multiplicities $m^{\lambda}_{\lambda,\mu}$ of $\mathfrak{q}(n)$-modules in (\ref{structuralconstant}). By Lemma~\ref{Pre_5_com}, when $\alpha$ is a staircase and $\beta$ is a single row, $f^{\lambda}_{\alpha,\beta}=1$ for any $\lambda\in \mathcal{P}_0$, $\ell(\lambda)=\ell(\alpha)=n$, $\ell(\beta)=1$, therefore (\ref{structuralconstant}) implies that $m^{\lambda}_{\alpha,\beta}=2$ for any $\lambda\in \mathcal{P}_0$. By Corollary~\ref{x1squareacts}, $\overline{\Omega}^2=\overline{\Omega}(1\otimes c) \overline{\Omega} (1\otimes c)=\Omega^2$ acts on $L(\lambda)$ via $mp(m-p)$, which is a strictly increasing function on $m$, the number of boxes in the first row of $\lambda$, and $m$ uniquely determines $\lambda$ by Lemma~\ref{Pre_5_com}. Therefore, $\overline{\Omega}^2$ acts on the isotypic component $L(\lambda)^{\oplus 2}$ by this distinct scalar $a=mp(m-p)$. This scalar is nonzero because $m>p$ as shown in Lemma~\ref{kapparelations}.

Choose a basis of $L(\lambda)^{\oplus 2}$ such that the matrix of  $\overline{\Omega}$ is in Jordan normal form. In particular, there exists one vector $v\in L(\lambda)^{\oplus 2}$, such that $\overline{\Omega} .v=\sqrt{a} v$ or  $\overline{\Omega} .v=-\sqrt{a} v$. Without a loss of generality we assume $\overline{\Omega} .v=\sqrt{a} v$. Since $v\in L(\alpha)\otimes L(\beta)$, and $\overline{\Omega}$ commutes with the action of $\mathfrak{q}(n)$ on $L(\alpha)\otimes L(\beta)$, it follows that $\overline{\Omega}(x.v)=x(\overline{\Omega}.v)=\sqrt{a}x.v$ for all $x\in \mathfrak{q}(n)$, therefore if $W$ is the irreducible $\mathfrak{q}(n)$-module generated by $v$, $W\simeq L(\lambda)$ and $\overline{\Omega}$ acts on $W$ by the scalar $\sqrt{a}$.

\end{proof}

Denote by $\# X$ the cardinality of a set $X$.
\begin{lemma}\label{dimensionsagree}
When $n$ is even, the dimension of $\mathcal{L}^{\lambda}$ is equal to $2^{d+1}\# \Gamma^{\lambda}$. When $n$ is odd, the dimension of $\mathcal{L}^{\lambda}$ is equal to $2^{d+2}\# \Gamma^{\lambda}$. 
\end{lemma}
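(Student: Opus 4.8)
The idea is to compute both sides of the claimed equality by combining the double centralizer decomposition \eqref{doublecentralizer} with a dimension count of $L(\alpha)\otimes L(\beta)\otimes V^{\otimes d}$ obtained from the Bratteli graph. First I would fix $\lambda\in\mathcal{P}_d(\alpha,\beta)$ and work out $\dim L(\lambda)$. Since $L(\lambda)$ is a polynomial module, Brundan's character formula $\operatorname{Ch}L(\lambda)=2^{\lfloor(\ell(\lambda)+1)/2\rfloor}P_{\lambda}$ gives $\dim L(\lambda)=2^{\lfloor(\ell(\lambda)+1)/2\rfloor}\cdot(\text{number of semistandard shifted tableaux of shape }\lambda)$; what matters here is only the power of $2$, namely $2^{\lceil n/2\rceil}$ since $\ell(\lambda)=n$ for every such $\lambda$. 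The key bookkeeping device is that $\#\Gamma^{\lambda}$, the number of paths from $\alpha$ to $\lambda$ in $\Gamma$, records exactly the branching: by Lemma~\ref{Pre_5_com} and the Pieri rule, successive tensoring with $V$ adds boxes one at a time, and $\#\Gamma^{\lambda}$ counts the standard skew tableaux realizing such a chain of box-additions starting from some $\mu\in\mathcal{P}_0$.

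Next I would track the multiplicities. By the discussion at the end of Section~\ref{3.1}, every irreducible summand in $L(\alpha)\otimes L(\beta)$ occurs with multiplicity $2$, and every irreducible summand of $L(\mu)\otimes V$ again occurs with multiplicity $2$ (here $m^{\gamma}_{\lambda,\mu}=2f^{\gamma}_{\lambda,\mu}$ because $\ell$ stays equal to $n$ and $2^{\lfloor(\ell+1)/2\rfloor}$ is the same for numerator and denominator of \eqref{structuralconstant}). Iterating $d$ times starting from the single edge $\alpha\to\mathcal{P}_0$, the multiplicity of $L(\lambda)$ in $L(\alpha)\otimes L(\beta)\otimes V^{\otimes d}$ equals $2^{d+1}\cdot\#\Gamma^{\lambda}$: each of the $d+1$ branching steps (one into $\mathcal{P}_0$ and $d$ Pieri steps) contributes a factor $2$, and $\#\Gamma^{\lambda}$ counts the distinct isotypic contributions along all paths. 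On the other hand, \eqref{doublecentralizer} says this multiplicity equals $2^{\delta(\lambda)}\dim\mathcal{L}^{\lambda}$, where $\delta(\lambda)=0$ when $n$ is even and $\delta(\lambda)=-1$ when $n$ is odd. Equating the two expressions gives $\dim\mathcal{L}^{\lambda}=2^{d+1}\#\Gamma^{\lambda}$ when $n$ is even, and $\dim\mathcal{L}^{\lambda}=2^{d+2}\#\Gamma^{\lambda}$ when $n$ is odd, which is the assertion.

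There are two points that require care. The first is making the "each branching step contributes a factor $2$" statement precise: one needs the multiplicity in a tensor product $W\otimes V$ where $W$ is an isotypic component $L(\mu)^{\oplus k}$ to be $2k\cdot(\text{number of }\gamma\text{ obtained from }\mu)$, which follows because $\operatorname{Ch}$ is additive, $f^{\gamma}_{\mu,\epsilon_1}\in\{0,1\}$, and the ratio of $2$-powers in \eqref{structuralconstant} is trivial since $\ell(\gamma)=\ell(\mu)=n$; I would state this as a short induction on $d$, the base case $d=0$ being the multiplicity-$2$ statement for $L(\alpha)\otimes L(\beta)$. The second point is purely combinatorial: identifying the recursive count of isotypic contributions along all paths with $\#\Gamma^{\lambda}$, which is immediate from the very definition of the Bratteli graph $\Gamma$ and the correspondence between paths $T\in\Gamma^{\lambda}$ and standard skew tableaux established in Section~\ref{4.1}. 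The main obstacle is simply organizing the induction cleanly so that the factor $2^{d+1}$ and the combinatorial factor $\#\Gamma^{\lambda}$ are cleanly separated; no deep input beyond \eqref{structuralconstant}, \eqref{doublecentralizer}, Lemma~\ref{Pre_5_com}, and the Pieri rule is needed.
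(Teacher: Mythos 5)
Your proof is correct and follows essentially the same route as the paper: count the multiplicity of $L(\lambda)$ in $L(\alpha)\otimes L(\beta)\otimes V^{\otimes d}$ as $2^{d+1}\#\Gamma^{\lambda}$ (one factor of $2$ per edge of a path, summed over all paths in $\Gamma^{\lambda}$), then read off $\dim\mathcal{L}^{\lambda}$ from the bimodule decomposition \eqref{doublecentralizer}, with the extra factor of $2$ coming from $2^{\delta(\lambda)}=2^{-1}$ when $n$ is odd. The paper simply invokes Lemma~\ref{Quo_3_scalar} (which already packages the multiplicity-two observation you re-derive from \eqref{structuralconstant}), so your opening computation of $\dim L(\lambda)$ via Brundan's character formula is harmless but unnecessary since that dimension cancels on both sides of the identity.
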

\begin{proof}
By Lemma~\ref{Quo_3_scalar}, each edge in a path $T$ results in two isomorphic copies of an irreducible summand, therefore the multiplicity of $L(\lambda)$ in $L(\alpha)\otimes L(\beta)\otimes V^{\otimes d}$ is always $2^{d+1}$, corresponding to the $d+1$ edges in $T$. Let $\operatorname{mult}W$ denote the multiplicity of a module $W$ in its isotypic component. When $n$ is even , (\ref{doublecentralizer}) implies that $\operatorname{dim}L(\lambda)\cdot\operatorname{dim}\mathcal{L}^{\lambda}=\operatorname{dim}L(\lambda)\cdot \operatorname{mult} L(\lambda)$, $\operatorname{mult} L(\lambda)=\operatorname{dim}\mathcal{L}^{\lambda}$. When $n$ is odd,  since the module $2^{-1}W\otimes U$ has dimension $2^{-1}\operatorname{dim}(W\otimes U)$, (\ref{doublecentralizer}) implies that $2^{-1}\operatorname{dim}L(\lambda)\cdot\operatorname{dim}\mathcal{L}^{\lambda}=\operatorname{dim}L(\lambda)\cdot \operatorname{mult} L(\lambda)$, and $2\operatorname{mult} L(\lambda)=\operatorname{dim}\mathcal{L}^{\lambda}$ as desired.
\end{proof}

\begin{theorem}\label{punchline}
When $n$ is even, the module $\operatorname{Res}^{\mathcal{Z}_d}_{\rho(\mathcal{H}^{\operatorname{ev}}_{p,d})}\mathcal{L}^{\lambda}$ is isomorphic to $\mathcal{D}^{\lambda}_f$ for some choice of $f$, and is therefore irreducible. Similarly, when $n$ is odd, the module $\operatorname{Res}^{\mathcal{Z}_d}_{\rho(\mathcal{H}^{\operatorname{od})}_{p,d}}\mathcal{L}^{\lambda}$ is isomorphic to $\mathcal{E}^{\lambda}_f$ for some choice of $f$, and is therefore irreducible.
\end{theorem}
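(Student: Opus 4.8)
The plan is to combine the combinatorial classification theorems (Theorem~\ref{classify} and Theorem~\ref{oddclassify}) with the double-centralizer decomposition (\ref{doublecentralizer}) and the eigenvalue computations of Lemma~\ref{Quo_3_scalar}. First I would treat the case $n$ even. The goal is to verify that $\operatorname{Res}^{\mathcal{Z}_d}_{\rho(\mathcal{H}^{\operatorname{ev}}_{p,d})}\mathcal{L}^{\lambda}$ satisfies the hypotheses of Theorem~\ref{classify}: namely that it is a free $\operatorname{Cl}_{d+1}$-module with a basis $\{v_T\}_{T\in\Gamma^{\lambda}}$ on which each $z_i$ acts by the scalar $\kappa_T(i)$ in (\ref{kappadefinition}). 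Once this is established, Theorem~\ref{classify} immediately gives $\operatorname{Res}^{\mathcal{Z}_d}_{\rho(\mathcal{H}^{\operatorname{ev}}_{p,d})}\mathcal{L}^{\lambda}\simeq \mathcal{D}^{\lambda}_f$ for some $f$, and irreducibility follows from Theorem~\ref{irreducibility}. (One also needs the standing arithmetic hypothesis that $n^2(n+1)^2+p^2(p+1)^2$ is not a perfect square; I would either assume this or note that it can be arranged.)

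The heart of the argument is identifying the eigenvalues of $z_0,\dots,z_d$ on $\mathcal{L}^{\lambda}$. Recall $z_i=\tilde z_ic_i$ and that $\tilde z_i$ acts by $\Omega_{M\otimes N\otimes V^{\otimes i-1},i}$, hence $z_i$ acts by $\overline{\Omega}$-type operators on the relevant tensor factors (using $\overline{\Omega}=\Omega(1\otimes C)$). Working inductively along a path $T:\alpha\to T^{(0)}\to T^{(1)}\to\cdots\to T^{(d)}=\lambda$ in the Bratteli graph: at each stage $L(\mu)\otimes V$ decomposes with multiplicity two into isotypic components $L(\gamma)^{\oplus 2}$, and by Lemma~\ref{Quo_3_scalar}(2) the relevant even Casimir operator acts on a one-dimensional subspace of each component by $\pm\sqrt{c(b)(c(b)+1)}=\pm\kappa_T(i)$, where $b$ is the box added in passing from $\mu$ to $\gamma$. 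Similarly Lemma~\ref{Quo_3_scalar}(1) handles the first edge and produces $\pm\sqrt{mp(m-p)}=\pm\kappa_T(0)$. Simultaneously diagonalizing these commuting operators produces a basis of $\mathcal{L}^{\lambda}$ indexed by paths $T$ together with a choice of sign at each edge; the sign choices are exactly the freedom recorded by the Clifford algebra $\operatorname{Cl}_{d+1}$, since $c_i$ anticommutes with $z_i$ and commutes with the other $z_j$. This shows the restriction is a free $\operatorname{Cl}_{d+1}$-module with one generator $v_T$ per path on which $z_i.v_T=\kappa_T(i)v_T$. The dimension count of Lemma~\ref{dimensionsagree} ($\dim\mathcal{L}^{\lambda}=2^{d+1}\#\Gamma^{\lambda}$) confirms there is exactly one $\operatorname{Cl}_{d+1}$-generator per path and no extra vectors, so the basis is as required.

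For $n$ odd the same strategy applies, with $\mathcal{H}^{\operatorname{od}}_{p,d}$ in place of $\mathcal{H}^{\operatorname{ev}}_{p,d}$, using that $L(\alpha)$ is now also of Type Q so that $c_M$ acts. Here $\mathcal{L}^{\lambda}$ is itself of Type Q (since $L(\lambda)$ is, by the double-centralizer statement following (\ref{doublecentralizer})), it is free over $\operatorname{Cl}_{d+2}$ by Lemma~\ref{dimensionsagree} ($\dim\mathcal{L}^{\lambda}=2^{d+2}\#\Gamma^{\lambda}$), and the $z_i$-eigenvalues are again the $\kappa_T(i)$; then Theorem~\ref{oddclassify} gives $\operatorname{Res}^{\mathcal{Z}_d}_{\rho(\mathcal{H}^{\operatorname{od}}_{p,d})}\mathcal{L}^{\lambda}\simeq\mathcal{E}^{\lambda}_f$, with irreducibility from Theorem~\ref{irreducibility}. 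The main obstacle I anticipate is the bookkeeping in the inductive eigenvalue computation: one must check that the eigenvalues accumulated along a path are genuinely the contents $\kappa_T(i)$ as defined in (\ref{kappadefinition}) (not merely $\pm$ them, and matching the $\kappa_T(0)=\sqrt{mp(m-p)}$ normalization coming from Corollary~\ref{x1squareacts}), and that distinct paths give distinct eigenvalue tuples so that the simultaneous eigenspaces are exactly one-dimensional over $\mathbb{C}$ after fixing the Clifford action — this last point is Lemma~\ref{distincteigenvalues}, which makes the freeness over the Clifford algebra precise. The rest is a matter of matching dimensions and invoking the already-proved classification and irreducibility theorems.
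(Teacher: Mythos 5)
Your proposal follows the same overall strategy as the paper's proof: inductively construct simultaneous $z_i$-eigenvectors along each path $T$ using Lemma~\ref{Quo_3_scalar}, adjust signs with the Clifford generators, confirm via the dimension count of Lemma~\ref{dimensionsagree} that one generator per path fills out $\mathcal{L}^{\lambda}$ freely over the Clifford algebra, and then invoke Theorem~\ref{classify} (resp.\ Theorem~\ref{oddclassify}). The one step you leave implicit, and which the paper spends several sentences making precise, is the passage from a simultaneous eigenvector $w_T$ sitting in a copy of $L(\lambda)\subset L(\alpha)\otimes L(\beta)\otimes V^{\otimes d}$ to a simultaneous eigenvector in the fixed subspace $\mathcal{L}^{\lambda}$ itself. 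The paper does this by noting that $w_T$ lies in the $\mathcal{L}^{\lambda}$-isotypic component of the full tensor space regarded as an $\mathcal{H}^{\operatorname{ev}}_{p,d}$-module, decomposing $w_T=w_1+\cdots+w_t$ over copies of $\mathcal{L}^{\lambda}$, and selecting a nonzero homogeneous component carrying the same $z_i$-eigenvalues. Without this step (or an equivalent appeal to the bimodule structure of (\ref{doublecentralizer}), under which the $z_i$ act only on the $\mathcal{L}^{\lambda}$ tensor factor), the assertion that your diagonalization \emph{produces a basis of} $\mathcal{L}^{\lambda}$ is not yet justified; with it inserted, your outline matches the paper's proof, including the parallel treatment of the odd case via $\operatorname{Cl}_{d+2}$, Type Q, and Theorem~\ref{oddclassify}.
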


\begin{proof}
We first prove the case when $n$ is even. We also omit the restriction notation and regard $\mathcal{L}^{\lambda}$ as a module for $\mathcal{H}^{\operatorname{ev}}_{p,d}$. Given any $T\in\Gamma^{\lambda}$, we claim that there exists a vector $v_T \in \mathcal{L}^{\lambda}$ such that $z_i.v_T= \kappa_T(i)v_T$.

This is because by Lemma~\ref{x1squareacts}, based on the path $T$ and the target $\gamma=T^{(0)}\in \mathcal{P}_0$ of its first edge, one can choose a unique isotypic component $L(\gamma)^{\oplus 2}$ on which $z_0^2$ acts as $\kappa_T(0)^2$. By Lemma~\ref{Quo_3_scalar}, one can further choose a copy of $L(\gamma)$ on which $z_0$ acts as either $\kappa_T(0)$ or $-\kappa_T(0)$. We use this copy of $L(\gamma)$ and perform further tensor products: using the target $\mu=T^{(1)}$ of the second edge in $T$, we then choose the unique isotypic component $L(\mu)^{\oplus 2}$ in $L(\gamma)\otimes V$, on which $z_1^2$ acts as $\kappa_T(1)^2$, and furthermore, choose one copy of $L(\mu)$ on which $z_1$ acts as either $\kappa_T(1)$ or $-\kappa_T(1)$. By successfully choosing the summand using Lemma~\ref{Quo_3_scalar}, we have obtained a sequence $\{T^{(0)},\dots,T^{(d)}=\lambda\}$ of irreducible summands corresponding to each vertex in $T$, and a vector $w_T \in L(\lambda)$ on which $z_i$ acts as $\pm\kappa_T(i)$ for all $0\leq i\leq d$.

To force such a vector $w_T$ in $\mathcal{L}^{\lambda}$ instead of being in $L(\lambda)$, notice that $w_T \in L(\lambda)\otimes \mathcal{L}^{\lambda}$ is in the isotypic component of $\mathcal{L}^{\lambda}$ as an $\mathcal{H}^{\operatorname{ev}}_{p,d}$-module. This isotypic component is isomorphic to $(\mathcal{L}^{\lambda})^{\oplus t}$ for some multiplicity $t$, and let $w_T=w_1+\cdots +w_t$ be the unique decomposition of $w_T$ under this isomorphism. Notice $z_i.w_T=\kappa_T(i)w_1+\cdots+\kappa_T(i)w_t$ is the unique decomposition of $z_i.w_1+\cdots+z_i.w_t$, it follows $z_i.w_j=\kappa_T(i)w_j$ for some $w_j\neq 0$, $w_j\in \mathcal{L}^{\lambda}$. We can further choose $w_j$ to be homogeneous: either its even or odd component $u_T$ is nonzero, and $u_T$ also satisfies $\kappa_T(i).u_T=u_T$ for all $0\leq i\leq d$. By acting on $u_T$ with the proper collection of $c_i$, one can negate certain eigenvalues for $z_i$ and obtain a homogeneous vector $v_T$ such that $z_i.v_T=\kappa_T(i)v_T$ for all $0\leq i\leq d$.

Let $U$ be the free $\operatorname{Cl}_{d+1}$-module generated by all such $\{v_T\}_{T\in \Gamma^{\lambda}}$, and $U$ is a submodule of $\mathcal{L}^{\lambda}$. By Lemma~\ref{dimensionsagree}, $\operatorname{dim}U=\operatorname{dim}\mathcal{L}^{\lambda}$, and $U=\mathcal{L}^{\lambda}$. Since $U$ satisfies the assumptions in Thoerem~\ref{classify}, we have $\mathcal{L}^{\lambda}\simeq \mathcal{D}^{\lambda}_f$ for some choice of $f$.

When $n$ is odd, the argument is similar, using the vectors $\{v_T\}_{T\in \Gamma^{\lambda}}$ and the fact that they generate a free $\operatorname{Cl}_{d+2}$-module whose dimension is equal to that of $\mathcal{L}^{\lambda}$ by Lemma~\ref{dimensionsagree}. The only extra ingredient needed is that when $n$ is odd, $L(\lambda)$ and $\mathcal{L}^{\lambda}$ are both of Type Q, therefore the extra assumption in Theorem~\ref{oddclassify} is satisfied.

\end{proof}

\begin{remark}
Since restriction to $\rho(\mathcal{H}^{\operatorname{ev}}_{p,d})$ or $\rho(\mathcal{H}^{\operatorname{od}}_{p,d})$ preserves the irreducibility of the modules $\mathcal{L}^{\lambda}$, we call the subalgebras $\rho(\mathcal{H}^{\operatorname{ev}}_{p,d})$ of $\mathcal{Z}_d$ (in the case when $n$ is even) and $\rho(\mathcal{H}^{\operatorname{od}}_{p,d})$ of $\mathcal{Z}_d$ (in the case when $n$ is odd) to be \emph{ representation theoretically dense} in the centralizer $\mathcal{Z}_d$, as a weak version of the Schur-Weyl duality.
\end{remark}

\section{Appendix}
The following codes were written in \href{http://magma.maths.usyd.edu.au/magma/}{Magma Computational Algebra System}.

\subsection{Codes for Lemma~\ref{kapparelations}, (\ref{nokappas}), (\ref{anothernokappas})} \label{codeskapparelations}
In the following codes, the variables in the polynomial ring represent following quantities in the proof
\begin{center}
\begin{tabular}{c|c|c|c|c|c}
\hline
m & p & mmp & mp1  & mmpp1 & mmpm1 \\
$\sqrt{m}$ &${p}$&$\sqrt{m-p}$&$\sqrt{m+1}$&$\sqrt{m-p+1}$&$\sqrt{m-p-1}$\\
\hline
\end{tabular} 
\end{center}
and the relations among them are recorded in the ideal ``Ideal''.
\begin{verbatim}
C := IntegerRing();  R<m,p,mmp,mp1,mmpp1,mmpm1> := PolynomialRing(C,6);
Ideal := ideal<R|m^2-1-p-mmpm1^2,  m^2+1-mp1^2, m^2+1-p-mmpp1^2, m^2-p-mmp^2>;
k1k1p:=m*mp1*mmp*mmpp1;  k0k0p:=p*k1k1p;
sumk1square:=m^2*(m^2+1)+(m^2-p)*(m^2-p+1);
kappasquare:=m^2*(p+1)*(m^2-p+1);
Q1:=(sumk1square+2*k1k1p)*(2*kappasquare-2*k0k0p-2*k1k1p);
M1:=p*(p+1)*(2*kappasquare-2*k0k0p+2*k1k1p);
F:= Q1-M1;  F in Ideal;
Q2:=(sumk1square+2*k1k1p)*(2*kappasquare+2*k0k0p-2*k1k1p);
M2:=(p*(p+1)+4*(m^2+1)*(m^2-p)*p/(1+p))*(2*kappasquare+2*k0k0p+2*k1k1p);
G:=Q2-M2;  Numerator(G) in Ideal;
\end{verbatim}

\subsection{Codes for Lemma~\ref{firstnontrivial}, (\ref{code1}) }
\begin{verbatim}
Q:=RationalField(); <m,p>:=PolynomialRing(Q,2);
k0:=m*p*(m-p); k1:=m*(m+1); k:=k0+k1; k3:=k0+p*k1;
x:=(k-2*k1-2*k0*k1*(p-1)/k3)^2-p*(p+1)*k-p*(p+1)*k*k0*k1*(p-1)^2/k3^2; 
Numerator(x);
\end{verbatim}

\subsection{Codes for Lemma~\ref{lastrelation}, (\ref{s1Tiszero})}
In the following codes, the variables in the polynomial ring represent following quantities in the proof
\begin{center}
\begin{tabular}{c|c|c|c|c|c|c|c|c}
\hline
m & p & mmp & mp1 & n & np1 & mmpp1 & mmpm1 & mm1\\
$\sqrt{m}$ &$\sqrt{p}$&$\sqrt{m-p}$&$\sqrt{m+1}$&$\sqrt{n}$&$\sqrt{n+1}$&$\sqrt{m-p+1}$&$\sqrt{m-p-1}$&$\sqrt{m-1}$\\
\hline
\end{tabular} 
\end{center}
and the relations among them are recorded in the ideal ``Ideal''.

When $T=L_1$,
\begin{verbatim}
C := IntegerRing();  R<m,p,mmp,mp1,n,np1,mmpp1,mmpm1,mm1> := PolynomialRing(C,9);
Ideal := ideal<R|m^2-1-mm1^2, m^2-1-p^2-mmpm1^2,  m^2+1-mp1^2,  n^2+1-np1^2, 
m^2+1-p^2-mmpp1^2, m^2-p^2-mmp^2>;
N:=n*np1;  k0:=(m*p*(mmp));  k1:=(n*(np1));  k2:=(m*(mp1));
k0p:=k0;  k1p:=k1;  k0pp:=((mp1)*p*(mmpp1));  k2pp:=((mmp)*(mmpp1));
alpha:=-1/(k1-k2);  beta:=1/(k1+k2);  alpha1:=-1/(k2-k1);  beta1:=1/(k1+k2);
alpha2:=-1/(k2pp-k1);  beta2:=1/(k2pp+k1);  
alphap:=-1/(k1p-k2);  betap:=1/(k1p+k2);
A:=k0-k0p;  B:=k1+k1p;  F:=k0-k0pp;  G:=k2+k2pp;
delta:=(-N^2+k1*k1)*k0/(k0*k0+p^2*k1*k1); 
delta1:=(-N^2+k2*k2)*k0/(k0*k0+p^2*k2*k2);
gamma:=(N^2*p^2+k0*k0)*k1/(k0*k0+p^2*k1*k1); 
 gamma1:=(N^2*p^2+k0*k0)*k2/(k0*k0+p^2*k2*k2);
M:=-(k0*k0+p^2*p^2*k2*k2)*(N^2-k2*k2)*(N^2-k2pp*k2pp)/
((k0*k0+p^2*k2*k2)*(k0*k0+p^2*k2*k2)*((k0-k0pp)*(k0-k0pp)+(k2+k2pp)*(k2+k2pp)));
K:=-(k0*k0+p^2*p^2*k1*k1)*(N^2-k1*k1)*(N^2-k1p*k1p)/
((k0*k0+p^2*k1*k1)*(k0*k0+p^2*k1*k1)*((k0-k0p)*(k0-k0p)+(k1+k1p)*(k1+k1p)));
C0:=-beta*gamma*gamma1-alpha*delta*delta1-delta1*delta1*alpha1
-gamma1*gamma1*beta1-gamma*gamma*beta-delta*delta*alpha-delta*delta1*alpha1
-gamma*gamma1*beta1+M*(-F*F*beta2-G*G*alpha2)
-K*(A*A*betap+B*B*alphap)+gamma1+gamma;
C3:=M*(-F^2*beta2-G^2*alpha2)-K*(A^2*betap+B^2*alphap)-gamma*beta*gamma1
-alpha*delta*delta1-gamma1^2*beta1-delta1^2*alpha1-gamma^2*beta-delta^2*alpha
-delta*delta1*alpha1-gamma*gamma1*beta1+gamma1+gamma;
C2:=M*(-G*F*beta2+G*F*alpha2)-K*(B*A*alphap-A*B*betap)+beta*delta*gamma1
+alpha*gamma*delta1-delta1*beta1*gamma1+gamma1*alpha1*delta1-delta*alpha*gamma
+gamma*beta*delta-delta*gamma1*alpha1-gamma*delta1*beta1+delta1-delta;
C1:=M*(G*F*alpha2-G*F*beta2)-K*(-A*B*betap+B*A*alphap)+alpha*delta*gamma1
-gamma*beta*delta1+delta1*alpha1*gamma1-gamma1*beta1*delta1+delta*beta*gamma
-gamma*alpha*delta-gamma*delta1*alpha1+delta*gamma1*beta1+delta1-delta;
Numerator(C0) in Ideal;  Numerator(C1) in Ideal;
Numerator(C2) in Ideal;  Numerator(C3) in Ideal;
\end{verbatim}
To check for other paths, simply replace the first few lines with the following:

$T=L_2$:
\begin{verbatim}
k0:=(m*p*(mmp)); k1:=(m*(mp1)); k2:=(n*(np1)); k0pp:=k0; 
k2pp:=k2; k0p:=((mp1)*p*(mmpp1)); k1p:=((mmp)*(mmpp1));
\end{verbatim}

$T=L_3$: 
\begin{verbatim}
k0:=((mp1)*p*(mmpp1)); k1:=((mmp)*(mmpp1)); k2:=(n*(np1)); k0pp:=k0; 
k2pp:=k2; k0p:=(m*p*(mmp)); k1p:=(m*(mp1));
\end{verbatim}

$T=L_4$:
\begin{verbatim}
k2:=((mmp)*(mmpp1)); k0:=((mp1)*p*(mmpp1)); k1:=(N);
k0pp:=(m*p*(mmp)); k2pp:=(m*(mp1)); k0p:=k0; k1p:=k1;
\end{verbatim}

$T=T_2$:
\begin{verbatim}
k2:=(m*(mp1)); k0:=((m)*p*(mmp)); k1:=((mmpm1)*(mmp)); k0p:=((mm1)*p*(mmpm1)); 
k1p:=(m*(mm1)); k0pp:=((mp1)*p*(mmpp1)); k2pp:=((mmp)*(mmpp1));
\end{verbatim}

$T=T_3$:
\begin{verbatim}
k2:=((mmp)*(mmpm1)); k0p:=((mp1)*p*(mmpp1)); k1p:=((mmpp1)*(mmp)); 
k0:=(m*p*(mmp)); k1:=(m*(mp1)); k0pp:=((mm1)*p*(mmpm1)); k2pp:=((m)*(mm1));
\end{verbatim}

\subsection{Codes for Lemma~\ref{x1determined}, (\ref{a1iszero}), (\ref{a2iszero}), (\ref{a0iszero}). }
\label{a1iszerocodes}
In the following codes, the variables in the polynomial ring represent following quantities in the proof
\begin{center}
\begin{tabular}{c|c|c|c|c}
\hline
m & p & mmp & mp1  & mmpp1 \\
$\sqrt{m}$ &$\sqrt{p}$&$\sqrt{m-p}$&$\sqrt{m+1}$&$\sqrt{m-p+1}$\\
\hline
\end{tabular} 
\end{center}
and the relations among them are recorded in the ideal ``Ideal''.

\begin{verbatim}
C := IntegerRing();  R<m,p,mmp,mp1,mmpp1> := PolynomialRing(C,5);
Ideal := ideal<R|m^2+1-mp1^2, m^2+1-p^2-mmpp1^2, m^2-p^2-mmp^2>;
k0:=m*p*mmp; k1:=m*mp1; k0p:=mp1*p*mmpp1; k1p:=mmp*mmpp1;
ksquare:=m^2*(p^2+1)*mmpp1^2; k1sum:=m^2*(m^2+1)+mmp^2*mmpp1^2;  
denom:=k0*k1-k0p*k1p;
a1:=-p^2*(p^2+1)+ 2*k0*k1*(2*k1^2/ksquare+2*k0*k1*(ksquare-k1sum)/ksquare/denom)
*(k1sum-p^2*(p^2+1))/denom
+(1-(ksquare-k1sum)/denom*(k1sum-p^2*(p^2+1))/denom)
*(p^2*(p^2+1)+4*(m^2+1)*(m^2-p^2)*p^2/(1+p^2))
+2*k1^2-2*k0*k1*(k1sum-p^2*(p^2+1))/denom-(-p^2*(p^2+1)+2*k1^2)
*(2*k1^2/ksquare+2*k0*k1/ksquare*(ksquare-k1sum)/denom);
Numerator(a1) in Ideal;
a2:=1+(2*k1^2/ksquare+2*k0*k1*(ksquare-k1sum)/denom/ksquare)^2
-1/ksquare*(p^2*(p^2+1)+4*(m^2+1)*(m^2-p^2)*p^2/(p^2+1))
-2*(2*k1^2/ksquare+2*k0*k1*(ksquare-k1sum)/denom/ksquare);
b2:=(2*m^2-p^2+1)^2*(m^2-p^2)*(m^2+1)/(p^2+1)^2/(m^2+1-p^2)/m^2;
Numerator(a2-b2) in Ideal;
a0:=k1^4+(2*k0*k1*(k1sum-p^2*(p^2+1))/(2*denom))^2
-ksquare*((k1sum-p^2*(p^2+1))/(2*denom))^2*(p^2*(p^2+1)
+4*(m^2+1)*(m^2-p^2)*p^2/(p^2+1))
-p^2*(p^2+1)*k1^2-(-p^2*(p^2+1)+2*k1^2)*(2*k0*k1*(k1sum-p^2*(p^2+1))/(2*denom));
b0:=-4*p^4*m^2*(m^2+1)*(m^2-p^2)*(m^2-p^2+1)/(p^2-1)^2;
Numerator(a0-b0) in Ideal;
\end{verbatim}


\begin{thebibliography}{99}
{
%\bibitem{Kac}
\bibitem{Arakawa}  T. Arakawa, T. Suzuki, Duality between $\mathfrak{sl}_n(\mathbb{C})$ and the degenerate affine Hecke algebra. \textit{J. Algebra} \textbf{209} (1998), no. 1, 288-304. 

\bibitem{Bessenrodt} Bessenrodt,
On multiplicity-free products of Schur P-functions. \textit{
Ann. Comb.}  \textbf{6} (2002), no. 2, 119-124. 




\bibitem{Brundan} J. Brundan, Kazhdan-Lusztig polynomials and character formulae for the Lie superalgebra $\mathfrak{q}(n)$. \textit{Adv. Math.} \textbf{182} (2004), no.1, 28-77. 

\bibitem{Brundan3} J. Brundan, N. Davidson, Type A blocks of supercategory $\mathcal{O}$,
\emph{J. Algebra} \textbf{473} (2017), 447-480.

\bibitem{Brundan2} J. Brundan, A. Kleshchev, Modular representations of the supergroup $Q(n)$. I. Special issue celebrating the 80th birthday of Robert Steinberg. \textit{J. Algebra} \textbf{260} (2003), no. 1, 64-98. 

 

\bibitem{Daugherty} Daugherty, Degenerate two-boundary centralizer algebras. \textit{Thesis (Ph.D.)}–The University of Wisconsin - Madison. 2010. \textbf{120} pp. ISBN: 978-1124-36940-2.

\bibitem{Ram} Z. Daugherty, A. Ram, Two boundary Hecke Algebras and combinatorics of type C, \href{https://arxiv.org/abs/1804.10296}{arXiv:1804.10296}.

\bibitem{GS} M. Gorelik,
Shapovalov determinants of Q-type Lie superalgebras.
\textit{ Int. Math. Res. Pap.} 2006, Art. ID 96895, 71 pp. 

\bibitem{Kashiwara} S. Kang, M. Kashiwara, S. Tsuchioka, Quiver Hecke superalgebras. \emph{J. Reine Angew. Math.} \textbf{711} (2016), 1-54.



\bibitem{HKS} D. Hill, J. Kujawa, J. Sussan,
Degenerate affine Hecke-Clifford algebras and type Q Lie superalgebras. \textit{Math. Z.} \textbf{268} (2011), no. 3-4, 1091-1158. 

\bibitem{Hum} J. Humphreys, Reflection groups and Coxeter groups.
\emph{Cambridge Studies in Advanced Mathematics}, \textbf{29}. Cambridge University Press, Cambridge, 1990. xii+204 pp. ISBN: 0-521-37510-X.


\bibitem{Kac} V. G. Kac, A sketch of Lie superalgebra theory.  \textit{Comm. Math. Phys.} \textbf{53} (1977), no. 1, 31-64. 

\bibitem{Naz}M. Nazarov, Young’s symmetrizers for projective representations of the symmetric group. \emph{Adv. Math.} \textbf{127}(1997), no. 2, 190-257.

\bibitem{Sergeev} A. N. Sergeev, Tensor algebra of the identity representation as a module over the Lie superalgebras $GL(n,m)$
and $Q(n)$, \textit{Math. USSR Sbornik} \textbf{51} (1985), 419-427.

\bibitem{Sergeev2} A. N. Sergeev, The center of enveloping algebra for Lie superalgebra $Q(n;\mathbb{C})$, \textit{Lett. Math.
Phys.} \textbf{7} (1983), 177-179.


\bibitem{Stembridge}  J.R. Stembridge, Shifted tableaux and the projective representations of symmetric groups. \textit{Adv. Math.} \textbf{74} (1989), no. 1, 87-134.
%\bibitem{MacDonald}

\bibitem{Su}  Y. Su, R. B. Zhang, Character and dimension formulae for queer Lie superalgebra. \textit{Comm. Math. Phys.} \textbf{333} (2015), no. 3, 1465-1481.

\bibitem{Swee} M. E. Sweedler, 
Hopf algebras.
\emph{Mathematics Lecture Note Series} W. A. Benjamin, Inc., New York 1969 vii+336 pp. 

\bibitem{Wan} J. Wan, Completely splittable representations of affine Hecke-Clifford algebras.\textit{ J. Algebraic Combin.} \textbf{32} (2010), no. 1, 15-58. 

\bibitem{Wang}C. Cheng, W. Wang, Dualities and representations of Lie superalgebras. \textit{Graduate Studies in Mathematics}, \textbf{144}. American Mathematical Society, Providence, RI, 2012. xviii+302 pp. ISBN: 978-0-8218-9118-6. 





\bibitem{Zhu} Zhu, Two boundary centralizer algebras for $\mathfrak{gl}(n|m)$. \href{https://arxiv.org/abs/1809.08172}{arXiv:1809.08172}.




}
\end{thebibliography}
\end{document}